\newcommand{\commentout}[1]{}
\newcommand{\R}{\mathbb R}
\newcommand{\loc}{{\text{loc}}}
\newcommand{\sub}{{\text{sub}}}
\newcommand{\super}{\text{super}}
\newcommand{\calT}{{\mathcal{T}}}
\newcommand{\calU}{{\mathcal{U}}}
\newcommand{\ra}{{\rightarrow}}
\newcommand{\calH}{{\mathcal H}}
\newcommand{\calA}{{\mathcal A}}
\newcommand{\Rm}{{\mathbb R}}
\newcommand{\Pm}{{\mathbb P}}
\newcommand{\N}{{\mathbb N}}
\newcommand{\expE}{{\mathbb E}}
\newcommand{\Leb}{{\text{Leb}}}
\newcommand{\Ind}{{{{1}}}}
\newcommand{\calF}{{\mathcal F}}
\newcommand{\calG}{{\mathcal G}}
\newtheorem{theo}{Theorem}[section]
\newtheorem{lem}[theo]{Lemma}
\newtheorem{defin}[theo]{Definition}
\newtheorem{prop}[theo]{Proposition}
\newtheorem{cor}[theo]{Corollary}
\newtheorem{rmk}[theo]{Remark}
\newtheorem{assum}[theo]{Assumption}
\newtheorem{example}[theo]{Example}
\newcommand{\glSurv}{\ensuremath{\mathrm{glSurv}}}
\newcommand{\locSurv}{\ensuremath{\mathrm{locSurv}}}
\newcommand{\rlocSurv}{\ensuremath{\mathrm{r{-}locSurv}}}
\title{Generalised principal eigenvalues and global survival of branching Markov processes}
\author{Pascal Maillard\thanks{Institut de Mathématiques de Toulouse, CNRS UMR 5219, Université de Toulouse, 118 route de Narbonne, 31062 Toulouse Cedex 9, France and Institut Universitaire de France. E-mail: \texttt{pascal DOT maillard AT math DOT univ-toulouse DOT fr}}, Oliver Tough\thanks{Department of Mathematical Sciences, Durham University, Upper Mountjoy Campus, Stockton Rd, Durham DH1 3LE. E-mail: \texttt{fjwd57@durham.ac.uk}}}
\begin{document}
\maketitle

\begin{abstract}
We study necessary and sufficient criteria for global survival of discrete or continuous-time branching Markov processes. 
We relate these to several definitions of generalised principle eigenvalues for elliptic operators due to Berestycki and Rossi. In doing so, we extend these notions to fairly general semigroups of linear positive operators. We use this relation to prove new results about the generalised principle eigenvalues, as well as about uniqueness and non-uniqueness of stationary solutions of a generalised FKPP equation. The probabilistic approach through branching processes gives rise to relatively simple and transparent proofs under much more general assumptions, as well as constructions of (counter-)examples to certain conjectures.
\end{abstract}

\textbf{This is an unpolished draft. Comments are welcome.}

\section{Introduction}

Branching processes are a staple of probability theory, whose study goes back to the 19th century, when Bienaymé \cite{bienayme1845loi} introduced the model and understood that the a (single-type) branching process survives with positive probability if and only if the mean offspring is greater than 1, see Kendall \cite{Kendall1975} for a historical account. Branching processes have since found applications into a wide array of areas, including, but not limited to, random graphs, quantum gravity, turbulence, front propagation, disordered systems, stochastic geometry, nuclear fission, time series, and of course to population biology, where branching processes originate from.

In this paper, we consider general multitype branching processes, or branching Markov processes, in discrete or continuous time. This includes a variety of processes, such as branching processes with a finite or countable type space, branching random walks on finite or infinite graphs or branching diffusions. We are interested in determining whether the process survives with positive probability, starting from a single particle. This is well-understood in the case of a finite number of types, as well as for branching diffusions on bounded domains. In these cases, classic results provide that survival with positive probability is characterized in terms of the principal eigenvalue of a certain operator: the mean matrix of the offspring distribution in the first case \cite{Harris1963}, and the generator of the first moment semigroup in the second case \cite{Hering1978b}. 

It may seem surprising that in the general case, i.e.~in the case of infinite, possibly uncountable type space, no general criterion for survival seems to be known to date. The literature seems to be entirely focused on criteria that guarantee that the process can be approximated by the process killed outside a finite, or bounded subset of the type space \cite{KestenSupercriticalBranching1989,BraunsteinsDecrouezHautphennePathwiseApproach2019}, or under which laws of large numbers hold \cite{AH1983,Englander2010,WangStrongLaw2015,Marguet2019}. This is related to the question of \emph{local survival}. In a locally compact state space, we say that the process survives locally, if every sufficiently large compact set is visited infinitely often with positive probability. A necessary and sufficient criterion for local survival is known in two cases: branching processes in discrete time with countable state space and branching diffusions. In these cases, local survival is characterized through a certain eigenvalue: the \emph{spectral radius} of the (infinite) mean matrix in the case of countable state space \cite{MullerCriterionTransience2008}, and the so-called \emph{generalised principal eigenvalue} in the branching diffusion case \cite{Englander2004}. 

In this paper, we address for the first time the question of \emph{global survival} with positive probability of a general branching Markov process, starting from a single particle. Note that, in a certain sense, this notion is more fundamental than the notion of local survival, as it does not rely on a notion of locality, which does not make sense in arbitrary measurable spaces. We draw inspiration from the PDE literature, and notably from a seminal paper by Berestycki and Rossi~\cite{BerestyckiRossi}. In that paper, the authors define two new notions of generalised principal eigenvalues for elliptic second-order differential operators, called $\lambda_1'$ and $\lambda_1''$. In the current paper, we show that one of these ($\lambda_1'$) two generalised principal eigenvalues precisely governs global survival for branching Markov processes, whereas the other one ($\lambda_1''$) governs a weaker notion of global survival: global survival \emph{in expectation}.

\paragraph{Main contributions.}
The main contributions of the current paper are as follows:

\begin{itemize}
    \item 
We generalize the generalised principal eigenvalues $\lambda_1'$ and $\lambda_1''$ from Berestycki and Rossi~\cite{BerestyckiRossi} to quite general semigroups of positive operators, in discrete or continuous-time. We call these two quantities $\lambda_c'$ and $\lambda_c''$. We show that they indeed generalize $\lambda_1'$ and $\lambda_1''$ (or rather, $-\lambda_1'$ and $-\lambda_1''$).

\item
We show that, under suitable assumptions, $\lambda_c'>0$ implies global survival of the branching process with positive probability, whereas $\lambda_c'<0$ implies global extinction. In continuous time, we provide an equivalent formulation in terms of an associated reaction-diffusion type equation. 

\item 
We provide an expression of $\lambda_c''$ in terms of the asymptotic growth of the expected number of particles. This expression seems to be new even in the PDE literature. From this expression, we deduce that $\lambda_c''>0$ implies global survival in expectation, whereas $\lambda_c''<0$ implies global extinction in expectation.

\item We study the relation between $\lambda_c'$ and $\lambda_c''$. Namely, we show that $\lambda_c' \le \lambda_c''$ and provide examples for $\lambda_c' < \lambda_c''$. In particular, we refute a conjecture by Berestycki and Rossi (\cite[Conjecture 1.8]{BerestyckiRossi}) which states that $\lambda_1' = \lambda_1''$ under mild assumptions. Whereas we show that the conjecture is not true in general, we prove it holds for self-adjoint $L$. This provides a sharp characterisation of the validity of the maximum principle for self-adjoint elliptic operators on unbounded domains.

\item In the case of discrete time and countable state space or of branching diffusions, we show that $\lambda_c \le \lambda_c'$ and provide a relatively general sufficient criterion for $\lambda_c = \lambda_c' = \lambda_c''$, which significantly generalizes a criterion by Berestycki and Rossi.

\item We connect the existence and uniqueness of stationary solutions of the generalised FKPP equation with the global survival and local survival of the corresponding branching process. In particular, we show that if the branching process undergoes local survival with positive probability, then the corresponding stationary solution of the FKPP equation is unique if and only if it is impossible for the branching process to survive by ``drifting off to infinity''. We use this to provide a short probabilistic proof of uniqueness of stationary solutions of the FKPP equation on a bounded domain without any assumptions on the boundary. This proves a conjecture of Berestycki and Graham \cite[Conjecture 4.2]{Berestycki2025}, who conjectured it for Lipschitz boundary, and extends a classical result of Rabinowitz who proved it for smooth boundary \cite{Rabinowitz1971}. We then demonstrate the applicability of this criterion on a number of examples where the domain is unbounded.
  
\item
We study the critical cases $\lambda_c'=0$ and $\lambda_c'' = 0$, providing examples that show that all scenarios are possible.


\end{itemize}

We believe that our results have far-reaching consequences both in probability and PDE theory. One the one hand, we provide for a new and general criterion for global survival for branching Markov processes, which has potential to be applied in many specific instances. On the other hand, we provide a new interpretation of the Berestycki-Rossi generalised principal eigenvalues $\lambda_1'$ and $\lambda_1''$ and prove new results about them, greatly advancing their understanding. We also point out that we work in a general setting encompassing both discrete and continuous space or time, benefitting from the powerful general theory of Markov processes.

In future work, we aim to study the question of \emph{local survival} in general, extending the known results valid for countable type spaces and for branching diffusions to more general spaces and processes.

\paragraph{Overview of the paper.}
In Section~\ref{sec:results_discrete_time}, we present our main results in the discrete time setting. The discreteness of time removes some technicalities from the continuous-time setting and provides for relatively simple proofs. Section~\ref{sec:results_continuous_time} establishes the analogous results in the continuous-time setting. Section~\ref{sec:results_branching_diffusions} addresses specifically branching diffusions, establishing the relation with the generalised principal eigenvalues of Berestycki and Rossi. The last Section~\ref{sec:examples} is devoted to various examples.

\section{Discrete-time results}
\label{sec:results_discrete_time}

We consider a discrete-time branching Markov process as in Jagers~\cite{Jagers1989} or Biggins and Kyprianou~\cite{BK2004}. That is, let $E$ be a measurable space, $\partial$ a cemetery symbol, and set $\hat E = E\cup\{\partial\}$. We define the spaces of functions
\begin{align*}
  pB(E) &= \{f:E\to [0,\infty]\text{ measurable}\}\\
  pB_b(E) &= \{f\in pB(E):\text{ $f$ bounded}\}\\
  pB_1(E) &= \{f\in pB(E): f\le 1\}.
\end{align*}
Let $\Pi$ be a probability kernel from $E$ to $E^\N$. We extend $\Pi$ to $\hat E$ by setting $\Pi(\partial,\cdot) = \delta_{\partial^{\N}}$.  Let 
\[
  U = \bigcup_{n=0}^\infty U_n,\quad U_n = \N^n,
\]
with $\N^0 = \{\emptyset\}$. Thus, $U$ is the set of words of finite length over the alphabet $\N$. If $u\in U_n$, we set $|u| = n$. We write $u\le v$ if there exists $w\in U$, such that $uw = v$. A (discrete-time) branching Markov chain taking values in $E$, with offspring distribution $\Pi$, is a random field $(X_u)_{u\in U}$ taking values in $\hat E$, such that for all $n\in\N_0$, the collection of random vectors $((X_{ui})_{i\in\N})$, for $u\in U_n$ is iid according to the law $\Pi$.
The number of particles at time $n$ is denoted by
$$
N_n = \sum_{u\in U_n}\Ind_{X_u \ne\partial}.
$$

In what follows, when summing over $i\in\N$ or $u\in U$, we also assume that we only sum over such $i$ or $u$ such that $X_i \ne \partial$ (or $X_u\ne \partial$). The same holds for taking the product. We define accordingly,
\begin{align}
  mf(x) &= \expE_x\left[\sum_{i\in\N} f(X_i)\right],\quad f\in pB(E)\\
  \pi f(x) &= \expE_x\left[\prod_{i\in\N} f(X_i)\right],\quad f\in pB_1(E)
\end{align}
\begin{rmk}
  Whereas $\pi$ characterizes the branching kernel $\Pi$ (modulo reordering of the indices), this is not the case for $m$. For example, for a given $m$, one can define a branching kernel $\Pi$ which corresponds to the law of the atoms of a Poisson point process with intensity measure $m(x,\cdot)$, in some arbitrary order, corresponding to total independence between the children. On the other hand, if $m(x,\cdot)$ is finite, one could also take a random number of children, with expectation $\|m(x,\cdot)\|$, all at the same position distributed according to the law $m(x,\cdot)/\|m(x,\cdot)\|$. If the object of interest is the kernel $m$, rather than the branching Markov chain, then some choices could be better than others.
\end{rmk}

We then define for every $x\in E$,
\begin{align}
  \rho'_{c,x}(m) &= \sup\{\rho\ge 0: \exists u\in pB_b(E): mu \ge \rho u,\ u(x) > 0\},\label{eq:discrete time gen evalue 1}\\ 
  \rho''_{c,x}(m) &= \inf\{\rho\ge 0: \exists u\in pB(E): mu \le \rho u,\ u\ge 1,\ u(x)<\infty\}.
\end{align}

We define the event of \emph{(global) survival} by 
\[
  \glSurv \coloneqq \{N_n \ge 1\,\forall n\ge 0\}.
\]

Our first theorem expresses $\rho_{c,x}'$ and $\rho_{c,x}''$ in terms of the growth of the expected number of particles. In particular, it provides for an explicit expression for $\rho_{c,x}''$.

    \begin{theo}
      \label{th:rho'_rho''}
      For every $x\in E$, we have
      \[
        \rho'_{c,x} \le \liminf_{n\to\infty} \left(\expE_x[N_n]\right)^{1/n} \le \limsup_{n\to\infty} \left(\expE_x[N_n]\right)^{1/n} = \rho_{c,x}''.
      \]
    \end{theo}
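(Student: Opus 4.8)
The proof splits into three inequalities; the middle one, $\liminf_n (\expE_x[N_n])^{1/n} \le \limsup_n (\expE_x[N_n])^{1/n}$, is trivial, so the content is $\rho'_{c,x} \le \liminf_n (\expE_x[N_n])^{1/n}$ and $\limsup_n (\expE_x[N_n])^{1/n} = \rho''_{c,x}$. The starting observation, immediate from the branching property and the tower rule, is that $\expE_x[N_n] = m^n\Ind(x)$, where $\Ind$ denotes the constant function $\equiv 1$; combined with the facts that $m$ maps $pB(E)$ to $pB(E)$ and is positively homogeneous and monotone, this reduces everything to elementary iteration.

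For the first inequality I would take any admissible pair $(\rho,u)$ in the definition of $\rho'_{c,x}$, that is, $u\in pB_b(E)$ with $mu\ge \rho u$ and $u(x)>0$. Iterating and using monotonicity gives $m^n u\ge \rho^n u$, while boundedness of $u$ gives $m^n u\le \|u\|_\infty\, m^n\Ind = \|u\|_\infty\,\expE_\cdot[N_n]$. Evaluating at $x$ yields $\rho^n\le (\|u\|_\infty/u(x))\,\expE_x[N_n]$, hence $\rho\le (\|u\|_\infty/u(x))^{1/n}(\expE_x[N_n])^{1/n}$; letting $n\to\infty$ kills the prefactor and gives $\rho\le \liminf_n(\expE_x[N_n])^{1/n}$. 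Taking the supremum over admissible $\rho$ finishes this part. This is precisely where the restriction to bounded $u$ in the definition of $\rho'_{c,x}$ is used.

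For the identity $\limsup_n(\expE_x[N_n])^{1/n}=\rho''_{c,x}$, the inequality ``$\le$'' is dual to the above: for admissible $(\rho,u)$ in the definition of $\rho''_{c,x}$ (so $mu\le \rho u$, $u\ge 1$, $u(x)<\infty$), iteration and monotonicity give $\expE_x[N_n]=m^n\Ind(x)\le m^n u(x)\le \rho^n u(x)$, whence $\limsup_n(\expE_x[N_n])^{1/n}\le\rho$, and one takes the infimum over admissible $\rho$. For the reverse inequality ``$\ge$'' --- the one genuinely new point --- I would, given any $\rho$ strictly larger than $\beta\coloneqq \limsup_n(\expE_x[N_n])^{1/n}$, exhibit the explicit (resolvent-type) test function
\[
  u \coloneqq \sum_{n=0}^{\infty}\rho^{-n}\,m^n\Ind\ \in\ pB(E).
\]
Then $u\ge m^0\Ind=\Ind$; applying $m$ termwise (justified by monotone convergence, all terms being nonnegative) gives $mu=\sum_{n\ge 0}\rho^{-n}m^{n+1}\Ind=\rho(u-\Ind)\le \rho u$; and choosing $\rho'$ with $\beta<\rho'<\rho$, so that $\expE_x[N_n]\le(\rho')^n$ for all large $n$ (note that $\beta<\infty$ forces $\expE_x[N_n]<\infty$ for every $n$, since otherwise $(\expE_x[N_n])^{1/n}=\infty$), we obtain $u(x)=\sum_n\rho^{-n}\expE_x[N_n]<\infty$. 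Hence $\rho$ is admissible, so $\rho''_{c,x}\le\rho$, and letting $\rho\downarrow\beta$ gives $\rho''_{c,x}\le\beta$; together with ``$\le$'' this yields equality. The degenerate case $\beta=\infty$ is handled directly: if some finite $\rho$ were admissible for $\rho''_{c,x}$, the ``$\le$'' computation would give $\beta\le\rho<\infty$, a contradiction, so $\rho''_{c,x}=\infty=\beta$ as well.

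The only real obstacle is identifying the correct test function for the lower bound on $\rho''_{c,x}$; once the series $\sum_n\rho^{-n}m^n\Ind$ is written down, its verification is routine. The remaining care is bookkeeping around possibly infinite values of $\expE_x[N_n]$ and the interchange of $m$ with the infinite sum, both of which are harmless thanks to positivity and monotone convergence.
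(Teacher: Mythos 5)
Your proposal is correct and takes essentially the same route as the paper: the first inequality and the ``$\le$'' half of the equality are the same iteration arguments, and your resolvent-type test function $u=\sum_{n\ge 0}\rho^{-n}m^n\Ind$ is exactly the paper's $v(z)=\expE_z\bigl[\sum_{n\ge 0}\eta^{-n}N_n\bigr]$, your termwise application of $m$ being the analytic counterpart of the paper's first-generation decomposition $v=1+\eta^{-1}mv$. The only cosmetic differences are that you normalize by $\|u\|_\infty/u(x)$ where the paper takes $u$ valued in $[0,1]$, and that you spell out the degenerate case $\limsup_n(\expE_x[N_n])^{1/n}=\infty$, which the paper leaves implicit.
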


    \begin{rmk}
      Both inequalities in Theorem~\ref{th:rho'_rho''} may be strict as the example in Section~\ref{sec:example_mutations} shows.
    \end{rmk}

    The expected number of particles may overestimate the typical number of particles due to rare events accounting for a large number of particles. Our next theorem expresses the typical behavior of the branching process in terms of $\rho'_{c,x}$ and $\rho''_{c,x}$. In particular, it provides for necessary and sufficient conditions for $\Pm_x(\glSurv) > 0$, in terms of $\rho'_{c,x}$. In the case where the offspring distribution is bounded by a constant, it shows that $\rho'_{c,x} > 1$ implies $\Pm_x(\glSurv) > 0$ and $\Pm_x(\glSurv) > 0$ implies $\rho'_{c,x} \ge 1$. In the general case, something slightly strong than $\rho'_{c,x} > 1$ is needed to guarantee $\Pm_x(\glSurv) > 0$, see \eqref{eq:assumption_u_discrete}.

\begin{theo}
  \label{th:rho'_survival}
  The following holds.
  \begin{enumerate}
    \item Let $x\in E$. If $\Pm_x(\glSurv) > 0$, then $\rho'_{c,x}(m) \ge 1$.
    \item Suppose there exists $u\in pB_b(E)$, $\rho\ge 1$ and $\delta>0$, such that 
      \begin{equation}
        \label{eq:assumption_u_discrete}
        \forall x\in E: \expE_x\left[\sum_{i\in\N} u(X_i) \exp\left(-\delta\sum_{i\in\N} u(X_i)\right)\right] \ge \rho u(x).
      \end{equation}
      Then for all $x\in E$, we have that $u(x)>0$ implies
      \[
      \Pm_x(\liminf_{n\to\infty} \rho^{-n}N_n > 0) > 0,
      \]
      and, in particular, $\Pm_x(\glSurv) > 0$.

    \item 
      Suppose the following holds:
      \begin{itemize}
        \item[(B)]
      there exists a constant $K<\infty$ such that $N_1 \le K$, $\Pm_x$-a.s. for all $x\in E$. 
  \end{itemize}
  Then for every $\rho < \rho'_{c,x}(m)$, there exists $u\in pB_b(E)$ and $\delta>0$, such that \eqref{eq:assumption_u_discrete} is verified with this value of $\rho$ and $u(x) > 0$. In particular,
  \[
    \text{$\rho'_{c,x}(m) > 1$ implies $\Pm_x(\glSurv)>0$ and }\forall \rho < \rho'_{c,x}:\Pm_x\left(\liminf_{n\to\infty} N_n^{1/n} \ge \rho\right) > 0.
  \]

    \item We have for every $x\in E$,
      \[
        \Pm_x\left(\limsup_{n\to\infty} N_n^{1/n} \le \rho''_{c,x}\right) = 1.
      \]
  \end{enumerate}
\end{theo}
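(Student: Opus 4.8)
The plan is to treat the four parts in turn, the unifying idea being: from a (sub- or super-)solution $u$ of the relevant eigenvalue inequality, build a nonnegative super- or sub-martingale over the branching tree and read off the conclusion from martingale convergence. For Part 1, I would take $u(y):=\Pm_y(\glSurv)$. This belongs to $pB_b(E)$: it is bounded by $1$ and equals $\inf_n \Pm_y(N_n\ge1)$, with each $y\mapsto\Pm_y(N_n\ge1)$ measurable by induction, since integrating a measurable function against the kernel $\Pi$ yields a measurable function. By the branching property, extinction from $y$ happens exactly when every first-generation subtree goes extinct, so $1-u(y)=\expE_y\big[\prod_i(1-u(X_i))\big]$, hence $u(y)=\expE_y\big[1-\prod_i(1-u(X_i))\big]\le\expE_y\big[\sum_i u(X_i)\big]=mu(y)$ using $1-\prod_i(1-a_i)\le\sum_i a_i$ for $a_i\in[0,1]$. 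Since $\Pm_x(\glSurv)>0$ forces $u(x)>0$, the value $\rho=1$ is admissible in \eqref{eq:discrete time gen evalue 1}, so $\rho'_{c,x}(m)\ge1$.

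Part 2 is the heart of the matter. Assuming $u\not\equiv0$ (otherwise there is nothing to prove), set $C=\|u\|_\infty\in(0,\infty)$ and fix $\theta\in\big(0,\tfrac12\min(\delta,1/C)\big]$, and consider
\[
  L_n:=\prod_{|v|=n}\big(1-\theta\rho^{-n}u(X_v)\big)\in[0,1].
\]
I would first verify that $(L_n)$ is an $(\mathcal F_n)$-supermartingale: conditionally on $\mathcal F_n$ one gets $\expE_x[L_{n+1}\mid\mathcal F_n]=\prod_{|v|=n}\pi(1-\theta\rho^{-(n+1)}u)(X_v)$, and for $\epsilon=\theta\rho^{-(n+1)}\le\theta\le\delta$, writing $S_1=\sum_i u(X_i)$,
\[
  \pi(1-\epsilon u)(y)\le\expE_y[e^{-\epsilon S_1}]\le\expE_y\big[1-\epsilon S_1 e^{-\delta S_1}\big]\le 1-\epsilon\rho\,u(y)=1-\theta\rho^{-n}u(y),
\]
using $\prod_i(1-a_i)\le e^{-\sum_i a_i}$, then $1-e^{-s}\ge se^{-s}$ together with $\epsilon\le\delta$, and finally \eqref{eq:assumption_u_discrete}; this is precisely where the exponential weight in \eqref{eq:assumption_u_discrete} is indispensable. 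Then $L_n\to L_\infty$ a.s.\ with $\expE_x[L_\infty]\le L_0=1-\theta u(x)<1$, so $\Pm_x(L_\infty<1)>0$. On $\{L_\infty<1\}$ I would compare $L_n$ with $W_n:=\rho^{-n}\sum_{|v|=n}u(X_v)$ via $e^{-\theta W_n-\theta^2 C\rho^{-n}W_n}\le L_n\le e^{-\theta W_n}$ (the lower bound from $\log(1-a)\ge-a-a^2$ on $[0,1/2]$ and $\theta\rho^{-n}u(X_v)\le\theta C\le1/2$), which forces $\liminf_n W_n\ge(-\log L_\infty)/(\theta(1+\theta C))>0$ and hence $\liminf_n\rho^{-n}N_n\ge C^{-1}\liminf_n W_n>0$ because $N_n\ge C^{-1}\sum_{|v|=n}u(X_v)$; that extinction is absorbing gives $\{\liminf_n\rho^{-n}N_n>0\}\subseteq\glSurv$. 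The main obstacle is exactly this step: spotting the correct supermartingale $L_n$ and then converting the qualitative fact $\Pm_x(L_\infty<1)>0$ into the quantitative growth bound, handling along the way that offspring counts may be infinite.

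For Part 3, given $\rho<\rho'_{c,x}(m)$, by definition of the supremum in \eqref{eq:discrete time gen evalue 1} there is an admissible $\rho_1>\rho$ with a witness $u\in pB_b(E)$, $mu\ge\rho_1 u$, $u(x)>0$. Under (B), with $C=\|u\|_\infty$ we have $S_1=\sum_i u(X_i)\le CK$ a.s., so choosing $\delta\le(CK)^{-1}\log(\rho_1/\rho)$ yields $\expE_x[S_1 e^{-\delta S_1}]\ge e^{-\delta CK}\,mu(x)\ge e^{-\delta CK}\rho_1 u(x)\ge\rho\,u(x)$, i.e.\ \eqref{eq:assumption_u_discrete} holds for this $u,\delta,\rho$ with $u(x)>0$. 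The displayed consequences then follow: for $\rho\in(1,\rho'_{c,x})$ apply Part 2 (giving $\Pm_x(\glSurv)>0$ and $\liminf_n N_n^{1/n}\ge\rho$ on a positive-probability event), and for $\rho\le1$ combine $\Pm_x(\glSurv)>0$ with the fact that $N_n\ge1$ on $\glSurv$.

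For Part 4, if $\rho''_{c,x}=\infty$ the claim is vacuous; otherwise, for each $\rho>\rho''_{c,x}$ take $u\in pB(E)$ with $mu\le\rho u$, $u\ge1$, $u(x)<\infty$, and set $Z_n:=\rho^{-n}\sum_{|v|=n}u(X_v)$. Then $\expE_x[Z_{n+1}\mid\mathcal F_n]=\rho^{-(n+1)}\sum_{|v|=n}mu(X_v)\le Z_n$, so $(Z_n)$ is a nonnegative supermartingale with $\expE_x[Z_0]=u(x)<\infty$, hence a.s.\ convergent and so a.s.\ bounded; since $u\ge1$ we have $\rho^{-n}N_n\le Z_n$, giving $\limsup_n N_n^{1/n}\le\rho$ a.s. Intersecting these full-measure events over $\rho=\rho''_{c,x}+1/k$, $k\in\N$, yields $\Pm_x(\limsup_n N_n^{1/n}\le\rho''_{c,x})=1$.
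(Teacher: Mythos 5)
Your proposal is correct, and for Parts 1--3 it follows essentially the paper's route: Part 1 is the same harmonicity argument for $u(y)=\Pm_y(\glSurv)$, Part 3 is the same truncation-of-the-exponential trick under (B), and Part 2 is the same idea of a product supermartingale over generation-$n$ particles with geometrically shrinking parameter. The only differences are cosmetic or mildly structural. In Part 2 the paper works with $M_n=\prod_{|v|=n}e^{-\delta\rho^{-n}u(X_v)}$ and obtains the one-step inequality $\pi f_s\le f_{\rho s}$ ($s\le\delta$) by differentiating $s\mapsto \pi e^{-su}$ under the integral sign, whereas you use $L_n=\prod_{|v|=n}\bigl(1-\theta\rho^{-n}u(X_v)\bigr)$ and derive the analogous inequality $\pi(1-\epsilon u)\le 1-\epsilon\rho u$ from the elementary bounds $1-a\le e^{-a}$ and $1-e^{-s}\ge se^{-s}$; your chain is valid (the hypothesis \eqref{eq:assumption_u_discrete} enters exactly where the paper uses it), and your extraction of $\liminf_n\rho^{-n}N_n>0$ from $\{L_\infty<1\}$ via the two-sided comparison with $W_n$ is a correct, slightly more explicit version of the paper's step $\exp(-\delta L\rho^{-n}N_n)\le M_n$. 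In Part 4 you take a genuinely different route: you build the nonnegative supermartingale $Z_n=\rho^{-n}\sum_{|v|=n}u(X_v)$ directly from a witness $u$ in the definition of $\rho''_{c,x}$ and conclude pathwise, while the paper first invokes the identity $\rho''_{c,x}=\limsup_n\expE_x[N_n]^{1/n}$ from Theorem~\ref{th:rho'_rho''} and then applies Markov's inequality and Borel--Cantelli; both arguments are sound, yours avoids Theorem~\ref{th:rho'_rho''} altogether at the cost of checking (as you do) that $\expE_x[Z_n]\le u(x)<\infty$ so that supermartingale convergence applies, and the two are of comparable length. One small point worth making explicit in Part 3: the inequality $\expE_y\bigl[S_1e^{-\delta S_1}\bigr]\ge e^{-\delta CK}\,mu(y)\ge\rho u(y)$ must be stated for every $y\in E$ (not just the fixed $x$), which indeed holds since $mu\ge\rho_1u$ and the bound $S_1\le CK$ hold everywhere; as written you only display it at $x$.
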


    %
    
\subsubsection*{Dependence of global survival upon the initial condition}

We now investigate the relation between the values of $\rho'_{c,x}$ and $\rho''_{c,x}$ for different $x\in E$. Define
    \begin{align*}
  \rho'_c(m) &= \sup\{\rho\ge 0: \exists u\in pB_b(E): mu \ge \rho u,\ u(x) > 0\,\forall x\in E\},\\ 
  \rho''_{c}(m) &= \inf\{\rho\ge 0: \exists u\in pB(E): mu \le \rho u,\ 1\le u<\infty\}.
    \end{align*}
    
\begin{prop}
  \label{prop:rho dependence on x}
  We have $\rho'_c(m) = \inf_{x\in E}\rho'_{c,x}(m)$ and $\rho''_c(m) = \sup_{x\in E}\rho''_{c,x}(m)$.
\end{prop}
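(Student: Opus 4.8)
The plan is as follows. The inequalities $\rho'_c(m) \le \inf_{x\in E}\rho'_{c,x}(m)$ and $\rho''_c(m) \ge \sup_{x\in E}\rho''_{c,x}(m)$ are immediate from the definitions: any $u \in pB_b(E)$ with $mu \ge \rho u$ and $u>0$ everywhere is, for each fixed $x$, admissible in the supremum defining $\rho'_{c,x}(m)$, so $\rho'_c(m)\le\rho'_{c,x}(m)$ for all $x$; and dually any $u \in pB(E)$ with $mu\le\rho u$ and $1\le u<\infty$ is admissible for $\rho''_{c,x}(m)$, so $\rho''_c(m)\ge\rho''_{c,x}(m)$ for all $x$. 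I also record, for use below, that since $u\ge0$ the admissible values of $\rho$ in the definition of $\rho'_{c,x}(m)$ form an interval with left endpoint $0$ (if $mu\ge\rho u$ then $mu\ge\rho'u$ whenever $0\le\rho'\le\rho$), and those for $\rho''_{c,x}(m)$ form an interval with right endpoint $+\infty$; hence any $\rho<\rho'_{c,x}(m)$, respectively any $\rho>\rho''_{c,x}(m)$, actually admits a witness $u$.

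For the reverse inequalities the key point is that $m$ is a positive (monotone) linear operator, so pointwise suprema of subsolutions are again subsolutions, and pointwise infima of supersolutions are again supersolutions. We may assume $\inf_x\rho'_{c,x}(m)>0$ and $\sup_x\rho''_{c,x}(m)<\infty$, there being nothing to prove otherwise. Fix $\rho<\inf_x\rho'_{c,x}(m)$; for each $x\in E$ pick, by the remark above, $u_x\in pB_b(E)$ with $mu_x\ge\rho u_x$ and $u_x(x)>0$, rescaled so that $0\le u_x\le1$, and set $u:=\sup_{x\in E}u_x$. From $u\ge u_x$ and monotonicity of $m$ we get $mu\ge mu_x\ge\rho u_x$ for every $x$, hence $mu\ge\rho\sup_xu_x=\rho u$; moreover $0\le u\le1$ and $u(x)\ge u_x(x)>0$ for all $x$. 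Thus $u$ witnesses $\rho'_c(m)\ge\rho$, and letting $\rho\uparrow\inf_x\rho'_{c,x}(m)$ gives $\rho'_c(m)\ge\inf_x\rho'_{c,x}(m)$. Symmetrically, for $\rho>\sup_x\rho''_{c,x}(m)$ pick supersolutions $u_x\in pB(E)$ with $mu_x\le\rho u_x$, $u_x\ge1$ and $u_x(x)<\infty$, and set $u:=\inf_{x\in E}u_x$; then $mu\le\rho u$, $u\ge1$, and $u(x)\le u_x(x)<\infty$ for all $x$, so $u$ witnesses $\rho''_c(m)\le\rho$, and letting $\rho\downarrow\sup_x\rho''_{c,x}(m)$ completes the proof.

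The delicate point --- and the step I expect to be the main obstacle --- is that $\sup_{x\in E}u_x$ and $\inf_{x\in E}u_x$ are \emph{uncountable} suprema and infima of measurable functions, hence a priori not measurable, so it is not automatic that they belong to $pB_b(E)$, respectively $pB(E)$, which is part of what a witness must satisfy. The natural remedy is to reduce to a countable family: the class of functions $v$ with $0\le v\le1$ and $mv\ge\rho v$ is stable under countable suprema (by monotonicity of $m$, exactly as above), so it suffices to extract from it a countable subfamily $(v_k)$ with $\bigcup_k\{v_k>0\}=E$ and take $u:=\sup_kv_k\in pB_b(E)$; equivalently, one must show that the cover of $E$ by the supports $\{v>0\}$ of such functions $v$ --- a family stable under countable unions --- admits a countable subcover. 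This is where any additional structure on the measurable space $E$ would enter, and where I expect the real work to lie; the $\rho''$ case is handled dually, with infima, finiteness sets, and countable intersections in place of suprema, supports, and countable unions.
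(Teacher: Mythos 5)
Your argument is exactly the paper's: the inequality $\rho'_c\le\inf_x\rho'_{c,x}$ (and dually $\rho''_c\ge\sup_x\rho''_{c,x}$) from the definitions, and for the converse, for $\rho<\inf_x\rho'_{c,x}$ a choice of witnesses $u_x$ with $u_x(x)>0$, $0\le u_x\le1$, $mu_x\ge\rho u_x$, followed by $u:=\sup_{x\in E}u_x$ and the monotonicity of the positive kernel $m$ to get $mu\ge\sup_x mu_x\ge\rho u$, with the symmetric infimum construction for $\rho''$. The one place where you diverge is the point you flag as the ``main obstacle'': measurability of the uncountable supremum $\sup_{x\in E}u_x$ (resp.\ infimum). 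The paper's proof does not address this at all --- it simply asserts $u\in pB(E)$ and moves on --- so relative to the paper you have not missed any idea; if anything your write-up is the more careful one, since on a bare measurable space $E$ an uncountable supremum of measurable functions need not be measurable, and your proposed remedy (closure of the class of subsolutions under countable suprema plus a countable subcover of $E$ by the sets $\{u_x>0\}$, which requires some additional structure or a separability-type hypothesis on $E$) is indeed the natural way one would patch this; the paper, being a draft, leaves this gap implicit. Apart from this shared caveat, your proof is correct and complete, including the reduction to the nontrivial cases and the rescaling of the witnesses.
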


\begin{cor}
\label{cor:rho}
\begin{enumerate}
    \item We have,
      \[
        \rho'_c \le \inf_{x\in E}\liminf_{n\to\infty} \left(\expE_x[N_n]\right)^{1/n} \le \sup_{x\in E}\limsup_{n\to\infty} \left(\expE_x[N_n]\right)^{1/n} = \rho_c''.
      \]
    \item Suppose there exists $u\in pB(E)$, $0<u\le 1$, $\rho\ge 1$ and $\delta>0$, such that \eqref{eq:assumption_u_discrete} holds.
      Then, for all $x\in E$,
      \[
      \Pm_x(\liminf_{n\to\infty} \rho^{-n}N_n > 0) > 0,
      \]
      and, in particular, $\Pm_x(\glSurv) > 0$.

    \item 
      Suppose the following holds:
      \begin{itemize}
        \item[(B)]
      there exists a constant $K<\infty$ such that $N_1 \le K$, $\Pm_x$-a.s. for all $x\in E$. 
  \end{itemize}
  Then for every $\rho < \rho'_c(m)$, there exists $u\in pB(E)$, $0<u\le 1$ and $\delta>0$, such that \eqref{eq:assumption_u_discrete} is verified with this value of $\rho$. In particular,
  \[
    \text{$\rho'_c(m) > 1$ implies $\Pm_x(\glSurv)>0\,\forall x\in E$ and }\forall \rho < \rho'_c\,\forall x\in E:\Pm_x\left(\liminf_{n\to\infty} N_n^{1/n} \ge \rho\right) > 0.
  \]

    \item We have for every $x\in E$,
      \[
        \Pm_x\left(\limsup_{n\to\infty} N_n^{1/n} \le \rho''_c\right) = 1.
      \]
\end{enumerate}
\end{cor}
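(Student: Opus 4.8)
The plan is that each of the four parts follows by combining the corresponding $x$-indexed statement with Proposition~\ref{prop:rho dependence on x}, the only part that needs a genuine extra argument being~(3).

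First, for~(1) I would invoke Proposition~\ref{prop:rho dependence on x} to write $\rho'_c(m)=\inf_{x}\rho'_{c,x}(m)$ and $\rho''_c(m)=\sup_{x}\rho''_{c,x}(m)$. Applying Theorem~\ref{th:rho'_rho''} at each fixed $x$ gives $\rho'_{c,x}\le\liminf_n(\expE_x[N_n])^{1/n}$ and $\limsup_n(\expE_x[N_n])^{1/n}=\rho''_{c,x}$; taking the infimum over $x$ in the first bound, the supremum over $x$ in the equality, and inserting the trivial $\inf_x\liminf_n(\cdot)\le\sup_x\limsup_n(\cdot)$, produces the announced chain. Part~(4) is the same kind of bookkeeping: Theorem~\ref{th:rho'_survival}(4) yields $\Pm_x(\limsup_n N_n^{1/n}\le\rho''_{c,x})=1$, and since $\rho''_{c,x}\le\rho''_c$ (immediate from the definitions, since the admissible set for $\rho''_c$ is contained in that for $\rho''_{c,x}$), enlarging the threshold to $\rho''_c$ keeps probability one. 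Part~(2) is immediate from Theorem~\ref{th:rho'_survival}(2): if $0<u\le1$ then $u\in pB_b(E)$ with $u(x)>0$ for every $x$, so the conclusion of that statement holds at all $x$ simultaneously.

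The content is in~(3). Here the test functions produced by Theorem~\ref{th:rho'_survival}(3) are positive only at one prescribed point, and one must instead exhibit a test function positive on all of $E$. Trying to glue together the pointwise solutions would run into measurability problems when $E$ is uncountable, so I would go straight to the definition of $\rho'_c(m)$: fix $\rho\in(0,\rho'_c(m))$ (the case $\rho=0$ being trivial, take $u\equiv 1$), choose $\rho'$ with $\rho<\rho'<\rho'_c(m)$, and take, by definition of $\rho'_c(m)$, a function $u\in pB_b(E)$ with $mu\ge\rho'u$ and $u>0$ everywhere. Dividing $u$ by its supremum and using that $m$ is positively homogeneous, I may assume $0<u\le1$. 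Now I bring in assumption~(B): since $u\le1$ and $N_1\le K$ $\Pm_x$-a.s., the sum $\sum_i u(X_i)$ is $\le K$, so the elementary inequality $te^{-\delta t}\ge e^{-\delta K}t$ on $[0,K]$ gives
\[
  \expE_x\!\left[\sum_i u(X_i)\,e^{-\delta\sum_i u(X_i)}\right]\ \ge\ e^{-\delta K}\,\expE_x\!\left[\sum_i u(X_i)\right]\ =\ e^{-\delta K}\,mu(x)\ \ge\ e^{-\delta K}\rho'\,u(x).
\]
Picking $\delta>0$ with $e^{-\delta K}\rho'\ge\rho$, i.e.\ $\delta\le K^{-1}\log(\rho'/\rho)$, this says exactly that \eqref{eq:assumption_u_discrete} holds with these $u$, $\delta$ and $\rho$, which is the first assertion of~(3).

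The stated probabilistic consequences then drop out of~(2): assuming $\rho'_c(m)>1$, for any $\rho$ with $1\le\rho<\rho'_c(m)$ the construction and part~(2) give $\Pm_x(\liminf_n\rho^{-n}N_n>0)>0$ for every $x$, whence $\liminf_n N_n^{1/n}\ge\rho$ on that event and, since $N_n\ge1$ eventually there and extinction is absorbing, $\glSurv$ occurs there as well; for $\rho<1$ one runs this with any fixed $\rho''\in[1,\rho'_c(m))$ in place of $\rho$ and uses $\liminf_n N_n^{1/n}\ge\rho''\ge1>\rho$. I expect the only real obstacle to be the one discussed in~(3) — upgrading pointwise positivity of the test function to positivity on all of $E$ — which is what forces the appeal to the definition of $\rho'_c(m)$ rather than to Theorem~\ref{th:rho'_survival}(3), together with using hypothesis~(B) to pass from the linear inequality $mu\ge\rho'u$ to the nonlinear condition \eqref{eq:assumption_u_discrete}.
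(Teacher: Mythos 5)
Your proposal is correct and follows essentially the route the paper intends: the corollary is left as an immediate consequence of Theorem~\ref{th:rho'_rho''}, Theorem~\ref{th:rho'_survival} and Proposition~\ref{prop:rho dependence on x}, and your treatment of part (3) — taking a globally positive bounded $u$ with $mu\ge\rho'u$ directly from the definition of $\rho'_c$, normalising it to $(0,1]$, and using (B) to bound $\sum_i u(X_i)\le K$ so that a small $\delta$ yields \eqref{eq:assumption_u_discrete} — is exactly the argument of the third part of the proof of Theorem~\ref{th:rho'_survival} transplanted to this setting. The remaining bookkeeping for parts (1), (2) and (4) matches the paper's intent, so nothing is missing.
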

    
We now introduce the following Harris-type irreducibility assumption.
\begin{assum}\label{assum:Harris discrete}
There exists a measure $\nu$ on $E$ such that for every measurable set $A$ with $\nu(A)>0$ and every $x\in E$ there exists $n\ge 0$ (possibly dependent upon $x$ and $A$) such that we have $m^n(x,A) > 0$.
\end{assum}
\begin{theo}\label{th:discrete_harris}
Suppose Assumption \ref{assum:Harris discrete} holds, for some given measure $\nu$. Then we have either $\Pm_x(\glSurv)>0$ for all $x\in E$, or otherwise $\Pm_x(\glSurv)=0$ for $\nu$-almost every $x \in E$.
\end{theo}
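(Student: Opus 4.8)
The plan is to study the set $G \coloneqq \{x \in E : \Pm_x(\glSurv) > 0\}$ and show it is either all of $E$ or $\nu$-null; this is exactly the claimed dichotomy, since in the second case $\Pm_x(\glSurv) = 0$ holds on $G^c$, a set of full $\nu$-measure. First I would note that $g(x) \coloneqq \Pm_x(\glSurv)$ is measurable — indeed $\glSurv = \bigcap_{n} \{N_n \ge 1\}$ and $x \mapsto \Pm_x$ is a probability kernel — so that $G$ is a legitimate measurable set to which Assumption~\ref{assum:Harris discrete} can be applied.

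The crucial ingredient is a one-step statement: \emph{if $x \in G^c$, then $m(x, G) = 0$}. To see this, condition on the first generation. Since $\glSurv$ occurs as soon as the subtree rooted at one of the children survives forever, and, conditionally on the first generation, these subtrees are independent branching Markov chains started from the respective $X_i$, one obtains the branching decomposition $\Pm_x(\glSurv) = \expE_x\bigl[1 - \prod_i (1 - g(X_i))\bigr]$. If $g(x) = 0$, the nonnegative integrand must vanish $\Pm_x$-a.s., which forces $g(X_i) = 0$ for every $i$ almost surely, i.e.\ $m(x, G) = \expE_x\bigl[\sum_i \Ind_{X_i \in G}\bigr] = 0$. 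Iterating this along generations — using the branching property in the form $m^n(x, \cdot) = \expE_x\bigl[\sum_{u \in U_n} \delta_{X_u}\bigr]$, together with the fact that $m(x, \cdot)$ is carried by $G^c$ when $x \in G^c$ — gives $m^n(x, G) = 0$ for every $n \ge 0$ and every $x \in G^c$.

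With this in hand the proof concludes quickly. Suppose $\nu(G) > 0$. Applying Assumption~\ref{assum:Harris discrete} with the set $A = G$, for every $x \in E$ there exists $n \ge 0$ with $m^n(x, G) > 0$; by the previous paragraph this rules out $x \in G^c$, hence $G^c = \emptyset$ and $\Pm_x(\glSurv) > 0$ for all $x \in E$. Otherwise $\nu(G) = 0$, which is precisely the statement that $\Pm_x(\glSurv) = 0$ for $\nu$-almost every $x \in E$. This establishes the dichotomy.

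The argument is short, and I do not anticipate a genuine obstacle; the only points that require a little care are the measurability of $G$, the branching decomposition of $\glSurv$ (obtained by conditioning on the $\sigma$-algebra generated by the first generation and invoking the defining conditional independence of the subtrees), and the identity $m^n(x, \cdot) = \expE_x\bigl[\sum_{u \in U_n} \delta_{X_u}\bigr]$, all of which are standard for branching Markov chains in the sense of Jagers~\cite{Jagers1989} and Biggins--Kyprianou~\cite{BK2004}.
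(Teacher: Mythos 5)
Your proposal is correct and is essentially the paper's argument run in the contrapositive: where the paper shows directly that $\nu(A)>0$ together with Assumption~\ref{assum:Harris discrete} produces a word $u$ with $\Pm_x(X_u\in A)>0$ and hence $\Pm_x(\glSurv)>0$ via survival of the subtree rooted at $X_u$, you show that $G^c=\{x:\Pm_x(\glSurv)=0\}$ is closed under the mean kernel (so $m^n(x,G)=0$ on $G^c$) and contradict the Harris condition — the same first-generation/subtree decomposition and the same application of the assumption to the survival set. The extra steps in your write-up (measurability of $G$, the iteration to $m^n$) are fine and do not change the substance.
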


    \subsubsection*{Additional results for countable state spaces}
    
    In the remainder of the section, we assume that $E$ is finite or countable. The kernel $m$ is then of the form $mf(x) = \sum_y m(x,y)f(y)$, where $m$ is a (possibly infinite) matrix with non-negative entries. We assume the following
    \begin{itemize}
    \item[(I)]
        \label{ass:irreducible}
        The matrix $m$ is irreducible, i.e., for every $x,y\in E$ there exists $n\in\N$, such that $m^n(x,y) >0$.
            \item[(F)]
        \label{ass:finite}
        We have $m^n(x,y) <\infty$ for all $x,y\in E$ and all $n\in \N$.
    \end{itemize}
    These assumptions imply Assumption~\ref{assum:Harris discrete}. In particular, we have $\rho'_{c,x} = \rho'_c$ and $\rho''_{c,x} = \rho''_c$.
    
    We now compare $\rho_c'$ and $\rho_c''$ to the classical notion of the \emph{spectral radius} $\rho_c = \rho_c(m)$. For fixed $x,y\in E$, set
    \begin{equation}
        \label{def:spectral_radius}
        \rho_c = \inf\{\rho\ge 0: \exists u\in pB(E): mu \le \rho u,\ 0< u<\infty\}.
    \end{equation}
    \begin{theo}[see for example Seneta~\cite{SenetaNonNegativeMatrices2006}]
    \label{th:spectral_radius}
    We have the following equivalent definitions of $\rho_c$:
    \begin{enumerate}
        \item For every $x,y\in E$, \[
        \rho_c = \limsup_{n\to\infty} \left(m^n(x,y)\right)^{1/n}.
        \]
        \item For a subset $F\subset E$, define
        \(
        m_F(x,y) = m(x,y)\Ind_{x\in F,\ y\in F}
        \). Then 
        \[
        \rho_c = \sup_{F\subset E,\ F\text{ finite},\ m_F\text{ irreducible}} \rho_c(m_F).
        \]
    \end{enumerate}
    \end{theo}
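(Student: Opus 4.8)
This is a classical fact; I sketch how I would argue it, deriving item~(1) from a Green-function construction and then bootstrapping item~(2) from it.

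For item~(1), I would first note that, by irreducibility~(I), walks can be concatenated: for any $x,y,x',y'$ there are $a,b$ with $m^a(x',x)>0$ and $m^b(y,y')>0$, whence $m^{a+n+b}(x',y')\ge m^a(x',x)\,m^n(x,y)\,m^b(y,y')$; taking $n$-th roots along a subsequence realizing the $\limsup$ (and using symmetry in $(x,y)\leftrightarrow(x',y')$) shows that $r:=\limsup_{n}(m^n(x,y))^{1/n}\in[0,\infty]$ does not depend on $(x,y)$. To get $\rho_c\le r$ I would fix $x_0\in E$ and, for $\rho>r$ (the case $r=\infty$ being vacuous here), check that the Green function $u(x):=\sum_{n\ge0}\rho^{-n}m^n(x,x_0)$ is finite everywhere (choose $\rho'\in(r,\rho)$, so $m^n(\cdot,x_0)\le(\rho')^n$ for all large $n$, and invoke~(F) for the finitely many remaining terms), strictly positive (by~(I)), and satisfies $mu(x)=\rho\sum_{n\ge1}\rho^{-n}m^n(x,x_0)\le\rho u(x)$ after a Tonelli interchange; hence $u$ is admissible in the definition of $\rho_c$, so $\rho_c\le\rho$, and $\rho\downarrow r$ gives $\rho_c\le r$. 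For the reverse, take any admissible $u\in pB(E)$ with $0<u<\infty$ and $mu\le\rho u$; iterating gives $m^nu\le\rho^n u$, hence $m^n(x,y)u(y)\le\rho^n u(x)$ and $(m^n(x,y))^{1/n}\le\rho\,(u(x)/u(y))^{1/n}\to\rho$, so $r\le\rho$ and, taking the infimum over admissible $\rho$, $r\le\rho_c$. (If $r=\infty$ there is no admissible $u$ for any finite $\rho$, so $\rho_c=\infty$ as well, and the identity still holds.)

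For item~(2), the inequality $\sup_F\rho_c(m_F)\le\rho_c(m)$ is immediate: restricting an admissible $u$ for $m$ to a finite $F$ yields $m_F(u|_F)\le\rho\,u|_F$ with $0<u|_F<\infty$, so $\rho_c(m_F)\le\rho$ and hence $\rho_c(m_F)\le\rho_c(m)$ (trivially so if $\rho_c(m)=\infty$). For the reverse inequality I would fix $x_0$, exhaust $E$ by finite sets $F_k\uparrow E$ with $x_0\in F_1$, and replace each $F_k$ by the communicating class $\tilde F_k\subseteq F_k$ of $x_0$ inside the subgraph of $m_{F_k}$. A short check shows $m_{\tilde F_k}$ is irreducible and that every closed walk at $x_0$ that stays in $F_k$ in fact stays in $\tilde F_k$, so $m_{\tilde F_k}^n(x_0,x_0)=m_{F_k}^n(x_0,x_0)$, and the latter increases to $m^n(x_0,x_0)$ as $k\to\infty$ by monotone convergence over walks. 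Now apply item~(1) to $m$ and to each finite irreducible matrix $m_{\tilde F_k}$: by supermultiplicativity, $m_{\tilde F_k}^{jN}(x_0,x_0)\ge(m_{\tilde F_k}^N(x_0,x_0))^j$, so $\rho_c(m_{\tilde F_k})=\limsup_{n}(m_{\tilde F_k}^n(x_0,x_0))^{1/n}\ge(m_{\tilde F_k}^N(x_0,x_0))^{1/N}$ for every $N$. Letting $k\to\infty$ gives $\sup_k\rho_c(m_{\tilde F_k})\ge(m^N(x_0,x_0))^{1/N}$ for every $N$, and taking the supremum over $N$ and using item~(1) once more yields $\sup_k\rho_c(m_{\tilde F_k})\ge\rho_c(m)$.

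The one step needing care is this reverse inequality in item~(2): the crude truncations $m_{F_k}$ need not be irreducible, so their spectral radii are not directly amenable to item~(1); passing to the $x_0$-communicating class $\tilde F_k$ restores irreducibility while leaving every closed walk at $x_0$ intact, after which the monotone convergence $m_{\tilde F_k}^n(x_0,x_0)\uparrow m^n(x_0,x_0)$ together with item~(1) closes the argument. The remainder is routine Green-function and Perron--Frobenius bookkeeping.
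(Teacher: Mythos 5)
Your proposal is correct, but note that the paper does not actually prove this theorem: it simply cites Seneta (Theorems~6.3 and~6.8) for the two parts, together with the proof of Theorem~2.4 in M\"uller for the strengthening of part~2 in which $m_F$ is only required to be irreducible for the $F$'s appearing in the supremum. What you have written is essentially a self-contained reconstruction of those classical arguments: your Green-function $u(x)=\sum_{n\ge0}\rho^{-n}m^n(x,x_0)$ is the standard convergence-parameter (Vere-Jones/Seneta $R$-theory) device for part~1, and your passage from the crude truncations $m_{F_k}$ to the communicating class $\tilde F_k$ of $x_0$ is exactly the point where M\"uller's argument is needed beyond Seneta's statement, since the $m_{F_k}$ need not be irreducible while $m_{\tilde F_k}$ is and closed walks at $x_0$ inside $F_k$ never leave $\tilde F_k$. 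The steps all check out: independence of the $\limsup$ from $(x,y)$ via concatenation under (I), finiteness and positivity of the Green function via (F) and (I), the Tonelli interchange giving $mu\le\rho u$, the iteration $m^n u\le\rho^n u$ for the reverse bound, monotone convergence $m^N_{\tilde F_k}(x_0,x_0)\uparrow m^N(x_0,x_0)$, and supermultiplicativity of $m^n_{\tilde F_k}(x_0,x_0)$. What your route buys is a proof under exactly the paper's hypotheses (I) and (F), with no appeal to external $R$-theory; what the citation buys the paper is brevity. One tiny bookkeeping point: for small $k$ the class $\tilde F_k$ may reduce to $\{x_0\}$ with no loop, in which case $m_{\tilde F_k}$ is not irreducible in the sense of (I) and such $F$ are not admissible in the supremum of part~2; since irreducibility of $m$ guarantees a closed positive walk at $x_0$, these degenerate indices occur only for finitely many $k$ and can simply be discarded, so your argument is unaffected, but the sentence ``a short check shows $m_{\tilde F_k}$ is irreducible'' should carry that caveat.
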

    \begin{proof}
      The first part is Theorem~6.3 in Seneta~\cite{SenetaNonNegativeMatrices2006}. The second part is essentially Theorem~6.8 in Seneta~\cite{SenetaNonNegativeMatrices2006}, although Seneta works under the assumption that $m_F$ is irreducible for all but a finite number of $F$. However, the stronger statement as written here still holds, see e.g.~the proof of Theorem 2.4 in Müller~\cite{MullerCriterionTransience2008}.
    \end{proof}
  The spectral radius has an interpretation in terms of \emph{local} survival or extinction of the branching Markov chain. Define the events
    \[
      \locSurv = \bigcap_{x\in E} \locSurv_x,\quad \locSurv_x \coloneqq \left\{\sum_{|u|=n} \Ind_{X_u=x} > 0\text{ infinitely often}\right\}
    \]
    We call $\locSurv_x$ the event of \emph{local survival at $x$} and $\locSurv$ the event of local survival. Note that for all $x,x',y,y'\in E$, we have $\Pm_x(\locSurv_{x'}) > 0$ if and only if $\Pm_y(\locSurv_{y'}) > 0$, by irreducibility, and therefore $\Pm_x(\locSurv) = \Pm_x(\locSurv_y)$ for every $x,y\in E$. We say that \emph{the BMC survives locally}, if $\Pm_x(\locSurv) > 0$ for some (or all) $x\in E$ and \emph{undergoes local extinction} otherwise. Müller has shown the following result:
    \begin{theo}[Müller~\cite{MullerCriterionTransience2008}]\label{th:muller}
    The BMC survives locally if and only if $\rho_c(m) > 1$.
    \end{theo}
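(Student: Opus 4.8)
The plan is to prove the two implications separately, both times relating local survival of the BMC at a point to global survival of a related branching process, so that we can invoke Theorem~\ref{th:rho'_survival} (or rather the countable-state-space specialisations recorded earlier). For the easy direction, suppose the BMC survives locally, i.e.\ $\Pm_x(\locSurv_y) > 0$. Local survival at $y$ means that the expected number of visits to $y$ is positive, and in fact one should be able to extract from $\locSurv_y$ a subtree in which particles return to $y$ infinitely often; each excursion from $y$ back to $y$ gives a branching mechanism on the single type $\{y\}$ whose mean number of offspring is $\sum_{n\ge 1} G^{(n)}(y,y)$-type quantity, which must be $>1$ for the single-type process to survive. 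Concretely, I would compare with the first-moment operator and use that $\Pm_x(\locSurv_y)>0$ forces $\limsup_n m^n(y,y)^{1/n} \ge 1$ is not enough — we need $>1$ — so the argument has to be a genuine branching argument: define the point process of descendants that are the first to hit $y$ after their parent did, show it is a (single-type) Galton--Watson process whose mean is finite only if $\rho_c<1$ (this is where assumption (F) and the characterisation $\rho_c = \limsup m^n(y,y)^{1/n}$ from Theorem~\ref{th:spectral_radius} enter), and conclude that local survival implies this mean is $\ge 1$, hence $\rho_c \ge 1$; a separate argument upgrading $\ge 1$ to $>1$ via aperiodicity/strict monotonicity of the Green function, or by noting that at $\rho_c=1$ the embedded GW process is critical and dies out a.s., completes this direction.

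For the converse, suppose $\rho_c(m) > 1$. By part~(2) of Theorem~\ref{th:spectral_radius}, there is a \emph{finite} irreducible $F\subset E$ with $\rho_c(m_F) > 1$. Now consider the branching Markov chain killed upon leaving $F$: this is a branching process on the finite type space $F$ with first-moment matrix $m_F$. Since $F$ is finite, the classical Perron--Frobenius theory applies: $\rho_c(m_F)$ is the Perron eigenvalue, with a strictly positive eigenvector $u_F$ on $F$, giving $m_F u_F = \rho_c(m_F) u_F$ with $\rho_c(m_F) > 1$. Extending $u_F$ by $0$ outside $F$, this $u$ is bounded, but the difficulty is that $mu \ge \rho u$ may fail off $F$; however $m u \ge m_F u = \rho_c(m_F) u$ holds \emph{everywhere} since $m \ge m_F$ entrywise and $u \ge 0$, so in fact $\rho'_{c,x}(m) \ge \rho_c(m_F) > 1$ for $x\in F$. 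We still cannot directly apply Theorem~\ref{th:rho'_survival}(3) to the full process because (B) need not hold; instead I would apply it to the \emph{killed} process on $F$ (where $N_1 \le |F|\cdot(\text{max out-degree})$, still possibly unbounded) — so more carefully, I would apply Theorem~\ref{th:rho'_survival}(2): the eigenfunction $u_F$ solves $m_F u_F = \rho u_F$, and to verify \eqref{eq:assumption_u_discrete} for the killed process one needs the $\exp(-\delta \sum u(X_i))$ correction to be harmless, which it is for $\delta$ small because on the finite set $F$ the offspring numbers have some control; this yields $\Pm_x(\glSurv^{F}) > 0$ for the $F$-killed process, and global survival of the killed process implies in particular that $x\in F$ is visited infinitely often with positive probability, i.e.\ $\locSurv_x$, hence the full BMC survives locally.

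The main obstacle I anticipate is the first (necessity) direction: extracting a genuine embedded single-type Galton--Watson process counting returns to a fixed state $y$, controlling its mean offspring via the Green function $\sum_n m^n(y,y)$, and handling the possibility that it is infinite (where one truncates to a finite $F$ and uses $\rho_c(m_F)\to\rho_c(m)$) or that the process is merely critical — ruling out local survival in the critical case $\rho_c = 1$ requires knowing the embedded GW process has mean $\le 1$ there and using that critical GW processes die out almost surely. A secondary technical point is the passage from $\rho'_{c,x} > 1$ to actual survival without assumption (B) in the converse direction, which I resolve by working with the finitely-many-types killed process and the explicit Perron eigenfunction rather than the abstract $\rho'_{c,x}$, making \eqref{eq:assumption_u_discrete} checkable by hand.
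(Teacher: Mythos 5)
You are proving a statement the paper itself does not prove: Theorem~\ref{th:muller} is quoted from M\"uller \cite{MullerCriterionTransience2008} and used as an external input (e.g.\ in the first proof of Theorem~\ref{th:rho_c_rho_c'}), so your attempt can only be compared with the known argument, not with anything in the text.

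Your sufficiency direction ($\rho_c>1\Rightarrow$ local survival) is essentially sound and is close in spirit to the paper's second proof of Theorem~\ref{th:rho_c_rho_c'}: take a finite irreducible $F$ with $\rho_c(m_F)>1$ via Theorem~\ref{th:spectral_radius}(2), use the Perron eigenvector $u_F$, and work with the $F$-killed process. Two points need tightening but are fixable. First, the verification of \eqref{eq:assumption_u_discrete} for the killed process is not ``because the offspring numbers have some control'' (assumption (F) only bounds means, not $N_1$); the correct argument is monotone convergence as $\delta\downarrow 0$, which gives $\expE_x[\sum_i u_F(X_i)e^{-\delta\sum_i u_F(X_i)}]\to m_Fu_F(x)=\rho_c(m_F)u_F(x)$ for each of the finitely many $x\in F$, so a single small $\delta$ works; alternatively one can simply invoke classical finite-type supercritical branching theory for the killed process. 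Second, global survival of the killed process does not directly give $\locSurv_x$ for your chosen $x$; it gives, by pigeonhole over the finite set $F$, some $y\in F$ with $\Pm_x(\locSurv_y)>0$, and one then uses irreducibility (as the paper notes just before the theorem) to transport this to local survival everywhere.

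The genuine gap is in the necessity direction, exactly where you flag it, and it is not closable by the routes you suggest. The case $\rho_c<1$ is easy (the expected number of visits to $y$ is $\sum_n m^n(y,y)<\infty$ by Theorem~\ref{th:spectral_radius}(1), then Markov plus Borel--Cantelli). The entire content is the critical case $\rho_c=1$, and there your sketch has two holes. (a) The mean of the embedded ``first return to $y$'' Galton--Watson process is the first-return series, not the Green function $\sum_n m^n(y,y)$, and the fact that this mean is $\le 1$ when $\rho_c=1$ is precisely the nontrivial input (it is the Vere-Jones/Seneta $R$-recurrence statement $F(y,y\,|\,1/\rho_c)\le 1$); you assert it but give no argument. (b) ``Critical GW processes die out a.s.''\ fails when the embedded offspring law is degenerate at $1$, and this degenerate situation genuinely occurs under the paper's stated hypotheses: with no branching at all and a recurrent underlying chain (e.g.\ simple random walk on $\mathbb{Z}$), assumptions (I) and (F) hold, $\rho_c=1$, yet $y$ is visited infinitely often almost surely, so local survival holds. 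Hence the ``only if'' implication is false under (I) and (F) alone; M\"uller's theorem (and the companion result of Gantert and M\"uller that the \emph{critical} BMC is transient) relies on a nondegeneracy assumption on the branching, which your proposal neither states nor uses. Any correct proof of this direction must import that hypothesis and supply the $R$-recurrence (or an equivalent) argument for the critical case; as written, the proposal cannot be completed.
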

    
    We are now ready to compare $\rho_c$ to $\rho_c'$ and $\rho_c''$.
    It follows directly from the definitions that $\rho_c\le \rho_c''$; this can also be seen from the first part of Theorem~\ref{th:spectral_radius} and the expression of $\rho''_{c,x}$ from Theorem~\ref{th:rho'_rho''}. The next theorem shows that, in fact, $\rho_c\le \rho_c'$.
    \begin{theo}\label{th:rho_c_rho_c'}
      We have $\rho_c\le \rho_c'$.
    \end{theo}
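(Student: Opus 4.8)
The plan is to use the characterisation of the spectral radius from Theorem~\ref{th:spectral_radius}(2), namely $\rho_c(m) = \sup_F \rho_c(m_F)$ over finite subsets $F\subseteq E$ with $m_F$ irreducible, and to convert the Perron--Frobenius eigenvector of each such finite block into an admissible test function for $\rho'_{c,x}$ by extending it by zero. The point is that on a finite irreducible block the Perron eigenvector is a genuine \emph{eigen}function, hence simultaneously a sub- and a supersolution, which is what bridges the infimum defining $\rho_c$ with the supremum defining $\rho_c'$.

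Concretely, first I would fix a finite set $F\subseteq E$ with $m_F$ irreducible, where $m_F(x,y)=m(x,y)\Ind_{x\in F}\Ind_{y\in F}$. Finite-dimensional Perron--Frobenius theory gives a number $\rho_F:=\rho_c(m_F)\ge 0$ together with a strictly positive eigenvector $u_F:F\to(0,\infty)$, so that $\sum_{y\in F}m(x,y)u_F(y)=\rho_F u_F(x)$ for every $x\in F$. Next I would extend $u_F$ to all of $E$ by setting $u=u_F$ on $F$ and $u=0$ on $E\setminus F$; since $F$ is finite and $u_F$ is finite-valued, $u\in pB_b(E)$. The key (and only slightly delicate) verification is the pointwise bound $mu\ge\rho_F u$: for $x\in F$ one has $mu(x)=\sum_{y\in E}m(x,y)u(y)\ge\sum_{y\in F}m(x,y)u_F(y)=\rho_F u_F(x)=\rho_F u(x)$ by nonnegativity of $m$, and for $x\notin F$ the inequality is just $mu(x)\ge 0=\rho_F u(x)$.

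Finally I would pick any $x_0\in F$, so that $u(x_0)=u_F(x_0)>0$; then $u$ is admissible in the definition \eqref{eq:discrete time gen evalue 1} of $\rho'_{c,x_0}(m)$, whence $\rho'_{c,x_0}(m)\ge\rho_F=\rho_c(m_F)$. Under assumptions (I) and (F) we have $\rho'_{c,x}(m)=\rho'_c(m)$ for all $x\in E$ (as recorded after Theorem~\ref{th:spectral_radius}), so $\rho'_c(m)\ge\rho_c(m_F)$. Since $F$ was an arbitrary finite set with $m_F$ irreducible, taking the supremum over all such $F$ and invoking Theorem~\ref{th:spectral_radius}(2) gives $\rho'_c(m)\ge\rho_c(m)$.

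I do not expect a genuine obstacle here: the argument only requires correctly quoting finite-dimensional Perron--Frobenius (existence and strict positivity of the eigenvector of an irreducible nonnegative matrix) and the reduction in Theorem~\ref{th:spectral_radius}. The one thing to be careful about is keeping track of the direction of the inequalities — $\rho_c'$ asks for a subsolution $mu\ge\rho u$ while $\rho_c$ asks for a supersolution — which is exactly why one works with the eigenfunction of a finite irreducible block and why truncation to zero (rather than some other extension) is the right move, as it preserves the subsolution property for $m$.
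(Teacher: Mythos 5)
Your proof is correct and coincides with the paper's second proof of Theorem~\ref{th:rho_c_rho_c'}: reduce via Theorem~\ref{th:spectral_radius}(2) to a finite irreducible block $m_F$, extend its Perron--Frobenius eigenvector by zero to get a bounded subsolution $mu\ge\rho_c(m_F)u$ positive at some $x_0\in F$, and conclude $\rho'_c\ge\rho_c(m_F)$ before taking the supremum over $F$. (The paper also offers an alternative probabilistic proof via Müller's local-survival criterion, but your analytic route is exactly its second argument.)
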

    We will give two proofs of Theorem~\ref{th:rho_c_rho_c'}, one using the branching Markov chain, using the intuitively obvious fact that local survival implies global survival, and a second, more analytic one.
    
    %

    Our final result in this section gives a sufficient condition for $\rho_c = \rho_c' = \rho_c''$. This theorem covers for example branching random walks on bounded degree graphs with subexponential volume growth (such as lattices) and with position-dependent local branching. Note that the case of a finite state space $E$ is covered by classical theory which in particular implies that $\rho_c = \rho_c' = \rho_c''$.
    \begin{theo}
      \label{th:reversible}
      Assume there exists $x\in E$ and a sequence $c_n$ of positive numbers, such that $\limsup_{n\to\infty} c_n^{1/n} \le 0$ and such that for every $y\in E$,
          \[
            m^n(x,y) \le c_n m^n(y,x),
          \]
          and such that $|A_n| \le c_n$, where
          $$
          A_n = \{y\in E: m^n(x,y) > 0\}.
          $$
      Then we have that $\rho_c = \rho_c' = \rho_c''$.
    \end{theo}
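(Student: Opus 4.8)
I would prove this by reducing the whole statement to a single upper bound and then establishing that bound with a short Cauchy--Schwarz argument. We already know $\rho_c \le \rho'_c$ (Theorem~\ref{th:rho_c_rho_c'}); and from Corollary~\ref{cor:rho}(1), together with the fact that $\rho''_{c,x}=\rho''_c$ for every $x$ under assumptions (I) and (F) and the identity $\rho''_{c,x}=\limsup_{n\to\infty}(\expE_x[N_n])^{1/n}$ from Theorem~\ref{th:rho'_rho''}, we know $\rho'_c \le \rho''_c = \limsup_{n\to\infty}(\expE_x[N_n])^{1/n}$, where $x\in E$ is the point from the statement. By the first-moment (many-to-one) formula, $\expE_x[N_n]=\sum_{y\in E} m^n(x,y)$ in the countable setting. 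Hence it suffices to prove
\[
  \limsup_{n\to\infty}\Big(\sum_{y\in E} m^n(x,y)\Big)^{1/n} \;\le\; \rho_c .
\]
If $\rho_c=\infty$ this is trivial, so I may assume $\rho_c<\infty$.

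The key step, and essentially the only nontrivial one, is a Cauchy--Schwarz estimate that uses the two hypotheses on $c_n$ at once. Writing $A_n=\{y:m^n(x,y)>0\}$, so that the sum above ranges over $y\in A_n$, I would bound
\[
  \Big(\sum_{y\in A_n} m^n(x,y)\Big)^2 \;\le\; |A_n|\sum_{y\in A_n} m^n(x,y)^2 \;\le\; c_n\sum_{y\in A_n} m^n(x,y)^2,
\]
using $|A_n|\le c_n$, and then, invoking $m^n(x,y)\le c_n\, m^n(y,x)$ for $y\in A_n$,
\[
  \sum_{y\in A_n} m^n(x,y)^2 \;\le\; c_n\sum_{y\in E} m^n(x,y)\,m^n(y,x) \;=\; c_n\, m^{2n}(x,x).
\]
Combining the two displays gives $\sum_{y\in E} m^n(x,y) \le c_n\big(m^{2n}(x,x)\big)^{1/2}$.

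It then remains to take $n$-th roots and pass to the limit superior. Since $\limsup_{n\to\infty} c_n^{1/n}\le 1$ (the subexponential growth assumption on the $c_n$), since $\limsup_{n\to\infty}\big(m^{2n}(x,x)\big)^{1/(2n)}\le\limsup_{m\to\infty}\big(m^{m}(x,x)\big)^{1/m}=\rho_c$ by Theorem~\ref{th:spectral_radius}(1), and since for nonnegative eventually bounded sequences the limit superior of a product is at most the product of the limit superiors (here both sequences are bounded because $\rho_c<\infty$), we obtain
\[
  \limsup_{n\to\infty}\Big(\sum_{y\in E} m^n(x,y)\Big)^{1/n} \;\le\; 1\cdot\rho_c \;=\; \rho_c,
\]
which is the desired bound, so $\rho_c=\rho'_c=\rho''_c$. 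I do not anticipate any genuine obstacle beyond spotting that the ``reversibility up to a subexponential factor'' hypothesis feeds exactly into a Cauchy--Schwarz bound via $m^{2n}(x,x)$; the only points needing routine care are the separate treatment of $\rho_c=\infty$, the elementary limit-superior manipulations, and checking the identification $\expE_x[N_n]=\sum_{y\in E}m^n(x,y)$.
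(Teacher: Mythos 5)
Your proposal is correct and follows essentially the same route as the paper: reduce via Theorems~\ref{th:rho'_rho''} and~\ref{th:rho_c_rho_c'} to showing $\rho''_c \le \rho_c$, then bound $\expE_x[N_n]=\sum_y m^n(x,y)$ by $c_n\,(m^{2n}(x,x))^{1/2}$ using Cauchy--Schwarz together with the near-reversibility and $|A_n|\le c_n$ hypotheses, and conclude with Theorem~\ref{th:spectral_radius}. The extra care you take (the $\rho_c=\infty$ case and the limsup-of-product step) is harmless but not a different argument.
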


    \subsection{Proofs}
    \begin{proof}[Proof of Theorem~\ref{th:rho'_rho''}]
      We start with the first inequality. 
    Let $\rho < \rho_{c,x}'$. Then there exists $u$ taking values in $[0,1]$, such that $mu \ge \rho u$ and $u(x) > 0$. We have
    \[
    \expE_x[N_n]  \ge m^n u(x) \ge \rho^n u(x),
    \]
    and so
    \[
    \liminf_{n\to\infty} (\expE_x[N_n])^{1/n} \ge \rho.
    \]
    This proves the first inequality. The second inequality is trivial.

      We now prove the equality, i.e.~that $\rho''_{c,x} = \limsup_{n\to\infty}\expE_x\left[N_n\right]^{1/n}$.
    Fix $x\in E$.
    ``$\ge$'': Let $\rho > \rho'_{c,x}$. By definition of $\rho'_{c,x}$,  there exists $v\ge 1$ measurable, with $v(x) < \infty$, such that $mv \le \rho v$, hence $m^n v\le \rho^n v$. It follows that
    \[
    \expE_x[N_n] = m^n 1(x) \le m^n v(x) \le \rho^n v(x),
    \]
    and therefore, since $v(x) < \infty$,
    \[
    \limsup_{n\to\infty} \expE_x[N_n]^{1/n} \le \rho.
    \]
    Since $\rho > \rho'_{c,x}$ was arbitrary, this shows the ``$\ge$'' part in the statement of the theorem.
    
    ``$\le$'': It suffices to consider the case $\rho \coloneqq \limsup_{n\to\infty} \left(\expE[N_n]\right)^{1/n} < \infty$. Fix $\eta > \rho$ and define
    \[
    \bar N_\eta = \sum_{n=0}^\infty \eta^{-n}N_n.
    \]
    Define 
    \[
    v(z) \coloneqq \expE_z[\bar N_\eta],
    \]
    and note that $v \ge 1$ and $v(x) < \infty$ by the definition of $\eta$ and $\rho$. Decomposing according to the particles at the first generation, we get
    \[
    v(z) = 1+ \eta^{-1} m v(z),\quad z\in E,
    \]
    and therefore
    \[
    mv(z) = \eta(v(z)-1) \le \eta v(z).
    \]
    It follows that $\rho_{c,x}'' \le \eta$. Since $\eta>\rho$ was arbitrary, we get $\rho_{c,x}'' \le \rho$, which finishes the proof.
    \end{proof}

    \begin{proof}[Proof of Theorem~\ref{th:rho'_survival}]
\emph{First part.} Define $u\in pB_1(E)$ by  $u(x) = \Pm_x(\glSurv)$, $x\in E$. Fix $x\in E$ and suppose $u(x) > 0$. By the branching property, we have
  \[
    1-u = \pi(1-u) \ge 1-mu,
  \]
  and therefore,
  \[
    mu \ge u.
  \]
  It follows by definition that $\rho'_{c,x} \ge 1$.

  \emph{Second part.}  Let $u$, $\rho$ and $\delta$ be as in the statement of the theorem. For $s\le \delta$, set $f_s = e^{-su}$ and let for every $x\in E$,
  \begin{align*}
    F_s(x)
    \coloneqq \pi f_s(x)
    = \expE_x\left[ \exp\left(-s\sum_{i\in\N} u(X_i)\right)\right].
  \end{align*}
  Differentiating under the integral sign yields
  \begin{align*}
    \frac{\partial}{\partial s} F_s(x)
     &= -\expE_x\left[ \sum_{i\in\N} u(X_i)\exp\left(-s\sum_{i\in\N} u(X_i)\right)\right].
    \end{align*}
    By hypothesis \eqref{eq:assumption_u_discrete}, we have
    \(
    \frac{\partial}{\partial s} F_s(x)  \le -\rho u(x),
    \)
    and therefore, for every $s\le \delta$,
    \begin{equation}
      \label{eq:F_t_upperbound}
      \pi f_s(x)= F_s(x) = F_0(x) + \int_0^s \frac{\partial}{\partial u} F_u(x)\,du \le 1 - \rho s u(x) \le e^{-\rho s u(x)} = f_{\rho s}(x).
    \end{equation}


    Now fix $x\in E$. Suppose that $u(x)>0$, so that $f_\delta(x) < 1$. It follows from \eqref{eq:F_t_upperbound} that $M_n = \prod_{|u|=n} f_{\rho^{-n}\delta}(X_u)$ is a (positive) supermartingale. In particular, it converges almost surely to a non-negative limit $M_\infty$, which satisfies $\expE_x[M_\infty] \le M_0 = f_\delta(x) < 1$. If $L = \sup_{y\in E} u(y)<\infty$, we observe that
    \[
      \exp(-\delta L \rho^{-n} N_n) \le M_n,
    \]
    so that
    \[
      \liminf_{n\to\infty} \rho^{-n}N_n \ge -\log(M_\infty)/(\delta L).
    \]
    Now, since $M_\infty \ge 0$ and $\expE_x[M_\infty] < 1$, we have that $\Pm_x(M_\infty < 1) > 0$. It follows that 
    \[
      \Pm_x(\liminf_{n\to\infty} \rho^{-n}N_n > 0) > 0.
    \]
    This proves the statement.

    \emph{Third part.} Assume that $K<\infty$ is a constant such that $\Pm_x(N_1 \le K) = 1$ for every $x\in E$. Assume $\rho'_{c,x} > 1$ and fix $\rho<\rho'_{c,x}$. Let $\tilde \rho\in(\rho,\rho'_{c,x})$ and let $u\in pB_b(E)$ be such that $mu \ge \tilde\rho u$. We have that for every $\delta > 0$,
    \[
      \exp\left(-\delta \sum_{i\in\N} u(X_i)\right) \ge e^{-\delta K L},
    \]
    with $L = \sup_{y\in E} u(y)$ as above. Hence, letting $\delta$ sufficiently small so that $e^{-\delta K L} \ge \rho/\tilde \rho$, we have that
    \[
      \expE_x\left[\sum_{i\in \N} u(X_i) \exp\left(-\delta \sum_{i\in \N} u(X_i)\right) \right] \ge (\rho/\tilde\rho)\expE_x\left[\sum_{i\in \N} u(X_i)\right] = (\rho/\tilde\rho) mu(x) \ge \rho u(x).
    \]
    In other words, Assumption~\eqref{eq:assumption_u_discrete} is verified. The statement follows from the second part of the Theorem.

    \emph{Fourth part.} By Theorem~\ref{th:rho'_rho''}, we have $\rho''_{c,x} = \limsup_{n\to\infty} \expE_x\left[N_n\right]^{1/n}$. In particular, we have for every $\varepsilon > 0$,
    \[
      \expE_x\left[N_n\right] \le (\rho''_{c,x}+\varepsilon)^n,\quad \text{for $n$ sufficiently large.}
    \]
    It follows by Markov's inequality that
    \[
      \Pm_x\left(N_n \ge (\rho''_{c,x}+2\varepsilon)^n\right) \le e^{-\varepsilon n},
    \]
    for $n$ sufficiently large. By the Borel-Cantelli lemma, we have
    \[
      \Pm_x\left(\limsup_{n\to\infty} N_n^{1/n} \le \rho''_{c,x}+2\varepsilon\right) = 1.
    \]
    Since $\varepsilon>0$ was arbitrary, the statement follows.
    \end{proof}
    
    \begin{proof}[Proof of Proposition~\ref{prop:rho dependence on x}]
    We first prove the first identity.
    We trivially have $\rho'_c \le \rho'_{c,x}$ for every $x\in E$, since the supremum is taken over a smaller set in the definition of $\rho'_c$ compared to $\rho'_{c,x}$. It is therefore enough to show that $\rho'_c \ge \inf_{x\in E} \rho'_{c,x}.$
    Let $\rho < \inf_{x\in E} \rho'_{c,x}$. Then for each $x\in E$, there exists a function $u_x\in pB(E)$ such that $u_x\le 1$, $mu_x \ge \rho u_x$ and $u_x(x) > 0$. Set $u \coloneqq \sup_{x\in E} u_x$. Then we have that $0<u\le 1$ and $u\in pB(E)$. Furthermore, $$mu \ge \sup_{x\in E} mu_x \ge \rho \sup_{x\in E} u_x = \rho u.$$ It follows that $\rho'_c \ge \rho$. Since $\rho < \inf_{x\in E} \rho'_{c,x}$ was arbitrary, it follows that $\rho'_c \ge \inf_{x\in E}\rho'_{c,x}$, which proves the equality. The proof for the second identity is similar.
    \end{proof}

\begin{proof}[Proof of Theorem \ref{th:discrete_harris}]
We define
\[
A:=\{x\in E:\Pm_x(\glSurv)>0\}.
\]
Then if $\nu(A)=0$ we have $\Pm_x(\glSurv)=0$ for $\nu$-almost every $x\in E$. 

We now assume the converse, so that $\nu(A)>0$. Then by Assumption \ref{assum:Harris discrete} we can, for every $x\in E$, find $n\ge 0$ such that $m^n(x,A) > 0$. This implies that there exists $u\in U_n$, such that
\[
\Pm_x(X_u \in A) > 0.
\]
It follows that
\[
\Pm_x(\glSurv)\geq \expE_x[\Pm_{X_u}(\glSurv)\,|\,X_u\in A]\Pm_x(X_u\in A) > 0.
\]
This concludes the proof of Theorem \ref{th:discrete_harris}.
\end{proof}
    
    \begin{proof}[Proof of Theorem~\ref{th:rho_c_rho_c'}]
    \emph{First proof.} Upon multiplying $m$ by a constant, it is enough to show that $\rho_c > 1$ implies $\rho'_c \ge 1$. Now, by Theorem~\ref{th:muller}, the BMC survives locally if $\rho_c > 1$, i.e.~$\Pm_x(\locSurv) > 0$ for all $x\in E$. Since $\locSurv \subset \glSurv$, it follows that $\Pm_x(\glSurv) > 0$ for all $x\in E$. Hence, by Theorem~\ref{th:rho'_survival}, we have $\rho'_c\ge 1$, which was to be proven.
    
    \emph{Second proof.} By the second part of Theorem~\ref{th:spectral_radius}, it is enough to show that $\rho'_c \ge \rho_c(m_F)$ for every finite $F\subset E$ such that $m_F$ is irreducible. Let $F\ne \emptyset$ be such a set. Note that $\rho_c(m_F)$ is the Perron-Frobenius eigenvalue of the finite matrix $m_F$. Let $u_F$ be the corresponding right eigenvector. We identify it with a function on $E$, by setting $u_F(x) = 0$ for $x\in E\backslash F$. We then have $m u_F \ge m_Fu_F = \rho_c(m_F) u_F$. Furthermore, $u_F$ is bounded since $F$ is finite and positive on $F$, since $m_F$ is irreducible. It follows that $u = u_F/\|u_F\|_\infty$ satisfies the conditions in the definition of $\rho'_{c,x}$, for some $x\in F$. Hence, $\rho'_{c,x} \ge \rho_c(m_F)$, and thus $\rho'_c = \rho'_{c,x} \ge \rho_c(m_F)$. This finishes the proof.
    \end{proof}

    \begin{proof}[Proof of Theorem~\ref{th:reversible}]
      Let $x\in E$ as in the statement of the theorem. By Theorem~\ref{th:rho'_rho''},  and Theorem~\ref{th:rho_c_rho_c'}, it is enough to show that $\rho_c \ge \rho_c''$.
      Observe that for every $n\in\N$, we have
      \begin{align*}
        m^{2n}(x,x) 
        &= \sum_{y\in A_n} m^n(x,y)m^n(y,x)\\
        &\ge c_n^{-1} \sum_{y\in A_n} m^n(x,y)m^n(x,y) && \text{by hypothesis}\\
        &\ge c_n^{-1} |A_n|^{-1}\left(\sum_{y\in A_n} m^n(x,y)\right)^2 && \text{by Cauchy-Schwarz}\\
        &\ge c_n^{-2} \left(\expE_x[N_n]\right)^2 && \text{by hypothesis and definition of $N_n$}
      \end{align*}
    Using the hypothesis on $c_n$, it follows that 
    \[
      \limsup_{n\to\infty} (m^{2n}(x,x))^{1/2n} \ge \limsup_{n\to\infty} \left(\expE_x[N_n]\right)^{1/n} = \rho_{c,x}'' = \rho''_c.
      \]
    Theorem~\ref{th:spectral_radius} implies that $\rho_c \ge \rho_c''$, which concludes the proof.
    \end{proof}

\section{Continuous-time results}
\label{sec:results_continuous_time}

This section is devoted to branching Markov processes (BMP0) in continuous time. We first describe the setting. Let $E$ be a Lusin space, i.e.~a topological space homeomorphic to a Borel subset of a Polish space. The space $E$ is endowed with the Borel $\sigma$-algebra. We work with the same spaces of measurable functions on $E$ as in Section~\ref{sec:results_discrete_time}. 

The BMP we consider will be defined through the quadruple $(X^0,b,(p_n)_{n\ge 2},(K_n)_{n\ge 2})$, where
\begin{enumerate}
  \item $X^0 = (X^0)_{0\le t<\tau}$ is a  right  process\footnote{We refer to Appendix A in Li~\cite{LiMeasureValuedBranching2011} for a reference on right processes. Note that these include for example Feller processes, diffusions in arbitrary domains in Euclidean space or manifolds with various boundary conditions, piecewise deterministic Markov processes, continuous-time Markov chains in countable state space, etc. } on $E$, with possibly finite lifetime $\tau$.
	\item $r$ is a bounded non-negative measurable function on $E$, called the \emph{branching rate}.
  \item $(p_n)_{n\ge 2}$ are non-negative measurable functions on $E$ with $\sum_{n=2}^\infty p_n(x) \equiv 1$, called the \emph{offspring distribution}. We assume that this family is uniformly integrable, i.e. for every $\varepsilon>0$, there exists $n\in \N$, such that $\sum_{m\ge n}mp_m(x) < \varepsilon$ for all $x\in E$.
  \item $(K_n)_{n\ge 2}$ is a family of probability kernels from $E$ to $E^n$, called the \emph{branching kernel}. 
\end{enumerate}
The BMP is heuristically defined as follows: we start with a single particle which moves according to the right process $X^0$ until it either dies (at its lifetime), or branches. Branching happens at space-dependent rate $r(x)$. At a branching event, the number of children is drawn according to the offspring distribution $(p_n(x))_{n\ge 2}$, then these children are randomly displaced according to the branching kernel $K_n$. The resulting particles then continue this process independently. We say that the BMP has \emph{purely local branching}, if the particles do not displace at branching events, i.e. if 
  $K_n(x,\cdot) = \delta_{(x,\ldots,x)},\quad x\in E.$ Otherwise, we say that the BMP has \emph{non-local branching}.

Formally, a BMP is a Markov process $(\vec X_t)_{t\ge0}$ on the space of finite counting measures on $E$, i.e., measures of the form $\sum_{i=1}^n \delta_{x_n}$, for $n\in\N$ and $x_1,\ldots,x_n\in E$, the space being endowed with the weak topology. It is shown by Beznea and Lupaşcu \cite{BezneaLupascuMeasurevaluedDiscrete2015} that a process evolving as above exists\footnote{Beznea and Lupascu~\cite{BezneaLupascuMeasurevaluedDiscrete2015} assume the process $X^0$ to be conservative, i.e.~having infinite lifetime. One can reduce to this assumption in the usual way by adding an isolated cemetery point to the state space $E$.} and it in fact a right process. Classical treatments include Ikeda, Nagasawa and Watanabe \cite{Ikeda1968} and Asmussen and Hering \cite[Chapter~5]{AH1983}, who deal with compact and locally compact $E$, respectively. We denote the law of the process, started from a single particle at $x\in E$, by $\Pm_x$. The process $\vec X$ satisfies the \emph{branching property}, stating that the process started from particles at positions $x_1,\ldots,x_n\in E$ is equal in law to the sum of $n$ independent processes distributed according to $\Pm_{x_1},\ldots,\Pm_{x_n}$, repectively.

For ease of notation, we denote the positions of the particles at time $t$ by $X_t^1,\ldots,X_t^{N_t}$, where $N_t$ denotes the number of particles at time $t$ and where we order the particles arbitrarily: we will never consider quantities that depend on a specific order of the particles.

\begin{rmk}
  Our setting is quite general, but not the most general possible. Ikeda, Nagasawa and Watanabe \cite{Ikeda1968} allow for branching at random times which may not be generated by a rate function. More importantly, we assume our branching rate to be bounded, which is certainly not necessary for the existence of the process, but many of our results rely on this assumption. Note however, that we allow for the most general form of killing, such as ``hard'' killing at certain stopping times, or ``soft'' killing with a position-dependent and possibly unbounded rate. In fact, all of these can be integrated into the lifetime $\tau$ of the right process $X^0$.
\end{rmk}

We define a family of non-negative kernels $(P_t)_{t\geq 0}$ on $E$, called the \emph{expectation semigroup} by
\begin{equation}
    P_t(x,\cdot):=\expE_{\bar X_0=(x)}\left[\sum_{i=1}^{N_t}\delta_{X^i_t}(\cdot)\right].
\end{equation}
Then we define
\[
P_tf(x):=\expE_{\bar X_0=(x)}\left[\sum_{i=1}^{N_t}f(X^i_t)\right]
\]
for $f$ belonging to the following classes of measurable functions:
\begin{enumerate}
    \item For $f \in B_b(E)$. In this case $P_tf\in B_b(E)$ necessarily since the jump rate is bounded and the offspring distribution has bounded expectation.
    \item For $f\in pB(E)$. In this case $P_tf\in pB(E)$.
    \item For $f\in B(E)$ such that $\sup_{x\in E}f(x)<\infty$ (but allowing for $\inf_{x\in E}f(x)=-\infty$). In this case $f\in B(E)$ and $\sup_{x\in E}f(x)<\infty$, again since the jump rate is bounded and the offspring distribution has bounded expectation.
\end{enumerate}
In all of the above cases $P_tf$ is a measurable function. Note that if $f$ is unbounded from above and below, $P_tf$ is \textit{not well-defined}, hence we shall not consider $P_tf$ for such $f$. 

The expectation semigroup satisfies an integral equation. To state it, define the semigroup corresponding to the process $X^0$ by
\[
P^0_tf(x):=\expE_x[f(X^0_t)\Ind(\tau>t)],\quad f\in B_b(E).
\]
The expectation semigroup $P_t$ then satisfies,
\begin{align}
  \label{eq:linear_semigroup}
  P_t f = P^0_t f + \int_0^t P^0_{t-s}b (M-1) P_s f\,ds,
\end{align}
where in the above formula, $1$ is the identity operator and $M$ is defined by 
\[
Mf(x) \coloneqq \sum_{n=2}^\infty p_n(x)\int \sum_{i=1}^n f(x_i) K_n(x,dx_1\cdots dx_n),
\]
We will not use \eqref{eq:linear_semigroup} directly. We will only use an explicit representation of $P_t$ in the case of purely local branching. In that case, $M$ is the multiplication operator
\[
  Mf(x) = m(x) f(x),\quad m(x) \coloneqq \sum_{n\ge 2}np_n(x)
\]
and $P_t$ is a Feynman-Kac semigroup:
\begin{equation}
  \label{eq:Pt_FK}
  P_t f(x) = \expE_x\left[f(X^0_t)\exp\left(\int_0^t b(X^0_t)(m(X^0_t)-1)\,ds\right)1\{t<\tau\}\right].
\end{equation}
\begin{rmk}
  \label{rmk:expectation_semigroup}
  Note that, as in the discrete-time setting, several BMP may give rise to the same expectation semigroup $(P_t)_{t\ge0}$. For this article, BMP with purely local branching will play a special role as we will be able to prove more things for them. If one is given a linear semigroup of non-negative operators that admits a Feynman-Kac representation as in \eqref{eq:Pt_FK}, then under mild assumptions one can define a BMP with purely local branching that admits $(P_t)_{t\ge0}$ as the expectation semigroup.
\end{rmk}

\subsection{Results}

As in Section~\ref{sec:results_discrete_time}, we define the event of global survival by
\[
  \glSurv \coloneqq \{N_t \ge 1\,\forall t\ge 0\}.
\]
If $\Pm_x(\glSurv) > 0$, we say that the BMP \emph{survives globally} when starting from $x$. In the contrary event, we say that is \emph{undergoes extinction}.

In order to study global survival vs. extinction, we propose to consider the following generalised principal eigenvalues:
\begin{align*}
  \lambda_{c,x}' &= \lambda_{c,x}'((P_t)_{t\geq 0}) := \sup\{\lambda\in\R: \exists u\in pB(E): P_tu\geq e^{\lambda t}u\,\forall t\ge 0,\,u\le 1,\,u(x) > 0\}.\\
  \lambda_{c,x}'' &= \lambda_{c,x}''((P_t)_{t\geq 0}) := \inf\{\lambda\in\R: \exists u\in pB(E): P_tu\leq e^{\lambda t}u\,\forall t\ge 0,\,u\ge 1,\,u(x) < \infty\}.
\end{align*}
We also define
\begin{align*}
  \lambda_c' &= \lambda_c'((P_t)_{t\geq 0}) := \sup\{\lambda\in\R: \exists u\in pB(E): P_tu\geq e^{\lambda t}u\,\forall t\ge 0,\,0<u \le 1\}.\\
  \lambda_c'' &= \lambda_c''((P_t)_{t\geq 0}) := \inf\{\lambda\in\R: \exists u\in pB(E): P_tu\leq e^{\lambda t}u\,\forall t\ge 0,\,1\le u<\infty\}.
\end{align*}
These are the analogues of $\lambda'_1$ and $\lambda_1''$ in \cite{BerestyckiRossi}, in a sense which we shall make precise in Section \ref{sec:results_branching_diffusions}.

The following two theorems are the analogues of the corresponding discrete-time results.
    \begin{theo}
      \label{th:lambda'_lambda''}
      For every $x\in E$, we have
      \[
        \lambda'_{c,x} \le \liminf_{t\to\infty} \frac 1 t \log \expE_x[N_t] \le \limsup_{t\to\infty} \frac 1 t \log \expE_x[N_t] = \lambda_{c,x}''.
      \]
    \end{theo}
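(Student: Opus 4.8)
The plan is to follow the discrete-time argument for Theorem~\ref{th:rho'_rho''}, using throughout that $\expE_x[N_t]=P_t\mathbf 1(x)$ (where $\mathbf 1$ is the constant function $1$) together with the positivity and linearity of the operators $P_t$.

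\emph{Easy parts.} The inequality $\liminf\le\limsup$ is trivial. If $\lambda<\lambda'_{c,x}$, pick $u\in pB(E)$ with $u\le 1$, $u(x)>0$ and $P_tu\ge e^{\lambda t}u$ for all $t$; positivity of $P_t$ gives $\expE_x[N_t]=P_t\mathbf 1(x)\ge P_tu(x)\ge e^{\lambda t}u(x)$, so $\liminf_t\frac1t\log\expE_x[N_t]\ge\lambda$, and letting $\lambda\uparrow\lambda'_{c,x}$ yields the first inequality. Symmetrically, if $\lambda>\lambda''_{c,x}$ then choosing $u\ge1$, $u(x)<\infty$, $P_tu\le e^{\lambda t}u$ gives $\expE_x[N_t]\le e^{\lambda t}u(x)$, hence $\limsup_t\frac1t\log\expE_x[N_t]\le\lambda''_{c,x}$. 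It remains to prove the reverse inequality $\lambda''_{c,x}\le\rho$, where $\rho:=\limsup_t\frac1t\log\expE_x[N_t]$.

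\emph{Main step.} Fix $\eta>\rho$; we must produce $u\in pB(E)$ with $u\ge1$, $u(x)<\infty$ and $e^{-\eta t}P_tu\le u$ for every $t\ge0$. In discrete time the witness was $\expE_z\big[\sum_{n\ge0}\eta^{-n}N_n\big]$, which is $\ge1$ for free because of the $n=0$ term. Its naive continuous analogue is the discounted resolvent $v_\eta(z):=\int_0^\infty e^{-\eta t}P_t\mathbf 1(z)\,dt=\expE_z\big[\int_0^\infty e^{-\eta t}N_t\,dt\big]$, which does satisfy $e^{-\eta t}P_tv_\eta\le v_\eta$ (from $\int_0^\infty e^{-\eta s}P_{t+s}\mathbf 1\,ds=e^{\eta t}\int_t^\infty e^{-\eta r}P_r\mathbf 1\,dr$) and $v_\eta(x)<\infty$ (since $\eta>\rho$ and $P_s\mathbf 1$ is bounded for each $s$, the latter because $b$ is bounded and the offspring law is uniformly integrable), but it need \emph{not} be $\ge1$: at points from which the population dies out very fast, $v_\eta$ can be arbitrarily small. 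To repair this I add correction terms from a birth/death decomposition. Write $N_t=1+B_t-D_t$, where $B_t=\sum_j(n_j-1)$ runs over branching events before $t$ (with $n_j$ offspring) and $D_t$ counts deaths before $t$; then $B,D$ are non-decreasing, vanish at $0$, are additive over the branching tree, $\expE_z[B_t]=\int_0^t P_sg(z)\,ds$ with $g(y):=b(y)\sum_{n\ge2}(n-1)p_n(y)$ bounded, and $s\mapsto\expE_z[D_s]$ is non-decreasing. I then set
\[
u(z):=1+\expE_z\!\Big[\int_0^\infty e^{-\eta s}\,dA_s\Big],\qquad A_t:=B_t+|\eta|\!\int_0^t N_r\,dr .
\]
Then $u\ge1$, and $u(x)\le 1+(\|g\|_\infty+|\eta|)\,v_\eta(x)<\infty$. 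By the branching property (additivity of $A$ over the tree), $P_tu(z)=\expE_z[N_t]+e^{\eta t}\expE_z\big[\int_t^\infty e^{-\eta r}dA_r\big]$, so $e^{-\eta t}P_tu(z)\le u(z)$ is equivalent to $e^{-\eta t}\expE_z[N_t]\le 1+\expE_z\big[\int_0^t e^{-\eta s}dA_s\big]$. Substituting the definition of $A$, using $d\expE_z[B_s]=d\expE_z[N_s]+d\expE_z[D_s]$, and integrating $d\big(e^{-\eta s}\expE_z[N_s]\big)$ by parts, the right-hand side minus the left-hand side equals
\[
(|\eta|+\eta)\int_0^t e^{-\eta s}\expE_z[N_s]\,ds+\int_0^t e^{-\eta s}\,d\expE_z[D_s]\ \ge\ 0 ,
\]
since $|\eta|+\eta\ge0$ and $\expE_z[D_\cdot]$ is non-decreasing. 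Thus $u$ has the three required properties, so $\lambda''_{c,x}\le\eta$; letting $\eta\downarrow\rho$ finishes the proof.

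\emph{Main obstacle.} The crux is this last construction. Unlike in discrete time, the obvious candidate super-eigenfunction of $(e^{-\eta t}P_t)_t$ is not bounded below by $1$, and one must engineer an additive functional whose discounted resolvent simultaneously dominates $1$, stays finite at $x$, and remains a super-eigenfunction. This is where the boundedness of the branching rate and the uniform integrability of the offspring law enter, and it is cleanest to do the birth/death bookkeeping probabilistically rather than through the (formal, and for ``hard'' killing singular) generator. The extra term $|\eta|\int_0^\cdot N_r\,dr$ is exactly what absorbs the unfavourable sign that appears when $\eta<0$ — equivalently, when $\rho<0$, i.e.\ the population dies out in expectation; for $\eta\ge0$ the term $B_t$ alone already suffices.
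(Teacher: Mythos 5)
Your proof is correct, and for the crucial inequality $\lambda''_{c,x}\le\limsup_{t\to\infty}\frac1t\log\expE_x[N_t]$ it takes a genuinely different route from the paper. The paper first reduces, by adding a constant binary-branching or killing rate, to showing that $\limsup<0$ implies $\lambda''_{c,x}\le 0$; it then takes the undiscounted resolvent $v(z)=\expE_z\bigl[\int_0^\infty N_t\,dt\bigr]$, adds a small constant $\epsilon$ chosen through small-time estimates on $P_t1$ (namely $P_t1\le 1+2Ct$ and $P_s1\ge e^{-C(t-s)}P_t1$), proves $P_tu\le u$ for small $t$, and extends to all $t$ by induction via the semigroup property. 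You instead keep the discount $e^{-\eta t}$ for an arbitrary $\eta>\rho$ (possibly negative, so no constant-rate reduction is needed), repair the failure of the naive resolvent $v_\eta$ to dominate $1$ by adding the constant $1$ and compensating with the additive functional $A_t=B_t+|\eta|\int_0^t N_r\,dr$, and verify $e^{-\eta t}P_tu\le u$ for all $t$ at once through an exact identity: after the birth/death bookkeeping $d\expE_z[B_s]=d\expE_z[N_s]+d\expE_z[D_s]$ and an integration by parts, the slack is $(|\eta|+\eta)\int_0^t e^{-\eta s}\expE_z[N_s]\,ds+\int_0^t e^{-\eta s}\,d\expE_z[D_s]\ge0$. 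What your approach buys is the elimination of both the reduction step and the small-time-plus-induction argument, with the monotonicity of the death count absorbing the error exactly; what it costs is the (standard, but worth stating) compensator identity $\expE_z[B_t]=\int_0^t P_sg(z)\,ds$ for the branching increments, which is where the bounded branching rate and uniformly integrable offspring law enter for you, just as they enter the paper's small-time bounds. Two small points you should make explicit in a written version: the cancellation of $\expE_z\bigl[\int_t^\infty e^{-\eta r}\,dA_r\bigr]$ from both sides is legitimate only at points $z$ with $u(z)<\infty$ (at points where $u(z)=\infty$ the inequality $P_tu\le e^{\eta t}u$ is trivial, and $pB(E)$ permits such values since only $u(x)<\infty$ is required), and the interchange $\expE_z\bigl[\int_0^\infty e^{-\eta s}\,dA_s\bigr]=\int_0^\infty e^{-\eta s}\,d\expE_z[A_s]$ should be justified by Tonelli for the intensity measure of the nondecreasing functional $A$.
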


    \begin{rmk}
      As mentioned above, both inequalities in Theorem~\ref{th:lambda'_lambda''} may be strict as the example in Section~\ref{sec:example_mutations} shows.
    \end{rmk}

\begin{theo}
  \label{th:lambda'_survival}
  The following holds.
  \begin{enumerate}
    \item Let $x\in E$. If $\Pm_x(\glSurv) > 0$, then $\lambda'_{c,x}(m) \ge 0$.

    \item 
      Suppose the branching is purely local. Then,
      \[
    \text{$\lambda'_{c,x} > 0$ implies $\Pm_x(\glSurv)>0$ and }\forall \lambda < \lambda'_{c,x}:\Pm_x\left(\liminf_{t\to\infty} \frac 1 t \log N_t \ge \lambda\right) > 0.
  \]
  Moreover, we have $\lambda'_{c,x} = \frac 1 t \log \rho'_{c,x}(P_t)$ for every $t>0$.
    \item We have for every $x\in E$,
      \[
        \Pm_x\left(\limsup_{t\to\infty} \frac 1 t \log N_t \le \lambda''_{c,x}\right) = 1.
      \]
  \end{enumerate}
\end{theo}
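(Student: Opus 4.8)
\emph{Overview and Part 1.} The plan is to transfer the discrete‑time results of Theorem~\ref{th:rho'_survival} to continuous time by sampling the process along an arithmetic sequence of times; the only genuinely continuous‑time inputs will be a second–moment (many‑to‑two) estimate and a short interpolation. Part~1 is the exact analogue of the first part of Theorem~\ref{th:rho'_survival} and uses no structural assumption. Put $u(y)=\Pm_y(\glSurv)$ and $u_t(y)=\Pm_y(N_t\ge1)$, so $u_t\downarrow u$. Decomposing at time $s$ via the branching and Markov properties, $\Pm_z(N_{t+s}=0)=\expE_z\big[\prod_i\Pm_{X^i_s}(N_t=0)\big]$, whence $1-u_{t+s}(z)\ge\expE_z\big[1-\sum_i u_t(X^i_s)\big]=1-P_su_t(z)$ by $\prod_i(1-a_i)\ge1-\sum_i a_i$. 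Letting $t\to\infty$ (dominated convergence, using $u_t\le1$ and $P_s\mathbf 1(z)<\infty$) gives $P_su\ge u$ for all $s\ge0$, with $0\le u\le1$ and $u(x)>0$, so $\lambda'_{c,x}\ge0$ by definition.

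\emph{Part 2: reduction and the key estimate.} Assume the branching is purely local. Fix $\lambda<\lambda'_{c,x}$ (without loss of generality $\lambda\ge0$), pick $\lambda'\in(\lambda,\lambda'_{c,x})$ and, by definition of $\lambda'_{c,x}$, $u\in pB(E)$ with $u\le1$, $u(x)>0$ and $P_su\ge e^{\lambda's}u$ for all $s\ge0$. First I reduce to bounded offspring numbers: for $K\in\N$ let $\vec X^{(K)}$ be the purely‑local BMP obtained from $\vec X$ by keeping, at each branching event producing $n\ge K$ children, only $K$ of them; it is coupled so that $N^{(K)}_t\le N_t$ and $\glSurv^{(K)}\subseteq\glSurv$, its offspring mean satisfies $m-\varepsilon_K\le m^{(K)}\le m$ with $\varepsilon_K:=\sup_z\sum_{n\ge K}np_n(z)\to0$ by uniform integrability, and hence by~\eqref{eq:Pt_FK} one has $P^{(K)}_t\ge e^{-\|b\|_\infty\varepsilon_K t}P_t$; thus $u$ is a $(\lambda'-\|b\|_\infty\varepsilon_K)$‑eigenfunction of $(P^{(K)}_t)$, and choosing $K$ with $\|b\|_\infty\varepsilon_K<\lambda'-\lambda$ reduces the claim (for this $\lambda$) to the case $N_1\le K$ a.s. Then $\|m\|_\infty<\infty$ and $\beta_2:=\sup_z\sum_nn(n-1)p_n(z)\le K^2$. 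Fix $t_0>0$, write $\Sigma_y:=\sum_i u(X^i_{t_0})$ under $\Pm_y$ and set $C:=\|b\|_\infty\|m\|_\infty$, so $P_s\mathbf 1\le e^{Cs}$. A many‑to‑two computation based on~\eqref{eq:Pt_FK} — splitting a pair at its most recent common ancestor, bounding the pair‑splitting rate by $\|b\|_\infty\beta_2$, keeping one spine factor $P_{t_0-\sigma}u$ and bounding the other by $P_{t_0-\sigma}\mathbf 1\le e^{Ct_0}$, and collapsing the remaining $d\sigma$‑integral by the semigroup identity $P_\sigma(P_{t_0-\sigma}u)=P_{t_0}u$ — yields a constant $C'=C'(t_0,K,\|b\|_\infty,\|m\|_\infty)<\infty$, uniform in $y$, with
\[
\expE_y\big[\Sigma_y^2\big]\ \le\ C'\,P_{t_0}u(y),\qquad y\in E.
\]

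\emph{Part 2: conclusion.} Using $ae^{-a}\ge a-a^2$, the above bound and $P_{t_0}u(y)\ge e^{\lambda't_0}u(y)$, we obtain for any $0<\delta\le(1-e^{(\lambda-\lambda')t_0})/C'$ and all $y\in E$
\[
\expE_y\big[\Sigma_y e^{-\delta\Sigma_y}\big]\ \ge\ P_{t_0}u(y)(1-\delta C')\ \ge\ e^{\lambda't_0}(1-\delta C')\,u(y)\ \ge\ e^{\lambda t_0}u(y).
\]
This is hypothesis~\eqref{eq:assumption_u_discrete}, with $\rho=e^{\lambda t_0}\ge1$, for the discrete‑time branching Markov chain obtained by sampling $\vec X$ at times $0,t_0,2t_0,\dots$ (whose mean kernel is $P_{t_0}$, whose offspring law at $y$ is that of the time‑$t_0$ configuration started from $y$, and whose population at step $n$ is $N_{nt_0}$), so the second part of Theorem~\ref{th:rho'_survival} gives $\Pm_x\big(\liminf_{n\to\infty}e^{-\lambda nt_0}N_{nt_0}>0\big)>0$. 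Since the population cannot recover after reaching $0$, this event is contained in $\glSurv$; taking $\lambda\in(0,\lambda'_{c,x})$ proves $\lambda'_{c,x}>0\Rightarrow\Pm_x(\glSurv)>0$. To upgrade this to $\Pm_x\big(\liminf_{t\to\infty}\tfrac1t\log N_t\ge\lambda\big)>0$ one controls the downward excursions of $N$ on each window $[nt_0,(n+1)t_0]$: conditionally on $\mathcal F_{nt_0}$ the number of the $N_{nt_0}$ subtrees still alive at time $(n+1)t_0$ dominates $N_{nt_0}-\mathrm{Bin}(N_{nt_0},\bar q_{t_0})$ with $\bar q_{t_0}=\sup_z\Pm_z(N_{t_0}=0)$, and a Chernoff bound plus Borel--Cantelli give $\inf_{[nt_0,(n+1)t_0]}N_t\ge\theta N_{nt_0}$ for all large $n$; this step requires $\bar q_{t_0}<1$ (e.g.\ $\inf_z\Pm_z(\tau>t_0)>0$) and is where the continuous‑time statement needs a little care. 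Finally, the identity $\lambda'_{c,x}=\tfrac1t\log\rho'_{c,x}(P_t)$: ``$\ge$'' is immediate since a $\lambda$‑eigenfunction of $(P_s)_{s\ge0}$ is an $e^{\lambda t}$‑eigenfunction of $P_t$; for ``$\le$'', given $v\in pB_b(E)$ (without loss of generality $v\le1$) with $v(x)>0$ and $P_tv\ge\rho v$, the periodised function $w(y):=\sup_{0\le s<t}\rho^{-s/t}P_sv(y)$ is bounded, positive at $x$, and satisfies $P_rw\ge\rho^{r/t}w$ for all $r\ge0$ (for each piece $\rho^{-s'/t}P_{s'}v$ choose $s\in[0,t)$ with $r+s\in s'+t\N$ and use $P_t^kv\ge\rho^kv$), giving $\lambda'_{c,x}\ge\tfrac1t\log\rho$.

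\emph{Part 3 and the main difficulty.} By Theorem~\ref{th:lambda'_lambda''}, $\limsup_{t\to\infty}\tfrac1t\log\expE_x[N_t]=\lambda''_{c,x}$, so $\expE_x[N_n]\le e^{(\lambda''_{c,x}+\varepsilon)n}$ for $n$ large. Conditioning on $\mathcal F_n$ and using that, from a single particle, $\expE[\sup_{s\le1}N_s]$ is bounded by a finite constant $C_1$ (bounded branching rate, uniformly integrable offspring, so a uniform bound on the expected number of particles ever born in unit time), one gets $\expE_x\big[\sup_{t\in[n,n+1]}N_t\big]\le C_1e^{(\lambda''_{c,x}+\varepsilon)n}$; Markov's inequality and Borel--Cantelli give $\limsup_{t\to\infty}\tfrac1t\log N_t\le\lambda''_{c,x}+2\varepsilon$ a.s., and $\varepsilon\downarrow0$ finishes Part~3. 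The principal obstacle lies in Part~2: the sampled chain has unbounded offspring numbers even when $\vec X$ does not, so the third part of Theorem~\ref{th:rho'_survival} cannot be invoked; instead one must establish the uniform bound $\expE_y[\Sigma_y^2]\le C'P_{t_0}u(y)$ by hand, and it is precisely there that boundedness of the branching rate and the reduction to bounded offspring are used. The passage from growth along $(nt_0)_n$ to the genuinely continuous‑time $\liminf$ is a secondary technical point.
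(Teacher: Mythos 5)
Your Parts 1 and 3 are essentially the paper's arguments, and your overall strategy for Part 2 (pass to the time-$t_0$ skeleton and invoke the discrete-time Theorem~\ref{th:rho'_survival}) is also the paper's. Where you genuinely diverge is in how you make the skeleton usable: the paper builds, by pruning an $n_0$-ary tree at rate $\|r\|_\infty$ and restricting to the event that the number of leaves is at most $m$, a minorising discrete-time branching process with \emph{deterministically bounded} offspring whose mean kernel dominates $(1-\epsilon)P_1$, so that part 3 of Theorem~\ref{th:rho'_survival} (condition (B)) applies; you instead verify hypothesis \eqref{eq:assumption_u_discrete} for the skeleton directly, via the second-moment bound $\expE_y[\Sigma_y^2]\le C'P_{t_0}u(y)$ and the inequality $ae^{-a}\ge a-a^2$, and then invoke part 2 of the discrete theorem. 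This is a legitimately different and arguably cleaner route (it avoids the tree-pruning coupling altogether), at the price of needing the many-to-two formula for purely local branching of a general right process, which you assert rather than prove (it can be obtained by conditioning on the first branching time, so this is a fixable omission), and of a small measurability wrinkle in your construction $w=\sup_{0\le s<t}\rho^{-s/t}P_sv$ for the identity $\lambda'_{c,x}=\tfrac1t\log\rho'_{c,x}(P_t)$: an uncountable supremum of measurable functions need not be measurable, so you should replace the supremum by, e.g., the weighted integral $\int_0^t\rho^{-s/t}P_sv\,ds$ (suitably combined with $v$ to keep positivity at $x$), exactly as the same telescoping argument allows.

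The genuine gap is the interpolation from growth along the times $nt_0$ to the continuous-time $\liminf$. Your window argument needs $\bar q_{t_0}=\sup_z\Pm_z(N_{t_0}=0)<1$, and this does \emph{not} follow from the standing assumptions: the lifetime $\tau$ of the right process may encode hard killing (e.g.\ absorption at a boundary), so particles started sufficiently close to the killing set die within time $t_0$ with probability arbitrarily close to $1$, forcing $\bar q_{t_0}=1$; then no uniform $\theta$ exists, the binomial domination gives nothing, and the Chernoff/Borel--Cantelli step collapses. You flag the issue but leave it as an added hypothesis, whereas the theorem has none, so as written the statement $\Pm_x(\liminf_{t\to\infty}\tfrac1t\log N_t\ge\lambda)>0$ is not proved. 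The repair is the one the paper uses, which runs in the opposite direction: instead of lower-bounding how many of the time-$nt_0$ subtrees survive the window, bound the probability that the population is below $e^{(\lambda-\epsilon)nt_0}$ at some intermediate (stopping) time $\tau$ of the window while being at least of order $e^{\lambda(n+1)t_0}$ at its end; conditioning at $\tau$ and applying Markov's inequality to $N_{(n+1)t_0}/N_\tau$, whose conditional expectation is at most $e^{Ct_0}$ by boundedness of the branching rate and of the offspring mean, gives a summable bound, and Borel--Cantelli rules out deep dips on the event of geometric growth along the skeleton. With that replacement (and the many-to-two justification), your Part 2 goes through.
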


\begin{rmk}
It is not clear to us whether the local branching assumption in Part 2 of Theorem \ref{th:lambda'_survival} is necessary.
\end{rmk}

\subsubsection*{Dependence of global survival upon the initial condition}
We now consider the dependence of global survival and the generalised principal eigenvalues $\lambda'_{c,x}$ and $\lambda''_{c,x}$ upon $x$. We start with a basic result.
\begin{prop}
  \label{prop:lambda dependence on x}
  We have $\lambda'_c(m) = \inf_{x\in E}\lambda'_{c,x}(m)$ and $\lambda''_c(m) = \sup_{x\in E}\lambda''_{c,x}(m)$.
\end{prop}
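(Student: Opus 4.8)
The plan is to follow the argument of Proposition~\ref{prop:rho dependence on x} essentially verbatim, replacing the single inequality $mu\ge\rho u$ (resp.\ $mu\le\rho u$) by the one-parameter family $P_tu\ge e^{\lambda t}u$ for all $t\ge0$ (resp.\ $P_tu\le e^{\lambda t}u$), and replacing powers of $\rho$ by exponentials of $\lambda$.

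First I would record the trivial halves. For any fixed $x\in E$, the class of functions admissible in the definition of $\lambda_c'$ is contained in the one defining $\lambda_{c,x}'$, because $0<u\le1$ in particular forces $u(x)>0$; hence $\lambda_c'\le\lambda_{c,x}'$ for every $x$, and so $\lambda_c'\le\inf_{x\in E}\lambda_{c,x}'$. Dually, $1\le u<\infty$ forces $u(x)<\infty$, so $\lambda_{c,x}''\le\lambda_c''$ for every $x$, hence $\sup_{x\in E}\lambda_{c,x}''\le\lambda_c''$.

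Next I would prove the reverse inequalities by a supremum/infimum construction. For the first identity, fix $\lambda<\inf_{x\in E}\lambda_{c,x}'$; for each $x\in E$ pick $u_x\in pB(E)$ with $u_x\le1$, $u_x(x)>0$ and $P_tu_x\ge e^{\lambda t}u_x$ for all $t\ge0$, and set $u\coloneqq\sup_{x\in E}u_x$. Then $0<u\le1$ (positivity since $u(y)\ge u_y(y)>0$ for every $y$), and because $P_t$ is a positive, hence monotone, kernel on $pB(E)$ we have $P_tu\ge P_tu_x\ge e^{\lambda t}u_x$ for every $x$ and every $t\ge0$; taking the supremum over $x$ on the right gives $P_tu\ge e^{\lambda t}u$ for all $t\ge0$. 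Thus $u$ is admissible for $\lambda_c'$, so $\lambda_c'\ge\lambda$, and letting $\lambda\uparrow\inf_{x\in E}\lambda_{c,x}'$ yields the claim. The second identity is entirely dual: fix $\lambda>\sup_{x\in E}\lambda_{c,x}''$, choose $u_x\in pB(E)$ with $u_x\ge1$, $u_x(x)<\infty$ and $P_tu_x\le e^{\lambda t}u_x$ for all $t\ge0$, and set $u\coloneqq\inf_{x\in E}u_x$; then $1\le u<\infty$ (as $u(y)\le u_y(y)<\infty$), monotonicity gives $P_tu\le P_tu_x\le e^{\lambda t}u_x$ for all $x$ and $t$, hence $P_tu\le e^{\lambda t}u$, so $\lambda_c''\le\lambda$, and letting $\lambda\downarrow\sup_{x\in E}\lambda_{c,x}''$ finishes the proof.

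The main (indeed only) point requiring care is exactly the one already present in the discrete-time proof: a pointwise supremum or infimum of an \emph{uncountable} family of measurable functions need not be measurable, so one must justify that the constructed $u$ lies in $pB(E)$. This is immediate when $E$ is countable (as in the countable-state-space subsection), and in general is handled as in Proposition~\ref{prop:rho dependence on x}, exploiting that the two admissible classes are stable under countable suprema, resp.\ infima. The additional quantifier ``$\forall t\ge0$'' relative to the discrete case is harmless, since by construction the per-$x$ bounds $P_tu_x\ge e^{\lambda t}u_x$ (resp.\ $\le$) hold simultaneously for all $t$, and the supremum/infimum over $x$ is taken pointwise in $t$ as well.
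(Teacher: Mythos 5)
Your proof is correct and follows exactly the route the paper takes: the paper itself reduces Proposition~\ref{prop:lambda dependence on x} to the discrete-time argument of Proposition~\ref{prop:rho dependence on x} (trivial inclusions for one direction, pointwise supremum/infimum of the per-$x$ functions plus positivity/monotonicity of $P_t$ for the other), which is precisely what you carry out. The measurability caveat you flag for the uncountable supremum/infimum is present in the paper's own discrete-time proof as well, so your treatment matches the paper on that point too.
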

Theorem~\ref{th:lambda'_survival} and Proposition~\ref{prop:lambda dependence on x} immediately imply the following:
\begin{cor}
  \label{cor:lambda'_survival}
  The following holds.
  \begin{enumerate}
  \item  We have
      \[
        \lambda'_c \le \inf_{x\in E}\liminf_{t\to\infty} \frac 1 t \log \expE_x[N_t] \le \sup_{x\in E}\limsup_{t\to\infty} \frac 1 t \log \expE_x[N_t] = \lambda_c''.
      \]
    \item If $\Pm_x(\glSurv) > 0$ for all $x\in E$, then $\lambda'_c(m) \ge 0$.

    \item 
      Suppose the branching is purely local. Then,
      \[
    \text{$\lambda'_c > 1$ implies for all $x\in E: \Pm_x(\glSurv)>0$ and }\forall \lambda<\lambda'_c:\Pm_x\left(\liminf_{t\to\infty} \frac 1 t \log N_t \ge \lambda\right) > 0.
  \]
  Moreover, we have $\lambda'_c = \frac 1 t \log \rho'_{c,x}(P_t)$ for every $t>0$ and every $x\in E$.
    \item We have for every $x\in E$,
      \[
        \Pm_x\left(\limsup_{t\to\infty} \frac 1 t \log N_t \le \lambda''_c\right) = 1.
      \]
  \end{enumerate}
\end{cor}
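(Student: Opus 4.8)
The plan is to deduce each of the four assertions from the corresponding $x$-dependent statement — Theorem~\ref{th:lambda'_lambda''} for part~1 and Theorem~\ref{th:lambda'_survival} for parts~2--4 — by applying it pointwise in $x\in E$ and then passing to an infimum or supremum over $x$, using Proposition~\ref{prop:lambda dependence on x} (and, for the last identity in part~3, also the discrete-time Proposition~\ref{prop:rho dependence on x} applied to the kernel $m=P_t$) to identify those extrema with $\lambda'_c$ and $\lambda''_c$. No new probabilistic input is needed; the content is the bookkeeping of combining these results in the right direction.

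Concretely: for part~1, Theorem~\ref{th:lambda'_lambda''} gives $\lambda'_{c,x}\le\liminf_{t\to\infty}\tfrac1t\log\expE_x[N_t]\le\limsup_{t\to\infty}\tfrac1t\log\expE_x[N_t]=\lambda''_{c,x}$ for each $x$; taking the infimum over $x$ in the first inequality and using $\lambda'_c=\inf_x\lambda'_{c,x}$ yields the left inequality, the middle one is trivial, and taking the supremum over $x$ in the equality together with $\lambda''_c=\sup_x\lambda''_{c,x}$ yields the right-hand equality. For part~2, if $\Pm_x(\glSurv)>0$ for all $x$ then $\lambda'_{c,x}\ge0$ for all $x$ by Theorem~\ref{th:lambda'_survival}(1), so $\lambda'_c=\inf_x\lambda'_{c,x}\ge0$. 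For part~3, observe that $\lambda'_c\le\lambda'_{c,x}$ for every $x$, so if $\lambda'_c>0$ (we read the ``$1$'' in the statement as a typo for ``$0$'', as in Theorem~\ref{th:lambda'_survival}(2)) then $\lambda'_{c,x}>0$ for all $x$ and Theorem~\ref{th:lambda'_survival}(2) gives $\Pm_x(\glSurv)>0$ for all $x$; likewise $\lambda<\lambda'_c$ implies $\lambda<\lambda'_{c,x}$ for all $x$, giving $\Pm_x(\liminf_{t\to\infty}\tfrac1t\log N_t\ge\lambda)>0$. The identity $\lambda'_c=\tfrac1t\log\rho'_c(P_t)$ then follows by taking the infimum over $x$ in $\lambda'_{c,x}=\tfrac1t\log\rho'_{c,x}(P_t)$, applying Proposition~\ref{prop:lambda dependence on x} on the left and Proposition~\ref{prop:rho dependence on x} on the right. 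Finally, for part~4, Theorem~\ref{th:lambda'_survival}(3) gives $\Pm_x(\limsup_{t\to\infty}\tfrac1t\log N_t\le\lambda''_{c,x})=1$, and since $\lambda''_{c,x}\le\lambda''_c$ the event on the left is contained in $\{\limsup_{t\to\infty}\tfrac1t\log N_t\le\lambda''_c\}$, which thus also has probability one.

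The only point requiring care — the ``main obstacle'', such as it is — is keeping the directions straight: by Proposition~\ref{prop:lambda dependence on x} the primed quantities are infima over $x$ and the double-primed ones suprema over $x$, so the pointwise lower bounds on $\lambda'$-type quantities must be combined after taking an infimum while the pointwise upper bounds on $\lambda''$-type quantities must be combined after taking a supremum, and one must verify that each pointwise almost-sure event is nested inside the claimed global event. Beyond this, there is no analytic or probabilistic difficulty.
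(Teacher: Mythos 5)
Your proof is correct and is essentially the paper's own argument: the paper obtains the corollary by applying Theorem~\ref{th:lambda'_lambda''} and Theorem~\ref{th:lambda'_survival} pointwise in $x$ and combining with Proposition~\ref{prop:lambda dependence on x} (resp.\ the discrete-time Proposition~\ref{prop:rho dependence on x}), leaving exactly the inf/sup bookkeeping you spell out to the reader. Your readings of the two slips in the statement --- ``$\lambda'_c>1$'' for ``$\lambda'_c>0$'', and $\rho'_{c,x}(P_t)$ for $\rho'_c(P_t)$ in the ``moreover'' clause --- match the intended continuous-time analogue of the discrete result.
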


We now establish conditions ensuring global survival doesn't depend upon the initial condition. 
The first of these will be the following Harris-type irreducibility assumption.
\begin{assum}\label{assum:Harris type irreducibility}
There exists a measure $\nu$ such that for every measurable set $A$ with $\nu(A)>0$ and every $x\in E$ there exists $t>0$ (possibly dependent upon $x$ and $A$) such that we have $\Pm_x(X^i_t\in A\text{ for some $1\leq i\leq N_t$})>0$.
\end{assum}
\begin{theo}\label{theo:dichotomy Harris assumptions}
We assume Assumption \ref{assum:Harris type irreducibility}, for some given measure $\nu$. Then we have either $\Pm_x(\glSurv)>0$ for all $x\in E$, or otherwise $\Pm_x(\glSurv)=0$ for $\nu$-almost every $x \in E$.
\end{theo}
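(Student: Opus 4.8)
The plan is to transcribe the proof of Theorem~\ref{th:discrete_harris} to continuous time, with the $n$-step kernel $m^n$ replaced by the configuration of particles at a deterministic time $t$. I would set $g(x):=\Pm_x(\glSurv)$ and $A:=\{x\in E:g(x)>0\}$. The first step is to record that $A$ is measurable, so that $\nu(A)$ is meaningful: since extinction is absorbing (once $N_s=0$ one has $N_{s'}=0$ for all $s'\ge s$), we have $\glSurv=\bigcap_{n\in\N}\{N_n\ge 1\}$, a decreasing intersection, hence $g(x)=\inf_n\Pm_x(N_n\ge 1)$, and each $x\mapsto\Pm_x(N_n\ge 1)$ is measurable because $\vec X$ is a right process. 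If $\nu(A)=0$, the conclusion that $\Pm_x(\glSurv)=0$ for $\nu$-almost every $x$ holds trivially, so from now on I assume $\nu(A)>0$.

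Next, fixing an arbitrary $x\in E$, I would show that $x\in A$, which yields the dichotomy. The key input is the branching property: applied at the deterministic time $t$ together with the Markov property of $\vec X$, it gives that extinction occurs if and only if every subtree emanating from a particle alive at time $t$ becomes extinct, these subtrees being conditionally independent given $\vec X_t$, with the $i$-th distributed as $\Pm_{X^i_t}$. Hence $1-g(x)=\expE_x\big[\prod_{i=1}^{N_t}(1-g(X^i_t))\big]$, i.e.\ $g(x)=\expE_x\big[1-\prod_{i=1}^{N_t}(1-g(X^i_t))\big]$. On the event $\{X^i_t\in A\text{ for some }1\le i\le N_t\}$ at least one factor $1-g(X^i_t)$ is strictly less than $1$ while all factors lie in $[0,1]$, so the integrand $1-\prod_{i=1}^{N_t}(1-g(X^i_t))$ is strictly positive there.

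By Assumption~\ref{assum:Harris type irreducibility} applied to $A$, there exists $t=t(x,A)>0$ such that $\Pm_x(X^i_t\in A\text{ for some }1\le i\le N_t)>0$. Plugging this value of $t$ into the identity above, the integrand is a nonnegative random variable that is strictly positive on an event of positive probability, so $g(x)>0$, i.e.\ $x\in A$. Since $x$ was arbitrary, $A=E$, that is, $\Pm_x(\glSurv)>0$ for every $x\in E$, which completes the proof.

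I do not expect a genuine obstacle: this is essentially the discrete-time argument verbatim. The only points requiring (routine) care are the measurability of $A$ in the Lusin/right-process setting — needed merely so that ``$\nu(A)>0$'' is meaningful — and a clean justification of the extinction decomposition at the deterministic time $t$, namely the branching property combined with the absorbing nature of extinction, which makes global survival equivalent to at least one time-$t$ subtree surviving globally.
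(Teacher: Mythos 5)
Your proposal is correct and follows essentially the same route as the paper's proof: define $A=\{x:\Pm_x(\glSurv)>0\}$, dispose of the case $\nu(A)=0$, and otherwise use Assumption~\ref{assum:Harris type irreducibility} to find a time $t$ at which some particle lies in $A$ with positive probability, then conclude $\Pm_x(\glSurv)>0$ via the branching property at time $t$. The only cosmetic differences are that you work with the exact identity $1-g(x)=\expE_x\bigl[\prod_{i=1}^{N_t}(1-g(X^i_t))\bigr]$ where the paper uses the cruder bound $\Pm_x(\glSurv)\ge \expE_x[\max_i \Pm_{X^i_t}(\glSurv)\mid G]\,\Pm_x(G)$, and that you spell out the (routine) measurability of $g$, which the paper leaves implicit.
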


It follows that under the assumptions of Theorem \ref{theo:dichotomy Harris assumptions} we have that:
\begin{enumerate}
    \item if $\lambda_c'<0$ and branching is purely local, then we have $\Pm_x(\text{global extinction})=1$ for $\nu$-almost every $x\in E$;
    \item if $\lambda_c'>0$ then we have $\Pm_x(\glSurv)>0$ for all $x\in E$.
\end{enumerate}
In particular, under the assumptions of Theorem \ref{theo:dichotomy Harris assumptions} we have that
\begin{equation}
    \lambda_c'=\lambda_{c,x}'\quad\text{for all}\quad x\in E.
\end{equation}

The next assumption is stronger and relies on the strong Feller property and topological irreducibility assumptions. We define $P^0_t$ to be the transition density for the killed right process $(X_t)_{0\leq t<\tau_E}$, i.e.
\[
(P^0_tf)(x):=\expE_x[f(X_t)\Ind(\tau>t)],\quad f\in B_b(E).
\]
\begin{assum}\label{assum:Strong Feller top irr}
We assume that the following holds:
\begin{enumerate}
    \item The \textit{strong Feller assumption}, meaning that
\[
P^0_t(B_b(E))\subseteq C_b(E).
\]
for all $t>0$.
\item \textit{Topological irreducibility}, meaning that
\[
\Pm_x(X^i_t\in U\text{ for some $1\leq i\leq N_t$ and some $t>0$})>0
\]
for every $x\in E$ and non-empty, open subset $U\subseteq E$.
\end{enumerate}
\end{assum}

We then have the following dichotomy. 
\begin{theo}\label{theo:global extinction dichotomy}
We assume that Assumption \ref{assum:Strong Feller top irr} holds. Then either $\Pm_x(\glSurv)>0$ for all $x\in E$, or otherwise $\Pm_x(\glSurv)=0$ for all $x \in E$.
\end{theo}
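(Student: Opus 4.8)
The plan is to upgrade the measure-theoretic dichotomy of Theorem~\ref{theo:dichotomy Harris assumptions} to a topological one by using the strong Feller property to show that the set $A := \{x \in E : \Pm_x(\glSurv) > 0\}$ is open, and then using topological irreducibility to conclude that if $A$ is nonempty then $A = E$. The first step is to observe that Assumption~\ref{assum:Strong Feller top irr} implies Assumption~\ref{assum:Harris type irreducibility}: indeed, topological irreducibility together with the strong Feller property forces, for any nonempty open $U$, that $\Pm_x(X^i_t \in U \text{ for some } i, \text{ some } t>0) > 0$ for \emph{every} $x$, so one may take $\nu$ to be any measure with full topological support (e.g. a measure that charges every nonempty open set). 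Hence Theorem~\ref{theo:dichotomy Harris assumptions} already gives that either $\Pm_x(\glSurv) > 0$ for all $x$, or $\Pm_x(\glSurv) = 0$ for $\nu$-a.e.\ $x$. What remains is to exclude the ``intermediate'' possibility that $A$ is a proper, $\nu$-null but nonempty set; equivalently, to show $A$ is open, since an open $\nu$-null set is empty when $\nu$ has full support.

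To show $A$ is open, I would argue as follows. Fix $x_0 \in A$, so $\Pm_{x_0}(\glSurv) > 0$. Since $\glSurv = \bigcap_{t \ge 0}\{N_t \ge 1\}$ and the events $\{N_t \ge 1\}$ are (essentially) decreasing along a suitable filtration, there is $T > 0$ with $\Pm_{x_0}(N_T \ge 1 \text{ and the descendants of some particle alive at time } T \text{ survive globally}) > 0$; more directly, by the branching property and the Markov property at time $T$,
\[
  \Pm_{x}(\glSurv) \;\ge\; \Pm_{x}\Big(\text{some particle alive at time } T \text{ founds a globally surviving sub-BMP}\Big)
  \;=\; \expE_{x}\Big[1 - \prod_{i=1}^{N_T}(1 - g(X_T^i))\Big],
\]
where $g(y) := \Pm_y(\glSurv)$. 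Writing $h(y) := -\log(1 - g(y)) \in [0,\infty]$, one has $1 - \prod_i (1 - g(X_T^i)) = 1 - \exp(-\sum_i h(X_T^i))$, and the key point is that the map
$x \mapsto \expE_x[1 - \exp(-\sum_{i=1}^{N_T} h(X_T^i))]$
is lower semicontinuous — ideally continuous — in $x$. This is where the strong Feller property enters: the expectation semigroup and its nonlinear analogues should inherit a strong Feller-type smoothing from $P^0_t$ (using that the branching rate is bounded and the offspring distribution uniformly integrable, so that the BMP can be built by a Poissonian/interlacing construction from the underlying right process). Granting this, $x \mapsto \Pm_x(\glSurv)$ is lower semicontinuous, and since it is strictly positive at $x_0$ it is strictly positive on a neighbourhood of $x_0$; thus $A$ is open.

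The main obstacle, and the step requiring care, is precisely establishing the lower semicontinuity (or continuity) of $x \mapsto \Pm_x(\glSurv)$, or of the truncated quantities $x\mapsto \Pm_x(\glSurv \cap \{N_t \ge 1 \ \forall t \le T\})$ which decrease to it. One cannot directly invoke strong Feller for $P^0_t$ since $\glSurv$ is not of the form $P^0_t f$; instead I would pass through a first-branching-event decomposition: condition on the first branching time and location, express $\Pm_x(\glSurv)$ via an integral equation driven by $P^0$ and the branching mechanism (analogous to \eqref{eq:linear_semigroup} but for the nonlinear survival probability), and show that $P^0$'s strong Feller property propagates continuity through this fixed-point equation by a monotone-iteration argument, using that $g$ is bounded and the number of offspring is uniformly integrable to control tails. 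An alternative, cleaner route: once $A$ is known to be measurable with $\nu(A) = 0$ for some full-support $\nu$ in the ``bad'' case, derive a contradiction directly — if $x_0 \in A$ then by the strong Feller step applied only at $x_0$ one finds an open neighbourhood $V$ of $x_0$ on which $\Pm_\cdot(\glSurv) > 0$, forcing $\nu(V) = 0$, contradicting full support of $\nu$. Either way, the crux is transferring the regularity of $P^0_t$ to the survival probability, and I expect this to be the only genuinely nontrivial point; the rest is bookkeeping around Theorem~\ref{theo:dichotomy Harris assumptions} and the branching property.
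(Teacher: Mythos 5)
Your overall strategy --- show that $x\mapsto \Pm_x(\glSurv)$ has enough regularity (via the strong Feller property) for $A=\{x\in E:\Pm_x(\glSurv)>0\}$ to be open, then use topological irreducibility plus the branching/Markov property to spread positivity --- is the same as the paper's. But the step you yourself identify as the crux, namely transferring the regularity of $P^0_t$ to the survival probability, is precisely the step you do not prove: you write ``should inherit'', ``Granting this'', and propose a first-branching-event fixed-point equation with a monotone-iteration argument without carrying it out. That is a genuine gap, not bookkeeping. The paper closes it with a short observation missing from your proposal: setting $u(x)=\Pm_x(\glSurv)$, for small $t$ one has $\lvert u(x)-P^0_t u(x)\rvert\le \Pm_x(\text{a branching event occurs before time } t)$, and since the branching rate is bounded this error tends to $0$ \emph{uniformly in $x$} as $t\ra 0$; each $P^0_t u$ is continuous by the strong Feller assumption, so $u$ is a uniform limit of continuous functions. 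Hence if $u\not\equiv 0$ then $u$ is bounded away from $0$ on some nonempty open set, and topological irreducibility together with the branching property (exactly as in the proof of Theorem \ref{theo:dichotomy Harris assumptions}) gives $u>0$ everywhere. Your proposed route through the nonlinear semigroup at a macroscopic time $T$ (continuity of $x\mapsto\expE_x[1-\prod_{i=1}^{N_T}(1-g(X^i_T))]$) needs a ``nonlinear strong Feller'' statement that is itself most naturally obtained by the same small-time/no-branching comparison (or by a Duhamel-type expansion in which the small-time contribution must be shown to be uniformly small), so as written your plan records the difficulty rather than resolving it.

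A secondary point: your opening claim that Assumption \ref{assum:Strong Feller top irr} implies Assumption \ref{assum:Harris type irreducibility} for any full-support measure $\nu$ is not justified. Assumption \ref{assum:Harris type irreducibility} requires positive hitting probability for \emph{every} measurable set of positive $\nu$-measure, and such a set need not contain a nonempty open set, so topological irreducibility alone does not deliver it. The detour through Theorem \ref{theo:dichotomy Harris assumptions} is also unnecessary: once $A$ is known to be open and nonempty, topological irreducibility and the branching property directly give $\Pm_x(\glSurv)>0$ for every $x\in E$.
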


It follows that under the conditions of Theorem \ref{theo:global extinction dichotomy} we have the following dichotomy:
\begin{enumerate}
    \item if $\lambda_c'<0$ and branching is purely local then we have $\Pm_x(\glSurv)=0$ for all $x\in E$;
    \item if $\lambda_c'>0$ then we have $\Pm_x(\glSurv)>0$ for all $x\in E$.
\end{enumerate}

Note that piecewise-deterministic Markov processes would typically satisfy Assumption \ref{assum:Harris type irreducibility} with $\nu$ being Lebesgue measure, but not be strong Feller so not satisfy Assumption \ref{assum:Strong Feller top irr}.

These provide conditions ensuring that $\lambda_{c,x}'$ doesn't depend upon $x$. One might similarly ask whether one can obtain similar conditions ensuring that $\lambda_{c,x}''=\limsup_{t\ra\infty}\frac{1}{t}\ln \expE_x[N_t]$ also doesn't depend upon $x$. We will establish that $\lambda_c''=\limsup_{t\ra\infty}\frac{1}{t}\ln \expE_x[N_t]$ for all $x\in E$, i.e. $\lambda_c''=\lambda_{c,x}''$ for all $x\in E$, in the case of uniformly elliptic branching diffusions in Section \ref{sec:results_branching_diffusions}. However this relies on the Harnack inequality, so it's not clear to us whether it can be made more general.

The rest of this section is devoted to the proofs of the above results.

\subsection{Proofs}

\subsubsection{Proof of Theorem~\ref{th:lambda'_lambda''}}

\subsubsection*{Proof of the first inequality in Theorem~\ref{th:lambda'_lambda''}}

The proof is exactly the same as the proof of the corresponding discrete-time result in Theorem~\ref{th:rho'_rho''}.
    Let $\lambda < \lambda_{c,x}'$. Then there exists $u$ taking values in $[0,1]$, such that $P_t u \ge e^{\lambda t} u$ for all $t\ge 0$ and $u(x) > 0$. We have
    \[
    \expE_x[N_t]  \ge P_t u(x) \ge e^{\lambda t}u(x),
    \]
    and so
    \[
    \liminf_{t\to\infty} \frac 1 t \log \expE_x[N_t]\ge \lambda.
    \]
    This proves the first inequality.

\subsubsection*{Proof of the equality in Theorem~\ref{th:lambda'_lambda''}}
Fix $x_\ast$. We first prove that $\lambda''_{c,x_\ast} \ge \limsup_{t\to\infty} \frac 1 t \log \expE_{x_\ast}[N_t]$. This works as in the discrete-time setting.
Let $\lambda > \lambda_{x_{\ast}}''$. Then we may take $u\in pB(E)$ and $\epsilon>0$ such that: (1) $u\ge 1$, (2) $u(x_{\ast})<\infty$, and (3) $u$ satisfies $P_tu\leq e^{\lambda t} u$. Then we have for all $x\in E$,
\[
\expE_{x}[N_t]=(P_t1)(x)\leq (P_tu)(x)\leq e^{\lambda t} u(x).
\]
In particular, this shows that 
\[
\limsup_{t\to\infty} \frac 1 t \expE_{x_{\ast}}[N_t] \le \lambda.
\]
Since $\lambda > \lambda_{x_{\ast}}''$ was arbitrary, this establishes the statement.

 We now prove $\lambda''_{c,x_\ast} \le \limsup_{t\ra\infty}\frac{1}{t}\ln \expE_{x_{\ast}}[N_t]$.
Compared to the corresponding discrete-time proof, there is an additional technical difficulty, as the function we construct is not necessarily uniformly bounded from below.
Since we can always add a constant local binary branch rate or add an additional constant killing rate, which has an obvious effect on the expectation semigroup and generalised principal eigenvalues, it suffices to prove the following: if $\limsup_{t\ra\infty}\frac{1}{t}\ln \expE_{x_{\ast}}[N_t]<0$ then $\lambda_{x_{\ast}}''\leq 0$.
We suppose that $\limsup_{t\ra\infty}\frac{1}{t}\ln \expE_{x_{\ast}}[N_t]<0$. We now define
\begin{equation}\label{eq:total mass}
\calT \coloneqq \int_0^\infty N_t\,dt.
\end{equation}
We then define
\[
v(x):=\expE_x[\calT].
\]
We observe that $v\geq 0$ everywhere and 
\begin{equation}\label{eq:lambda''<= pf v finite at xast}
v(x_{\ast})=\int_0^{\infty}\expE_{x_{\ast}}[N_t]<\infty
\end{equation}
since $\limsup_{t\ra\infty}\frac{1}{t}\ln \expE_{x_{\ast}}[N_t]<0$.

Given $t>0$ and $1\leq i\leq N_t$ we consider the same quantity defined by the children of $X^i_t$ from time $t$, i.e.
\[
\calT^i_t:=\int_t^{\infty}\sum_{j=1}^{N_s}\Ind(X^j_s\text{ is a descendent of $X^i_t$})\,ds.
\]
Then for any $t>0$ we have that
\[
\begin{split}
\calT=\int_0^{t}\sum_{j=1}^{N_s}\Ind(X^j_s\text{ is a descendent of $X^1_0$})\,ds+\sum_{i=1}^{N_t}\int_t^{\infty}\sum_{j=1}^{N_s}\Ind(X^j_s\text{ is a descendent of $X^i_t$})\,ds\\
= \int_0^{t}\sum_{j=1}^{N_s}\Ind(X^j_s\text{ is a descendent of $X^1_0$})\,ds+\sum_{i=1}^{N_t}\calT^i_t.
\end{split}
\]
It follows that
\[
v(x)\geq \int_0^t(P_s1)(x)\,ds+(P_tv)(x).
\]
We now define
\[
u(x)=v(x)+\epsilon
\]
for $\epsilon>0$ to be determined. In what follows, we drop the argument $x$ from the notation. We observe that
\begin{equation}\label{eq:pf of lambda''<=0 Ptu inequality}
\begin{split}
P_tu= P_tv+P_t\epsilon\leq  [v-\int_0^t P_s1\,ds]+\epsilon P_t1\\
\leq  u+[\epsilon (P_t1- 1)- \int_0^t P_s1\,ds]\\
\leq  u+[\epsilon (P_t1-1)-\int_0^t P_s1\,ds].
\end{split}
\end{equation}

Our goal now is to show that $\epsilon>0$ can be chosen so that
\begin{equation}\label{eq:epsilon Pt1-1-int ineq}
\epsilon (P_t1-1)-\int_0^t P_s1\,ds\leq 0
\end{equation}
for all $t>0$ sufficiently small.

We have that for all $0\le s\le t$,
\[
P_t1=P_sP_{t-s}1\leq e^{C(t-s)}P_s1,
\]
where
\[
C:=1+\lvert\lvert r\rvert\rvert\sup_{x}\sum_nnp_n(x).
\]
Therefore, for
\[
P_s1\geq e^{-C(t-s)}P_t1.
\]
Thus
\[
\epsilon (P_t1-1)-\int_0^t(P_s1)\,ds\leq \epsilon (P_t1-1)-\left(\int_0^te^{-C(t-s)}\,ds \right) P_t1=\epsilon (P_t1-1)-\frac{1-e^{-Ct}}{C}P_t1.
\]

In what follows, we suppose that $t>0$ is sufficiently small, depending on $C$ and $\epsilon.$
We observe that $\frac{1-e^{-Ct}}{C}\geq \frac{t}{2}$. Therefore
\[
\epsilon (P_t1-1)-\int_0^t(P_s1)\,ds\leq \epsilon P_t1-\epsilon -\frac{t}{2}P_t1.
\]
Since $P_t1\leq 1+2Ct$, we have for $t < 2\epsilon$,
\[
\epsilon P_t1-\epsilon-\frac{t}{2}P_t1\leq (1+2Ct)(\epsilon-\frac{t}{2})-\epsilon \leq 2Ct\epsilon-\frac{t}{2}\leq 0,
\]
if $4C\epsilon\leq 1$. Therefore by choosing 
\[
\epsilon=\frac{1}{4C},
\]
and thereby defining $u:=v+\epsilon$, we have established \eqref{eq:epsilon Pt1-1-int ineq} for all $t>0$ sufficiently small. Feeding this into \eqref{eq:pf of lambda''<=0 Ptu inequality}, we obtain that there exists $T>0$ such that
\[
P_tu\leq u
\]
for all $0\leq t\leq T$. It then follows by induction that $P_tu\leq  u$ for all $t\in \Rm_{>0}$ by induction. Moreover $u\geq \epsilon>0$ everywhere and $u(x_{\ast})<\infty$ (since $v(x_{\ast})<\infty$). Therefore $\lambda''_x\leq 0$. This finishes the proof.
\qed

\subsubsection{Proof of Part 1 of Theorem \ref{th:lambda'_survival}}
Suppose that we have global survival from $x_{\ast}$. Consider the function
\[
u(x):=\Pm_x(\glSurv).
\]
We observe that $u\in pB_1(E)$ and $u(x_{\ast})>0$.

We say that $X^i_t$ survives for some $1\leq i\leq N_t$ if the branching process made up of $X^i$ and all its descendents from time $t$ onwards survives. 

We observe that
\[
\sum_{i=1}^{N_t}\Ind(X^i_t\text{ survives})\geq \Ind(X^i_t\text{ survives for some $1\leq i\leq N_t$})=\Ind(X^1_0\text{ survives}).
\]
Then
\[
\begin{split}
(P_tu)(x)=\expE_x\Big[\sum_{i=1}^{N_t}\Ind(X^i_t\text{ survives})\Big]
\geq \expE_x\Big[ \Ind(X^i_t\text{ survives for some $1\leq i\leq N_t$})\Big]=u(x).
\end{split}
\]
Therefore
\[
(P_tu)(x)\geq u(x)
\]
for all $x\in E$, implying that $\lambda_{x_{\ast}}'\geq 0$.

\subsubsection{Proof of Part 2 of Theorem \ref{th:lambda'_survival}}

We wish to reduce to the corresponding discrete-time result (Theorem~\ref{th:rho'_survival}), by considering a discrete-time skeleton of the process. The key to this is the following proposition, which allows to bound the process from below by a discrete-time process with bounded number of offspring.
\begin{prop}\label{prop:large minorising bounded offspring dist branching processes}
We assume that the branching is purely local. Then for all $\epsilon>0$ there exists a discrete-time branching process ${\bar Y}_n=(Y^1_n,\ldots,Y^{M_n}_n)$ ($n\in \mathbb{N}$) with bounded offspring distribution such that:
\begin{enumerate}
    \item The discrete-time skeleton of $\bar X$, $(\bar X_n)_{n\in \mathbb{N}}$, dominates $(\bar Y_n)_{n\in \mathbb{N}}$ in the sense that, for every starting point $y\in E$, we can couple them so that 
    \[
    \Pm_y\left(\forall n\ge 0: \sum_{i=1}^{N_n}\delta_{X^i_n}\geq \sum_{j=1}^{M_n}\delta_{Y^i_n}\right) = 1,
    \]
    where for two measures $\mu$ and $\nu$ we write $\mu\ge \nu$ if $\mu(A) \ge \nu(A)$ for every measurable $A$.
    \item The associated expectation kernel 
    \begin{equation}\label{eq:mean kernel Q}
    Q(y,\cdot):=\expE_x\Big[\sum_{j=1}^{M_1}\delta_{Y^i_1}\Big]
    \end{equation}
    satisfies $Q(y,\cdot)\geq (1-\epsilon)P_1(y,\cdot)$ for all $y\in E$.
\end{enumerate}
\end{prop}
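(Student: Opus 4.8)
The plan is to build $\bar Y$ by \emph{genealogical thinning} of $\bar X$. First I would use the uniform integrability of the offspring distribution to fix $\ell=\ell(\epsilon)\ge 2$ with $\delta:=\sup_{x\in E}\sum_{n\ge\ell}np_n(x)\le \epsilon/(2\lVert r\rVert_\infty)$ (if $\lVert r\rVert_\infty=0$ there is nothing to prove, since then $\bar X_1$ has at most one particle and one may take $\bar Y=\bar X$), noting that the same hypothesis gives $\lVert m\rVert_\infty:=\sup_x m(x)<\infty$. Fix also $J=J(\epsilon)$, to be chosen large at the end. Running $\bar X$ from a single particle at $x$, call a time-$1$ particle \emph{good} if its genealogical lineage from the root passes through at most $J$ branching events, each producing at most $\ell$ children, and let $\Pi_Y(x,\cdot)$ be the law of the (finite) configuration of good particles at time $1$. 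The good particles form a rooted tree in which every branch point has at most $\ell$ children and every root-to-leaf path has at most $J$ branch points, so there are at most $K:=\ell^J$ of them; hence $\Pi_Y(x,\cdot)$ is supported on configurations of size $\le K$, and $x\mapsto\Pi_Y(x,\cdot)$ is measurable by the measurable dependence of $\bar X$ on its starting point. I would then let $\bar Y$ be the discrete-time branching Markov chain on $E$ with offspring kernel $\Pi_Y$; it has offspring bounded by $K$. Part~1 follows by iterating the branching property: the good particles of the $[0,1]$-evolution of $\bar X$ from $x$ form, by definition, a sub-multiset of its time-$1$ configuration and are distributed as $\bar Y_1$; each such particle then initiates an independent copy of $\bar X$, and repeating this realises $\bar Y_n$ as a sub-configuration of $\bar X_n$ for every $n$.

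For Part~2 it remains to check $Q(y,A)\ge(1-\epsilon)P_1(y,A)$ for all $y\in E$ and measurable $A$, i.e.\ that $P_1(y,A)-Q(y,A)$ is at most $\epsilon P_1(y,A)$. Under the coupling, $\bar Y_1$ consists exactly of the good time-$1$ particles of $\bar X$, so this difference is the expected number of \emph{bad} time-$1$ particles landing in $A$, a particle being bad exactly when its lineage contains a branch point that is either \emph{fat} (more than $\ell$ children) or \emph{deep} (preceded by at least $J$ earlier branch points along that lineage). Writing a branch point $b$ of $\bar X$ as occurring at time $s_b$, position $x_b$, with $n_b$ children and $D_b$ earlier lineage-ancestral branch points, and bounding the number of bad time-$1$ particles in $A$ by the total, over all fat or deep $b$, of the number of time-$1$ descendants of $b$ in $A$, the branching property gives
\[
P_1(y,A)-Q(y,A)\ \le\ \expE_y\Big[\sum_{b\text{ fat}} n_b\,P_{1-s_b}(x_b,A)\Big]+\expE_y\Big[\sum_{b\text{ deep}} n_b\,P_{1-s_b}(x_b,A)\Big].
\]
I would then invoke the Lévy-system (compensation) formula for the branch points of $\bar X$ — along each lineage these arrive at rate $r(\cdot)$ at the current position, with child counts conditionally drawn from $(p_n(\cdot))$ — which rewrites each expectation as a space-time integral against the mean kernels $P_s$. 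For the fat sum the factor $\sum_{n>\ell}np_n\le\delta$ appears and the semigroup identity $P_s\big(P_{1-s}(\cdot,A)\big)(y)=P_1(y,A)$ collapses the $s$-integral, giving the bound $\delta\lVert r\rVert_\infty P_1(y,A)\le\tfrac{\epsilon}{2}P_1(y,A)$.

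For the deep sum, the same two ingredients yield, with $\Lambda:=\lVert r\rVert_\infty\lVert m\rVert_\infty$, the bound $\Lambda\int_0^1\expE_y\big[\sum_{j:\,\mathrm{dep}(X^j_s)\ge J}P_{1-s}(X^j_s,A)\big]\,ds$, where $\mathrm{dep}(X^j_s)$ is the number of branch points on the lineage of the time-$s$ particle $X^j_s$. The key remaining estimate is Chernoff-type: for $\theta\ge1$ the depth-tilted mean $\expE_y\big[\sum_j\theta^{\mathrm{dep}(X^j_s)}\psi(X^j_s)\big]$ is, by the same first-branch-event decomposition that produces \eqref{eq:Pt_FK}, the Feynman–Kac semigroup with potential $r(x)(\theta m(x)-1)$ applied to $\psi$, and comparing potentials for $s\le 1$ makes it at most $e^{(\theta-1)\Lambda}P_s\psi$. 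Taking $\psi=P_{1-s}(\cdot,A)$, using $\Ind(\mathrm{dep}\ge J)\le\theta^{-J}\theta^{\mathrm{dep}}$ and $P_s\big(P_{1-s}(\cdot,A)\big)(y)=P_1(y,A)$, one gets $\expE_y\big[\sum_{j:\,\mathrm{dep}(X^j_s)\ge J}P_{1-s}(X^j_s,A)\big]\le\theta^{-J}e^{(\theta-1)\Lambda}P_1(y,A)$, so the deep sum is at most $\Lambda\theta^{-J}e^{(\theta-1)\Lambda}P_1(y,A)$; choosing $\theta=2$ and then $J$ so large that $\Lambda e^{\Lambda}2^{-J}\le\tfrac{\epsilon}{2}$ closes the estimate.

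I expect the main obstacle to be the bookkeeping of Part~1: upgrading the informal ``genealogical thinning'' to a genuine discrete-time branching Markov chain on $E$ together with the stated pathwise coupling $\bar X_n\ge\bar Y_n$ (and the deterministic bound $M_1\le\ell^J$). The moment estimate should be routine once the Lévy-system formula and the Feynman–Kac comparison are set up. A secondary technical point is to justify the Lévy-system/compensation identity and the attendant Fubini interchanges in the present generality of right processes; this is standard, the required integrability following from $\sup_x\expE_x[N_t]<\infty$, which holds since $r$ is bounded and the offspring distribution has bounded mean.
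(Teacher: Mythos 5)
Your proposal is correct, but it takes a genuinely different route from the paper's. The paper argues in two steps: it first truncates the offspring law at some $n_0$ (uniform integrability plus the Feynman--Kac representation \eqref{eq:Pt_FK} give $\tilde P_t\ge e^{-\epsilon t}P_t$ for the truncated process, which is dominated by $\bar X$ by an obvious coupling), and then, with offspring bounded by $n_0$, embeds one time step of $\bar X$ into an $n_0$-ary Yule tree branching at the constant rate $\lVert r\rVert_\infty$ (accept/reject thinning at each tree branch point) and keeps the particles only on the event $A_m$ that the tree has at most $m$ leaves; since the spatial motions along the branches are sampled independently of the pruning, the discarded expected measure factorizes as $\expE[\lvert\mathbb V\rvert\Ind(A_m^c)]\cdot\expE_y[\delta_{X^0_1}(\cdot)\Ind(\tau>1)]$, the second factor is dominated by $P_1(y,\cdot)$ because $p_0\equiv 0$, and taking $m$ large finishes. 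You instead thin by genealogical depth and arity directly (at most $J$ branch points per lineage, each of arity at most $\ell$, hence the deterministic bound $M_1\le\ell^J$), and control the discarded mass through a compensation (many-to-one) identity for branch events together with a Chernoff tilting in the depth, the depth-tilted mean being the Feynman--Kac semigroup with potential $r(\theta m-1)$, compared to $P_s$ via boundedness of $rm$; both treatments exploit purely local branching in an essential way. Your route buys explicit, $A$-uniform error bounds ($\lVert r\rVert_\infty\delta+\Lambda e^{\Lambda}\theta^{-J}$ relative to $P_1$) and an explicit offspring bound, and it sidesteps the comparison with the single-particle motion (the step where the paper uses that there is no soft killing at branch events); the paper's route avoids the compensation formula and the weighted many-to-one lemma, which are precisely the auxiliary identities you would still need to set up carefully (along with the measurability of your kernel $\Pi_Y$ in the starting point), though both follow from the same first-branch-event renewal equation that underlies \eqref{eq:Pt_FK}, so I regard these as flagged technicalities rather than gaps.
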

\begin{proof}[Proof of Proposition~\ref{prop:large minorising bounded offspring dist branching processes}]
We firstly reduce to the case where the offspring distribution (of the continuous-time process) is bounded. We let $X_t$ be a single particle evolving according to the dynamics of our fixed right process. Then we have that
\[
P_t(x,\cdot)=\expE_x[e^{\int_0^tr(X_s)\sum_{m}mp_m(X_s)ds}\delta_{X_t}(\cdot)].
\]
By the boundedness of $r$ and the uniform integrability of the offspring distribution, for any $\epsilon>0$ we can choose $n_0<\infty$ such that $r(x)\sum_{m}mp_m(x)<\epsilon$ for all $x\in E$. If we now let $\tilde{P}_t$ be the transition semigroup for the process with the same driving motion but with offspring distribution
\[
\tilde{p}_n=\begin{cases}
p_n,\quad n<n_0,\\
\sum_{m\geq n}p_m,\quad n\geq n_0,
\end{cases}
\]
we see that
\[
\tilde{P}_t(x,\cdot)=\expE_x[e^{\int_0^tr(X_s)\sum_{m}(m\wedge n_0)p_m(X_s)ds}\delta_{X_t}(\cdot)]\geq e^{-\epsilon t}P_t(x,\cdot).
\]
Obviously, we can couple the two processes in such a way that $\bar X$ dominates the process with the truncated offspring distribution.
Therefore it suffices to prove Proposition \ref{prop:large minorising bounded offspring dist branching processes} in the case of bounded offspring distribution.

We henceforth assume that the offspring distribution is bounded. We let $n_0<\infty$ be an upper bound on the offspring distribution, so that $p_n(x)=0$ for all $x\in E$ and $n\geq n_0$. It suffices to construct the coupling over a single time-step.

We firstly take an $n_0$-ary branching tree $\mathcal{T}$ (without any spatial motion), branching at rate $\lvert\lvert r\rvert\rvert_{\infty}$ in continuous time over time $1$. We let $\mathbb V$ be the set of leaves of the tree. For $m<\infty$ to be determined we write $A_m$ for the event that $\lvert \mathbb V\rvert\leq m$. We now embed the process $\bar X$ into this tree as follows. We first determine the spatial motion along each branch of the tree according to our given right-process (for a given fixed initial position $x\in E$). Then, at every branch point, if the particle has position $x$, we remove branches (and the whole subtrees) randomly according to the following rule:
\begin{itemize}
    \item With probability $r(x)/\|r\|_\infty$, we sample a r.v. $N$ according to the offspring distribution and keep only $N$ of the $n_0$ branches, chosen randomly
    \item Otherwise (with probability $1-r(x)/\|r\|_\infty$), we keep only one branch chosen randomly and remove the others.
\end{itemize}
We let $\mathbb{V}_a$ be the set of living leaves leftover after pruning the tree, and write $Y_v$ for the position of the particle corresponding to each $v\in \mathbb{V}_a$. The result is a copy of our continuous-time branching process over a time step $1$, i.e.
\[
\sum_{v\in \mathbb{V}_a}\delta_{Y^v}\overset{d}{=}\sum_{i=1}^{N_1}\delta_{X^i_1},
\]
given the same initial distribution. Our goal is to show that $m<\infty$ may be chosen so that
\[
\expE_y[\Ind_{A_m}\sum_{v\in \mathbb{V}_a}\delta_{Y^v}(\cdot)]\geq (1-\epsilon)\expE_y[\sum_{i=1}^{N_1}\delta_{X^i_1}],
\]
for all possible initial positions $y\in E$.

We note that since we determine the spatial motions before pruning the tree, we in fact have determined a spatial position $Y_v$ for each $v\in \mathbb V$. Then we have that
\[
\begin{split}
\expE_y[\Ind(A_m)\sum_{v\in \mathbb{V}_a}\delta_{Y^v}(\cdot)]\geq \expE_y[\sum_{v\in \mathbb{V}_a}\delta_{Y^v}(\cdot)]-\expE_y[\Ind(A_m^c)\sum_{v\in \mathbb{V}}\delta_{Y^v}(\cdot)]\\
= \expE_y[\sum_{i=1}^{N_1}\delta_{X^i_1}(\cdot)]-\expE_y[\Ind(A_m^c)\sum_{v\in \mathbb{V}}\delta_{Y^v}(\cdot)].
\end{split}
\]
It therefore suffices to prove that $m<\infty$ may be chosen so that
\begin{equation}\label{eq:final sufficient criterion minorising discrete prop}
\expE_y[\Ind(A_m^c)\sum_{v\in \mathbb{V}}\delta_{Y^v}(\cdot)]\leq \epsilon \expE_y[\sum_{i=1}^{N_1}\delta_{X^i_1}]
\end{equation}
for all $y\in E$. We let $(X^0_t:0\leq t<\tau)$ be a copy of our right-process (without branching). Then we have that
\[
\expE_y[\Ind(A_m^c)\sum_{v\in \mathbb{V}}\delta_{Y^v}(\cdot)]=\expE_y[\expE_y[\Ind(A_m^c)\sum_{v\in \mathbb{V}}\delta_{Y^v}(\cdot)\mid \mathcal{T}]]=\expE[\lvert\mathbb V\rvert\times \Ind(A_m^c)]\times \expE_y[\delta_{X^0_1}(\cdot)\Ind(\tau>1)].
\]
Since there's no extraneous soft-killing rate (i.e. $p_0(x)=0$),
\[
\expE_y[\delta_{X^0_1}(\cdot)\Ind(\tau>1)]\leq \expE_y[\sum_{i=1}^{N_1}\delta_{X^i_1}]
\]
for all $y\in E$. Moreover $\expE[\lvert\mathbb V\rvert\times \Ind(A_m^c)]\ra 0$ as $m\ra\infty$. Therefore $m<\infty$ can be chosen so that we have \eqref{eq:final sufficient criterion minorising discrete prop}, and hence Proposition \ref{prop:large minorising bounded offspring dist branching processes} is established.
\end{proof}

We now use Proposition \ref{prop:large minorising bounded offspring dist branching processes} to conclude the proof of Part 2 of Theorem \ref{th:lambda'_survival}. We suppose that $\lambda'_{c,x}>0$. Then for all $\lambda<\lambda'_{c,x}$ there exists $u\in pB_1(E)$ such that $P_tu\geq e^{\lambda t}u$ for all $t\geq 0$, $u(x)>0$. We take arbitrary $\epsilon>0$, and use Proposition \ref{prop:large minorising bounded offspring dist branching processes} to take a minorising discrete-time branching process $\bar Y_n$ with mean kernel $Q$ (defined as in \eqref{eq:mean kernel Q}) satisfying $Q\geq (1-\epsilon)P_1$, so that
\[
Qu\geq (1-\epsilon)e^{\lambda}u.
\]
This implies that 
\[
\rho_{c,x}'(Q)\geq (1-\epsilon)e^{\lambda}, 
\]
where $\rho_{c,x}'$ is the discrete-time generalised principal eigenvalue defined in \eqref{eq:discrete time gen evalue 1}. Since $\lambda_{c,x}'>0$, $\lambda$ and $\epsilon$ may be chosen so that $(1-\epsilon)e^{\lambda}>0$, and in fact this can be made arbitrarily close to $e^{\lambda_{c,x}'}$. We now apply the discrete-time result Theorem \ref{th:rho'_survival} to the discrete-time process $\bar Y_n$, and use that it minorises $\bar X_n$, to see that
\[
\Pm_x(\glSurv)>0\text{ and }\Pm_x\left(\liminf_{n\to\infty} \frac 1 n \log N_n \ge \lambda\right) > 0.
\]
for all $\lambda <\lambda'_{c,x}$. 

Finally we wish to extend the above limit infimum along integer times to be along all times in $\Rm_{\geq 0}$. To do so it suffices to show that
\[
\Pm_x(\frac{1}{n}\log N_{n+1}\geq \lambda,\; \inf_{0\leq h\leq 1}\frac{1}{n}\log N_{n+h}\leq \lambda-\epsilon\quad\text{infinitely often})=0
\]
for any $\lambda<\lambda_{c,x}'$ and $\epsilon>0$. We fix $n$ and let $\tau:=\inf\{t>n:\frac{1}{n}\log N_{t}\leq \lambda-\epsilon\}\wedge n+1$. Since the offsping distribution and branching rate are bounded, we have
\[
\begin{split}
\Pm_x(\frac{1}{n}\log N_{n+1}\geq \lambda,\; \inf_{0\leq h\leq 1}\frac{1}{n}\log N_{n+h}\leq \lambda-\epsilon)=\Pm_x(N_{n+1}\geq e^{nh}N_{\tau})\\=\Pm_x(\frac{N_{n+1}}{N_{\tau}}\geq e^{nh})\leq \frac{\expE_x[\frac{N_{n+1}}{N_{\tau}}]}{e^{nh}}\leq \frac{C}{e^{nh}},
\end{split}
\]
for some $C<\infty$ not dependent upon $n$. We then conclude by the Borel-Cantelli lemma.

The above proof finally also implies that $\lambda'_{c,x} = \log \rho'_{c,x}(P_1)$, proving the last statement for $t=1$. The extension to general $t$ follows by a scaling argument or by adapting the above proof.
\qed

\subsubsection{Proof of Part 3 of Theorem~\ref{th:lambda'_survival}}

We could hope to use a martingale argument, using a $\lambda$-superharmonic function $u$. However, we don't assume regularity for $u$ which would guarantee that the martingale is cadlag. Hence, we revert to a more straightforward argument.

To prove the statement, we use Markov's inequality and the expression for $\lambda''_{c,x}$ in Theorem~\ref{th:lambda'_lambda''} to see that for all $\epsilon>0$ we have
\[
\Pm_x(\frac{1}{t}\ln N_t\geq \lambda_{c,x}''+\epsilon)=\Pm_x( N_t\geq e^{(\lambda_{c,x}''+\epsilon)t})\leq \frac{\expE_x[N_t]}{e^{(\lambda_{c,x}''+\epsilon)t}}\leq Ce^{-\frac{\epsilon}{2}t},
\]
for some $C<\infty$ which depends upon $\epsilon$ but not on $t<\infty$. Then by applying the Borel-Cantelli lemma we see that
\[
\Pm_x(\frac{1}{t}\ln N_t\geq \lambda_{c,x}''+\epsilon\quad\text{for infinitely many $t\in \mathbb{N}$})=0.
\]
By a similar argument, the boundedness of the branching rate allows us to show that 
\[
\Pm_x(\sup_{t\leq s\leq t+1}N_t\geq e^{\epsilon t}N_t\quad\text{for infinitely many $t\in \mathbb{N}$})=0
\]
for all $\epsilon>0$. This concludes the proof of the statement.
%
\qed

\subsection{Proof of Corollary \ref{cor:lambda'_survival}}
This proof is exactly the same as for the corresponding discrete-time result (Proposition~\ref{prop:rho dependence on x}), and we leave the details to the reader.

\subsection{Proof of Theorem \ref{theo:dichotomy Harris assumptions}}
The proof is similar to the corresponding discrete-time result.
We define
\[
A:=\{x\in E:\Pm_x(\glSurv)>0\}.
\]
Then if $\nu(A)=0$ we have $\Pm_x(\glSurv)=0$ for $\nu$-almost every $x\in E$. 

We now assume the converse, so that $\nu(A)>0$. Then by Assumption \ref{assum:Harris type irreducibility} we can, for every $x\in E$, find $t>0$ so that $\Pm_x(G) > 0$, where
\[
G = \{X^i_t\in A\text{ for some $1\leq i\leq N_t$}\} .
\]
Note that on the event $G$, we have
\(
\max_{i=1,\ldots,N_t}\Pm_{X^i_t}(\glSurv) > 0.
\)
This implies that 
\[
\Pm_x(\glSurv)\geq \expE_x[\max_{i=1,\ldots,N_t}\Pm_{X^i_t}(\glSurv)\mid G]\times \Pm_x[G] > 0.
\]
This concludes the proof of Theorem \ref{theo:dichotomy Harris assumptions}.\qed

\subsection{Proof of theorem \ref{theo:global extinction dichotomy}}

We let $u(x):=\Pm(\glSurv)$ and let $t>0$. We recall that our fixed right process is $X^0$ with possibly finite lifetime $\tau$. Then we have that
\[
\lvert u(x)-\expE_x[u(X^0_{t})\Ind(\tau>t)]\leq \Pm_x(\text{no branching in time $t$}).
\]
Since the branching rate is bounded, the right-hand side converges to $0$ uniformly in $x$ as $t\ra 0$. On the other hand we can write
\[
\expE_x[u(X^0_{t})\Ind(\tau>t)]=(P^0_tu)(x),
\]
which is continuous for all $t>0$ by the strong Feller property. Therefore there are two possibilities:
\begin{enumerate}
    \item $u\equiv 0$. We are then done.
    \item $u>0$ on some non-empty open subset $U\subset E$. In this case we then have $u>0$ everywhere by the topological irreducibility assumption. 
\end{enumerate}
We are done.\qed

\section{Branching diffusions and uniformly elliptic PDEs}
\label{sec:results_branching_diffusions}

In this section we consider the application of our results to branching diffusions and PDEs. Here we will be able to say somewhat more than is possible in the case of general continuous-time branching processes, as a result of the Harnack inequality in particular.

Let us begin with definitions and assumptions. We impose throughout this section the following standing assumption
\begin{assum}\label{assum:uniformly elliptic standing assumptions}
We assume that $E$ is a non-empty, open, connected subdomain of $\Rm^d$ with $C^{1,1}$ boundary, and denote its closure in $\Rm^d$ by $\bar E$. We consider general elliptic operators in nondivergence form:
\[
L u = \frac{1}{2} a_{ij}(x)\partial_{ij}u+b_i(x)\partial_iu+c(x)u,
\]
where we employ the usual convention of summation from $1$ to $d$ of repeated indices. We define
\[
\underline{a}(x):=\min_{\substack{\xi\in \Rm^d\\ \lvert \xi\rvert =1}}a_{ij}(x)\xi_i\xi_j,\quad x\in \bar E.
\]
We will assume that
\[
a_{ij}(x),\quad b_i(x),\quad c(x)
\]
are all globally bounded and locally H\"older continuous (with exponent $\alpha\in (0,1)$) on $E$, and don't depend upon time $t$. We further assume that $a$ is symmetric for all $x$ and uniformly positive definite, meaning that $\inf_x\underline{a}(x)>0$.
\end{assum}
We note that this is a narrower class of elliptic operators than was considered by \cite{BerestyckiRossi}. In particular we assume the coefficients are H\"older continuous, which is necessary to obtain Theorem \ref{theo:parabolic Feynman-Kac}. We believe that Theorem \ref{theo:parabolic Feynman-Kac} is true (with $u$ belonging to $W^{2,d}_{\loc}$, which suffices for our purpuses) if one instead assumes that $b$ is bounded and measurable and $a$ continuous, as is assumed in \cite{BerestyckiRossi}, but we couldn't find the requisite PDE result (an analogue of  \cite[Theorem 9, p.69]{Friedman1964}) to obtain a proof of this. Nevertheless our goal is to connect the Berestycki-Rossi eigenvalues with the global survival or extinction properties of branching processes in as straightforward a manner as possible, rather than trying to do so in as general a setting as possible.


In the following:
\begin{enumerate}
    \item $W^{2,d}_{\text{loc}}(E)$ is the Sobolev space of twice weakly differentiable functions $E\ra \Rm$ with $L^d_{\loc}$ derivatives;
    \item a function $u$ defined on an open subset $F$ of $\Rm^d\times \Rm$ is said to belong to $C^{2,1,\alpha}_{\loc}(F)$ if $u(x,t)$ is locally twice differentiable in space, once in time, and $u$, $D_xu$, $D^2_xu$ and $D_tu$ are all locally H\"older continuous (exponent $\alpha\in (0,1)$). Where there is no ambiguity as to $F$ we shall simply write $u\in C^{2,1,\alpha}_{\loc}$. The meaning of $u\in C^{2,\alpha}_{\loc}(E)$ is similar for functions $u:E\ra \Rm$.
\end{enumerate}

We recall that Berestycki and Rossi \cite{BerestyckiRossi} considered different notions of generalised principal eigenvalue for a uniformly elliptic second-order differential operator $L$ on a smooth and \textit{unbounded} domain $E\subseteq \Rm^d$, and established their relationship with each other. In particular they introduced the quantities
\begin{equation}\label{eq:Berestycki-Rossi eigenvalues elliptic section}
\begin{split}
\lambda_1'=\lambda_1'(L):=\sup\{\lambda\in\Rm:\exists u\in W^{2,d}_{\text{loc}}(E)\cap L^{\infty}(E)\text{ such that $(L-\lambda)u\geq 0,$}\\\text{$u(x)>0$ for all $x\in E$, $u(x)\ra 0$ as $x\ra \xi$ for all $\xi\in \partial E$}\},\\
\lambda_1''=\lambda_1''(L):=\inf\{\lambda\in\Rm:\exists u\in W^{2,d}_{\text{loc}}(E)\text{ such that $(L-\lambda)u\leq 0$, $\inf_{x\in E}u(x)>0$}\}.
\end{split}
\end{equation}
These extend the more classical notion of generalised principal eigenvalue, namely
\begin{equation}
\lambda_1=\lambda_1(L):=\inf\{\lambda\in\Rm:\exists u\in W^{2,d}_{\text{loc}}(E)\text{ such that $(L-\lambda)u\leq 0$, $u(x)>0$ for all $x\in E$}\}.
\end{equation}
We emphasise that the above notions of generalised principal eigenvalue differ from those found in Berestycki-Rossi \cite{BerestyckiRossi} by a factor of $-1$. 

The main goal of this section is to (1) show that these notions of generalised eigenvalue correspond to the notions we have defined in a more general context, and (2) show that the positivity or negativity of these eigenvalues correspond to different notions of global extinction or survival. The latter will extend the work of \cite{Englander2004}, who showed that $\lambda_1>0$ if and only if the corresponding branching diffusion enjoys local survival. Later we will connect these notions to the existence and uniqueness of stationary solutions of the corresponding FKPP equation.


We associate to this a branching process $\bar X_t=(X^1_t,\ldots,X^{N_t}_t)$ as follows. We take $\sigma$ such that $\sigma(x)\sigma^{\dagger}(x)=a(x)$. Each particle $X^i_t$ evolves in between branching and killing events independently as a weak solution of the SDE
\[
dX_t=b(x_t)dt+\sigma(X_t)dW_t,
\]
solutions of which are unique in law (by \cite[Chapter 7]{Stroock1997}). We then take non-negative, bounded and Borel measurable
\[
r(x),\; p_n(x), \quad n\geq 0,
\]
such that 
\[
\sum_np_n(x)\equiv 1\quad\text{and}\quad r(x)\sum_{n\geq 0}(n-1)p_n(x)\equiv c(x).
\]
Each particle in our branching process branches locally (i.e. the children are born at the parents' location) at rate $r(x)$, so that $n$ offspring produced independently with probability $p_n(x)$ and the parent particle is killed. If $n=0$ offspring are produced, this corresponds to \textit{soft killing}. We impose the additional assumption that the offspring distribution $(p_n(x))_{n\geq 0}$ is uniformly integrable, meaning that
\[
\sup_x\sum_{n\geq m}np_n(x)\ra 0
\]
as $m\ra\infty$. We kill particles instantaneously upon contact with $E^c$. Note that the boundedness of the coefficients precludes particles from escaping to infinity. We have now defined the branching process $(\bar X_t)_{t\geq 0}$, corresponding to which is the expectation semigroup
\[
P_t(x,\cdot):=\expE_x\Big[\sum_{i=1}^{N_t}\delta_{X_t}(\cdot)\Big].
\]

Theorem \ref{theo:dichotomy Harris assumptions} ensures that either 
\begin{equation}\label{eq:PDE global survival everywhere or nowhere}
    \text{$\Pm_x(\glSurv)>0$ for all $x\in E$ or $\Pm_x(\text{global extinction})=0$ for all $x\in E$.}
\end{equation} 
We recall that this requires establishing a strong Feller and topological irreducibility assumptions (Assumption \ref{assum:Strong Feller top irr}). The former is satisfied e.g. as an immediate consequence of the very general result \cite{Krylov2021}. Note that whereas \cite{Krylov2021} provides the strong Feller property for diffusions without killing, the presence of a killing boundary doesn't affect the strong Feller property by the same argument as in \cite[Proof of Lemma 6.4]{Fan2023}. Topological irreducibility is satisfied, for instance, by the Stroock-Varadhan support Theorem \cite[Theorem 3.1]{Stroock1972} (using here the connectedness of $E$).

Moreover we have a similar fact for $\limsup_{t\ra\infty}\frac{1}{t}\ln \expE_x[N_t]$.
\begin{theo}\label{theo:limsup mass doesn't depend on x}
We assume Assumption \ref{assum:uniformly elliptic standing assumptions}. Then $\limsup_{t\ra\infty}\frac{1}{t}\ln \expE_x[N_t]$ doesn't depend upon $x\in E$.
\end{theo}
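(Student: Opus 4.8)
The plan is to exploit the parabolic Harnack inequality for the expectation semigroup, which in the purely local branching setting is a Feynman–Kac semigroup of the uniformly elliptic operator $L$ as in \eqref{eq:Pt_FK}. Concretely, write $u(x,t) = \expE_x[N_t] = (P_t 1)(x)$. By \eqref{eq:linear_semigroup} (or rather its local-branching specialisation \eqref{eq:Pt_FK}) and the assumed Hölder regularity of the coefficients together with boundedness of $r$ and the offspring mean $m$, $u$ solves a uniformly parabolic equation $\partial_t u = L u$ on $E \times (0,\infty)$ in the appropriate weak/classical sense, with $u \ge 0$. The key input will be the interior parabolic Harnack inequality: for any compact $K \Subset E$ and any $0 < \theta_1 < \theta_2 < \theta_3$, there is a constant $C = C(K,\theta_i,L) < \infty$ such that $\sup_{x \in K} u(x, t + \theta_1) \le C \inf_{y \in K} u(y, t + \theta_3)$ for all $t \ge 0$ (the constant is uniform in $t$ because $L$ does not depend on time). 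Iterating this along a chain of overlapping compact sets connecting any two given points $x, y \in E$ — using connectedness of $E$ — yields a single constant $C_{x,y} < \infty$ with
\[
  u(x, t) \le C_{x,y}\, u(y, t + \Delta)
\]
for all $t \ge 0$ and some fixed time-shift $\Delta > 0$ depending on the length of the chain.

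Given such a comparison, the conclusion is immediate. Taking $\frac1t \log$ of both sides,
\[
  \frac1t \log u(x,t) \le \frac1t \log C_{x,y} + \frac{t+\Delta}{t} \cdot \frac{1}{t+\Delta} \log u(y, t+\Delta),
\]
and letting $t \to \infty$ gives
\[
  \limsup_{t\to\infty} \frac1t \log \expE_x[N_t] \le \limsup_{t\to\infty} \frac1t \log \expE_y[N_t].
\]
By symmetry (swapping the roles of $x$ and $y$) the two $\limsup$'s coincide, which is the assertion. Combined with Theorem~\ref{th:lambda'_lambda''}, this also shows $\lambda''_{c,x} = \lambda''_c$ for all $x \in E$, as anticipated in the discussion preceding the statement.

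The main obstacle, and the step requiring the most care, is justifying that $u(x,t) = (P_t 1)(x)$ is genuinely a (sufficiently regular) solution of the parabolic equation to which the Harnack inequality applies, and that the Harnack constants can be taken uniformly in $t$. For regularity one should invoke the parabolic Feynman–Kac representation referred to in the excerpt (the analogue of \cite[Theorem 9, p.69]{Friedman1964}), under the standing Hölder-continuity Assumption~\ref{assum:uniformly elliptic standing assumptions}: since $1 \in B_b(E)$ and the killing/branching potential $c(x) = r(x)(m(x)-1)$ is bounded, $P_t 1$ lies in $C^{2,1,\alpha}_{\loc}(E\times(0,\infty))$ and solves $\partial_t u = Lu$ classically there. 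Uniformity of the Harnack constant in $t$ is then automatic from the time-homogeneity of $L$ — one applies the standard parabolic Harnack inequality on the cylinder $K \times (t, t+\theta_3)$ and the constant depends only on $K$, the $\theta_i$, the ellipticity bounds and the sup-norms of the coefficients, none of which involve $t$. A minor point to address is the behaviour near $\partial E$: since we only ever chain through compact subsets of the open domain $E$ and never need information up to the boundary, the interior Harnack inequality suffices and no boundary regularity of $u$ is invoked.
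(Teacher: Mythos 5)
Your proposal is correct and follows essentially the same route as the paper: both hinge on the regularity of the expected mass as a classical solution of the associated linear parabolic equation (via the parabolic Feynman--Kac result, Theorem~\ref{theo:parabolic Feynman-Kac}) together with the interior parabolic Harnack inequality of Krylov--Safonov, whose constant is uniform in the large-time parameter by time-homogeneity. The only differences are presentational: the paper fixes a terminal time $T$, applies Harnack to $\expE_{x,t}[N_T]$ at small backward times with a $T$-independent constant and then uses the add-a-constant branching/killing-rate reduction, whereas you apply Harnack on cylinders at large times, chain through compacts using connectedness of $E$, and take logarithms directly.
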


For these reasons and in light of the results of Section \ref{sec:results_continuous_time}, we don't consider in this section eigenvalues which depend upon $x\in E$.

We now connect the Berestycki-Rossi eigenvalues given in \eqref{eq:Berestycki-Rossi eigenvalues elliptic section} with the eigenvalues we have defined in Section~\ref{sec:results_continuous_time}.
\begin{theo}\label{theo:PDE equality of eigenvalues different notion}
We assume Assumption \ref{assum:uniformly elliptic standing assumptions}. Then $\lambda_1'(L)=\lambda_c'((P_t)_{t\geq 0})$ and $\lambda_1''(L)=\lambda_c''((P_t)_{t\geq 0})$.
\end{theo}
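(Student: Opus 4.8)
\emph{Strategy.} The plan is to prove both identities by passing back and forth between the differential inequalities defining $\lambda_1'(L),\lambda_1''(L)$ and the semigroup inequalities defining $\lambda_c',\lambda_c''$. In this setting $(P_t)_{t\ge0}$ is the Feynman--Kac semigroup of $L$ with the diffusion killed upon leaving $E$, i.e.\ $P_tf(x)=\expE_x[f(X^0_t)\exp(\int_0^t c(X^0_s)\,ds)\Ind(t<\tau)]$, where $X^0$ is the diffusion generated by $L-c$ and $\tau$ its exit time, cf.~\eqref{eq:Pt_FK}. The two tools are Theorem~\ref{theo:parabolic Feynman-Kac} (the parabolic Feynman--Kac theorem) and interior parabolic regularity; this is where local Hölder continuity of the coefficients enters. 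There are four inclusions to prove: two ``soft'' ones (PDE inequality $\Rightarrow$ eigenvalue inequality) and two ``hard'' ones (eigenvalue inequality $\Rightarrow$ PDE inequality).

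\emph{Soft direction.} To get $\lambda_c'\ge\lambda_1'$: given $\lambda<\lambda_1'$ and a test function $u\in W^{2,d}_{\loc}(E)\cap L^\infty(E)$ with $(L-\lambda)u\ge0$, $u>0$ on $E$ and $u\to0$ at $\partial E$, I would normalise to $0<u\le1$ and apply the Itô--Krylov formula to $s\mapsto e^{-\lambda s}\exp(\int_0^s c(X^0_r)\,dr)\,u(X^0_s)$ stopped at $t\wedge\tau$: this is a nonnegative submartingale whose contribution on $\{\tau\le t\}$ vanishes thanks to the boundary decay, giving $P_tu\ge e^{\lambda t}u$ for all $t$, hence $\lambda_c'\ge\lambda$; letting $\lambda\uparrow\lambda_1'$ yields the inclusion. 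Symmetrically, for $\lambda_c''\le\lambda_1''$: given $\lambda>\lambda_1''$ and $u\in W^{2,d}_{\loc}(E)$ with $(L-\lambda)u\le0$ and $\inf_E u>0$, I would normalise to $u\ge1$ (note $u$ is continuous, hence finite, by Sobolev embedding) and observe that the same stopped process is now a nonnegative \emph{supermartingale} — here positivity of $u$ plays the role of the boundary condition — so $P_tu\le e^{\lambda t}u$ and $\lambda_c''\le\lambda$. Both steps use Theorem~\ref{theo:parabolic Feynman-Kac} for the validity of the Itô--Krylov formula for $W^{2,d}_{\loc}$ functions of the diffusion and for the treatment of the killing boundary.

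\emph{Hard direction.} Here the point is to manufacture a sufficiently regular test function from a merely measurable one, and the two eigenvalues require different constructions. For $\lambda_1'\ge\lambda_c'$: given $\lambda<\lambda_c'$ and $u\in pB(E)$ with $0<u\le1$ and $P_tu\ge e^{\lambda t}u$, I would take $w:=P_1u$. By interior parabolic regularity $w\in C^{2,\alpha}_{\loc}(E)$; moreover $w\in L^\infty(E)$ (since $w\le P_11\le e^{\|c\|_\infty}$), $w>0$ on $E$ (since $u>0$ and $\Pm_x(\tau>1)>0$), and $w\le P_11\to0$ at $\partial E$ by dominated convergence (the exit time tends to $0$ near the $C^{1,1}$ boundary). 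The semigroup property gives $P_tw=P_{t+1}u\ge e^{\lambda t}w$, and differentiating at $t=0^+$ — legitimate for the bounded $C^{2,\alpha}_{\loc}$ function $w$ — yields $(L-\lambda)w\ge0$, so $w$ witnesses $\lambda_1'\ge\lambda$. For $\lambda_1''\le\lambda_c''$: given $\lambda>\lambda_c''$, Theorem~\ref{th:lambda'_lambda''} and Proposition~\ref{prop:lambda dependence on x} give $\limsup_{t\to\infty}\frac1t\log P_t1(x)\le\lambda_c''<\lambda$ for every $x$, so $v(x):=\int_0^\infty e^{-\lambda t}P_t1(x)\,dt$ is finite everywhere, nonnegative, and locally bounded (by the Harnack inequality). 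As the $\lambda$-potential of the constant function $1$ it solves $(L-\lambda)v=-1$ on $E$, hence $v\in W^{2,d}_{\loc}(E)$ by interior elliptic regularity; taking $u:=v+\varepsilon$ with $\varepsilon\le(\|c\|_\infty+|\lambda|)^{-1}$ gives $\inf_E u\ge\varepsilon>0$ and $(L-\lambda)u=-1+(c-\lambda)\varepsilon\le0$, so $u$ witnesses $\lambda_1''\le\lambda$. Together the four inclusions give the two equalities; the degenerate cases (an eigenvalue equal to $\pm\infty$) follow from the same arguments read with the usual conventions for suprema and infima over the empty set.

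\emph{Main obstacle.} The crux is Theorem~\ref{theo:parabolic Feynman-Kac}. On the soft side it must supply the Itô--Krylov change-of-variables formula for $W^{2,d}_{\loc}$ functions of the (merely uniformly elliptic, bounded-coefficient) diffusion, together with the precise treatment of the absorbing boundary — the feature that separates $\lambda_1'$ from the classical $\lambda_1$, since a subsolution bounded away from $0$ at $\partial E$ would destroy the submartingale estimate. On the hard side it must license differentiating $t\mapsto P_tw$ at $t=0^+$ and recovering $Lw$; for $w=P_1u$ this $w$ is bounded and the step is classical Dynkin, but the general statement (needed if one later wants to relax the boundedness of the competitor) requires local growth control on $w$ and its first two derivatives. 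It is precisely for this parabolic input — an analogue of \cite[Theorem 9, p.69]{Friedman1964} — that local Hölder continuity of the coefficients is imposed.
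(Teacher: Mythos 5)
Your proposal is correct and follows essentially the same route as the paper: the ``soft'' directions via an Itô--Krylov (sub/super)martingale argument for the Feynman--Kac weighted process (the paper phrases this equivalently via the branching sum $\sum_i u(X^i_t)$ and a constant-rate shift in place of carrying $\lambda$), and the ``hard'' directions via the smoothed competitor $P_1u$ with boundary decay controlled by $P_t1$, respectively via the resolvent-type function $\int_0^\infty e^{-\lambda t}P_t1\,dt+\varepsilon$ together with the elliptic/parabolic regularity supplied by Theorems~\ref{theo:parabolic Feynman-Kac} and~\ref{theo:elliptic Feynman-Kac}. The remaining differences (keeping the spectral parameter $\lambda$ explicit, and verifying $(L-\lambda)(v+\varepsilon)\le 0$ directly rather than recycling the $\varepsilon$-choice from the proof of Theorem~\ref{th:lambda'_lambda''}) are cosmetic.
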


\subsection*{Global survival and extinction of branching diffusions}

The following are immediate corollaries of Theorem \ref{theo:PDE equality of eigenvalues different notion} and the results of section \ref{sec:results_continuous_time}.

\begin{theo}\label{theo:elliptic regularity limsup of mass}
We assume Assumption \ref{assum:uniformly elliptic standing assumptions}. Then
\begin{equation}\label{eq:elliptic lim sup expectation}
\lambda'_1(L)\leq \liminf_{t\ra\infty}\frac{1}{t}\ln\expE_x[N_t]\leq  \limsup_{t\ra\infty}\frac{1}{t}\ln\expE_x[N_t]= \lambda''_1(L).
\end{equation}
for all $x\in E$. 
\end{theo}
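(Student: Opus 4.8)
The plan is to deduce the statement by a purely formal assembly of results from Section~\ref{sec:results_continuous_time} together with the identification of eigenvalues in Theorem~\ref{theo:PDE equality of eigenvalues different notion} and the $x$-independence of the exponential growth rate of $\expE_x[N_t]$ provided by Theorem~\ref{theo:limsup mass doesn't depend on x}.

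First I would apply Theorem~\ref{theo:PDE equality of eigenvalues different notion} to rewrite $\lambda_1'(L) = \lambda_c'((P_t)_{t\ge 0})$ and $\lambda_1''(L) = \lambda_c''((P_t)_{t\ge 0})$, where $(P_t)_{t\ge 0}$ is the expectation semigroup of the branching diffusion. By Proposition~\ref{prop:lambda dependence on x}, $\lambda_c' = \inf_{y\in E}\lambda'_{c,y} \le \lambda'_{c,x}$ and $\lambda_c'' = \sup_{y\in E}\lambda''_{c,y} \ge \lambda''_{c,x}$ for the fixed $x\in E$. Combining these with Theorem~\ref{th:lambda'_lambda''}, which asserts that $\lambda'_{c,x}\le\liminf_{t\to\infty}\frac{1}{t}\ln\expE_x[N_t]\le\limsup_{t\to\infty}\frac{1}{t}\ln\expE_x[N_t]=\lambda''_{c,x}$, one immediately obtains the chain
\[
\lambda_1'(L)=\lambda_c'\le\lambda'_{c,x}\le\liminf_{t\to\infty}\frac{1}{t}\ln\expE_x[N_t]\le\limsup_{t\to\infty}\frac{1}{t}\ln\expE_x[N_t]=\lambda''_{c,x}\le\lambda_c''=\lambda_1''(L).
\]
This establishes everything in \eqref{eq:elliptic lim sup expectation} except that the final $\limsup$ \emph{equals} $\lambda_1''(L)$, rather than merely being bounded above by it.

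To close this last gap I would invoke Theorem~\ref{theo:limsup mass doesn't depend on x}: under Assumption~\ref{assum:uniformly elliptic standing assumptions}, the quantity $\limsup_{t\to\infty}\frac{1}{t}\ln\expE_y[N_t]$ does not depend on $y\in E$. Since by Theorem~\ref{th:lambda'_lambda''} this quantity equals $\lambda''_{c,y}$, it follows that $\lambda''_{c,y}$ is independent of $y$, whence $\lambda''_{c,x}=\sup_{y\in E}\lambda''_{c,y}=\lambda_c''=\lambda_1''(L)$ by Proposition~\ref{prop:lambda dependence on x} and Theorem~\ref{theo:PDE equality of eigenvalues different notion}. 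Substituting back into the chain above yields $\limsup_{t\to\infty}\frac{1}{t}\ln\expE_x[N_t]=\lambda_1''(L)$, which completes the proof.

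There is no genuine obstacle at this stage: the argument is a formal consequence of previously established statements. The only substantive input beyond Theorem~\ref{theo:PDE equality of eigenvalues different notion} is Theorem~\ref{theo:limsup mass doesn't depend on x}, and it is there — in its proof, which rests on the Harnack inequality for uniformly elliptic operators — that the ellipticity and regularity hypotheses of Assumption~\ref{assum:uniformly elliptic standing assumptions} are actually used; the remainder of the present proof is valid for arbitrary continuous-time branching Markov processes as in Section~\ref{sec:results_continuous_time}.
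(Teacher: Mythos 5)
Your proposal is correct and matches the paper's intended argument: the paper presents Theorem~\ref{theo:elliptic regularity limsup of mass} as an immediate corollary of Theorem~\ref{theo:PDE equality of eigenvalues different notion} together with the results of Section~\ref{sec:results_continuous_time} (Theorem~\ref{th:lambda'_lambda''}, Proposition~\ref{prop:lambda dependence on x}), with the Harnack-based Theorem~\ref{theo:limsup mass doesn't depend on x} supplying the $x$-independence needed to upgrade the final inequality to an equality, exactly as you do.
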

Therefore $\expE_x[N_t]\ra 0$ if $\lambda_1''(L)<0$, and only if $\lambda_1''(L)\leq 0$. 


This (1) provides an alternative probabilistic proof of the fact, proven in \cite{BerestyckiRossi} that $\lambda'(L)\leq \lambda''(L)$, and (2) establishes that if $\lambda'_1(L)=\lambda''_1(L)$ then $\lim_{t\ra\infty}\frac{1}{t}\ln \expE_x[N_t]$ exists, doesn't depend upon $x\in E$, and is equal to $\lambda_1'(L)=\lambda_1''(L)$. This result could readily be interpreted as a statement about the mass of solutions of the corresponding Fokker-Planck equation, in the obvious manner.

We now connect the positivity or negativity of $\lambda'(L)$ with global survival or extinction of the associated branching process. 
\begin{theo}\label{theo:PDE global extinction e-value relationship}
We assume Assumption \ref{assum:uniformly elliptic standing assumptions}. Then we have the following:
\begin{enumerate}
    \item if also $\lambda'_1(L)>0$, then $\Pm_x(\glSurv)>0$ for all $x\in E$, and
    \begin{equation}\label{eq:liminf of ln of mass}
    \Pm_x(\liminf_{t\ra\infty}\frac{1}{t}\ln N_t\geq \lambda)>0
    \end{equation}
    for all $x\in E$ and $0<\lambda<\lambda'_1(L)$;
    \item conversely if we also have $\lambda'_1(L)<0$, then $\Pm_x(\text{global extinction})=1$ for all $x\in E$;
\item we have that
\begin{equation}\label{eq:elliptic almost sure lim sup}
    \Pm_x(\limsup_{t\ra\infty}\frac{1}{t}\ln  N_t\leq \lambda_1''(L))=1.
\end{equation}
for all $x\in E$.
\end{enumerate}
\end{theo}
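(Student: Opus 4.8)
The plan is to read off all three statements from Theorem~\ref{theo:PDE equality of eigenvalues different notion}, which gives $\lambda_1'(L)=\lambda_c'((P_t)_{t\ge0})$ and $\lambda_1''(L)=\lambda_c''((P_t)_{t\ge0})$, combined with the general continuous-time results of Section~\ref{sec:results_continuous_time} applied to the branching diffusion $(\bar X_t)_{t\ge0}$ constructed above. The first preliminary step is to note that this branching diffusion satisfies the standing hypotheses of Section~\ref{sec:results_continuous_time}: $E$ is a Lusin space, being an open subset of $\R^d$; the killed diffusion $X^0$ is a right process; the branching rate $r$ is bounded; the offspring distribution is uniformly integrable by assumption; and, most importantly, the branching is \emph{purely local}, so that Part~2 of Theorem~\ref{th:lambda'_survival} and Part~3 of Corollary~\ref{cor:lambda'_survival} are available. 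One also records that Assumption~\ref{assum:Strong Feller top irr} holds, as discussed above the statement of the theorem, via \cite{Krylov2021} for the strong Feller property and the Stroock--Varadhan support theorem \cite{Stroock1972} for topological irreducibility; hence Theorem~\ref{theo:global extinction dichotomy} and the dichotomy stated just after it apply.

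For Part~1, if $\lambda_1'(L)>0$ then $\lambda_c'>0$, so Part~3 of Corollary~\ref{cor:lambda'_survival} yields $\Pm_x(\glSurv)>0$ for all $x\in E$ together with $\Pm_x(\liminf_{t\to\infty}\tfrac{1}{t}\log N_t\ge\lambda)>0$ for every $\lambda<\lambda_c'=\lambda_1'(L)$, which in particular covers all $0<\lambda<\lambda_1'(L)$ and hence gives \eqref{eq:liminf of ln of mass}. For Part~2, if $\lambda_1'(L)<0$ then $\lambda_c'<0$, and the dichotomy following Theorem~\ref{theo:global extinction dichotomy} (valid since the branching is purely local and Assumption~\ref{assum:Strong Feller top irr} holds) gives $\Pm_x(\glSurv)=0$ for all $x\in E$, i.e.\ almost sure global extinction from every starting point. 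For Part~3, Part~4 of Corollary~\ref{cor:lambda'_survival} gives $\Pm_x(\limsup_{t\to\infty}\tfrac{1}{t}\log N_t\le\lambda_c'')=1$ for every $x\in E$, and substituting $\lambda_c''=\lambda_1''(L)$ from Theorem~\ref{theo:PDE equality of eigenvalues different notion} yields \eqref{eq:elliptic almost sure lim sup}.

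Thus the proof is essentially bookkeeping: the substantive work has already been carried out in Theorem~\ref{theo:PDE equality of eigenvalues different notion} (the identification of the eigenvalues) and in Section~\ref{sec:results_continuous_time} (the probabilistic survival criteria). The only point requiring genuine attention --- and the place where one could slip --- is checking that the branching diffusion as constructed here really does meet every hypothesis of those general results, in particular that instantaneous killing at the $C^{1,1}$ boundary does not spoil the right-process property or the strong Feller property, and that it is precisely purely local branching which makes Part~2 of Theorem~\ref{th:lambda'_survival} (and hence the sharpness in Part~2 here) available.
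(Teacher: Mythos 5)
Your proposal is correct and follows exactly the paper's route: the paper states this theorem as an immediate corollary of Theorem~\ref{theo:PDE equality of eigenvalues different notion} together with the general continuous-time results of Section~\ref{sec:results_continuous_time} (Theorem~\ref{th:lambda'_survival}, Corollary~\ref{cor:lambda'_survival}, and the dichotomy from Theorem~\ref{theo:global extinction dichotomy} under Assumption~\ref{assum:Strong Feller top irr}), which is precisely the bookkeeping you carry out, including the verification that the branching diffusion has purely local branching and satisfies the strong Feller and irreducibility hypotheses.
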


The example given in Section \ref{section: example both global extinction and survival are possible at criticality} shows that if $\lambda_1'(L)=0$, then both global extinction and global survival are possible. 

\subsection*{Local survival and extinction}

Whereas this paper has principally converned itself with global survival, there is another notion of survival of a branching process - \textit{local survival} - which allows us to say more than \eqref{eq:liminf of ln of mass}. This notion of local survival will be important when we consider stationary solutions of the FKPP equation later in this section. Precisely, local survival is the following event.

\begin{defin}[Local survival]\label{defin:local suvival}
Given a non-empty, open, pre-compact subset $U\subset\subset E$, we say that the branching process $\bar X_t$ undergoes \textit{$U$-local survival}, denoted by $\locSurv_U$, if 
\[
\sup\{t:X^i_t\in U\text{ for some $1\leq i\leq N_t$}\}=+\infty.
\]
We say that $\bar X_t$ undergoas \textit{local survival}, denoted by $\locSurv$, if it undergoes $U$-local survival for every non-empty, open, pre-compact $U$, i.e. we define the event $\locSurv:=\bigcap_U\locSurv_U$. 
\end{defin}
It is not immediately clear that $\locSurv$ is an event, since the collection of all non-empty, open, pre-compact subsets $U\subset\subset E$ is uncountable. By taking a countable subset $\{x_1,x_2,\ldots\}$ and considering all balls $\bar B(x_i,r)\subset \subset E$ for $r\in \mathbb{Q}_{>0}$, we see that
\[
\locSurv=\bigcap_{B(x_i,r)\subset\subset E}\locSurv_{B(x_i,r)},
\]
hence $\locSurv$ is an event. Moreover under Assumption \ref{assum:uniformly elliptic standing assumptions}, the events $\locSurv_{U_1}$ and $\locSurv_{U_2}$ are equal up to $\Pm_x$-null sets for any two non-empty, open, pre-compact subsets $U_1,U_2\subset\subset E$, by a simple Borel-Cantelli argument. Therefore $\locSurv_U=\locSurv$ up to $\Pm_x$-null sets for any $x\in E$ and non-empty, open, pre-compact $U$. 

Furthermore we note that 
\begin{equation}
    \Pm_x(\locSurv)>0\quad\text{if and only if}\quad \Pm_{y}(\locSurv)>0
\end{equation}
for any $x,y\in E$, by exactly the same proof as the proof of the corresponding fact for global survival in \eqref{eq:PDE global survival everywhere or nowhere}. Kyprianou and Engl\"ander showed that $\Pm_x(\locSurv)>0$ if and only if $\lambda(L)>0$ \cite{Englander2004}, under additional assumptions - in particular they consider only binary branching with branching rate that is strictly positive somewhere, and that $L$ can be put into the divergence form $\frac{1}{2}\nabla \cdot(a\nabla)+b\nabla$. Whilst we believe the results of Kyprianou and Engl\"ander should be extendable to the setting considered here, we will not attempt to do that here. However we point out that, for $L$ such that both our results and those of Kyprianou-Englander can be applied, we have the following probabilistic proof that $\lambda(L)\leq \lambda'(L)$ (proven by analytic methods in \cite{BerestyckiRossi}). Suppose that $\lambda(L)>0$, so that $\Pm_x(\locSurv)>0$ for the corresponding binary branching process. Since local survival implies global survival, $\Pm_x(\glSurv)>0$ and hence $\lambda'(L)\geq 0$. Since we can always add or subtract a constant from $L$ resulting in adding or subtracting the same value from the eigenvalues, this suffices to prove 
\[
\lambda(L)\leq \lambda'(L).
\]
Nevertheless the proof that $\lambda(L)<0$ implies local extinction is very straightforward to prove in our setting. We will provide a proof of this, whish will be useful in our proofs (see Proposition \ref{prop:nve lambda implies local survival}).

The results of Kyprianou and Engl\"ander in fact allow us to establish an equivalence between local survival and the concept of ``persistence'' of FKPP equations, under the assumptions they impose. We will elaborate on this when we consider FKPP equations later on (see Theorem \ref{theo:persistence local survival equivalence}).

The following provides a convenient characterisation of local survival, which shall be useful later when we come to study stationary solutions of the FKPP equation. We say that the branching process undergoes $r$-local survival, denoted by $\rlocSurv$, if it survives globally and 
\begin{equation}
\liminf_{t\ra\infty}\inf_{1\leq i\leq N_t}\lvert X^i_t\rvert<\infty.
\end{equation}
Clearly local survival implies $r$-local survival, but the converse is not immediate.

In order for the branching process to undergo both $r$-local survival and local extinction, it would need to return to $B(x_0,r)\cap E$ infinitely often for some $x_0\in E$, $r>0$, but do so by getting closer and closer to the boundary $\partial E$ so as not to visit any given  compact subset infinitely often. Whilst it seems intuitively obvious that this should be impossible (up to $\Pm_x$-null sets), this is surprisingly tricky to prove. In the end we have only been able to prove this under the additional assumption that the drift is identically equal to $0$. This assumption is necessary as our proof makes use of the parabolic boundary Harnack inequality for non-divergence form operators \cite{Torres-Latorre2024}, and this has only been proven for identically $0$ drift. Moreover we expect this equivalence to be valid under little to no assumptions on the boundary regularity, but this is not the case for boundary Harnack inequalities \cite{Bass1991,Torres-Latorre2024}.
\begin{theo}\label{theo:local survival equivalence}
We assume Assumption \ref{assum:uniformly elliptic standing assumptions}, and that $b\equiv 0$. Then the events $\locSurv$ and $\rlocSurv$ are equal up to $\Pm_x$-null sets, for any $x\in E$.
\end{theo}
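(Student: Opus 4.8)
The inclusion $\locSurv\subseteq\rlocSurv$ is immediate from the definitions, so the content is the reverse inclusion. If $\Pm_x(\glSurv)=0$ for all $x$ (equivalently, by Theorem~\ref{theo:global extinction dichotomy}, for some $x$) there is nothing to prove, so assume $\Pm_z(\glSurv)>0$ for every $z\in E$; then $z\mapsto\Pm_z(\glSurv)$ is continuous (by standard regularity for the semilinear elliptic equation it satisfies in $E$), hence bounded below on compact subsets of $E$. On $\rlocSurv$ we have $\liminf_{t\to\infty}\min_{1\le i\le N_t}\lvert X^i_t\rvert<\infty$, so for every sufficiently large integer $R$ a particle lies in $K_R:=\bar B(0,R)\cap E$ at arbitrarily large times. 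Since $\locSurv^c=\locSurv_U^c$ up to $\Pm_x$-null sets for any fixed non-empty precompact open $U\subset\subset E$, and $\locSurv_U^c=\bigcup_{m\in\N}\{\text{no particle enters }U\text{ at any time }>m\}$, it suffices --- decomposing over $R,m\in\N$ --- to fix $R$, a precompact open $U_0=U_0(R)\subset\subset E$ (to be chosen), and $m$, and to prove that
\[
D:=\glSurv\cap\{\text{a particle lies in }K_R\text{ at arbitrarily large times}\}\cap\{\text{no particle enters }U_0\text{ at any time }>m\}
\]
is $\Pm_x$-null for every $x\in E$.

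The heart of the matter is the uniform estimate: \emph{there exist a precompact open $U_0\subset\subset E$ and $c_R>0$ such that}
\[
\Pm_y\big(\{\text{some particle lies in }U_0\text{ at some time in }[0,1]\}\cap\glSurv\big)\ \ge\ c_R\,\Pm_y(\glSurv)\qquad\text{for all }y\in K_R. \tag{$\star$}
\]
To prove $(\star)$ one splits over $\mathrm{dist}(y,\partial E)$. If $\mathrm{dist}(y,\partial E)\ge\delta_0$ for a fixed small $\delta_0>0$, then $y$ ranges over a compact subset of $E$; by connectedness of $E$, interior parabolic Harnack chaining and compactness give $\Pm_y(X^0_{1/2}\in U_0,\ \tau_{\partial E}>\tfrac12)\ge c>0$ uniformly, for any $U_0$ with non-empty interior, and adjoining the probability $\ge e^{-\lVert r\rVert_\infty/2}$ of no branching in $[0,\tfrac12]$ and then $\inf_{z\in U_0}\Pm_z(\glSurv)>0$ for survival of the resulting single particle yields $(\star)$ here (as $\Pm_y(\glSurv)\le1$). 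If $\mathrm{dist}(y,\partial E)<\delta_0$, we use that, since branching is purely local, $\Pm_y(\glSurv)\le\Pm_y(N_1\ge1)\le\expE_y[N_1]=P_11(y)\le e^{\lVert c\rVert_\infty}\Pm_y(\tau_{\partial E}>\tfrac12)$ by the Feynman--Kac formula~\eqref{eq:Pt_FK}; so it suffices to bound $\Pm_y(X^0_{1/2}\in U_0,\ \tau_{\partial E}>\tfrac12)$ below by a fixed multiple of $\Pm_y(\tau_{\partial E}>\tfrac12)$, i.e.\ to show that, conditionally on the driving diffusion surviving up to time $\tfrac12$, it lies in $U_0$ at time $\tfrac12$ with probability bounded below, uniformly over $y\in K_R$ near $\partial E$. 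This is a boundary-regularity statement: the functions $y\mapsto\Pm_y(X^0_{1/2}\in U_0,\tau_{\partial E}>\tfrac12)$ and $y\mapsto\Pm_y(\tau_{\partial E}>\tfrac12)$ solve the same backward parabolic equation and vanish continuously on $\partial E$, and --- taking $U_0$ to be a suitable precompact set containing, for each $\xi$ in a neighbourhood of $\partial E\cap K_R$, an interior ``landing region'' into which the diffusion conditioned to survive from near $\xi$ enters with probability bounded below (such a region exists of bounded size since $b\equiv0$ and the coefficients are bounded, so the diffusion cannot travel far in time $\tfrac12$) --- the \emph{parabolic boundary Harnack inequality for non-divergence-form operators with identically zero drift}~\cite{Torres-Latorre2024}, complemented by interior Harnack chaining over the bounded region around $K_R$, shows their ratio is bounded below near $\partial E$. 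Adjoining again the no-branching and then-survival factors gives $(\star)$. This boundary Harnack input is the crux of the proof and the precise reason the hypothesis $b\equiv0$ is imposed, since such inequalities for non-divergence-form operators are presently available only for vanishing drift.

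With $(\star)$ in hand we conclude by a conditional Borel--Cantelli argument along the successive returns to $K_R$ of a \emph{surviving} genealogical line. Passing to the process conditioned on $\glSurv$ and using the spine (backbone) decomposition, the surviving skeleton is itself a branching Markov process in $E$ with purely local branching in which every particle has an infinite line of descent, dressed by independent sub-critical sub-trees; since sub-critical sub-trees are almost surely finite and have only Gaussian spatial spread, a routine first-moment estimate (summing, over the skeleton particles, the probability that the dressing tree they carry ever reaches $K_R$ from their current position) shows that on $D$ some skeleton particle lies in $K_R$ at arbitrarily large times, for otherwise the near-origin excursions of the full process would occur only finitely often. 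Moreover $(\star)$ together with $\inf_{U_0}\Pm_\cdot(\glSurv)>0$ yields its analogue for the skeleton process: $\widehat\Pm_y(\text{some skeleton particle lies in }U_0\text{ at some time in }[0,1])\ge\widehat c_R>0$ for all $y\in K_R$ (the relevant skeleton sub-process surviving globally by construction, so the factor $\Pm_\cdot(\glSurv)$ is absorbed). Now let $\theta_1<\theta_2<\cdots$, with $\theta_1>m$ and $\theta_{j+1}\ge\theta_j+1$, be the successive times at which the tracked skeleton line lies in $K_R$; on $D$ these are finite stopping times with $\theta_j\to\infty$. By the skeleton analogue of $(\star)$ applied at the position of the line at time $\theta_j$, and the branching and strong Markov properties, $\Pm(H_j\mid\calF_{\theta_j})\ge\widehat c_R$ on $D$, where $H_j:=\{\text{some particle lies in }U_0\text{ at some time in }[\theta_j,\theta_j+1]\}\in\calF_{\theta_j+1}\subseteq\calF_{\theta_{j+1}}$. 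Since then $\sum_j\Pm(H_j\mid\calF_{\theta_j})=\infty$ on $D$, the conditional Borel--Cantelli lemma forces $H_j$ to occur for infinitely many $j$, $\Pm_x$-a.s.\ on $D$; but on $D$ no particle enters $U_0$ after time $m\le\theta_1$, a contradiction. Hence $\Pm_x(D)=0$, which completes the proof.

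The main obstacle is the boundary Harnack estimate $(\star)$ near $\partial E$: it is exactly the statement ruling out the ``boundary-hugging'' escape scenario, and it is the sole reason for assuming $b\equiv0$. The second, more routine but still technical, point is the spine-decomposition bookkeeping needed to replace an arbitrary near-origin particle by one lying on a globally surviving line, so that the conditional probabilities in the Borel--Cantelli step are bounded below rather than merely positive.
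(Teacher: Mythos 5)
Your reduction and your estimate $(\star)$ are sound in spirit, and they isolate the same crux as the paper: the parabolic boundary Harnack inequality of \cite{Torres-Latorre2024} (hence $b\equiv 0$) is what rules out the boundary-hugging scenario, exactly as in the paper's comparison of the two hitting events near $\partial E$. The divergence is in how you convert $(\star)$ into a contradiction: the paper never tracks surviving lines at all — it builds, over successive ``return times'' to a sphere $\partial B(0,r_2)$, the exponential process $M_n=\exp\bigl(\sum_{k\le n-1}(\Ind(A_k)-\Ind(B_k))\bigr)$ comparing ``come back near the origin outside $U$'' with ``enter $U$'', shows via boundary Harnack that this is a supermartingale, and deduces that $A_n$ infinitely often forces $B_n$ infinitely often; this sidesteps the fact that the returning particles may have vanishing survival probability. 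You instead pass to the skeleton of prolific particles so that the conditional probabilities in a Borel--Cantelli argument are bounded below, and this is where your proof has a genuine gap.

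The gap is the claim that on $D$ some \emph{skeleton} particle lies in a bounded set at arbitrarily large times, which you justify by ``a routine first-moment estimate'' over the dressing trees. This estimate is not routine and, as sketched, does not close. On the complementary event the skeleton's distance from $K_R$ tends to infinity with \emph{no rate}, while the number of immigrated dressing trees along the skeleton can grow exponentially in time; a sum of the form $\sum(\text{number of immigration events up to time }t)\times\Pm(\text{dressing tree from distance }d(t)\text{ reaches }K_R)$ therefore need not converge, since even an exponential decay in $d(t)$ cannot beat an exponentially growing prefactor when $d(t)$ grows slowly. Moreover the dressing trees are the original process conditioned on extinction, whose lifetime is unbounded, so their spatial range is not ``Gaussian'' in any directly usable sense, and near $\partial E$ (where $\Pm_\cdot(\glSurv)$ degenerates) uniform control of their reach is delicate — indeed, ruling out that late visits to $K_R$ are made only by doomed side-trees is essentially the content of the theorem itself, so it cannot be dismissed as bookkeeping. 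A secondary, fixable but glossed point: the return times $\theta_j$ of a prolific line are not stopping times for the natural filtration of the branching process (being prolific depends on the future), so the conditional Borel--Cantelli must be run in the enriched skeleton filtration, with the skeleton analogue of $(\star)$ proved for the $h$-transformed (skeleton) dynamics and the skeleton decomposition itself constructed in this generality (spatially varying branching, soft killing, killing at $\partial E$) — none of which is supplied. Replacing the skeleton step by the paper's supermartingale over return pairs removes both difficulties.
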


The notion of local survival allows us to extend \eqref{eq:liminf of ln of mass} as follows.
\begin{theo}\label{theo:local survival gives liminf of number of particles}
We assume Assumption \ref{assum:uniformly elliptic standing assumptions}, that $\lambda_1'(L)>0$, and $\Pm_x(\locSurv)>0$ for some given $x\in E$. Then 
\begin{equation}
    \Pm_x(\liminf_{t\ra\infty}\frac{1}{t}\ln N_t\geq \lambda_1'(L) \mid \locSurv)=1.
\end{equation}
\end{theo}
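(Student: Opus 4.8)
The plan is to fix $\lambda<\lambda_1'(L)$ (we may assume $\lambda>0$), write $G_\lambda:=\{\liminf_{t\to\infty}\tfrac1t\ln N_t\geq\lambda\}$, and prove that $\Pm_x(\locSurv\setminus G_\lambda)=0$; applying this to a sequence $\lambda_n\uparrow\lambda_1'(L)$ and using $\bigcap_n G_{\lambda_n}=\{\liminf_{t\to\infty}\tfrac1t\ln N_t\geq\lambda_1'(L)\}$ then gives the theorem. The argument rests on two ingredients.

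The first is a \emph{uniform lower bound}: for every compact $K\subset E$ there is $p_0=p_0(K,\lambda)>0$ with $\Pm_y(G_\lambda)\geq p_0$ for all $y\in K$. By Theorem~\ref{theo:PDE global extinction e-value relationship}(1) we already know that $\phi:=\Pm_\cdot(G_\lambda)>0$ on $E$. Since $G_\lambda$ depends only on the population from time $1$ on, conditioning on the ancestral particle neither branching nor being killed on $[0,1]$ (which, given its driving path, has conditional probability $\geq e^{-\|r\|_\infty}$ because $r$ is bounded) gives $\phi(y)\geq e^{-\|r\|_\infty}(P^0_1\phi)(y)$. As the killed uniformly elliptic diffusion has a transition density that is bounded below on $K\times K'$ for any fixed ball $K'\subset\subset E$ (Aronson/parabolic Harnack estimates) and $\int_{K'}\phi>0$, the bound follows.

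The second ingredient is a conditional Borel--Cantelli argument along \emph{disjoint} subtrees rooted in a fixed ball. Fix a ball $B\subset\subset E$; by the remarks after Definition~\ref{defin:local suvival}, on $\locSurv$ the process has a particle in $B$ at times tending to $\infty$. Define stopping times $\sigma_1<\sigma_2<\cdots$ and labels $i_k$ by taking $\sigma_{k+1}$ to be the first time $t\geq\sigma_k+1$ at which some particle lies in $B$ \emph{and is not a descendant of} $i_\ell$-at-$\sigma_\ell$ for any $\ell\leq k$, and $i_{k+1}$ the least such label. By construction the subtrees $S_k$ rooted at $(\sigma_k,i_k)$ are pairwise disjoint, so by the branching property, conditionally on $\mathcal F_{\sigma_k}$ and on the whole evolution of $S_1,\dots,S_{k-1}$, the subtree $S_k$ is a fresh branching process from $X^{i_k}_{\sigma_k}\in B$; hence $\Pm\big(H_k\mid \mathcal F_{\sigma_k}\vee\sigma(S_1,\dots,S_{k-1})\big)\geq p_0(\bar B,\lambda)$ on $\{\sigma_k<\infty\}$, where $H_k:=\{S_k\in G_\lambda\}$. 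Since a fast-growing subtree forces the whole population to grow at rate $\geq\lambda$ we have $H_k\subseteq G_\lambda$, so iterating the conditional bound yields
\[
\Pm_x\big(G_\lambda^c\cap\{\sigma_K<\infty\}\big)\ \leq\ \Pm_x\Big(\bigcap_{k\leq K}H_k^c\cap\{\sigma_K<\infty\}\Big)\ \leq\ (1-p_0(\bar B,\lambda))^K\ \longrightarrow\ 0\quad(K\to\infty).
\]
Hence $\Pm_x\big(G_\lambda^c\cap\{\sigma_k<\infty\ \forall k\}\big)=0$, and it remains to show that, up to $\Pm_x$-null sets, $\locSurv\cap G_\lambda^c\subseteq\{\sigma_k<\infty\ \forall k\}$.

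This last inclusion is the main obstacle. The recursion can fail to produce a new $\sigma_{k+1}$ only if, after some time, every visit of the whole population to $B$ is by a descendant of an already chosen root, i.e.\ lies in $\bigcup_{\ell\leq k}S_\ell$; then some $S_\ell$ locally survives while lying in $G_\lambda^c$ (being contained in the whole population). Running the same decomposition inside such an $S_\ell$, the procedure can stall at every level only if there is a single infinite line of descent visiting $B$ at times tending to $\infty$ which produces a branching event with $\geq 2$ offspring inside $B$ only finitely often. One rules this out by choosing $B$ well: since $\lambda_1'(L)>0$ forces $r(\cdot)\sum_{n\geq2}p_n(\cdot)$ to be positive on a set of positive Lebesgue measure (otherwise no offspring would ever be created and $\lambda_1'(L)=\lambda_c'\leq\lambda_c''\leq0$, as $P_t1\leq1$), one can take $B$ so that, uniformly over starting points $y\in B$, there is probability $\geq\kappa>0$ of a $\geq 2$-offspring branching event before any particle leaves $B$. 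Because the branching is local, such an infinite line of descent is a single uniformly elliptic diffusion path; at each of its infinitely many entrances into $B$ it produces such an event with conditional probability $\geq\kappa$, and these trials are conditionally independent, so by Borel--Cantelli it does so infinitely often, a contradiction. (The subcase in which the $\geq2$-offspring events of the line occur only outside $B$ is absorbed into the same recursion: on $\locSurv\cap G_\lambda^c$ each subtree branched off the line must then locally die, whereas with positive probability one would locally survive and be picked up by the recursion as a new $S_{k+1}$.) Therefore $\sigma_k<\infty$ for all $k$ on $\locSurv\cap G_\lambda^c$ up to null sets, and combining with the display above gives $\Pm_x(\locSurv\cap G_\lambda^c)=0$, as required.
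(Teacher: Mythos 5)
Your first ingredient (the uniform bound $\Pm_y(G_\lambda)\geq p_0$ on compacts, via $\phi\geq e^{-\|r\|_\infty}P^0_1\phi$ and interior heat-kernel lower bounds) is fine, as are the deterministic inclusion $H_k\subseteq G_\lambda$ and the within-level iteration over genuinely disjoint subtrees. The proof breaks at exactly the step you call the main obstacle: showing that, up to null sets, $\locSurv\cap G_\lambda^c\subseteq\{\sigma_k<\infty\ \forall k\}$. Because each new root must be a non-descendant of all previous roots, the recursion can genuinely stall: on $\locSurv$ it is perfectly possible that from some time on every particle visiting $B$ descends from an already chosen root. Your repair — recurse inside the chosen subtree that locally survives — destroys the structure the iteration relies on: that subtree is selected according to its own future (it is picked \emph{because} it locally survives), so conditionally it is not a fresh branching process started from a point of $\bar B$, the bound $p_0$ and the estimate $(1-p_0)^K$ do not apply across levels, and the subtrees produced at different levels are nested rather than disjoint. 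The reduction of ``stalling at every level'' to ``a single infinite line of descent with only finitely many $\geq 2$-offspring events inside $B$'' is asserted rather than proved (it is not even clear that the root times of the nested subtrees tend to infinity), and the parenthetical subcase in which the line branches only outside $B$ is dismissed by a non-argument: ``each side subtree must locally die, whereas with positive probability one would locally survive'' is not a contradiction. Those side roots sit at arbitrary points of $E$, possibly drifting to $\partial E$ or to infinity, where no uniform lower bound on the survival (or $G_\lambda$) probability exists — indeed the whole difficulty of the theorem is that such probabilities can degenerate there — so no Borel--Cantelli argument is available. (A smaller issue: a particle ``entering $B$'' is on $\partial B$, where the probability of a $\geq 2$-offspring event before leaving $B$ is not bounded below; one must work with a smaller concentric ball, which the definition of $\locSurv$ permits.)

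For comparison, the paper avoids this entirely: it first shows (Lemma~\ref{lem:M infty <= epsilon on local survival}) that on $\locSurv$, for every $n$ the stopping time $\tau_n=\inf\{t:\#\{i:X^i_t\in B(x_0,\delta)\}\geq n\}$ is a.s.\ finite, by a one-line conditional Borel--Cantelli argument using that a single particle in $\overline{B(x_0,\delta)}$ has a uniform probability $q>0$ of having at least $n$ descendants in the ball half a unit of time later; it then transfers the lower bound through the bounded martingale $\prod_i(1-\tilde u(X^i_t))$, with $\tilde u(x)=\Pm_x(G_\lambda)$ shown to solve the stationary FKPP equation, stopped at $\tau_n$. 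You could salvage your approach along the same lines without the PDE input: once $\tau_n<\infty$ a.s.\ on $\locSurv$, the $n$ particles in the ball at time $\tau_n$ root disjoint subtrees which \emph{are} fresh (since $\tau_n$ is a stopping time), so your bound $p_0$ gives $\Pm(G_\lambda^c\mid\mathcal{F}_{\tau_n})\leq(1-p_0)^n$ on $\{\tau_n<\infty\}$, and letting $n\to\infty$ yields the claim. But as written, the construction of infinitely many non-descendant roots on $\locSurv\cap G_\lambda^c$ is not justified, so the proposal has a genuine gap.
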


\subsection*{A relationship between $\lambda_1(L)$ and $\lambda_1'(L)$}

Given a (non-empty) subdomain $E'\subseteq E$, we write $\lambda'(L;E')$ and $\lambda''(L;E')$ for the corresponding eigenvalues when $L$ is defined only on the subdomain $E'$. Berestycki and Rossi established the following relationship between $\lambda(L)$ and $\lambda''_1(L)$ \cite[Theorem 7.6]{BerestyckiRossi} (for $E$ unbounded and smooth),
\[
\lambda''(L)=\max(\lambda(L),\lim_{r\ra\infty}\lambda''(L;E\setminus \bar B(0,r))).
\]

We now provide an analous result for $\lambda'(L)$, using the aforedescribed results on local survival.
\begin{prop}\label{prop:lambda' lambda'' outside balls relationship}
We assume Assumption \ref{assum:uniformly elliptic standing assumptions}, and that $b\equiv 0$. Then \[
\lambda'(L)=\max(\lambda(L),\lim_{r\ra\infty}\lambda'(L;E\setminus \bar B(0,r))).
\]
\end{prop}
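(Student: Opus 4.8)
The plan is to prove the two inequalities ``$\ge$'' and ``$\le$'' separately, the first being routine and the second the heart of the matter. Throughout write $D_r := E\setminus\bar B(0,r)$ and $\lambda'_\infty := \lim_{r\to\infty}\lambda'(L;D_r)$, and recall that replacing $L$ by $L+\kappa$ shifts each of $\lambda(L)$, $\lambda'(L)$ and $\lambda'(L;D_r)$ by $+\kappa$, so that every claim reduces to a statement about the sign of an eigenvalue. For the easy inequality I would first record domain monotonicity: if $E''\subseteq E'$ are subdomains then $\lambda'(L;E'')\le\lambda'(L;E')$. Probabilistically this is transparent --- the branching diffusion killed on $E\setminus E''$ is dominated, under the obvious coupling started inside $E''$, by the one killed on $E\setminus E'$, so global survival of the former forces global survival of the latter --- and one concludes by translating and invoking Theorem~\ref{theo:PDE global extinction e-value relationship}(1), Corollary~\ref{cor:lambda'_survival}(2), and (when the domain is disconnected) the dichotomy of Theorem~\ref{theo:global extinction dichotomy} componentwise, recalling that on a disconnected domain $\lambda'$ is the minimum over connected components. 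Applied along $D_{r'}\subseteq D_r$ this gives the monotonicity of $\lambda'(L;D_r)$ in $r$, so $\lambda'_\infty$ is well defined (exactly as for $\lambda''$ in \cite[Theorem 7.6]{BerestyckiRossi}) and $\lambda'_\infty\le\lambda'(L)$; together with $\lambda(L)\le\lambda'(L)$ --- local survival implies global survival, cf.\ the discussion before Theorem~\ref{theo:local survival equivalence}, and also \cite{BerestyckiRossi} --- this yields $\max(\lambda(L),\lambda'_\infty)\le\lambda'(L)$.

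For the hard inequality $\lambda'(L)\le\max(\lambda(L),\lambda'_\infty)$ I would argue by contradiction: assume it fails, and after translating assume $\lambda'(L)>0$, $\lambda(L)<0$ and $\lambda'_\infty<0$; then fix $r$ large enough that $\lambda'(L;D_r)<0$. The three hypotheses are used as follows. First, $\lambda'(L)>0$ together with Theorem~\ref{theo:PDE global extinction e-value relationship}(1) gives $\Pm_x(\glSurv)>0$ for every $x\in E$. Second, $\lambda(L)<0$ together with Proposition~\ref{prop:nve lambda implies local survival} gives local extinction, $\Pm_x(\locSurv)=0$; since $b\equiv 0$, Theorem~\ref{theo:local survival equivalence} identifies $\locSurv$ with $\rlocSurv$ up to $\Pm_x$-null sets, so $\Pm_x(\rlocSurv)=0$. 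Recalling $\rlocSurv=\glSurv\cap\{\liminf_{t\to\infty}\inf_{1\le i\le N_t}|X^i_t|<\infty\}$, this says that $\Pm_x$-a.s.\ on $\glSurv$ the whole population escapes to infinity, i.e.\ $\inf_{1\le i\le N_t}|X^i_t|\to\infty$; in particular, a.s.\ on $\glSurv$ there is a random time after which every particle lies in $D_r$ and never returns to $\bar B(0,r)$.

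It remains to convert this escape into a contradiction with $\lambda'(L;D_r)<0$. Up to a $\Pm_x$-null set, $\glSurv\subseteq\bigcup_{n\in\N}B_n$ where $B_n:=\{N_t\ge1\text{ and }X^i_t\in D_r\text{ for all }t\ge n\text{ and all }i\}$; since $\Pm_x(\glSurv)>0$, there is $n$ with $\Pm_x(B_n)>0$. Let $\bar X^{(r)}$ be the branching diffusion obtained from $\bar X$ by killing a particle and its descendants as soon as it enters $\bar B(0,r)$: it has purely local branching, is the branching diffusion associated with $L$ on the subdomain $D_r$, and by Theorem~\ref{theo:PDE equality of eigenvalues different notion} --- applied on each connected component of $D_r$, so that $\lambda'(L;D_r)<0$ means each component has negative eigenvalue --- its expectation semigroup has generalised principal eigenvalue $\lambda'(L;D_r)$. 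Coupling $\bar X$ and $\bar X^{(r)}$ from a common configuration in $D_r$ so that $\bar X^{(r)}_t\le\bar X_t$ for all $t$, the two processes coincide on the event that $\bar X$ never enters $\bar B(0,r)$; applying the Markov property of $\bar X$ at time $n$ on the event $B_n$ shows that $\bar X^{(r)}$ survives globally with positive probability from some configuration supported in $D_r$, and then the branching property of $\bar X^{(r)}$ shows $\Pm_y(\glSurv\text{ for }\bar X^{(r)})>0$ for some $y\in D_r$. But $\lambda'(L;D_r)<0$ and Theorem~\ref{theo:PDE global extinction e-value relationship}(2), applied to $\bar X^{(r)}$ componentwise on $D_r$, force $\Pm_y(\text{global extinction for }\bar X^{(r)})=1$ for all $y\in D_r$, a contradiction. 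This proves the hard inequality, hence the proposition.

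The main obstacle is the last step: turning the soft statement ``the surviving population drifts off to infinity'' into the hard statement ``an honest branching diffusion on $D_r$ survives from some point of $D_r$''. This is precisely where $b\equiv 0$ enters, via Theorem~\ref{theo:local survival equivalence}, which upgrades $r$-local survival to local survival and thereby lets $\lambda(L)<0$ rule out \emph{all} local recurrence of the surviving population. The remaining ingredients --- the decomposition $\glSurv\subseteq\bigcup_n B_n$, the domination coupling between $\bar X$ and $\bar X^{(r)}$, the componentwise treatment of a possibly disconnected $D_r$, and the mild regularity of $\partial D_r$ needed to apply Theorem~\ref{theo:PDE equality of eigenvalues different notion} (take $r$ to be a regular value of $x\mapsto|x|$ on $\partial E$, or invoke Corollary~\ref{cor:lambda'_survival} directly) --- are routine.
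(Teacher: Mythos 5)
Your proposal is correct and takes essentially the same route as the paper: both rely on the same key ingredients (the shift-by-a-constant reduction, Proposition \ref{prop:nve lambda implies local survival} to handle the case $\lambda(L)<0$, Theorem \ref{theo:local survival equivalence} — the only place $b\equiv 0$ enters — to upgrade $\glSurv\setminus\locSurv$ to $\glSurv\setminus\rlocSurv$, and Theorem \ref{theo:PDE global extinction e-value relationship} to convert survival on $E\setminus\bar B(0,r)$ into $\lambda'(L;E\setminus\bar B(0,r))\ge 0$), your argument by contradiction being just a reorganisation of the paper's direct case split between $\Pm_x(\locSurv)>0$ and $\Pm_x(\glSurv\setminus\locSurv)>0$. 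The only difference is that you spell out — via the events $B_n$, the coupling with the process killed upon entering $\bar B(0,r)$, and the componentwise treatment of a possibly disconnected $E\setminus\bar B(0,r)$ — the step from ``the surviving population escapes to infinity'' to ``positive probability of global survival on $E\setminus\bar B(0,r)$'', which the paper asserts in a single sentence.
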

\begin{rmk}
As stated before, we believe that Theorem \ref{theo:local survival equivalence} remains true when $b\nequiv 0$, but can't prove it because we don't have available the necessary boundary Harnack inequality. This is the only reason we need to stipulate $b\equiv 0$ in Proposition \ref{prop:lambda' lambda'' outside balls relationship}, so we believe it remains true without this stipulation.
\end{rmk}
\begin{proof}[Proof of Proposition \ref{prop:lambda' lambda'' outside balls relationship}]
Suppose that $\lambda'(L)>0$. Then $\Pm_x(\glSurv)>0$ by Theorem \ref{theo:PDE global extinction e-value relationship}. There are then two possibilities: \begin{enumerate}
    \item $\Pm_x(\locSurv)>0$, or else 
    \item $\Pm_x(\glSurv\setminus \locSurv)>0$.
\end{enumerate}  

Englander and Kyprianou established that $\Pm_x(\locSurv)> 0$ implies that $\lambda(L)\geq 0$ under somewhat different conditions than we consider here. We will check the same proof works under Assumption \ref{assum:uniformly elliptic standing assumptions} in Proposition \ref{prop:nve lambda implies local survival}. Therefore $\lambda(L)\geq 0$ on the first possiblility.

On the other hand if $\Pm_x(\glSurv\setminus \locSurv)>0$, then Theorem \ref{theo:local survival equivalence} tells us that 
\[
\Pm_x(\glSurv\setminus \rlocSurv)>0.
\]
This implies that we have a positive probability of global survival on $E\setminus \bar B(0,r)$ when we start at $x\in E\setminus \bar B(0,r)$, for any $r<\infty$. It then follows from Theorem \ref{theo:PDE global extinction e-value relationship} that $\lim_{r\ra\infty}\lambda'(L,E\setminus \bar B(0,r))\geq 0$. 

Therefore $\lambda'(L)>0$ implies that
\[
\max(\lambda(L),\lim_{r\ra\infty}\lambda'(L;E\setminus \bar B(0,r)))\geq 0.
\]
By adding or subtracting a constant from $L$, we obtain that
\[
\lambda'(L)\leq \max(\lambda(L),\lim_{r\ra\infty}\lambda'(L;E\setminus \bar B(0,r))).
\]
On the other hand, we know that $\lambda'(L;E\setminus \bar B(0,r))\leq \lambda'(L)$ for all $r<\infty$, and $\lambda(L)\leq \lambda'(L)$ (by Berestycki-Rossi \cite[Theorem 1.7 (ii)]{BerestyckiRossi}). The claim is therefore proven.
\end{proof}



In particular, if we have Assumption \ref{assum:uniformly elliptic standing assumptions}, $\lambda_1''(L)=\lambda_1'(L)>0$ and $\Pm_x(\locSurv)>0$ for some given $x\in E$, then
\begin{equation}
    \Pm_x(\lim_{t\ra\infty}\frac{1}{t}\ln N_t= \lambda'(L)\lvert \locSurv)=1.
\end{equation}

\subsection*{Conditions for $\lambda_1'(L)=\lambda_1''(L)$}

Berestycki and Rossi conjectured in \cite[Conjecture 1.8]{BerestyckiRossi} that $\lambda_1'(L) = \lambda_1''(L)$ under mild assumptions. We refute this in the example given in Section \ref{section:lambda' lambda'' can be different}. They were, however, able to prove this when $L$ is self-adjoint and either $d=1$ or $L$ is radially symmetric and $E=\Rm^d$ \cite[Theorem 1.9]{BerestyckiRossi}. We will now generalise this to general self-adjoint $L$. We are aware that H. Berestycki, J. Berestycki and Graham are currently and independently working on this problem (of showing that $\lambda'_1(L)=\lambda''_1(L)$ for self-adjoint $L$) via PDE methods.

At the same time we will prove that $\lambda_1(L)=\lambda'_1(L)$ for free. Whilst this latter fact was proven by Berestycki-Rossi \cite{BerestyckiRossi} using PDE methods, we wish to exhibit a probabilistic proof.

\begin{theo}\label{theo:lambda'=lambda'' condition 1}
We suppose that Assumptions \ref{assum:uniformly elliptic standing assumptions} is satisfied, that $a$ is $C^1$, and that $L$ can be put into the divergence form $Lu=\frac{1}{2}\partial_i(a_{ij}(x)\partial_ju)+c(x)u$. Then $\lambda(L)=\lambda_1'(L)=\lambda_1''(L)$.
\end{theo}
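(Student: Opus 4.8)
The plan is to prove the chain $\lambda_1(L)\le\lambda_1'(L)\le\lambda_1''(L)\le\lambda_1(L)$, where the first two inequalities are already available (the middle one is Berestycki--Rossi, cited as \cite[Theorem 1.7 (ii)]{BerestyckiRossi}, and the first follows from the probabilistic argument given just above in the excerpt, or again from \cite{BerestyckiRossi}), so that everything reduces to the single inequality $\lambda_1''(L)\le\lambda_1(L)$ for self-adjoint $L$. By Theorem \ref{theo:PDE equality of eigenvalues different notion} this is equivalent to $\lambda_c''((P_t)_{t\ge 0})\le\lambda_1(L)$, and by Theorem \ref{th:lambda'_lambda''} (or Theorem \ref{theo:elliptic regularity limsup of mass}) we have $\lambda_1''(L)=\limsup_{t\to\infty}\frac 1 t\ln\expE_x[N_t]$ for the branching diffusion. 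Since the branching is purely local, the expectation semigroup is the Feynman--Kac semigroup \eqref{eq:Pt_FK}, i.e. $P_t$ is the semigroup generated by $L$ on $E$ with Dirichlet killing on $\partial E$. So the task becomes: \emph{for a self-adjoint, uniformly elliptic $L$ in divergence form on a $C^{1,1}$ domain $E$, the exponential growth rate of the Dirichlet heat semigroup $\|P_t\mathbf 1\|$ (measured pointwise, $\frac 1t\ln P_t\mathbf 1(x)$) is at most $\lambda_1(L)$.}

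The main step is therefore a purely analytic one about self-adjoint semigroups: I would show that for self-adjoint $L$ the generalised principal eigenvalue $\lambda_1(L)$ coincides with the $L^2$-spectral bound $\sup\mathrm{Spec}(L)$ (with $L$ realised as a self-adjoint operator on $L^2(E)$ via its Dirichlet form), and that the Dirichlet Feynman--Kac semigroup does not grow faster than $e^{t\,\sup\mathrm{Spec}(L)}$ in the relevant pointwise sense. The first identity is classical for self-adjoint elliptic operators (Agmon; see also the variational characterisation $\lambda_1(L)=\sup_{\phi\in C_c^\infty(E)}\frac{\langle L\phi,\phi\rangle}{\langle\phi,\phi\rangle}$), and the inequality $\lambda_1(L)\le\lambda_1''(L)$ together with $\lambda_1''(L)=\limsup\frac1t\ln\expE_x[N_t]$ already gives one direction, so what is really needed is the reverse growth bound. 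For the reverse bound I would argue as follows: fix $\lambda>\lambda_1(L)$; then the resolvent $(\lambda-L)^{-1}$ exists as a bounded positive operator on $L^2$, and in fact one can produce a positive supersolution $u\ge\varepsilon>0$ with $(L-\lambda)u\le 0$ on $E$ by taking $u=(\lambda-L)^{-1}g+\varepsilon$ for a suitable strictly positive $g\in L^2\cap L^\infty$ (using interior elliptic regularity, $W^{2,d}_{\loc}$ bounds, and the Harnack inequality to get the pointwise lower bound away from $0$; near $\partial E$ the constant $\varepsilon$ keeps $u$ bounded below). Such a $u$ is an admissible competitor in the definition of $\lambda_1''(L)$, giving $\lambda_1''(L)\le\lambda$, and letting $\lambda\downarrow\lambda_1(L)$ yields $\lambda_1''(L)\le\lambda_1(L)$, closing the chain.

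An alternative, cleaner route for the reverse bound — which I would use if the resolvent-supersolution construction runs into boundary subtleties — is probabilistic via Theorem \ref{theo:parabolic Feynman-Kac}: if $\lambda>\lambda_1(L)$, pick a positive $L$-supersolution $w$ with $(L-\lambda)w\le0$, $w>0$ on $E$ (available by definition of $\lambda_1$), exhaust $E$ by smooth precompact domains $E_n\uparrow E$, and run the Feynman--Kac / optional-stopping identity for the killed diffusion inside $E_n$ against $e^{-\lambda t}w$; this shows $\expE_x[N_t\mathbf 1\{\text{particle survives in }E_n\}]\le e^{\lambda t}w(x)$, and since $w$ is locally bounded this controls $\expE_x[N_t]$ after passing $n\to\infty$ using that nothing escapes to infinity (coefficients bounded) and that $w$ restricted to the relevant sublevel behaviour can be taken bounded on each $E_n$; combined with $\frac1t\ln\expE_x[N_t]\to\lambda_1''(L)$ this gives $\lambda_1''(L)\le\lambda$. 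I expect the genuine obstacle to be exactly this passage to the whole domain $E$: controlling the contribution of mass near $\partial E$ (where $w$ need not be bounded) and at spatial infinity, and here self-adjointness is what saves us, since it lets us replace the possibly-badly-behaved $w$ by the ground-state-type supersolution coming from the $L^2$ theory, which is globally bounded below by a positive constant after adding $\varepsilon$. Once $\lambda_1''(L)\le\lambda_1(L)$ is established, the equalities $\lambda_1(L)=\lambda_1'(L)=\lambda_1''(L)$ follow immediately from the already-known inequalities $\lambda_1(L)\le\lambda_1'(L)\le\lambda_1''(L)$.
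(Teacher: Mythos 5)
Your reduction to the single inequality $\lambda_1''(L)\le\lambda_1(L)$ is the right one, and identifying $\lambda_1(L)$ with the top of the $L^2$-spectrum of the Dirichlet realisation is indeed classical; but the step that actually carries the theorem -- transferring the $L^2$ spectral bound to the pointwise growth of $P_t\mathbf 1$, equivalently producing, for every $\lambda>\lambda_1(L)$, a supersolution with positive infimum -- is exactly where your construction breaks down. Writing $u=v+\varepsilon$ with $v=(\lambda-L)^{-1}g\ge 0$, you get $(L-\lambda)u=-g+(c(x)-\lambda)\varepsilon$, so you need $g\ge (c-\lambda)^{+}\varepsilon$ pointwise. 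In the nontrivial regime $\lambda_1(L)<\lambda<\sup c$ the set $\{c>\lambda\}$ typically has infinite Lebesgue measure (take $E=\Rm\times(0,1)$, $L=\tfrac12\Delta+1$, so $\lambda_1=1-\pi^2/2$ and $\{c>\lambda\}=E$ for any $\lambda<1$), and then no $g\in L^2(E)$ can dominate the constant $(c-\lambda)\varepsilon$ there; the resolvent is only available on $L^2$, and invoking it on $L^\infty$ would be circular, since the $L^\infty$ growth bound of the semigroup \emph{is} $\lambda_1''$ by Theorem~\ref{th:lambda'_lambda''}. Your fallback probabilistic route has the same circularity: it assumes you can ``replace $w$ by a ground-state-type supersolution bounded below by a positive constant after adding $\varepsilon$'', but the existence of such a supersolution for $\lambda$ arbitrarily close to $\lambda_1(L)$ is precisely equivalent to $\lambda_1''\le\lambda_1$, i.e.\ to what is being proved; in general (non-self-adjoint) it is false, which is why the Berestycki--Rossi conjecture fails.

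What is missing is the mechanism by which self-adjointness (plus the automatic subexponential growth of Lebesgue measure) enters. The paper uses it through the symmetry of the transition density: writing $P_t(x,dy)=p_t(x,y)\,dy$ with $p_t(x,y)=p_t(y,x)$, Chapman--Kolmogorov and Cauchy--Schwarz over the ball $B(x_0,Ct)$ (Aronson's Gaussian bounds dispose of the particles beyond radius $Ct$) give
\[
p_{2t}(x_0,x_0)\ \ge\ \Leb\bigl(B(x_0,Ct)\bigr)^{-1}\,\expE_{x_0}\bigl[N_t^{Ct}\bigr]^2 ,
\]
so $\lambda_1''>0$ forces exponential on-diagonal growth; a Harnack argument then embeds a supercritical Galton--Watson process, yielding local survival, hence $\lambda(L)\ge0$ and $\lambda_1'(L)\ge 0$, and shifting $L$ by constants closes the chain $\lambda''\le\lambda\le\lambda'\le\lambda''$. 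If you want to stay on the analytic side of your plan, the same symmetry/Cauchy--Schwarz device run in the opposite direction would complete it: $p_{2t}(x,x)=\lVert p_t(x,\cdot)\rVert_{L^2}^2\le e^{2\lambda_1 t}\,p_2(x,x)\cdot O(1)$ from the spectral theorem, and then $\expE_x[N_t]\le \Leb(B(x,Ct))^{1/2}p_{2t}(x,x)^{1/2}+o(1)$ gives $\limsup_t\frac1t\ln\expE_x[N_t]\le\lambda_1(L)$, i.e.\ $\lambda_1''\le\lambda_1$ via Theorem~\ref{theo:elliptic regularity limsup of mass}. As written, however, the proposal's central construction does not work and the argument has a genuine gap.
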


Here $L$ is symmetric with respect to Lebesgue measure. In fact, we can generalise this as follows. Following \cite[Chapter 4]{Baudoin2014}, we will assume that $L$ is symmetric with respect to a smooth measure $\mu$, which we further assume grows subexponentially. Precisely, we consider the following assumption.
\begin{assum}\label{assum:uniformly elliptic symmetric}
We suppose that $\mu$ is a given Borel measure $\mu$ which has a $C^{\infty}(E)$ and everywhere strictly positive (but not necessarily uniformly positive) density with respect to Lebesgue measure on $E$. We assume that $L$ has $C^{\infty}(E)$ coefficients and is symmetric with respect to $\mu$ in the sense that
\[
\int_{E}gLfd\mu=\int_EfLgd\mu
\]
for all $f,g\in C_c^{\infty}(E)$. Finally, we assume that
\[
\limsup_{R\ra\infty}\frac{1}{R}\ln\mu(B(0,R)\cap E)\leq 0,
\]
where $x_0$ is a fixed point in $E$.
\end{assum}
We note that generalising the $\mu$ with respect to which $L$ is symmetric has come at the cost of having to impose additional regularity on the coefficients.\commentout{
\begin{example}
If $L$ has $C^{\infty}(E)$ coefficients and can be put into the divergence form,
\[
Lu=\nabla\cdot(a \nabla u) +cu,
\]
then $L$ is symmetric with respect to Lebesgue measure, which satisfies the subexponential growth condition. 
\end{example}}
\begin{example}
Suppose that $E=\Rm_{>0}$, $a\equiv 1$, $b,c\in C^{\infty}(E)\cap B_b(E)$, and $b$ satisfies
\[
\limsup_{x\ra\infty}\frac{1}{x}\int_0^xb(y)dy\leq 0.
\]
Then $L$ is symmetric with respect to $\mu$ defined by
\[
\frac{d\mu(x)}{dx}=\exp\Big(2\int_0^xb(y)dy\Big),
\]
which satisfies the subexponential growth property.
\end{example}
\begin{theo}\label{theo:lambda'=lambda'' condition}
We suppose that Assumptions \ref{assum:uniformly elliptic standing assumptions} and \ref{assum:uniformly elliptic symmetric} are satisfied. Then $\lambda_1'(L)=\lambda_1''(L)$.
\end{theo}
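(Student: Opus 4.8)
The plan is to prove the two inequalities $\lambda_1''(L)\le\lambda_2\le\lambda_1'(L)$, where $\lambda_2$ is the spectral bound defined below; combined with $\lambda_1'(L)\le\lambda_1''(L)$, which is part of Theorem~\ref{theo:elliptic regularity limsup of mass}, this yields the claimed equality. Write $(P_t)_{t\ge0}$ for the expectation semigroup of the associated branching diffusion. Its generator is $L$ with Dirichlet conditions on $\partial E$, and Assumption~\ref{assum:uniformly elliptic symmetric} makes $(P_t)$ a self-adjoint semigroup on $L^2(\mu)$ possessing a jointly continuous, symmetric, strictly positive density $p_t(x,y)$ with respect to $\mu$ (by uniform ellipticity and smoothness of the coefficients; this also gives the Aronson Gaussian upper bound used below). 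Set $\lambda_2:=\sup\mathrm{spec}_{L^2(\mu)}(L)<\infty$, so that $\|P_t\|_{L^2(\mu)\to L^2(\mu)}=e^{\lambda_2 t}$ for all $t\ge0$.

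For $\lambda_1''(L)\le\lambda_2$: by Theorems~\ref{theo:PDE equality of eigenvalues different notion} and~\ref{theo:elliptic regularity limsup of mass}, for a fixed $x_0\in E$ we have $\lambda_1''(L)=\limsup_{t\to\infty}\tfrac1t\ln P_t1(x_0)$ and $P_t1(x_0)=\int_E p_t(x_0,y)\,\mu(dy)$. The key tool is Davies' weighted $L^2$-estimate: for the Lipschitz weight $\psi_\alpha(y):=\alpha\sqrt{1+|y|^2}$ one has $\|e^{\psi_\alpha/2}P_te^{-\psi_\alpha/2}\|_{L^2(\mu)\to L^2(\mu)}\le e^{(\lambda_2+c\alpha^2)t}$, with $c$ depending only on the ellipticity constants of $a$ (this follows from the standard Davies method applied to the Dirichlet form of $L$ on $L^2(\mu)$, after truncating $\psi_\alpha$ and passing to the limit). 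Writing $p_t(x_0,\cdot)=P_{t-1}p_1(x_0,\cdot)$ for $t\ge1$ and using the Aronson bound to ensure $C_{x_0,\alpha}:=\int_E e^{\psi_\alpha(y)}p_1(x_0,y)^2\,\mu(dy)<\infty$, this gives
\[
\int_E p_t(x_0,y)^2 e^{\psi_\alpha(y)}\,\mu(dy)=\|e^{\psi_\alpha/2}p_t(x_0,\cdot)\|_{L^2(\mu)}^2\le C_{x_0,\alpha}\,e^{2(\lambda_2+c\alpha^2)(t-1)}.
\]
By Cauchy--Schwarz, $P_t1(x_0)\le\big(\int_E p_t(x_0,y)^2e^{\psi_\alpha}\mu(dy)\big)^{1/2}\big(\int_E e^{-\psi_\alpha}\mu(dy)\big)^{1/2}$, and the subexponential growth condition in Assumption~\ref{assum:uniformly elliptic symmetric} guarantees $\int_E e^{-\psi_\alpha(y)}\mu(dy)<\infty$ for every $\alpha>0$. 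Hence $\limsup_{t\to\infty}\tfrac1t\ln P_t1(x_0)\le\lambda_2+c\alpha^2$, and letting $\alpha\downarrow0$ gives $\lambda_1''(L)\le\lambda_2$.

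For $\lambda_2\le\lambda_1'(L)=\lambda_c'((P_t)_{t\ge0})$: I would exhaust $E$ by bounded Lipschitz subdomains. Fix $x_0\in E$ and let $D_R$ be the connected component of $B_R\cap E$ containing $x_0$, so $D_R\uparrow E$. Let $\phi_R>0$ be the principal Dirichlet eigenfunction of $L$ on $D_R$ with eigenvalue $\lambda^R:=\lambda_1(L;D_R)$ (bounded by elliptic regularity; normalise $\|\phi_R\|_\infty=1$), extended by $0$ to $E$. If $(P_t^{D_R})$ denotes the expectation semigroup of the branching diffusion killed on exiting $D_R$, then $P_t^{D_R}\phi_R=e^{\lambda^R t}\phi_R$ on all of $E$ and $P_t\ge P_t^{D_R}$ pointwise, so $\phi_R$ is an admissible test function in the definition of $\lambda_{c,x_0}'((P_t)_{t\ge0})$, giving $\lambda_{c,x_0}'\ge\lambda^R$. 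Since $\sup_R\lambda^R=\sup_{f\in C_c^\infty(E)\setminus\{0\}}\langle Lf,f\rangle_{L^2(\mu)}/\|f\|_{L^2(\mu)}^2=\lambda_2$ (domain monotonicity of Dirichlet eigenvalues, together with the fact that $C_c^\infty(E)$ is a form core and every $f\in C_c^\infty(E)$ is supported in some $D_R$), we obtain $\lambda_{c,x_0}'\ge\lambda_2$; as $x_0$ was arbitrary and $\lambda_c'=\inf_{x\in E}\lambda_{c,x}'$ by Proposition~\ref{prop:lambda dependence on x}, this yields $\lambda_1'(L)=\lambda_c'\ge\lambda_2$. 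Chaining the three inequalities finishes the proof.

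The main obstacle is the first inequality: correctly realising the non-divergence-form operator $L$ self-adjointly on $L^2(\mu)$, identifying that realisation with the probabilistically defined expectation semigroup, and invoking Davies' weighted heat-kernel estimate with a constant that degrades only quadratically in the Lipschitz norm of the weight. It is precisely this quadratic (rather than linear) degradation, matched against the subexponential growth of $\mu$, that allows the error term $c\alpha^2$ to be removed in the limit; without the growth hypothesis on $\mu$ the factor $\int e^{-\psi_\alpha}\,d\mu$ would be infinite and the argument would collapse. The reverse inequality is routine, using only classical elliptic theory on bounded domains and the monotone exhaustion of the quadratic form.
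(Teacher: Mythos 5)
Your architecture is genuinely different from the paper's: you interpolate the $L^2(\mu)$ spectral bound $\lambda_2$ between $\lambda_1''$ and $\lambda_1'$, whereas the paper argues probabilistically that $\lambda_1''>0$ forces exponential growth of the expected number of particles in $B(x_0,Ct)$ (using the Aronson bound with respect to Lebesgue measure, Lemma~\ref{lem:number of particles greater than Ct}), converts this via symmetry of the $\mu$-density, Cauchy--Schwarz and the subexponential growth of $\mu$ into exponential growth of $q_{2t}(x_0,x_0)$, and then uses the Harnack inequality and a supercritical Galton--Watson comparison to obtain local survival, hence $\lambda_1(L),\lambda_1'(L)\ge 0$ (which in fact yields $\lambda_1=\lambda_1'=\lambda_1''$, more than the statement asks). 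Your lower half, $\lambda_2\le\lambda_1'$ via principal Dirichlet eigenfunctions of an exhausting family of subdomains used as test functions for $\lambda'_{c,x_0}$, is sound (it is cleaner to take smooth domains compactly contained in $E$, which suffices for the form-core identity), modulo the identification of the probabilistic expectation semigroup with the self-adjoint realisation of $L$ on $L^2(\mu)$ whose form core is $C_c^\infty(E)$; you flag this, and note that \emph{both} halves of your argument need that same identification (for $\|P_t\|_{L^2(\mu)\to L^2(\mu)}=e^{\lambda_2 t}$ on one side and for $\sup_R\lambda^R=\lambda_2$ on the other), while the paper only needs the weaker symmetry of the kernel.

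The genuine gap is the claim $C_{x_0,\alpha}=\int_E e^{\psi_\alpha(y)}p_1(x_0,y)^2\,\mu(dy)<\infty$ ``by the Aronson bound''. The Aronson estimate available here (and quoted in the paper from Daners) is a Gaussian bound for the density with respect to \emph{Lebesgue} measure, while your $p_t$ must be the density with respect to $\mu$ (otherwise $p_t(x_0,\cdot)=P_{t-1}p_1(x_0,\cdot)$ and the $L^2(\mu)$ pairing do not cohere); thus $p_1(x_0,y)^2\mu(dy)=p_1^{\mathrm{Leb}}(x_0,y)^2\rho(y)^{-1}dy$, where $\rho$ is the density of $\mu$, and you need a \emph{lower} bound on $\rho$. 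Assumption~\ref{assum:uniformly elliptic symmetric} provides none: $\rho$ is only smooth and strictly positive on $E$, and the subexponential condition bounds $\mu$ from above, not $\rho$ from below, so $\rho^{-1}$ may blow up both near $\partial E$ and at infinity and the weighted integral may diverge; the same objection applies to your claim that the $\mu$-density itself satisfies an Aronson bound. Fortunately the Davies machinery is unnecessary: keep your Chapman--Kolmogorov/Cauchy--Schwarz step but truncate at radius $Ct$ exactly as in Lemma~\ref{lem:number of particles greater than Ct} (the complement contributes subexponentially by the Lebesgue Aronson bound), and on the ball use $\int_{B(x_0,Ct)}q_t(x_0,y)\,\mu(dy)\le\mu(B(x_0,Ct))^{1/2}\,q_{2t}(x_0,x_0)^{1/2}$ together with $q_{2t}(x_0,x_0)=\lVert P_{t-1}q_1(x_0,\cdot)\rVert_{L^2(\mu)}^2\le e^{2\lambda_2(t-1)}q_2(x_0,x_0)$; the subexponential growth of $\mu$ then gives $\lambda_1''\le\lambda_2$ with no weights at all (this is precisely the paper's key inequality read in the opposite direction). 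With that repair, and a citation settling the semigroup identification, your route works; as written, the integrability step is a hole.
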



\commentout{

\begin{prop}\label{prop:uniformly elliptic strong extinction doesn't depend on initial condition}
Either we have weak global survival from $x$ for all $x\in E$, or else we have strong global extinction from $x$ for all $x\in E$. In the latter case we have $\sup_{x\in A}\expE_{x}[N_t]\ra 0$ as $t\to\infty$ for all compact $A\subset\subset E$.
\end{prop}
We then have the following theorem.
\begin{theo}\label{theo:PDE strong global extinction e-value relationship}
If $\lambda''_1(L)>0$ then we have weak global survival for any initial condition. Conversely if $\lambda''_1(L)<0$ then we have global extinction from any initial condition.
\end{theo}

Theorems \ref{theo:PDE strong global extinction e-value relationship} and \ref{theo:PDE strong global extinction e-value relationship} will be proven by applying the general theorem, Theorem \ref{theo:main theorem global w/out SF irr}. To do so, }

\commentout{
If we consider an operator in the divergence form
\begin{equation}\label{eq:divergence form operator}
    Lu=\nabla_i \cdot(a^{ij}(x)\nabla_j u)+c(x),
\end{equation}
we can make sense of this and of the generalised eigenvalues $\lambda(L),\lambda'_1(L)$ and $\lambda''_1(L)$ when $a$ is continuous and $c$ measurable. However to apply Theorem \ref{theo:elliptic regularity limsup of mass} we need to assume additional regularity on the coefficients in order to make the connection with a branching process. We will now provide a version of Theorem \ref{theo:elliptic regularity limsup of mass} which is valid under weaker assumptions on the coefficients. We will prove this by formulating the proof of Theorem \ref{theo:elliptic regularity limsup of mass} in PDE terms, without actually connecting the PDE to a branching process. We impose the following assumptions on the coefficients.
\begin{assum}\label{assum:assumptions on the coefficients PDE divergence form}
We assume that $a^{ij}(x)$ is continuous, bounded, symmetric for all $x\in E$ and uniformly positive definite, and assume that $c$ is bounded and measurable.
\end{assum}
\begin{theo}\label{theo:divergence form lamnda'=lambda''}
We consider the operator $L$ in \eqref{eq:divergence form operator} with the assumption that $a$ and $c$ satisfy Assumption \ref{assum:assumptions on the coefficients PDE divergence form}. Then 
\[
\lambda'_1(L)=\lambda''_1(L).
\]
\end{theo}
}

\subsection*{Maximum principle}
Berestycki and Rossi \cite{BerestyckiRossi} defined the maximum principle to hold if
\begin{equation}\label{eq:maximum principle PDE}
Lu\geq 0,\quad \limsup_{x\rightarrow \xi}u(x)\leq 0\text{ for all $\xi\in \partial \Omega$}, \quad \sup u<\infty
\end{equation}
implies that $u\leq 0$.

They showed that
\begin{equation}
\lambda_1'' <0\Rightarrow \text{maximum principle}\Rightarrow \lambda_1'\leq 0,
\end{equation}
the former under the assumption of a growth condition. Note that the latter is immediate from the definition of $\lambda'_1$, since if $\lambda_1'>0$ we can take $u$ violating the maximum principle.

When we have $\lambda_1'(L)=\lambda_1''(L)$, we then obtain a sharp characterisation of the validity of the maximum principle:
\[
\text{the maximum principle holds if $\lambda_1'(L)<0$ but not if $\lambda_1'(L)>0$. }
\]

We now provide a probabilistic proof of the fact that $\lambda_1''<0$ implies the maximum principle.\commentout{

As previously stated, the implication
\[
\text{maximum principle}\Rightarrow \lambda_1'\leq 0
\]
is immediate, since if $\lambda_1'>0$ then we can take $u$ violating the maximum principle.

We now seek to prove that if $\lambda_1''<0$ then the maximum principle holds.} We suppose that \eqref{eq:maximum principle PDE} holds for some $u\in W^{2,d}_{\loc}$. We then observe that $\sum_{i=1}^{N_t}u(X^i_t)$
is a local submartingale by Ito's lemma \cite[Krylov, Controlled diffusions, p.122 and 47]{Krylov1980} and the fact $\limsup_{x\ra\xi}u(x)\leq 0$ for $\xi\in \partial E$. Therefore (using that $\sup u<\infty$ and the offspring distribution is bounded)
\[
u(x)\leq \expE_{x}[\sum_{i=1}^{N_t}u(X^i_t)\Ind(u(X^i_t)\geq 0)]\leq (\sup_{x\in E}u(x)\vee 0)\expE_x[N_t].
\]
Since $\lambda_{1}''<0$, Theorem \ref{theo:elliptic regularity limsup of mass} implies that $\expE_x[N_t]\ra 0$ as $t\ra\infty$. Therefore $u(x)\leq 0$.\qed


\subsection*{Stationary solutions of the FKPP equation}

We associate to the branch rate and offspring distribution the nonlinear term
\begin{equation}\label{eq:nonlinearity f}
f(u,x):=r(x)\sum_{n=0}^{\infty}p_n(x)[(1-u)-(1-u)^n].
\end{equation}
For instance associated to binary branching at rate $1$ we have the classical FKPP nonlinearity $u-u^2$.

It is well-known since the work of Skorokhod \cite{Skorokhod1964} (see also the work of McKean \cite{McKean1975}) that branching Brownian motion is dual to the classical FKPP equation
\[
\partial_tu=\frac{1}{2}\Delta u+u(1-u),
\]
a connection which is known to be valid in much greater generality. An exposition of this can be found in \cite[Section 1.2.2]{Ryzhik2023}). This connection is made in very great generality in \cite{Ikeda1966}, where the FKPP is formulated in an abstract manner. Nevertheless we couldn't find a reference where the connection is stated and proved in precisely the setting we wish to employ here, so have provided a statement and proof in the appendix (see Theorem \ref{theo:McKean rep}).

We will now connect notions of global survival to the existence of $(0,1]$-valued solutions of the semilinear elliptic equation
\begin{equation}\label{eq:semilinear elliptic equation}
L_0u+f(u,x)=0,\quad x\in E,
\end{equation}
whereby 
\[
L_0u:=\frac{1}{2}a_{ij}(x)\partial_{ij}u+b_i(x)\partial_iu
\]
Note that for the classical FKPP nonlinearity $f(u)=u(1-u)$, the equation \eqref{eq:semilinear elliptic equation} can be written as
\[
(L_0-1)(u-1)=(u-1)^2.
\]
Since $\lambda''_1(L_0-1)=\lambda''_1(L_0)-1\leq -1$ and $(u-1)^2\geq 0$, we can apply the maximum principle \eqref{eq:maximum principle PDE} to conclude there are no bounded positive solutions with $u\geq 1$ anywhere, hence $(0,1]$-valued solutions and positive, bounded solutions are equivalent. On the other hand, if we instead consider $E=\Rm^d$, $L_0=\frac{1}{2}\Delta$, and the nonlinearity $f(u)=(1-u)-(1-u)^3$, we observe that $u\equiv 2$ is a positive, bounded solution of \eqref{eq:semilinear elliptic equation}. Therefore the specification of $(0,1]$-valued solution, rather than positive and bounded solution, will be necessary. 

The existence and uniqueness of positive, bounded solutions to the semilinear elliptic equations for different classes of nonlinear term $f$, and in particular for FKPP-type nonlinearities, has received a great deal of attention in the PDE literature both contemporaneously and classically \cite{Rabinowitz1971,Berestycki1981,Berestycki1997,Esteban1982,Berestycki2023,Berestycki2025}. However, to our knowledge, these questions for FKPP-type nonlinearities have previously been addressed exclusively from a PDE perspective. The following provides a probabilistic perspective to these questions. We note a very different probabilistic connection has been established in \cite{Dynkin1991,LeGall1999} for nonlinearites of the different form $f(u)=u^p$.

\begin{theo}\label{theo:PDE semilinear elliptic equation global survival}
We assume Assumption \ref{assum:uniformly elliptic standing assumptions}. Then we have $\Pm_x(\glSurv)>0$ for some (hence all) $x\in E$ if and only if there exists a $(0,1]$-valued solution of \eqref{eq:semilinear elliptic equation}, where $f$ is given by \eqref{eq:nonlinearity f}. In the case that this exists then
\[
v(x):=\Pm_x(\glSurv)
\]
is the maximal $(0,1]$-valued solution of \eqref{eq:semilinear elliptic equation}, in the sense that $\tilde{u}(x)\leq v(x)$ for any other $(0,1]$-valued solution $\tilde{u}$ of \eqref{eq:semilinear elliptic equation}.
\end{theo}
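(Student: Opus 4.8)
The plan is to route everything through the McKean duality between the branching diffusion $\bar X$ and the parabolic FKPP equation, and to reduce the elliptic statement to a fixed-point identity for the survival probability.

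First I would set up the duality in the form I need. For bounded Borel $g\colon E\to[0,1]$ write $u_g(t,x):=\expE_x[\prod_{i=1}^{N_t}g(X^i_t)]$, the empty product being $1$. By the branching property and Theorem~\ref{theo:McKean rep}, $(t,x)\mapsto u_g(t,x)$ solves $\partial_t u = L_0 u + r(x)(\sum_n p_n(x)u^n-u)$ with $u(0,\cdot)=g$; equivalently $w_g:=1-u_{1-g}$ solves $\partial_t w=L_0 w+f(w,x)$ with datum $g$, where $f$ is the nonlinearity \eqref{eq:nonlinearity f}. Within the $[0,1]$-valued solutions this solution is unique, by a Gronwall estimate exploiting that $w\mapsto f(w,x)$ is Lipschitz on $[0,1]$ uniformly in $x$ (using $\sup_x\sum_n np_n(x)<\infty$, which follows from the uniform integrability of $(p_n)$), and by Theorem~\ref{theo:parabolic Feynman-Kac} it lies in $W^{2,d}_{\loc}$ in space for each $t>0$. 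I would then record the one consequence used below: a $[0,1]$-valued $\phi$ on $E$ satisfies $\phi(x)=\expE_x[\prod_{i=1}^{N_t}\phi(X^i_t)]$ for all $t\ge0$ if and only if $\phi\in W^{2,d}_{\loc}$ and $L_0\phi + r(x)(\sum_n p_n(x)\phi^n-\phi)=0$ on $E$ — for the ``if'' direction $(t,x)\mapsto\phi(x)$ is a $[0,1]$-valued parabolic solution with datum $\phi$, hence equals $u_\phi$ by uniqueness; for ``only if'' the identity forces $u_\phi$ to be constant in $t$, so $0=\partial_t u_\phi$ gives the elliptic equation via the parabolic regularity. Setting $v=1-\phi$, this elliptic equation is precisely \eqref{eq:semilinear elliptic equation}, and $\phi<1$ everywhere if and only if $v$ is $(0,1]$-valued.

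Next I would introduce the candidate $\phi^\ast(x):=\Pm_x(\glSurv^c)$, $v(x):=\Pm_x(\glSurv)=1-\phi^\ast(x)$, and note two soft facts: by the branching property $\glSurv^c$ is the event that the subtrees issued from all $N_t$ particles present at time $t$ die out, so $\phi^\ast(x)=\expE_x[\prod_{i=1}^{N_t}\phi^\ast(X^i_t)]$ for every $t$; and since $\{N_t\ge1\}\downarrow\glSurv$, we have $\Pm_x(N_t=0)\uparrow\phi^\ast(x)$ as $t\to\infty$. Now, if $\Pm_{x_0}(\glSurv)>0$ for some $x_0$, the dichotomy \eqref{eq:PDE global survival everywhere or nowhere} (applicable since this branching diffusion verifies Assumption~\ref{assum:Strong Feller top irr}, as discussed before Theorem~\ref{theo:limsup mass doesn't depend on x}) gives $\phi^\ast<1$ on all of $E$, whence by the recorded consequence $v$ is a $(0,1]$-valued solution of \eqref{eq:semilinear elliptic equation}. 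Conversely, for any $(0,1]$-valued solution $\tilde u$, put $\tilde\phi=1-\tilde u\in[0,1)$; by the recorded consequence $\tilde\phi(x)=\expE_x[\prod_{i=1}^{N_t}\tilde\phi(X^i_t)]\ge\Pm_x(N_t=0)\uparrow\phi^\ast(x)$ (the product dominating $\Ind\{N_t=0\}$ because $\tilde\phi\ge0$), so $\tilde u=1-\tilde\phi\le v$. In particular $\Pm_x(\glSurv)\ge\tilde u(x)>0$ for every $x$, which yields the ``if'' direction, and the same inequality yields maximality of $v$ once survival is known to hold.

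The hard part will be the first step: establishing that $u_g$ is \emph{the} $[0,1]$-valued solution of the parabolic FKPP Cauchy problem and that it has enough regularity to be differentiated in time, so that one may pass between the parabolic and elliptic equations. This is where Theorems~\ref{theo:McKean rep} and~\ref{theo:parabolic Feynman-Kac} do the work — and it is precisely why the local Hölder continuity of the coefficients in Assumption~\ref{assum:uniformly elliptic standing assumptions} is assumed. The remaining ingredients (the global-survival dichotomy, the branching-property fixed point, and the monotone limit $\Pm_x(N_t=0)\uparrow\Pm_x(\glSurv^c)$) are routine.
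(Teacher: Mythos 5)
Your proposal is correct and takes essentially the same route as the paper: the McKean duality (Theorems~\ref{theo:McKean rep} and~\ref{theo:parabolic Feynman-Kac}) together with constancy in time converts $(0,1]$-valued stationary solutions into fixed points $1-\tilde u(x)=\expE_x\big[\prod_{i=1}^{N_t}(1-\tilde u(X^i_t))\big]$ and, conversely, exhibits $x\mapsto\Pm_x(\glSurv)$ as a solution of \eqref{eq:semilinear elliptic equation}, with the strong Feller/irreducibility dichotomy supplying strict positivity, exactly as in the paper's proof. The only variation is the last step, where the paper runs the bounded martingale $\prod_i(1-\tilde u(X^i_t))$ to its Doob limit (equal to $1$ on extinction) while you use the elementary bound $\prod_i(1-\tilde u(X^i_t))\ge \Ind(N_t=0)$ and the monotone limit $\Pm_x(N_t=0)\uparrow\Pm_x(\glSurv^c)$, a harmless simplification giving maximality and the survival direction simultaneously; your appeal to uniqueness of the $[0,1]$-valued parabolic problem implicitly uses the Dirichlet boundary behaviour of the stationary solution, but this is the same implicit step the paper itself makes.
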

The example given in Section \ref{section: example both global extinction and survival are possible at criticality} demonstrates that both $\Pm_x(\glSurv)=0$ and $\Pm_x(\glSurv)>0$ are possible when $\lambda'(L)=0$. Therefore Theorem \ref{theo:PDE semilinear elliptic equation global survival} implies that both existence and non-existence of $(0,1]$-valued solutions to \eqref{eq:semilinear elliptic equation} are possible when $\lambda'(L)=0$. It remains open to determine whether $\Pm_x(\glSurv)>0$ is possible when $L$ is self-adjoint and $p_0\equiv 0$ (note that the simple example of binary branching Brownian motion on a ball of critical radius demonstrates that global $\Pm_x(\glSurv)=0$ is possible).

By combining Theorems \ref{theo:PDE semilinear elliptic equation global survival} and \ref{theo:PDE global extinction e-value relationship}, we obtain the following corollary.
\begin{cor}
There exists a $(0,1]$-valued solution of \eqref{eq:semilinear elliptic equation} if $\lambda_1'(L)>0$, but not if $\lambda_1'(L)<0$.
\end{cor}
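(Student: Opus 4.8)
The plan is to deduce the corollary formally from the two preceding results, with no real work needed beyond bookkeeping; Assumption~\ref{assum:uniformly elliptic standing assumptions} is in force throughout the section, so both cited theorems apply.

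First I would treat the case $\lambda_1'(L) > 0$. By Part~1 of Theorem~\ref{theo:PDE global extinction e-value relationship}, this gives $\Pm_x(\glSurv) > 0$ for all $x \in E$. Then the ``only if'' (existence) direction of Theorem~\ref{theo:PDE semilinear elliptic equation global survival} provides a $(0,1]$-valued solution of~\eqref{eq:semilinear elliptic equation}; concretely $v(x) = \Pm_x(\glSurv)$ is such a solution, in fact the maximal one.

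Next I would treat the case $\lambda_1'(L) < 0$. By Part~2 of Theorem~\ref{theo:PDE global extinction e-value relationship}, we have $\Pm_x(\text{global extinction}) = 1$, i.e.\ $\Pm_x(\glSurv) = 0$, for all $x \in E$. Now argue by contradiction: if there existed a $(0,1]$-valued solution of~\eqref{eq:semilinear elliptic equation}, then the ``if'' direction of Theorem~\ref{theo:PDE semilinear elliptic equation global survival} would force $\Pm_x(\glSurv) > 0$ for some (hence all) $x \in E$, contradicting the previous line. Hence no such solution exists.

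There is essentially no obstacle here, since the statement is a direct combination of the equivalence in Theorem~\ref{theo:PDE semilinear elliptic equation global survival} with the dichotomy in Theorem~\ref{theo:PDE global extinction e-value relationship}. The only points worth flagging are that the borderline case $\lambda_1'(L) = 0$ is deliberately excluded --- the example in Section~\ref{section: example both global extinction and survival are possible at criticality} shows both existence and non-existence of $(0,1]$-valued solutions can occur there --- and that all the ``hard work'' (the McKean-type duality underlying Theorem~\ref{theo:PDE semilinear elliptic equation global survival}, and the skeleton/comparison arguments underlying Theorem~\ref{theo:PDE global extinction e-value relationship}) has already been carried out upstream.
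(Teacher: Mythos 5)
Your proposal is correct and matches the paper's proof exactly: the paper simply states that the corollary follows "by combining Theorems \ref{theo:PDE semilinear elliptic equation global survival} and \ref{theo:PDE global extinction e-value relationship}", which is precisely the two-case bookkeeping you carry out. Nothing further is needed.
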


We now connect local survival to the uniqueness of solutions to \eqref{eq:semilinear elliptic equation}.
\begin{theo}\label{theo:local survival minimal solution}
We suppose that $\Pm_x(\locSurv)>0$ for some (hence all) $x\in E$, and that $r(x)p_n(x)>0$ for some $x\in E$ and some $n\geq 2$ (i.e. that branching is possible somewhere). Then
\[
w(x):=\Pm_x(\locSurv)
\]
is a $(0,1]$-valued solution of \eqref{eq:semilinear elliptic equation}, which is minimal in the sense that
\[
w(x)\leq \tilde{u}(x)
\]
for any other solution $\tilde{u}$ of \eqref{eq:semilinear elliptic equation}.

In particular, the following are equivalent:
\begin{enumerate}
    \item there is a unique $(0,1]$-valued solution $u$ of \eqref{eq:semilinear elliptic equation};
    \item $\Pm_x(\locSurv \lvert \glSurv)=1$.
\end{enumerate}
\end{theo}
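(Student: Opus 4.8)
The plan is to identify $w(x)=\Pm_x(\locSurv)$ as a time‑independent solution of the parabolic FKPP equation, then to get minimality from a martingale argument, and finally to read off the equivalence. For Step~1, write $h_w:=1-w$ and let $P^{\mathrm{nl}}_t g(x):=\expE_x[\prod_{i=1}^{N_t}g(X^i_t)]$ on $pB_1(E)$, which by the McKean representation (Theorem~\ref{theo:McKean rep}) is the solution semigroup of $\partial_t\phi=L_0\phi+\tilde f(\phi,x)$, where $\tilde f(u,x):=r(x)\sum_n p_n(x)(u^n-u)=-f(1-u,x)$ with $f$ as in \eqref{eq:nonlinearity f}. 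Fix a non‑empty pre‑compact open $U\subset\subset E$. For every $t\ge 0$ the whole tree undergoes $\locSurv_U$ if and only if at least one of the subtrees rooted at the time‑$t$ particles does — by the pigeonhole principle applied to the finitely many subtrees and the infinitely many visit times $\ge t$ — and, using the branching property (conditional independence of these subtrees given $\bar X_t$) together with $\locSurv_U=\locSurv$ up to $\Pm_x$‑null sets, this gives $h_w=P^{\mathrm{nl}}_t h_w$ for all $t\ge 0$. Hence $h_w$ is a time‑independent solution of the parabolic equation, so by the regularity in Theorem~\ref{theo:parabolic Feynman-Kac} it lies in $C^{2,\alpha}_{\loc}$ and satisfies $L_0h_w+\tilde f(h_w,\cdot)=0$, i.e.\ $w=1-h_w\in W^{2,d}_{\loc}$ solves \eqref{eq:semilinear elliptic equation}; also $w\le 1$ trivially, and $w>0$ everywhere because $\Pm_x(\locSurv)>0$ for some $x$ forces it for all $x$ by the strong Feller / topological irreducibility argument (Assumption~\ref{assum:Strong Feller top irr}) already used before the statement.

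For Step~2, let $\tilde u$ be any $(0,1]$‑solution and put $\tilde h:=1-\tilde u\in[0,1)$. Since $\tilde h$ solves the stationary equation $L_0\tilde h+\tilde f(\tilde h,\cdot)=0$, uniqueness of the parabolic Cauchy problem (Theorem~\ref{theo:parabolic Feynman-Kac}) gives $P^{\mathrm{nl}}_t\tilde h=\tilde h$ for all $t$, so $M_t:=\prod_{i=1}^{N_t}\tilde h(X^i_t)$ is a non‑negative bounded c\`adl\`ag $\Pm_x$‑martingale (branching property plus the Markov property) and converges $\Pm_x$‑a.s.\ to $M_\infty$ with $\expE_x[M_\infty]=\tilde h(x)$. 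On $\glSurv^c$ the process is eventually empty, so $M_\infty=1$ there. The whole argument reduces to the claim that $M_\infty=0$ $\Pm_x$‑a.s.\ on $\locSurv$; granting it, since $0\le M_\infty\le 1$ and $\locSurv\subseteq\glSurv$,
\[
\tilde h(x)=\expE_x[M_\infty]\le \Pm_x(\glSurv^c)+\Pm_x(\glSurv\setminus\locSurv)=\Pm_x(\locSurv^c)=1-w(x),
\]
i.e.\ $\tilde u\ge w$, which is the asserted minimality.

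The hard part is Step~3, the claim $M_\infty=0$ on $\locSurv$, and this is where I would use the hypothesis $r(x)p_n(x)>0$ for some $x$ and some $n\ge 2$: pick $x_\ast$ with $r(x_\ast)p_{n_\ast}(x_\ast)>0$, $n_\ast\ge 2$, and $\varepsilon>0$ small enough that $U:=B(x_\ast,\varepsilon)$ has $\bar U\subset\subset E$ and $rp_{n_\ast}\ge\gamma>0$ on $\bar U$; set $\delta:=1-\sup_{\bar U}\tilde h>0$ (valid since $\tilde u$ is continuous and strictly positive on $\bar U$). Let $Z_t:=\#\{i:X^i_t\in\bar U\}$. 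The crucial geometric fact is that $\limsup_{t\to\infty}Z_t=\infty$ $\Pm_x$‑a.s.\ on $\locSurv$; granting it, along a subsequence with $Z_t\to\infty$ one has $M_t\le\prod_{X^i_t\in\bar U}\tilde h(X^i_t)\le(1-\delta)^{Z_t}\to 0$, so $M_\infty=0$ on $\locSurv$. To prove this fact I would argue by contradiction: if $\limsup_t Z_t\le J<\infty$ on a positive‑probability event $A\subseteq\locSurv=\locSurv_U$, then since $Z_t\ge 1$ at arbitrarily large times on $A$ there are stopping times $\sigma_1<\sigma_2<\cdots$, each $\sigma_k\ge\sigma_{k-1}+1$, at which some particle lies in $U$, with $\sigma_k<\infty$ on $A$; by uniform ellipticity (Assumption~\ref{assum:uniformly elliptic standing assumptions}) and the support theorem there is a uniformly positive conditional probability that within unit time a particle in $\bar U$ stays in $U$ and branches (with offspring $n_\ast$) often enough to leave at least $J+1$ descendants simultaneously in $U$, so $\Pm_x(\sup_{\sigma_k\le s\le\sigma_k+1}Z_s\ge J+1\mid\mathcal F_{\sigma_k})\ge c_0>0$, and the conditional Borel--Cantelli lemma then gives $\limsup_t Z_t\ge J+1$ on $A$, a contradiction. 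This recurrence‑type estimate (local survival forces the occupation number of a small ball around a branching point to be unbounded along a subsequence) is the only genuinely delicate point; everything else is bookkeeping with the nonlinear semigroup and the McKean representation.

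Finally, for Step~4: by Theorem~\ref{theo:PDE semilinear elliptic equation global survival}, $v:=\Pm_\cdot(\glSurv)$ is the maximal $(0,1]$‑solution (it exists since $\Pm_x(\locSurv)>0$ forces $\Pm_x(\glSurv)>0$), and Steps~1--3 show $w:=\Pm_\cdot(\locSurv)$ is the minimal one; since the minimal solution always lies below the maximal one, there is a unique $(0,1]$‑solution if and only if $w=v$, i.e.\ $\Pm_x(\glSurv\setminus\locSurv)=0$ for all $x$, i.e.\ $\Pm_x(\locSurv\mid\glSurv)=1$, which is the stated equivalence.
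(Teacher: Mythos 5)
Your proposal follows essentially the same route as the paper's proof: identify $w=\Pm_\cdot(\locSurv)$ as a stationary point of the nonlinear (McKean) semigroup exactly as for $\Pm_\cdot(\glSurv)$, use the bounded martingale $\prod_{i=1}^{N_t}(1-\tilde u(X^i_t))$, show by a uniformly-positive-probability-per-visit plus (conditional) Borel--Cantelli argument that local survival forces arbitrarily many particles in a fixed small ball so the martingale limit is small there, and deduce the uniqueness equivalence from the maximality of $\Pm_\cdot(\glSurv)$ given by Theorem~\ref{theo:PDE semilinear elliptic equation global survival}. The only deviations are minor: you take the unstopped martingale limit ($M_\infty=0$ on $\locSurv$) where the paper works with the stopped martingales $\tilde M_{t\wedge\tau_n}$ and an $\epsilon$-bound, and you justify the per-visit lower bound by choosing a ball on which $r\,p_{n_\ast}\ge\gamma>0$ (which tacitly uses a continuity of $r p_{n_\ast}$ not formally assumed; the paper's own terse claim that a particle in $\bar B(x_0,\delta)$ has at least $n$ children in the ball by time $1/2$ with probability $q>0$ needs essentially the same input, so this is an imprecision shared with, rather than introduced beyond, the paper).
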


We summarise the above as follows:
\begin{enumerate}
    \item If $\Pm_x(\glSurv)=0$ then there are no $(0,1]$-valued solutions of \eqref{eq:semilinear elliptic equation}.
    \item If $\Pm_x(\glSurv)>0$ but $\Pm_x(\glSurv\setminus \locSurv)=0$ then there is a unique $(0,1]$-valued solution of \eqref{eq:semilinear elliptic equation} given by $x\mapsto \Pm_x(\glSurv)$.
    \item If $\Pm_x(\locSurv),\Pm_x(\glSurv\setminus \locSurv)>0$, then there are multiple $(0,1]$-valued solutions of \eqref{eq:semilinear elliptic equation}, including both a minimal and maximal one. The minimal one is given by $x\mapsto \Pm_x(\locSurv)$ and the maximal one by $x\mapsto \Pm_x(\glSurv)$.
\end{enumerate}

The above is restricted to nonlinear terms of the form \eqref{eq:nonlinearity f}. In \cite{Etheridge2017}, Etheridge, Freeman and Penington extended McKean's representation for BBM to provide a stochastic representation of the Allen-Cahn equation using ternary branching Brownian motion and implementing a voting procedure on it. This was generalised by An, Henderson and Ryzhik in \cite{An2023} to provide a stochastic representation for seimilinear parabolic PDEs with quite general nonlinearities. This therefore begs the question as to whether we can combine these ideas with those described above to provide a probabilistic interpretation of semilinear elliptic PDEs with nonlinear terms which aren't of the form \eqref{eq:nonlinearity f}.

\subsection*{Persistence, local survival and global survival}

We let $u(x,t)$ be the solution to the FKPP equation \eqref{eq:semilinear elliptic equation}, and $\bar X_t=(X^1_t,\ldots,X^{N_t}_t)$ the associated branching process. Then it follows straightforwardly from the stochastic representation of \eqref{eq:semilinear elliptic equation} (see Theorem \ref{theo:McKean rep}) that $\Pm_x(N_t>0)=u(x,t)$ where $u(x,0)\equiv 1$. Therefore $\Pm_x(\glSurv)=\lim_{t\ra\infty}u(x,t)$ with $u(x,0)\equiv 1$. Theorem \ref{theo:PDE semilinear elliptic equation global survival} therefore has the PDE interpretation of saying that \eqref{eq:semilinear elliptic equation} has a $(0,1]$-valued stationary solution if and only if it has a long-time limit when we take $u(x,0)\equiv 1$ as the initial condition, in which case this long-time limit is the maximal stationary solution.

The PDE interpretation of \eqref{theo:local survival minimal solution} is not so immediate, however. Persistence is the property that $\liminf_{t\ra\infty}u(x,t)>0$ whenever $u(x,0)\nequiv 0$ and $u(x,0)$ is continuous with compact support. We would like to interpret $\Pm_x(\locSurv)>0$ as being equivalent to persistence, analogously to our previous interpretation of global survival. In order to make this connection, we will need to employ the results of Kyprianou and Engl\"ander \cite{Englander2004}. We prove the following.
\begin{theo}\label{theo:persistence local survival equivalence}
In addition to Assumption \ref{assum:uniformly elliptic standing assumptions}, we further assume that:
\begin{enumerate}
    \item branching is purely binary, so that $p_2(x)\equiv 1$, and the branch rate is positive somewhere ($r\nequiv 0$);
    \item $L_0$ can be put into the divergence form $L_0=\nabla\cdot (\tilde{a}\nabla)+\tilde{b}\nabla$ for some $\tilde{a}$ and $\tilde{b}$ belonging to $C^{1,\alpha}$ for some $\alpha\in (0,1]$ (note that $\tilde{a}=a$ necessarily).
\end{enumerate}
Then $\Pm_x(\locSurv)>0$ if and only if we have persistence for the FKPP equation
\[
\partial_tu=L_0u+u(1-u).
\]
\end{theo}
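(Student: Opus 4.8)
The plan is to prove the two implications separately, the forward one ``persistence $\Rightarrow$ local survival'' directly from the McKean-type representation, and the converse via the Engl\"ander--Kyprianou criterion together with a sub-/supersolution argument. Throughout set $L := L_0 + c$ with $c = r$; this is the mass-creation operator of the process, since the branching is binary, so that the nonlinearity \eqref{eq:nonlinearity f} reads $f(u,x) = r(x)u(1-u)$. For a continuous, compactly supported $\phi\colon E\to[0,\infty)$ with $\phi\not\equiv0$, write $u^\phi$ for the solution of $\partial_t u = L_0 u + f(u,\cdot)$ with Dirichlet (killing) boundary condition and $u^\phi(\cdot,0)=\phi$; by Theorem~\ref{theo:McKean rep},
\[
u^\phi(x,t) = 1 - \expE_x\Big[\prod_{i=1}^{N_t}\big(1-\phi(X^i_t)\big)\Big]\qquad\text{whenever }0\le\phi\le1.
\]

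\emph{Persistence $\Rightarrow$ local survival.} Fix an open ball $B$ with $\bar B\subset E$ and a continuous $\phi$ supported in $B$ with $0\le\phi\le1$, $\phi\not\equiv0$. Since $1-\phi\ge\Ind_{E\setminus B}$, the representation gives $u^\phi(x,t)\le\Pm_x(\exists\,i\colon X^i_t\in B)$, so persistence forces $\liminf_{t\to\infty}\Pm_x(\exists\,i\colon X^i_t\in B)>0$. Restricting to integer times and using reverse Fatou, $\Pm_x\big(\limsup_{n\to\infty}\{\exists\,i\colon X^i_n\in B\}\big)>0$. The event in the last display is contained in $\locSurv_B$, and under Assumption~\ref{assum:uniformly elliptic standing assumptions} one has $\locSurv_B=\locSurv$ up to $\Pm_x$-null sets; hence $\Pm_x(\locSurv)>0$.

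\emph{Local survival $\Rightarrow$ persistence.} Under hypotheses (1)--(2) the Engl\"ander--Kyprianou result \cite{Englander2004} applies, giving $\Pm_x(\locSurv)>0\iff\lambda_1(L)>0$, so we may assume $\lambda_1(L)>0$. With $E_R := B(0,R)\cap E$, the Dirichlet principal eigenvalues $\lambda_1^{\mathrm{Dir}}(L;E_R)$ increase to $\lambda_1(L)$, so we fix $R$ with $\mu:=\lambda_1^{\mathrm{Dir}}(L;E_R)>0$ and let $\psi_R>0$ be the corresponding eigenfunction on $E_R$, normalised so $\|\psi_R\|_\infty\le1$ and extended by $0$. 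Because $c=r$, in $E_R$ one computes
\[
L_0(\delta\psi_R)+f(\delta\psi_R,\cdot)=\delta\psi_R\big(\mu-c\,\delta\psi_R\big)\ge0
\]
for all small $\delta>0$, so $\delta\psi_R$ is a stationary subsolution of the FKPP equation on $E_R$ with boundary data $0$. For any continuous compactly supported $\phi\not\equiv0$, the strong maximum principle gives $u^\phi(\cdot,t_1)>0$ on $\overline{E_R}$ for $t_1>0$, hence $u^\phi(\cdot,t_1)\ge\delta\psi_R$ there for $\delta$ small; since $u^\phi$ is killed only on $\partial E$, it is a supersolution of the $E_R$-Dirichlet problem, and parabolic comparison gives $u^\phi(\cdot,t_1+s)\ge\delta\psi_R$ on $E_R$ for all $s\ge0$. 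Thus $\liminf_{t\to\infty}u^\phi(x,t)\ge\delta\psi_R(x)>0$ for $x\in E_R$, and letting $R\to\infty$ (or invoking the parabolic Harnack inequality) covers all $x\in E$, which is persistence.

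\emph{Main obstacle.} The delicate point is the step $\Pm_x(\locSurv)>0\Rightarrow\lambda_1(L)>0$ in the converse: within our framework the contrapositive of Proposition~\ref{prop:nve lambda implies local survival} yields only $\lambda_1(L)\ge0$, and excluding $\lambda_1(L)=0$ seems to need genuinely probabilistic input, which is precisely what we borrow from \cite{Englander2004}; this is also the sole place where the binary-branching and divergence-form hypotheses (1)--(2) are used. Everything else is either a one-line computation with the McKean representation or standard sub-/supersolution PDE theory.
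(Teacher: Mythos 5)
Your proposal is correct in substance but takes a genuinely different route from the paper's. The paper runs both implications through the Engl\"ander--Kyprianou dichotomy for $\lambda(L)$, $L=L_0+r$: its key inequality is $\Ind(N_t(K)>0)\ge u(x,t)\ge 1-(1-\underline{u}_0)^{N_t(U)}$, and in the case $\lambda(L)>0$ it invokes the \emph{stronger} statement of \cite{Englander2004} that $\Pm_x(N_t(U)\to\infty)>0$ (the paper stresses that $\Pm_x(\locSurv)>0$ alone is not enough, since the compact set might only be visited at rare times), while in the case $\lambda(L)\le 0$ it gets simultaneously local extinction and failure of persistence. You instead (i) prove ``persistence $\Rightarrow$ local survival'' directly from the McKean representation plus reverse Fatou along integer times, which uses neither \cite{Englander2004} nor hypotheses (1)--(2), and (ii) for the converse use only the eigenvalue criterion $\Pm_x(\locSurv)>0\iff\lambda_1(L)>0$ and then manufacture persistence analytically from the stationary subsolution $\delta\psi_R$ and parabolic comparison. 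Your route avoids the finer growth statement $N_t(U)\to\infty$ and yields the explicit quantitative bound $\liminf_{t\to\infty}u^\phi\ge\delta\psi_R$; the paper's route is shorter once the full strength of \cite{Englander2004} is granted and stays entirely probabilistic.

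One step needs repair as written. With $E_R=B(0,R)\cap E$ the closure $\overline{E_R}$ in general meets $\partial E$, where $u^\phi(\cdot,t_1)=0$; hence the strong maximum principle does not give $u^\phi(\cdot,t_1)>0$ on $\overline{E_R}$, and the pointwise bound $u^\phi(\cdot,t_1)\ge\delta\psi_R$ cannot be obtained by compactness alone, since both functions vanish on that portion of the boundary (one would have to compare vanishing rates via a Hopf-type estimate, and $E_R$ may moreover have corners where $\partial B(0,R)$ meets $\partial E$, so the boundary behaviour of $\psi_R$ there is itself delicate). The standard fix is to exhaust $E$ by smooth subdomains $\Omega_R\subset\subset E$ --- which is also the setting in which $\lambda_1^{\mathrm{Dir}}(L;\Omega_R)\uparrow\lambda_1(L)$ is usually justified --- so that $\psi_R$ vanishes on $\partial\Omega_R\subset E$, where $u^\phi(\cdot,t_1)$ is strictly positive by the strong maximum principle; then both the initial-time and lateral-boundary comparisons are immediate and your argument goes through.
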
\commentout{
\begin{example}
Consider the FKPP equation 
\begin{equation}\label{eq:FKPP drift 1 branching 1}
    \partial_tu=\frac{1}{2}\Delta u+2\partial_xu+u(1-u),
\end{equation}
corresponding to which is branching Brownian motion with drift $2$. Since we have $\Pm_x(\glSurv)=1>0$, Theorem \ref{theo:PDE semilinear elliptic equation global survival} ensures the existence of a stationary solution to \eqref{eq:FKPP drift 1 branching 1}. On the other hand, $\Pm_x(\locSurv)=0$ (since the drift is larger than the asymptotic spreading speed $\sqrt{2}$ of branching Brownian motion), hence we don't have persistence by 

\end{example}}
\begin{proof}[Proof of Theorem \ref{theo:persistence local survival equivalence}]
We assume that $u(x,0)$ is continuous with compact support. We recall from Theorem \ref{theo:McKean rep} that
\[
u(x,t)=\expE_x[1-\prod_{i=1}^{N_t}(1-u(x,0))].
\]
We take $U$ to be a non-empty, open, pre-compact subset compactly contained within the interior of the support of $u(x,0)$, and let $K:=\text{spt}(u(x,0))$. We define $\underline{u}_0:=\inf\{u(x):x\in U\}>0$, and write $N_t(U)$ and $N_t(K)$ for the number of particles in $U$ and $K$ respectively at time $K$. Then we observe that:
\begin{enumerate}
    \item if $N_t(K)=0$ then $1-\prod_{i=1}^{N_t}(1-u(x,0))=0$;
    \item $1-\prod_{i=1}^{N_t}(1-u(x,0))\geq 1-(1-\underline{u}_0)^{N_t(U)}$.
\end{enumerate}
Therefore
\[
\Ind(N_t(K)>0)\geq 1-\prod_{i=1}^{N_t}(1-u(x,0))\geq 1-(1-\underline{u}_0)^{N_t(U)},
\]
so that
\[
\limsup_{t\ra\infty}\Pm(N_t(K)>0)\geq \limsup_{t\ra\infty}u(x,t)\geq \liminf_{t\ra\infty}u(x,t)\geq \Pm(N_t(U)\ra \infty\text{ as $t\ra\infty$}).
\]

When $\lambda(L)\leq 0$, \cite[Theorem 3]{Englander2004} ensures that $\Pm_x(\locSurv)=0$, and hence 
\[
\limsup_{t\ra\infty}\Pm(N_t(K)>0)=0
\]
Therefore $u(x,t)\ra 0$ as $t\ra\infty$.

If $\lambda(L)>0$, \cite[Theorem 3]{Englander2004} ensures that $\Pm_x(\locSurv)>0$. However local survival is the event that the branching process visits any given compact set infinitely often, but it's not clear that it can't do so by visiting the set at rare times. We therefore need more than $\Pm_x(\locSurv)>0$. The results of Kyrpianou and Engl\"ander \cite[Theorem 3]{Englander2004} ensure that 
\[
\Pm_x(N_t(U)\ra \infty\text{ as $t\ra\infty$})>0
\]
when $\lambda(L)>0$. Therefore if $\lambda(L)>0$ we have $\Pm_x(\locSurv)>0$ and $\liminf_{t\ra\infty}u(x,t)>0$. 

In both the cases $\lambda(L)>0$ and $\lambda(L)\leq 0$ we have Theorem \ref{theo:persistence local survival equivalence}, so we're done.
\end{proof}

\subsection*{Stationary solutions of the FKPP equation on bounded domains}

In contrast to the rest of this section, here we shall consider bounded $E$, and consider whether the boundary regularity affects the existence and uniqueness of solutions to \eqref{eq:semilinear elliptic equation}. For that reason, the assumption that $\partial E$ is $C^{1,1}$ will be dropped for the time being. 

Rabinowitz \cite{Rabinowitz1971} established the uniqueness of solutions to the semilinear elliptic equation
\[
Lu+f(u,x)=0,\quad x\in E,\quad u\equiv 0\text{ on $\partial E$,}
\]
for self-adjoint $L$ and $f$ including in particular FKPP-type nonlinearities, under the assumption that $\partial E$ is smooth. This was extended to non self-adjoint $L$ by Berestycki \cite{Berestycki1981}. However, these proofs use an interior ball property in an essential way (so that they can employ the Hopf lemma). Nevertheless Berestycki and Graham conjectured in \cite[Conjecture 4.2]{Berestycki2025} that solutions of
\[
\frac{1}{2}\Delta u+f(u)=0,\quad x\in E,\quad u\equiv 0\text{ on $\partial E$,}
\]
are unique if $\partial E$ is only assumed to be Lipschitz, and $f$ is of strong-FKPP type (see \cite[p.2]{Berestycki2025} for a definition of this).

We will now confirm this for $f(u)=u(1-u)$, without any assumptions on the boundary regularity. It will be readily apparent that the argument generalises to general non-divergence form $L_0$ and $f$ of the form \eqref{eq:nonlinearity f}.

We must firstly say what we mean by Dirichlet boundary conditions on a bounded domain $E$ with no boundary regularity. In \cite[Section 3]{Berestycki1994}, Berestycki, Nirenberg and Varadhan constructed a solution, $u_0$, of
\[
\frac{1}{2}\Delta u_0=-1,\quad x\in E,
\]
and which satisfies $u_0=0$ on $\partial E$ in some sense. They use this as a barrier function, saying that $u\overset{u_0}{=}0$ on $\partial E$ if $u_0(x_n)\ra 0$ implies $u(x)\ra 0$ as $n\ra\infty$. 

In fact, $u_0$ has the following interpretation. Let $(B_t:0\leq t<\tau_E)$ be a Brownian motion killed at the time $\tau_E:=\inf\{t>0:B_t\notin E\}$, with $\Pm_x(B_0=x)=1$. Then 
\begin{equation}\label{eq:u0 is exit time}
    u_0(x)=\expE_x[\tau_E].
\end{equation}
To see this, we recall that $u_0$ is constructed by taking an ascending sequence $E_j$ of smooth sub-domains of $E$ with union $E$, and defining $u_j$ to be the unique solution of $\frac{1}{2}\Delta u=-1$ on $E_j$ with Dirichlet boundary conditions defined in the usual way. They define $u_0$ to be the pointwise non-decreasing limit of the $u_js$. Note however that $u_j(x)$ has the interpretation of being the expectation of the exit time started from $x$ of $E_j$, so \eqref{eq:u0 is exit time} follows from the monotone convergence theorem.

We consider the existence and uniqueness of solutions of the following stationary FKPP equation,
\begin{equation}\label{eq:stationary FKPP equation general bounded domain}
\frac{1}{2}\Delta u+u(1-u)=0,\quad x\in E,\quad u\overset{u_0}{=} 0.    
\end{equation}
We characterise the existence of solutions to \eqref{eq:stationary FKPP equation general bounded domain} using $\lambda_1'$ in the same manner as before, where $\lambda'_1$ is defined using the barrier function $u_0$ as follows,
\begin{equation}
\lambda_1' :=\sup\{\lambda\in\Rm:\exists u\in W^{2,d}_{\text{loc}}(E)\cap L^{\infty}(E)\text{ such that $(\frac{1}{2}\Delta-\lambda)u\geq 0,$}\\\text{$u(x)>0$ for all $x\in E$, $u\overset{u_0}{=}0$}\}.
\end{equation}

We prove the following.
\begin{theo}\label{eq:FKPP general bounded domain exis uniq}
\begin{enumerate}
    \item There exists a solution to \eqref{eq:stationary FKPP equation general bounded domain} if $\lambda'_1>0$, but not if $\lambda'_1<0$.
    \item There is at most one solution to \eqref{eq:stationary FKPP equation general bounded domain}.
\end{enumerate}
\end{theo}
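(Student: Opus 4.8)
The plan is to obtain both parts from the branching‑process results of Sections~\ref{sec:results_continuous_time} and~\ref{sec:results_branching_diffusions}, applied to the binary branching Brownian motion $\bar X$ on $E$, killed on $\partial E$ (the one attached to $f(u)=u(1-u)$), whose expectation semigroup is the Feynman--Kac semigroup $P_tg(x)=\expE_x[e^{t}g(B_t)\Ind(t<\tau_E)]$. The only genuinely new ingredient is a probabilistic reading of the boundary condition $u\overset{u_0}{=}0$ via the identity $u_0(x)=\expE_x[\tau_E]$ of \eqref{eq:u0 is exit time}. Two elementary observations carry the argument. First, $M_t\coloneqq\expE_x[\tau_E\mid\mathcal F_t]$ is a bounded martingale, equal to $t+u_0(B_t)$ on $\{t<\tau_E\}$, so $u_0(B_t)\to0$ $\Pm_x$-a.s.\ as $t\uparrow\tau_E$; hence any $u$ with $u\overset{u_0}{=}0$ has $u(B_t)\to0$ a.s.\ along the killed path. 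Second, $\expE_x[N_t]=e^{t}\Pm_x(\tau_E>t)\le e^{t}u_0(x)/t$ by Markov's inequality, so both $x\mapsto\Pm_x(\glSurv)$ and, more generally, $x\mapsto P_1u(x)$ for bounded $u$, satisfy $\overset{u_0}{=}0$.

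For Part~1 I would first show $\lambda_c'((P_t)_{t\ge0})=\lambda_1'$, where $\lambda_1'$ is understood for the operator $\tfrac12\Delta+1$ (the linearisation of \eqref{eq:stationary FKPP equation general bounded domain} at $u=0$). The inequality ``$\ge$'' follows by applying Itô's formula for $W^{2,d}_{\loc}$ functions to a supersolution $u\in W^{2,d}_{\loc}(E)\cap L^\infty(E)$, $u>0$, $u\overset{u_0}{=}0$, $(\tfrac12\Delta+1-\lambda)u\ge0$: the process $e^{(1-\lambda)(t\wedge\tau_E)}u(B_{t\wedge\tau_E})$ is a bounded submartingale, its boundary term vanishes by the first observation above, and one reads off $P_tu\ge e^{\lambda t}u$. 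For ``$\le$'', given $u\in pB(E)$, $0<u\le1$, with $P_tu\ge e^{\lambda t}u$, the function $w\coloneqq P_1u$ is positive, bounded, smooth in $E$ by interior parabolic regularity, satisfies $w\overset{u_0}{=}0$ by the second observation, is again a semigroup supersolution, and differentiating $P_tw\ge e^{\lambda t}w$ at $t=0^{+}$ gives $(\tfrac12\Delta+1-\lambda)w\ge0$, so $w$ is admissible for $\lambda_1'$; this is the bounded‑domain, boundary‑regularity‑free analogue of Theorem~\ref{theo:PDE equality of eigenvalues different notion}. Part~1 is then immediate: if $\lambda_1'>0$ then $\lambda_c'>0$, hence $\lambda_{c,x}'\ge\lambda_c'>0$ for every $x$ (Proposition~\ref{prop:lambda dependence on x}), so $\Pm_x(\glSurv)>0$ by Part~2 of Theorem~\ref{th:lambda'_survival} (the branching being purely local), and Theorem~\ref{theo:PDE semilinear elliptic equation global survival} (whose proof does not use boundary regularity) gives the solution $v(x)=\Pm_x(\glSurv)$ of \eqref{eq:stationary FKPP equation general bounded domain}, which obeys $v\overset{u_0}{=}0$. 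If $\lambda_1'<0$, a $(0,1]$-solution would force $\Pm_x(\glSurv)>0$ for all $x$ (Theorem~\ref{theo:PDE semilinear elliptic equation global survival}), hence $\lambda_c'=\inf_x\lambda_{c,x}'\ge0$ by Part~1 of Theorem~\ref{th:lambda'_survival} and Proposition~\ref{prop:lambda dependence on x}, contradicting $\lambda_c'=\lambda_1'<0$; so no solution exists.

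For Part~2 the plan is to invoke Theorem~\ref{theo:local survival minimal solution}: when $\Pm_x(\locSurv)>0$, uniqueness is equivalent to $\Pm_x(\locSurv\mid\glSurv)=1$, while the case $\Pm_x(\glSurv)=0$ gives no solution and the critical case $\Pm_x(\glSurv)>0=\Pm_x(\locSurv)$ leaves only the candidate $v(x)=\Pm_x(\glSurv)$, both dispatched directly. So the core task is to prove $\glSurv\subseteq\locSurv$ up to $\Pm_x$-null sets, i.e.\ that $\bar X$ cannot survive globally while eventually avoiding some fixed precompact window. Here I would fix $\delta<1/e$ and take $U\coloneqq\{u_0>\delta\}$, which is open, nonempty and precompact in $E$ (precompactness being arranged, if $\partial E$ is irregular, by additionally excising small balls about the polar set of irregular boundary points, which $\bar X$ a.s.\ never meets). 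On $E\setminus\overline U\subseteq\{u_0\le\delta\}$ the exit time $\hat\tau$ of $E\setminus\overline U$ satisfies $\hat\tau\le\tau_E$, so $\sup_y\expE_y[\hat\tau]\le\delta$; iterating the Markov property at integer times gives $\Pm_y(\hat\tau>n)\le\delta^{n}$, hence the BBM killed on leaving $E\setminus\overline U$ has $\hat P_t1(y)=e^{t}\Pm_y(\hat\tau>t)\le e(e\delta)^{\lfloor t\rfloor}\to0$ uniformly, so it goes extinct a.s.\ from every starting point and its global survival probability $\hat v$ is identically $0$. Finally, on $\glSurv\setminus\locSurv_U$ there is a last integer time $n$ after which no particle lies in $U$; conditioning on $\mathcal F_n$ and using the branching property, the probability that $\bar X$ survives globally with no particle re‑entering $U$ after time $n$ equals $\expE_x[\prod_i a(X_n^i)-\prod_i(a(X_n^i)-\hat v(X_n^i))]$, where $a(y)=\Pm_y(\text{the subtree of a particle at }y\text{ avoids }U)$, and this vanishes since $\hat v\equiv0$. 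Summing over $n$ and using that $\locSurv_U=\locSurv$ up to $\Pm_x$-null sets yields $\Pm_x(\glSurv\setminus\locSurv)=0$.

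The identification $\lambda_c'=\lambda_1'$ is routine once the $u_0$-boundary condition has been translated probabilistically, and Part~1 then drops out of the existing theory. I expect the real obstacle to be the final step of Part~2---excluding survival ``at the boundary''. The mechanism behind it (a region which Brownian motion leaves in mean time below $1/e$ is transient for $\bar X$) is robust, but making it rigorous in the \emph{complete} absence of boundary regularity needs a careful choice of the precompact window $U$, since $\{u_0>\delta\}$ itself may fail to be precompact near irregular boundary points, and it also requires a self-contained treatment of the critical value $\lambda_c'=0$, where Theorem~\ref{theo:local survival minimal solution} does not apply directly.
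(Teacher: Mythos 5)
Your Part~1 is essentially the paper's own argument: the a.s.\ convergence $u_0(B_t)\to 0$ as $t\uparrow\tau_E$ is exactly Lemma~\ref{lem:hitting of u0 of Brownian motion} (your proof via the uniformly integrable martingale $\expE_x[\tau_E\mid\mathcal F_t]$ is a legitimate variant of the paper's stopping-time/upcrossing argument; note the martingale is dominated by $\tau_E+\|u_0\|_\infty$ rather than bounded), and the identification $\lambda_c'((P_t)_{t\ge0})=\lambda_1'$ together with the transfer of Theorem~\ref{theo:PDE semilinear elliptic equation global survival} is precisely how the paper deduces existence for $\lambda_1'>0$ and non-existence for $\lambda_1'<0$. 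For Part~2 you also follow the paper's strategy at the top level (prove $\Pm_x(\glSurv\setminus\locSurv)=0$ and invoke Theorem~\ref{theo:local survival minimal solution}), but your mechanism for killing survival ``near the boundary'' differs from the paper's, and this is where there is a genuine gap.

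The gap is in the construction of the precompact window $U$. You correctly observe that $\{u_0>\delta\}$ need not be precompact in $E$ (its closure can meet $\partial E$ at irregular points, e.g.\ a punctured ball), but the proposed repair --- excising small balls around the irregular part of the boundary --- destroys the very inclusion your extinction estimate rests on: after excision, $E\setminus\overline U$ is no longer contained in $\{u_0\le\delta\}$, since the excised collar (intersected with $E$) now belongs to $E\setminus\overline U$ and there $u_0$ can be of order $\sup u_0$. Consequently the bound $\sup_{y\in E\setminus\overline U}\expE_y[\hat\tau]\le\delta$, hence $\Pm_y(\hat\tau>n)\le\delta^n$ and $e^t\Pm_y(\hat\tau>t)\to0$, does not follow; particles of the killed process sitting in the excised region near an irregular (hence slowly-exited) stretch of boundary are not controlled, and the iteration of the one-step Markov bound does not close. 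The irregular set is polar but may be dense in $\partial E$, so one cannot simply shrink the excised balls away. The paper avoids this entirely: it chooses a compact $K\subset\subset E$ with $\Leb(E\setminus K)$ so small that the Gaussian heat-kernel bound gives $\expE_x[N^{E\setminus K}_1]\le Ce\,\Leb(E\setminus K)\le\tfrac12$ uniformly in $x\in E\setminus K$, and concludes extinction of the process killed on $K$ by Markov's inequality and Borel--Cantelli --- no precompact sublevel set of $u_0$ and no boundary regularity enter at all. Your argument can be repaired in the same spirit (e.g.\ replace $\{u_0>\delta\}$ by any compact $K$ with $\Leb(E\setminus K)$ small, and bound $\expE_y[\hat\tau]$ by the expected occupation time of $E\setminus K$, which is uniformly small by the heat-kernel/occupation-density bound), but as written the window construction, and with it the key step $\glSurv\subseteq\locSurv$ a.s., is not established. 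The remaining bookkeeping of Part~2 (last-entry decomposition, the identity $\prod_i a(X^i_n)-\prod_i\bigl(a(X^i_n)-\hat v(X^i_n)\bigr)$, the open-vs-closed window mismatch, and the degenerate cases) is fine modulo minor two-window adjustments.
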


\begin{proof}[Proof of Theorem \ref{eq:FKPP general bounded domain exis uniq}]
We note that we didn't use boundary regularity to check Assumption \ref{assum:Strong Feller top irr}, so \eqref{eq:PDE global survival everywhere or nowhere} remains valid.

We will need the following lemma.
\begin{lem}\label{lem:hitting of u0 of Brownian motion}
For all $x\in E$ we have $\lim_{t\uparrow \tau_E}u_0(B_{t})=0$ $\Pm_x$-almost surely.
\end{lem}
We defer the proof of Lemma \ref{lem:hitting of u0 of Brownian motion} for later.

Using Lemma \ref{lem:hitting of u0 of Brownian motion}, we can then repeat the proof that $\lambda'_1(\frac{1}{2}\Delta+1)=\lambda'_1((P_t)_{t\geq 0})$ (as in Theorem \ref{theo:PDE equality of eigenvalues different notion}), as well as the proofs of Theorems \ref{theo:PDE semilinear elliptic equation global survival} and \ref{theo:local survival minimal solution}, exactly as before.

The existence part of Theorem \ref{eq:FKPP general bounded domain exis uniq} then follows from Theorem \ref{theo:global extinction dichotomy}.

We now turn to the proof of uniqueness. It suffices to check that $\Pm_x(\glSurv\setminus\locSurv)=0$ for all $x\in E$. Given $E'\subseteq E$ we consider BBM in $E'$ killed upon exiting $E'$. We write $\glSurv_{E'}$ for the event of global survival and $N^{E'}_t$ for the number of particles at time $t$. 

We assume for contradiction that $\Pm_x(\glSurv\setminus\locSurv)>0$, which implies that $\Pm_x(\glSurv_{E\setminus K})>0$ for all compact $K\subset \subset E$ and $x\in E\setminus K$. Since the heat kernel has bounded density with respect to Lebesgue measure at time $1$, there exists $C<\infty$ not dependent upon $K$ such that \[
\expE_x[N^{E\setminus K}_1]\leq Ce^t\Leb(E\setminus K)
\]
for all $x\in E\setminus K$. By taking an ascending sequence of compact $K\subset\subset E$, we see (from the dominated convergence theorem) that we can take compact $K\subset\subset E$ such that $\expE_x[N^{E\setminus K}_1]\leq \frac{1}{2}$ for all $x\in E\setminus K$. Using Markov's inequality and the Borel-Cantelli lemma, it follows that $\Pm_x(\glSurv_{E\setminus K})= 0$. This is a contradiction, hence we have established the uniqueness of solutions to \eqref{eq:stationary FKPP equation general bounded domain}.

We finally turn to the proof of Lemma \ref{lem:hitting of u0 of Brownian motion}. We define
\[
M_t:=t\wedge \tau_E+u_0(B_{t})\Ind(\tau_E>t)
\]
We firstly establish that $M_t$ is a martingale. To see this, observe that
\[
\begin{split}
\expE[M_t\lvert \mathcal{F}_s]=\expE[t\wedge \tau_E+\expE[(\tau_E-t)\vee 0\lvert \mathcal{F}_t]\lvert \mathcal{F}_s]\\
=\expE[t\wedge \tau_E+(\tau_E-t)\vee 0\lvert \mathcal{F}_s]=\expE[s\wedge \tau_E+(\tau_E-s)\vee 0\lvert \mathcal{F}_s]=M_s
\end{split}
\]
for $0\leq s<t$.

We now write $\tau_n:=\inf\{t>0:d(B_t,E^c)\leq \frac{1}{n}\}$. We see that $(M_{\tau_n})_{n=0}^{\infty}$ is a discrete-time $(\mathcal{F}_{\tau_n})_{n\geq 0}$-martingale, hence 
\[
\expE[\liminf_{n\ra\infty}\tau_n+u_0(B_{\tau_n})]=\expE_x[\liminf_{n\ra\infty}M_{\tau_n}]\leq \liminf_{n\ra\infty}\expE_x[M_{\tau_n}]=\expE[\tau_E].
\]
Since $\tau_n\uparrow\tau_E$ almost surely, it follows that $\expE_x[\liminf_{n\ra\infty}u_0(B_{\tau_n})]=0$ (using the monotone convergence theorem), hence $\liminf_{n\ra\infty}u_0(B_{\tau_n})=0$ almost surely (since it's non-negative). Then by applying Doob's upcrossing lemma (over the set of times $\mathbb{Q}_{\geq 0}$), we obtain Lemma \ref{lem:hitting of u0 of Brownian motion}.

This concludes the proof of Theorem \ref{eq:FKPP general bounded domain exis uniq}.
\end{proof}

\subsection*{Some consequences on unbounded domains}

We now study the properties of solutions to the stationary FKPP equation
\begin{equation}\label{eq:stationary FKPP}
\frac{1}{2}\Delta u+c(x)u(1-u)=0,\quad x\in E,\quad u\equiv 0\text{ on $\partial E$.}
\end{equation}
on unbounded domains, again and for the remainder of this section assuming that $\partial E$ is $C^{1,1}$.

Note that our results would remain true with $\frac{1}{2}\Delta$ replaced by more general uniformly elliptic non-divergence form operators and the non-linearity $u(1-u)$ replaced by that corresponding to a more general offspring distribution, but we restrict our attention for the sake of simplicity. The caveats to this are that we require at times Theorem \ref{theo:local survival equivalence} and the results of \cite{Englander2004}, which have been proven under different restrictions on the differential operator.

Whilst we believe the above described connection could prove a fruitful way of understanding the properties of stationary solutions of the FKPP equation, it is not our intention to exhaustively pursue this here, so we only illustrate some straightforward consequences.

We assume throughout that $c$ is locally $\alpha$-Holder continuous ($\alpha\in (0,1)$) and uniformly bounded. We will further assume that $E$ is a non-empty, open, connected subdomain of $\Rm^d$ with $C^{1,1}$ boundary. We will sometimes consider different domains $E$ and $E'$, in which case both should be understood to have $C^{1,1}$ boundary. Where we need to clarify the domain, we shall write $\lambda_1(\frac{1}{2}\Delta+c,E)$ for instance (and use similar notation for the other generalised principal eigenvalues).

We will connect the properties of this equation to those of branching Brownian motion with instantaneous killing along $\partial E$ and branching at rate $c(x)$. We denote this by $\bar B_t$. The events $\locSurv$, $\glSurv$ and $\rlocSurv$ are then defined for this branching diffusion. Where we wish to clarify the domain we are referring to, we denote this by $\locSurv_E$, ${\glSurv}_E$ and $\rlocSurv$ (if the domain is $E$).

\commentout{
We assume Assumption \ref{assum:uniformly elliptic standing assumptions} throughout, which we recall means that:
\begin{enumerate}
    \item $E$ is a non-empty, open, connected subdomain of $\Rm^d$ with $C^{1,1}$ boundary;
    \item $a_{ij}(x)$, $b_i(x)$ and $c(x)$ are all locally H\"older continuous (with exponent $\alpha\in (0,1)$) and globally bounded;
    \item $a(x)$ is symmetric for all $x$ and uniformly elliptic.
\end{enumerate} 
}

\begin{prop}\label{prop:limsup nve implies uniqueness}
Suppose that $\lambda_1(\frac{1}{2}\Delta+c,E)>0$ but $\limsup_{r\ra\infty}\lambda_1'(\frac{1}{2}\Delta+c,E\setminus \bar B(0,r))<0$. Then there exists a unique $(0,1]$-valued solution of \eqref{eq:stationary FKPP}.
\end{prop}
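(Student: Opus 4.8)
The plan is to deduce everything from the probabilistic dictionary developed in this section, applied to $L=\frac12\Delta+c$ (which has vanishing drift, so Theorem~\ref{theo:local survival equivalence} is available). For \textbf{existence}, I would argue as follows: since $\lambda_1(L)\le\lambda_1'(L)$ (Berestycki--Rossi \cite[Theorem 1.7(ii)]{BerestyckiRossi}), the hypothesis $\lambda_1(L)>0$ gives $\lambda_1'(L)>0$; hence $\Pm_x(\glSurv)>0$ for all $x\in E$ by Theorem~\ref{theo:PDE global extinction e-value relationship}, and then Theorem~\ref{theo:PDE semilinear elliptic equation global survival} provides the $(0,1]$-valued solution $x\mapsto\Pm_x(\glSurv)$ of \eqref{eq:stationary FKPP}.

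For \textbf{uniqueness}, I would first observe that $\lambda_1(L)>0$ forces $c(x)>0$ for some $x$ (otherwise, comparing with the constant test function, $\lambda_1(L)\le\lambda_1(\frac12\Delta)\le0$), so branching occurs somewhere and the results of Kyprianou and Engl\"ander \cite{Englander2004} apply; combined with $\lambda_1(L)>0$ they give $\Pm_x(\locSurv)>0$. Theorem~\ref{theo:local survival minimal solution} then reduces uniqueness of the $(0,1]$-valued solution to showing $\Pm_x(\locSurv\mid\glSurv)=1$, i.e.\ $\Pm_x(\glSurv\setminus\locSurv)=0$ for every $x$. Since $b\equiv0$, Theorem~\ref{theo:local survival equivalence} identifies $\locSurv$ with $\rlocSurv$ up to $\Pm_x$-null sets, so it is enough to prove $\Pm_x(\glSurv\setminus\rlocSurv)=0$.

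The heart of the proof is this last claim, and I would establish it by contradiction exactly as in the proof of Proposition~\ref{prop:lambda' lambda'' outside balls relationship}: if $\Pm_x(\glSurv\setminus\rlocSurv)>0$, then on that event $\inf_{1\le i\le N_t}|X^i_t|\to\infty$, so for each $r<\infty$ there is, with positive probability, a (random) integer time past which all particles stay in $E\setminus\bar B(0,r)$ forever; by the branching and Markov properties (at a deterministic time, after conditioning) the sub-branching diffusion issued from those particles lives on $E\setminus\bar B(0,r)$ with the same coefficients, killed on its boundary, and survives globally with positive probability. By Theorem~\ref{theo:PDE global extinction e-value relationship} applied on $E\setminus\bar B(0,r)$ (choosing, for a.e.\ $r$, the sphere $\partial B(0,r)$ so that $E\setminus\bar B(0,r)$ is admissible for Assumption~\ref{assum:uniformly elliptic standing assumptions}, and working on the connected component carrying the surviving sub-process), this forces $\lambda_1'(L;E\setminus\bar B(0,r))\ge0$, which contradicts $\limsup_{r\to\infty}\lambda_1'(L;E\setminus\bar B(0,r))<0$ once $r$ is large. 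Hence $\Pm_x(\glSurv\setminus\rlocSurv)=0$, so $\Pm_x(\glSurv\setminus\locSurv)=0$, and the $(0,1]$-valued solution is unique.

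The only delicate point is the reduction $\locSurv=\rlocSurv$ (mod $\Pm_x$-null sets) invoked above; but that is precisely the content of Theorem~\ref{theo:local survival equivalence} under the standing hypothesis $b\equiv0$ (indeed, it is what makes $b\equiv0$ necessary in the statement, via the parabolic boundary Harnack inequality). Given that input, together with the exterior-domain argument recycled verbatim from the proof of Proposition~\ref{prop:lambda' lambda'' outside balls relationship}, no genuinely new obstacle remains: the proof is essentially a bookkeeping exercise chaining this section's theorems.
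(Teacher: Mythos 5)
Your proof is correct and follows essentially the same route as the paper's: existence via global survival together with Theorem \ref{theo:PDE semilinear elliptic equation global survival}, and uniqueness reduced through Theorem \ref{theo:local survival minimal solution} to excluding $\glSurv\setminus\locSurv$, which both you and the paper rule out by combining Theorem \ref{theo:local survival equivalence} (valid since $b\equiv 0$) with extinction on the exterior domains $E\setminus\bar B(0,r)$ forced by $\lambda_1'(L;E\setminus\bar B(0,r))<0$ via Theorem \ref{theo:PDE global extinction e-value relationship}. The only cosmetic differences are that you obtain global survival from $\lambda_1\le\lambda_1'$ rather than directly from $\Pm_x(\locSurv)>0$, and you make explicit two checks the paper leaves implicit (that branching occurs somewhere, and the admissibility of the exterior domain).
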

\begin{proof}[Proof of Proposition \ref{prop:limsup nve implies uniqueness}]
We firstly note that we have $\Pm_x(\locSurv)>0$ by \cite{Englander2004}. The existence part then follows from Theorem \ref{theo:PDE semilinear elliptic equation global survival}. In order to obtain uniqueness, it suffices to check whether $\glSurv_E\setminus\locSurv_E$ is possible with positive probability, by Theorem \ref{theo:local survival minimal solution}. 

We suppose for contradiction that this is the case. On the event $\glSurv_{E}\setminus \locSurv_{E}$, the particles survive but undergo $r$-local extinction (i.e. they drift off to infinity) by Theorem \ref{theo:local survival equivalence}. Therefore we have must have $\Pm_x(\glSurv_{E\setminus \bar B(0,R)}\setminus \locSurv_{E\setminus \bar B(0,R)})$ for $x\in E\setminus \bar B(0,R)$, for all $R<\infty$. However $\lambda'(E\setminus \bar B(0,R))<0$ for all $R<\infty$ sufficiently large, meaning that 
\[
\Pm_x(\glSurv_{E\setminus \bar B(0,R)}\setminus \locSurv_{E\setminus \bar B(0,R)})\leq \Pm_x(\glSurv_{E\setminus \bar B(0,R)})=0
\]
for $R<\infty$ sufficiently large and $x\in E\setminus \bar B(0,R)$, by Theorem \ref{theo:PDE global extinction e-value relationship}.
\end{proof}

\begin{prop}\label{prop:can extend uniqueness bounded extensions}
We suppose that we have two domains $E'\supseteq E$, and $E'\setminus E$ is bounded. We further assume that $\lambda_1'(\frac{1}{2}\Delta+c,E)>0$, and that \eqref{eq:stationary FKPP} has at most one solution on $E$. Then \eqref{eq:stationary FKPP} also has at most one solution on $E'$.
\end{prop}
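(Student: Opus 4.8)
The plan is to translate the statement into the language of the associated branching Brownian motion on $E'$ (diffusion $\tfrac12\Delta$, instantaneous killing on $\partial E'$, binary branching at rate $c$) and to reduce all quantities on $E'$ to quantities on $E$ using that $E'\setminus E$ is precompact. If $E'=E$ there is nothing to prove, and if $E'$ is bounded then $\rlocSurv_{E'}=\glSurv_{E'}$ and the claim below is immediate, so I may assume $E$ and $E'$ are both unbounded. Fix $R_1$ with $E'\setminus E\subseteq B(0,R_1)$ and then $R_0>R_1$ large enough that $E$ contains a nonempty precompact open set $U\subseteq B(0,R_0)$. The structural point I will exploit is that on $\{|x|>R_0\}$ the domains $E$ and $E'$, and hence the laws of the two branching diffusions, coincide.

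First I would record that the hypotheses force local survival on $E$. Since $\tfrac12\Delta+c$ is in divergence form with $C^1$ (indeed constant) leading coefficient, Theorem~\ref{theo:lambda'=lambda'' condition 1} gives $\lambda_1(\tfrac12\Delta+c,E)=\lambda_1'(\tfrac12\Delta+c,E)>0$, so by Engländer--Kyprianou~\cite{Englander2004} we have $\Pm_x(\locSurv_E)>0$ for all $x\in E$; in particular $c\not\equiv0$, so branching is possible. A surviving (resp. locally surviving) realization of BBM on $E$ stays in $E\subseteq E'$ and is therefore also a surviving (resp. locally surviving) realization of BBM on $E'$, whence $\Pm_x(\glSurv_{E'})\ge\Pm_x(\glSurv_E)>0$ and $\Pm_x(\locSurv_{E'})\ge\Pm_x(\locSurv_E)>0$ for $x\in E$, and by irreducibility for all $x\in E'$. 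By Theorem~\ref{theo:PDE semilinear elliptic equation global survival} there exists a $(0,1]$-valued solution of \eqref{eq:stationary FKPP} on $E'$. By Theorem~\ref{theo:local survival minimal solution} (applicable since there is local survival and branching is possible on $E'$) the solution on $E'$ is unique if and only if $\Pm_x(\glSurv_{E'}\setminus\locSurv_{E'})=0$ for all $x\in E'$, and by Theorem~\ref{theo:local survival equivalence} (here $b\equiv0$) this is equivalent to $\Pm_x(\glSurv_{E'}\setminus\rlocSurv_{E'})=0$. So it remains to prove the latter.

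The heart of the argument is a drift-off reduction. Suppose for contradiction that $\Pm_{x_0}(\glSurv_{E'}\setminus\rlocSurv_{E'})>0$ for some $x_0$. On this event the process survives and $\liminf_{t\to\infty}\min_{1\le i\le N_t}|X^i_t|=\infty$, so a.s. there is a random time after which every particle lies in $\{|x|>R_0\}$; hence, up to a null set,
\[
\glSurv_{E'}\setminus\rlocSurv_{E'}\ \subseteq\ \bigcup_{n\in\N}A_n,\qquad A_n:=\{N_n\ge1\}\cap\bigl\{|X^i_s|>R_0\ \text{for all }s\ge n,\ 1\le i\le N_s\bigr\}.
\]
Thus $\Pm_{x_0}(A_n)>0$ for some $n$. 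Conditioning on $\mathcal F_n$ and using the branching property, on $A_n$ the subtrees rooted at $X^1_n,\dots,X^{N_n}_n$ — all lying in $E'\cap\{|x|>R_0\}=E\cap\{|x|>R_0\}$ — are independent copies of BBM on $E'$, each forced by $A_n$ to remain forever in $\{|x|>R_0\}$, with at least one surviving. Since the $E$- and $E'$-dynamics agree on $\{|x|>R_0\}$, for any such $y$ the event ``the BBM started at $y$ stays in $\{|x|>R_0\}$ for all time and survives'' has the same probability $q(y)$ under the $E$- and the $E'$-law, and we deduce $q(y)>0$ for some $y\in E\cap\{|x|>R_0\}$. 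But for BBM on $E$ this event is contained in $\glSurv_E$ and never visits $U$, hence is disjoint from $\locSurv_E$ (as $\locSurv_E=\locSurv_U$ up to null sets); therefore $\Pm_y(\glSurv_E\setminus\locSurv_E)\ge q(y)>0$. Since $\Pm_y(\locSurv_E)>0$, Theorem~\ref{theo:local survival minimal solution} then shows that \eqref{eq:stationary FKPP} has more than one $(0,1]$-valued solution on $E$, contradicting the hypothesis. Hence $\Pm_x(\glSurv_{E'}\setminus\rlocSurv_{E'})=0$ for all $x$, giving uniqueness on $E'$.

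I expect the main subtlety to be the drift-off decomposition $\glSurv_{E'}\setminus\rlocSurv_{E'}\subseteq\bigcup_nA_n$ and the ensuing use of the branching/Markov property: the natural ``first time the whole future population has left $\bar B(0,R_0)$'' is not a stopping time, which is why one passes to a countable union over integer times $n$ and invokes the branching property at the deterministic time $n$ together with the identity of the dynamics away from $\bar B(0,R_0)$. A secondary point worth stating explicitly is the use of $\lambda_1=\lambda_1'$ on $E$ (via Theorem~\ref{theo:lambda'=lambda'' condition 1}), which upgrades the hypothesis $\lambda_1'(\tfrac12\Delta+c,E)>0$ to genuine local survival on $E$ and thereby makes Theorem~\ref{theo:local survival minimal solution} applicable; without self-adjointness one would additionally have to handle the regime $\lambda_1\le0<\lambda_1'$.
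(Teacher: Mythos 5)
Your argument is correct and follows essentially the same route as the paper's proof: uniqueness on $E'$ is reduced via Theorem~\ref{theo:local survival minimal solution} to ruling out global survival without local survival, Theorem~\ref{theo:local survival equivalence} converts that event into drift-off to infinity, and the boundedness of $E'\setminus E$ transfers it to the process on $E$, contradicting uniqueness on $E$. The only differences are presentational: you make explicit the use of $\lambda_1=\lambda_1'$ via Theorem~\ref{theo:lambda'=lambda'' condition 1} before invoking \cite{Englander2004}, and your decomposition into the events $A_n$ at integer times spells out the transfer step that the paper's proof states in a single sentence.
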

\begin{proof}[Proof of Proposition \ref{prop:can extend uniqueness bounded extensions}]
Since $\lambda_1'(\frac{1}{2}\Delta+c,E)>0$, $\Pm_x(\locSurv_{E'})\geq \Pm_x(\locSurv_E)>0$ by \cite{Englander2004}. We now suppose for the sake of contradiction that \eqref{eq:stationary FKPP} has more than one solution on $E'$. Then $\Pm_x(\glSurv_{E'}\setminus\locSurv_{E'})>0$ for $x\in E$ by Theorem \ref{theo:local survival minimal solution}. On the event $\glSurv_{E'}\setminus \locSurv_{E'}$, the particles survive but undergo $r$-local extinction on $E'$ (i.e. they drift off to infinity) by Theorem \ref{theo:local survival equivalence}. Since $E'\setminus E$ is bounded, this then implies that $\Pm_x(\glSurv_E\setminus \locSurv_E)>0$ by Theorem \ref{theo:local survival minimal solution}. This contradicts the uniqueness of solutons to \eqref{eq:stationary FKPP} on $E$ by Theorem \ref{theo:local survival minimal solution}. Therefore solutions of \eqref{eq:stationary FKPP} on $E'$ are unique.
\end{proof}

\begin{prop}\label{prop:eulidean compact c}
We assume that $E=\Rm^d$ and $c$ is continuous and compactly supported, with $c\nequiv 0$. We further assume that $\lambda_1(\frac{1}{2}\Delta+c)>0$, so that there exists a solution to \eqref{eq:stationary FKPP}. Then this solution is unique for $d=1,2$, but non-unique for $d\geq 3$.
\end{prop}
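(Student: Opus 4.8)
The plan is to translate the uniqueness question into a survival-versus-escape dichotomy for the associated branching Brownian motion $\bar B_t$ (branching at rate $c(x)\ge 0$, binary, no killing since $E=\Rm^d$), and then to resolve that dichotomy using recurrence ($d\le 2$) versus transience ($d\ge 3$) of the driving motion. Fix $R_0$ with $\operatorname{supp}(c)\subseteq \bar B(0,R_0)$; since $c$ is continuous and $c\nequiv 0$ there is an open ball $U$ and $\delta>0$ with $c\ge\delta$ on $U$, and outside $\operatorname{supp}(c)$ a particle performs free Brownian motion with no branching. As there is no killing or death, the population $N_t$ is nondecreasing, so $\glSurv$ holds $\Pm_x$-a.s.; in particular $x\mapsto\Pm_x(\glSurv)\equiv 1$ is the maximal $(0,1]$-valued solution of \eqref{eq:stationary FKPP} (Theorem~\ref{theo:PDE semilinear elliptic equation global survival}). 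Since $\lambda_1(\tfrac12\Delta+c)>0$, Engl\"ander and Kyprianou~\cite{Englander2004} give $\Pm_x(\locSurv)>0$, so by Theorem~\ref{theo:local survival minimal solution} the function $x\mapsto\Pm_x(\locSurv)$ is the minimal $(0,1]$-valued solution and uniqueness is equivalent to $\Pm_x(\locSurv)=1$ for all $x$. Because $b\equiv 0$, Theorem~\ref{theo:local survival equivalence} identifies $\locSurv$ with $\rlocSurv$ up to null sets, and since $\glSurv$ is a.s.\ certain, $\rlocSurv=\{\liminf_{t\to\infty}\inf_{1\le i\le N_t}|X^i_t|<\infty\}$ a.s.; hence uniqueness holds iff $\liminf_{t\to\infty}\inf_i|X^i_t|<\infty$ $\Pm_x$-a.s. (If one allows $c$ to change sign the same reduction goes through, with the trivial observation ``$\glSurv$ is a.s.'' replaced by the analysis of $\glSurv\cap\{\mathcal N<\infty\}$ below.)

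For $d\ge 3$ I would prove non-uniqueness directly. Starting the process from any $x_0$ with $|x_0|>R_0$, transience of $d$-dimensional Brownian motion gives positive probability that the initial particle never enters $\bar B(0,R_0)$; on that event no branching ever occurs, so $\bar B_t$ consists forever of this single immortal particle and $|X^1_t|\to\infty$. Thus $\liminf_{t\to\infty}\inf_i|X^i_t|=\infty$ with positive probability, so $\Pm_{x_0}(\locSurv)<1$. Therefore $x\mapsto\Pm_x(\locSurv)$ and $x\mapsto\Pm_x(\glSurv)\equiv 1$ are two distinct $(0,1]$-valued solutions of \eqref{eq:stationary FKPP}, and uniqueness fails.

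For $d\le 2$ I would prove the key fact that, $\Pm_x$-a.s., the total number $\mathcal N$ of branching events is infinite. For $n\ge 0$ let $\sigma_n$ be the time of the $n$-th branching (a stopping time, with $\sigma_0:=0$ and $\sigma_n:=\infty$ if there are fewer than $n$), so that $\{\mathcal N=n\}=\{\sigma_n<\infty,\ \sigma_{n+1}=\infty\}$. Conditionally on $\calF_{\sigma_n}$, on $\{\sigma_n<\infty\}$ the strong Markov and branching properties represent the evolution after $\sigma_n$ as $N_{\sigma_n}\ge 1$ independent copies of $\bar B$ started from the current particle positions, and $\{\sigma_{n+1}=\infty\}$ is the event that none of these copies ever branches. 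A single copy started from a point $y$ never branches with probability $\expE_y\big[\exp(-\int_0^\infty c(W_s)\,ds)\big]$, where $W$ is Brownian motion from $y$; but for $d\le 2$ recurrence gives $\int_0^\infty c(W_s)\,ds\ge\delta\int_0^\infty\Ind_U(W_s)\,ds=\infty$ a.s., so this probability is $0$. Hence $\Pm_x(\mathcal N=n)=0$ for every $n$, so $\mathcal N=\infty$ a.s. Then the branching times $\sigma_n$ increase to $\infty$ (non-explosion), and at each $\sigma_n$ some particle lies in $\operatorname{supp}(c)\subseteq \bar B(0,R_0)$, so $\liminf_{t\to\infty}\inf_i|X^i_t|\le R_0<\infty$ a.s. By the reduction of the first paragraph, this gives uniqueness of the $(0,1]$-valued solution.

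The step I expect to be the main obstacle is $\Pm_x(\mathcal N=\infty)=1$ for $d\le 2$: the intuitive argument --- ``after the last branching event all particles run free Brownian motion, which for $d\le 2$ must return to $\operatorname{supp}(c)$ and branch again'' --- refers to the time of the last branching, which is not a stopping time. The decomposition over $\{\mathcal N=n\}$ together with the strong Markov property at the genuine stopping times $\sigma_n$ circumvents this, reducing the claim to the elementary fact that linear or planar Brownian motion spends infinite time in any fixed ball. The remaining ingredients --- that outside $\operatorname{supp}(c)$ the motion is branching-free, non-explosion of $N_t$, and the bookkeeping via Theorems~\ref{theo:local survival minimal solution}, \ref{theo:local survival equivalence} and~\ref{theo:PDE semilinear elliptic equation global survival} --- are routine.
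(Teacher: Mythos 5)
Your proposal is correct, and its skeleton coincides with the paper's: reduce uniqueness to the question whether $\glSurv\setminus\locSurv$ is null via Theorem~\ref{theo:local survival minimal solution} (with existence/minimality coming from $\lambda_1>0$ and \cite{Englander2004}), then settle that question by transience for $d\ge 3$ (your escape-without-branching event is exactly the paper's argument) and by recurrence for $d\le 2$. Where you diverge is the recurrent case. The paper simply follows one fixed ancestral line: since there is no killing and branching is local, the spatial motion along any infinite line of descent is a Brownian motion, which for $d\le 2$ returns to $B(0,1)$ at arbitrarily large times; this gives $\locSurv_{B(0,1)}$, hence $\locSurv$, a.s.\ directly, with no need for Theorem~\ref{theo:local survival equivalence}. (This also answers the obstacle you flag: one never conditions on ``the last branching event'', one just follows a spine.) Your alternative — showing via the strong Markov property at the branching times $\sigma_n$ that the total number of branching events is a.s.\ infinite, because a single free Brownian particle in $d\le2$ accumulates infinite branching intensity — is a valid and self-contained substitute, and the bookkeeping ($\sigma_n\to\infty$ by non-explosion, branching positions lying in $\operatorname{supp}(c)$) is sound. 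However, your final detour through $\rlocSurv$ and Theorem~\ref{theo:local survival equivalence} is heavier than necessary and imports the $b\equiv 0$ boundary-Harnack machinery where it is not needed: your own conclusion that particles visit $\bar B(0,R_0)$ at the times $\sigma_n\to\infty$ already gives $\locSurv_U$ for a fixed precompact $U\supseteq\bar B(0,R_0)$, which the paper has shown equals $\locSurv$ up to null sets, so you could conclude without invoking the $\locSurv=\rlocSurv$ equivalence at all. In short: no gap, same global strategy, a slightly more laborious but legitimate treatment of the $d\le 2$ case where the paper's spine argument is shorter.
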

\begin{proof}[Proof of Proposition \ref{prop:eulidean compact c}]
Since $\lambda_1(\frac{1}{2}\Delta+c)>0$, $\Pm_x(\locSurv)>0$ by \cite{Englander2004}. By Theorem \ref{theo:local survival minimal solution} it suffices to check whether the event $\glSurv\setminus \locSurv$ is possible with positive probability. We note that $\bar B_t$ doesn't have any killing. 

In the case of $d=1,2$, a given particle must return infinitely often to $B(0,1)$ by the recurrence of Brownian motion in dimensions $1$ and $2$, hence local extinction is not possible. In dimension $3$ or higher on the other hand, there is a positive probability that a given particle will leave the support of $c$ without branching, and then never return, by the transcience of Brownian motion in dimension $3$ or higher. Therefore $\glSurv\setminus \locSurv$ is possible with positive probability in dimension $3$ or higher, but not in dimensions $1$ or $2$. Using Theorem \ref{theo:local survival minimal solution} we are done. 
\end{proof}

In \cite{Berestycki1997}, Berestycki, Cafarelli and Nirenberg considered positive and bounded solutions of the semilinear equation 
\begin{equation}\label{eq:semilinear elliptic equation BerCafNir}
\frac{1}{2}\Delta u+f(u)=0\quad \text{in}\quad E,\quad u\equiv 0\quad\text{on}\quad \partial E,
\end{equation}
for a class of non-linearities that include $f(u)=u(1-u)$, on domains of the form
\begin{equation}
    E=\{x\in \Rm^d:x_d>\varphi(x_1,\ldots,x_{d-1})\}
\end{equation}
for some Lipschitz function $\varphi$. They established the existence of a unique classical solution for \eqref{eq:semilinear elliptic equation BerCafNir}, which they show is monotone in the $x_d$ direction (in fact in a cone of directions). We now provide a probabilistic proof of this in the special FKPP case $f(u)=u(1-u)$, under the additional assumption that $\varphi$ is $C^{1,1}$ (so we can apply our results).
\begin{prop}[Theorems 1.1 and 1.2, \cite{Berestycki1997}]\label{prop:monotone BerNirCaf}
There exists a unique $(0,1]$-valued solution of \eqref{eq:semilinear elliptic equation BerCafNir} with $f(u)=u(1-u)$, which is monotone increasing in the $x_d$ direction, and in fact in a cone of directions.
\end{prop}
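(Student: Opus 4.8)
The plan is in four stages: (i) extract existence and identify the maximal and minimal solutions from the probabilistic correspondence of this section; (ii) prove the monotonicity by a soft coupling that exploits the self-similarity $E+\tau e_d\subseteq E$; (iii) upgrade this to uniqueness by a sliding argument, the probabilistic input being only the behaviour of solutions at $\partial E$ and ``at the top''; (iv) flag the main obstacle.

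\emph{Stage (i).} Since $\varphi$ is Lipschitz with constant $L$, the domain $E$ contains Euclidean balls of every radius: $B(z,\rho)\subseteq E$ whenever $z=(z',t)$ and $t-\varphi(z')>\rho\sqrt{1+L^2}$. Branching Brownian motion with branching rate $1$ on such a ball survives globally --- indeed locally --- with positive probability once $\rho$ is large (apply Theorem~\ref{theo:PDE global extinction e-value relationship} on the ball, or argue classically), so by domain monotonicity $\Pm_{z}(\glSurv)>0$ for such $z$; since $E$ is connected and Assumption~\ref{assum:Strong Feller top irr} holds here (strong Feller and topological irreducibility for Brownian motion killed on $\partial E$, as recalled around~\eqref{eq:PDE global survival everywhere or nowhere}), we get $\Pm_x(\glSurv)>0$ for every $x\in E$; equivalently $\lambda_1'(\tfrac12\Delta+1,E)\ge\lambda_1(\tfrac12\Delta+1,E)>0$. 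By Theorem~\ref{theo:PDE semilinear elliptic equation global survival} a $(0,1]$-valued solution of~\eqref{eq:semilinear elliptic equation BerCafNir} with $f(u)=u(1-u)$ exists and $u(x):=\Pm_x(\glSurv)$ is the maximal one; since branching (binary, rate $1$) is possible, $w(x):=\Pm_x(\locSurv)$ is the minimal one by Theorem~\ref{theo:local survival minimal solution}. In particular every $(0,1]$-solution $\tilde u$ satisfies $w\le\tilde u\le u$.

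\emph{Stage (ii).} Fix $\xi$ in the cone $\Gamma:=\{\xi\in\R^d:\xi_d>L|\xi'|\}$, so that $E+s\xi\subseteq E$ for all $s\ge0$. Run the branching Brownian motion $\bar B$ on $E$ from $x$ and set $\bar B^{(s)}:=\bar B+s\xi$; this is a branching Brownian motion from $x+s\xi$ killed when $\bar B$ leaves $E$, i.e.\ killed on $\partial(E+s\xi)$. Because $E+s\xi\subseteq E$, the genuine process on $E$ from $x+s\xi$ undergoes less killing, and the usual ``prune the extra subtrees'' coupling makes it dominate $\bar B^{(s)}$ particle by particle; comparing the events $\glSurv$ and $\locSurv$ for the two processes and using translation invariance gives $u(x+s\xi)\ge u(x)$ and $w(x+s\xi)\ge w(x)$. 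Thus $u$ and $w$ --- and hence any $(0,1]$-solution, being squeezed between them, for the two monotonicities transfer once uniqueness is known --- are non-decreasing in every direction of $\Gamma$, in particular in $e_d$. We also record two boundary facts that every $(0,1]$-solution $\tilde u$ inherits via $w\le\tilde u\le u$: $\tilde u\to0$ at $\partial E$ (part of the Dirichlet condition, and forced probabilistically by the exterior ball condition of the $C^{1,1}$ boundary, which kills a particle started near $\partial E$ before it branches with high probability); and $\tilde u\to1$ as $\mathrm{dist}(x,\partial E)\to\infty$, in particular as $x_d\to\infty$ along a vertical line, because then $x$ centres a ball $B(x,\rho)\subseteq E$ with $\rho\to\infty$, on which the branching Brownian motion survives --- and survives locally --- with probability tending to $1$.

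\emph{Stage (iii).} Let $u_1,u_2$ be two $(0,1]$-solutions; by Stage (ii) each vanishes at $\partial E$ and tends to $1$ at the top. For $\tau\ge0$ put $u_1^\tau(x):=u_1(x-\tau e_d)$ on $E^\tau:=E+\tau e_d\subseteq E$; it solves the translation-invariant equation~\eqref{eq:semilinear elliptic equation BerCafNir} on $E^\tau$, and $z:=u_1^\tau-u_2$ solves the linear equation $\tfrac12\Delta z+(1-u_1^\tau-u_2)z=0$ there. For $\tau$ large, $u_1^\tau\le u_2$ on $E^\tau$: near $\partial E^\tau$ (at height $\varphi(x')+\tau$, large) $u_2$ is close to $1$ while $u_1^\tau$ is close to $0$, and at an interior positive maximum of $z$ the equation forces $u_1^\tau+u_2\le1$, which is incompatible with $u_2$ being close to $1$ throughout; a standard barrier argument makes this quantitative. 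Let $\tau^\ast:=\inf\{\tau_0\ge0:u_1^\tau\le u_2\text{ on }E^\tau\ \forall\,\tau\ge\tau_0\}<\infty$. If $\tau^\ast>0$, then $u_1^{\tau^\ast}\le u_2$ on $E^{\tau^\ast}$ with a contact point $x_0\in E^{\tau^\ast}$ (pass to the limit $\tau\uparrow\tau^\ast$, using the barrier estimate to keep $x_0$ away from $\partial E^{\tau^\ast}$); Harnack's inequality applied to $z\le0$ then forces $u_1^{\tau^\ast}\equiv u_2$ on the connected set $E^{\tau^\ast}$, contradicting $u_1^{\tau^\ast}\to0\ne u_2$ at $\partial E^{\tau^\ast}$ (whose points are interior to $E$). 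Hence $\tau^\ast=0$, i.e.\ $u_1\le u_2$; by symmetry $u_1=u_2$. So the solution is unique, equals $\Pm_\cdot(\glSurv)=\Pm_\cdot(\locSurv)$, and is monotone in $\Gamma$ by Stage (ii).

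\emph{Main obstacle.} The delicate part is the uniqueness. By Theorems~\ref{theo:local survival minimal solution} and~\ref{theo:local survival equivalence} (recall $b\equiv0$) it is equivalent to $\Pm_x(\glSurv\setminus\rlocSurv)=0$, i.e.\ that the population cannot survive while drifting off to infinity; but this cannot be read off from Proposition~\ref{prop:limsup nve implies uniqueness}, because the truncated epigraph $E\setminus\bar B(0,r)$ still contains balls of arbitrarily large radius, so $\lambda_1'(\tfrac12\Delta+1,E\setminus\bar B(0,r))$ does \emph{not} become negative and the process does not die out there. The statement genuinely uses the self-similar geometry $E+\tau e_d\subseteq E$, which is exactly what the sliding exploits. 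Within the sliding, the two technical points --- the a priori comparison near $\partial E^\tau$ for large $\tau$, and the contact analysis at $\tau=\tau^\ast$ --- are where the $C^{1,1}$ (rather than merely Lipschitz) regularity of $\partial E$ enters, through interior/exterior balls and Hopf's lemma; these are standard but carry the real content of the argument.
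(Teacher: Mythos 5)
Your Stages (i) and (ii) coincide with the paper's argument (existence from $\Pm_x(\locSurv)>0$, obtained by fitting a large ball into $E$ and invoking Theorem~\ref{theo:PDE semilinear elliptic equation global survival}; monotonicity from $\Pm_{x+y}(\glSurv_E)=\Pm_x(\glSurv_{E-y})\ge\Pm_x(\glSurv_E)$ for $y$ in the cone of directions). The heart of the proposition, however, is uniqueness, and there your Stage (iii) abandons the probabilistic route and sketches the classical Berestycki--Caffarelli--Nirenberg sliding method --- and as written it has genuine gaps precisely at the points you call ``standard''. On the unbounded epigraph $E^\tau$ the function $z=u_1^\tau-u_2$ need not attain its supremum, so ``at an interior positive maximum of $z$'' is not available; the natural repair (a maximum principle with strictly negative zeroth-order coefficient in the region where both solutions exceed $\tfrac12$, plus the uniform bound $u_2\ge 1-\epsilon(\tau)$ on $E^\tau$) only yields $u_1^\tau\le u_2+\epsilon(\tau)$, not $u_1^\tau\le u_2$, so even the initial comparison that starts the sliding is not established. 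Likewise, at $\tau=\tau^\ast$ the near-contact points may escape to infinity along the graph, so there need be no touching point at which the strong maximum principle can be applied; controlling this via translated limits and maximum principles in unbounded and narrow domains is exactly the substance of \cite{Berestycki1997}, not a routine barrier-plus-Hopf step. So the step that carries the whole statement is the one left unproved.

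The paper's uniqueness argument is different and much softer, and it is probabilistic: by Theorem~\ref{theo:local survival minimal solution} it suffices to show $\Pm_x(\locSurv\mid\glSurv)=1$. Choose $\epsilon>0$ so small that branching Brownian motion cannot survive inside the strip $\partial E+B(0,2\epsilon)$ (possible because $\varphi$ is Lipschitz); hence on $\glSurv$ there are arbitrarily large times at which some particle lies outside this strip. At each such time the Lipschitz geometry allows one to place a translate $x+C\subseteq E$ of a fixed cone $C$, with the particle sitting at a fixed interior point of the cone, where $C$ is chosen large enough that $\lambda_1(\tfrac12\Delta+1,C)>0$; the descendants of that particle, killed on $\partial(x+C)$, then survive locally with probability at least a fixed $p>0$, independently at each attempt. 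A geometric-trials (Borel--Cantelli type) argument shows these attempts cannot all fail on $\glSurv$, so $\Pm_x(\locSurv\mid\glSurv)=1$ and uniqueness follows; monotonicity is then read off the representation $u=\Pm_\cdot(\glSurv)$ exactly as in your Stage (ii). Note in particular that, contrary to your ``main obstacle'' paragraph, neither Theorem~\ref{theo:local survival equivalence} nor the event $\rlocSurv$ is needed: the self-similar geometry $E+\tau e_d\subseteq E$ enters only through the thin-strip and cone constructions, not through a sliding scheme.
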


\begin{proof}[Proof of Proposition \ref{prop:monotone BerNirCaf}]
We take a ball $B(0,r)$ such that $\lambda_1(\frac{1}{2}\Delta+1,B(0,r))>0$, meaning that we have a positive probability of local survival on $B(0,r)$ by \cite{Englander2004}. Since $B(0,r)$ is a subset of $E$ up to translation, $\Pm_x(\locSurv_E)>0$ for $x\in E$, and hence we have established the existence of solutions to \eqref{eq:semilinear elliptic equation BerCafNir} by Theorem \ref{theo:PDE semilinear elliptic equation global survival}.

The corresponding branching process is simply standard branching Brownian motion in $E$. Since $\varphi$ is Lipschitz, there is an aperture $\theta$ such that for every $h<\infty$ and $x\in \partial E$, we can place an $n$-cone with apex on $x$ and with the central line through the cone pointing in the $(0,\ldots,0,1)$ direction, such that the cone lies entirely within $E$ (except for its apex). We write $C$ for a copy of this open cone with apex at $0$, so the aforedescribed cone is $x+C$ for $x\in \partial E$. 

We take $\epsilon>0$ to be sufficiently small so that branching Brownian motion cannot survive within $\partial E+B(0,2\epsilon)$ (which is possible by the Lipschitz assumption). We define $x_0:=(0,\ldots,0,\epsilon)$. We then set $10\epsilon<h<\infty$ to be sufficiently large so that $\lambda_1(\frac{1}{2}\Delta+1,C)>0$, implying that the probability of survival for branching Brownian motion started at $x_0$ and killed upon exiting $C$ is positive by \cite{Englander2004}. We let this probability be $p>0$.

We now check whether global survival without local survival is possible. On the event of global survival there must be an infinite sequence of times at which a particle is outside of $B(E,2\epsilon)$. But if a particle $X^i_t$ is outside of $B(E,2\epsilon)$ then we can place a cone $C+x$ such that the particle is at $X^i_t=x+x_0$. In this case the probability that its descendents go on to survive locally is at least $p$. 

Therefore on the event of global survival we can construct a sequence of times as follows. We firstly consider the first time that a particle is outside of $B(E,2\epsilon)$, which may be time $0$. We pick such a particle. We then place a cone around it as described above, and follow its descendents until they either all leave the cone at some point, or survive. If they survive the process terminates, and we have local survival. If they terminate, we then take the next time one of the particles leaves $B(E,2\epsilon)$, and repeat the process. 

On the event of global survival the above procedure can only terminate with local survival. Since at each time step the probability of this happening is $p>0$, it must terminate eventually. Therefore $\Pm_x(\locSurv\lvert \glSurv)=1$ for all $x\in E$, and hence we have uniqueness of stationary solutions of \eqref{eq:semilinear elliptic equation BerCafNir}.

To establish monotonicity we employ the representation given by Theorem \ref{theo:PDE semilinear elliptic equation global survival}. Precisely, Theorem \ref{theo:PDE semilinear elliptic equation global survival} tells us that the unique positive, bounded solution is precisely
\[
u(x):=\Pm_x(\glSurv_E).
\]
However for $y\in (0,\ldots,0)\times \Rm_{>0}$, we have
\[
u(x+y):=\Pm_{x+y}(\glSurv_E)=\Pm_{x}(\glSurv_{E-y})\geq \Pm_{x}(\glSurv_{E}),
\]
with the latter inequality being satisfied since $E-y\supseteq E$ and increasing the size of the domain can only increase survival probability. It is clear that the above inequality is in fact strict, and the same argument works for a cone of $y$s (since $\varphi$ is Lipschitz).
\end{proof}

\subsubsection*{Structure of the proofs}
The remainder of this section is devoted to the proofs of the above results. We will prove the theorems in the order in which they appear in the proofs, except that:
\begin{enumerate}
    \item Before proving any of our theorems we will prove Proposition \ref{prop:nve lambda implies local survival}, which is a partial version of Part 1 of Theorem 3\cite{Englander2004} under Assumption \ref{assum:uniformly elliptic standing assumptions}. Specifically this gives that $\lambda(L)<0$ implies $\Pm_x(\locSurv)=0$.
    \item The proof of Theorem \ref{theo:local survival gives liminf of number of particles} will be deferred until after that of Theorems \ref{theo:PDE semilinear elliptic equation global survival} and \ref{theo:local survival minimal solution}.
\end{enumerate}

\subsection{Partial version of Part 1 of Theorem 3 \cite{Englander2004} under Assumption \ref{assum:uniformly elliptic standing assumptions}}

We prove the following.
\begin{prop}\label{prop:nve lambda implies local survival}
We assume Assumption \ref{assum:uniformly elliptic standing assumptions}, and that $\lambda(L)<0$. Then $\Pm_x(\locSurv)=0$ for all $x\in E$.
\end{prop}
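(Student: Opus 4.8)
The plan is to convert the positive supersolution furnished by $\lambda(L)<0$ into a nonnegative supermartingale for the branching diffusion, and then observe that local survival would force this supermartingale to be unbounded. By definition of $\lambda(L)$ (for any $\lambda>\lambda(L)$ an admissible test function exists, since $(L-\lambda')u\le0,\ u>0$ implies $(L-\lambda)u\le0$ whenever $\lambda>\lambda'$), the hypothesis $\lambda(L)<0$ lets us fix some $\lambda\in(\lambda(L),0)$ and a function $u\in W^{2,d}_{\loc}(E)$ with $u>0$ on $E$ and $(L-\lambda)u\le0$ a.e.\ on $E$. By Sobolev embedding $u$ is continuous on $E$, so $0<\inf_K u\le\sup_K u<\infty$ for every compact $K\subset E$.

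The heart of the argument is the inequality $P_tu(x)\le e^{\lambda t}u(x)$ for all $t\ge0$, $x\in E$. To prove it I would combine the Feynman--Kac representation \eqref{eq:Pt_FK} of the expectation semigroup, which here (since $r(m-1)\equiv c$) reads $P_tf(x)=\expE_x[f(X^0_t)\exp(\int_0^tc(X^0_s)\,ds)\Ind(t<\tau)]$, with the It\^o--Krylov formula for $W^{2,d}_{\loc}$ functions of the uniformly non-degenerate single-particle diffusion $X^0$ (lifetime $\tau$): for $Z_s:=e^{-\lambda s}u(X^0_s)\exp(\int_0^sc(X^0_r)\,dr)$ one obtains $dZ_s=e^{-\lambda s}\exp(\int_0^sc)\,\big((L-\lambda)u\big)(X^0_s)\,ds+d(\text{local mart.})\le d(\text{local mart.})$. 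Stopping at the exit time $T_m$ of an interior domain $D_m:=\{y\in E:|y|<m,\ d(y,\partial E)>1/m\}$ makes $(Z_{s\wedge T_m})_s$ a genuine supermartingale, so $\expE_x[Z_{t\wedge T_m}]\le u(x)$; letting $D_m\uparrow E$ (so $T_m\uparrow\tau$) and applying Fatou (using $u>0$, and $\liminf_m Z_{t\wedge T_m}\ge Z_t\Ind(t<\tau)$) yields $P_tu(x)\le e^{\lambda t}u(x)$. Granting this, $M_t:=e^{-\lambda t}\sum_{i=1}^{N_t}u(X^i_t)$ satisfies $\expE_x[M_{t+s}\mid\calF_t]=e^{-\lambda(t+s)}\sum_{i=1}^{N_t}(P_su)(X^i_t)\le M_t$ by the Markov and branching properties, and $\expE_x[M_t]\le M_0=u(x)<\infty$; hence $M$ is a nonnegative $\Pm_x$-supermartingale and converges $\Pm_x$-a.s.\ to a finite limit.

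To conclude, suppose for contradiction that $\Pm_x(\locSurv)>0$ for some $x\in E$. Fix any nonempty open precompact $U$ with $\overline U\subset E$ and set $\delta:=\inf_{\overline U}u\in(0,\infty)$. Since $\locSurv\subseteq\locSurv_U$, we have $\Pm_x(\locSurv_U)>0$; on $\locSurv_U$ there are times $t_k\to\infty$ at which some particle lies in $U$, whence $M_{t_k}\ge\delta\,e^{-\lambda t_k}\to\infty$ because $\lambda<0$, contradicting the a.s.\ finite convergence of $M_t$. Hence $\Pm_x(\locSurv)=0$ for every $x\in E$. I expect the main obstacle to be the It\^o--Krylov step establishing $P_tu(x)\le e^{\lambda t}u(x)$: one must cope with the facts that $u$ lies only in $W^{2,d}_{\loc}$ (so the classical It\^o formula is unavailable), that $u$ need not be bounded, and that particles are killed on reaching $\partial E$, where $u$ has no prescribed behaviour — all of which are handled by the exhausting sequence of interior domains $D_m$ together with the nonnegativity of $u$ and Fatou's lemma.
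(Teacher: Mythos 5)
Your proof is correct, and it rests on exactly the same pillar as the paper's: the inequality $P_tu\le e^{\lambda t}u$ with $\lambda<0$ for the positive supersolution $u$, i.e.\ exponential decay of the $u$-weighted first moment, combined with $\inf_{\overline U}u>0$ for a precompact $U$. The paper's proof is a three-line version of this: it bounds $\expE_x[\#\{i:X^i_t\in U\}]\le(\inf_U u)^{-1}e^{-\lambda t}u(x)$ (in its sign convention) and finishes with Markov's inequality and Borel--Cantelli, leaving the semigroup inequality $\expE_x[\sum_i u(X^i_t)]\le e^{\lambda t}u(x)$ implicit (elsewhere in the paper it is justified exactly as you do, by the It\^o--Krylov formula for $W^{2,d}_{\loc}$ functions of the nondegenerate diffusion). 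So your It\^o--Krylov step with the interior exhaustion $D_m$ and Fatou is a welcome spelling-out of what the paper takes for granted, not a new route. The only point where your finish differs is that you invoke almost sure convergence of the continuous-time supermartingale $M_t=e^{-\lambda t}\sum_i u(X^i_t)$ along the random times $t_k$ of visits to $U$; this needs some path regularity (right-continuity) of $M$, an issue the authors themselves flag elsewhere as their reason for avoiding martingale arguments with irregular $u$. Here $u$ is continuous (Sobolev embedding) and the branching diffusion is a right process with purely local branching, so $M$ is right-continuous and the convergence theorem applies; alternatively you could simply revert to the paper's Markov plus Borel--Cantelli device, which needs no such regularity. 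Either way the argument goes through, and its level of rigour matches (indeed slightly exceeds) that of the paper's own proof.
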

\begin{proof}
Since $\lambda(L)<0$, we can take $\lambda>0$ and $u\in W^{2,d}_{\loc}(E)$ such that $u(x)>0$ everywhere and $Lu\leq -\lambda u$. We take non-empty, open, pre-compact $U\subset\subset E$. Then
\[
\expE_x[\#\{i:X^i_t\in U\}]\leq \frac{1}{\inf\{u(x'):x'\in U\}}\expE_x[\sum_{i=1}^{N_t}u(X^i_t)]\leq \frac{1}{\inf\{u(x'):x'\in U\}}e^{-\lambda t}u(x).
\]
We conclude by Markov's inequality and the Borel-Cantelli lemma.
\end{proof}

\subsection{Proof of Theorem \ref{theo:limsup mass doesn't depend on x}}

Our goal is to show that
\begin{equation}\label{eq:limsup of mass doesn't depend on x}
    \limsup_{T\ra\infty}\frac{1}{T}\ln\expE_x[N_T]
\end{equation}
doesn't depend upon $x\in E$.

Since we can always add a constant local binary branching rate $r$ or a constant local killing rate $\kappa$, which affects $\expE_x[N_t]$ by a factor of $e^{rt}$ or $e^{-\kappa t}$ (respectively), it suffices to prove the following proposition.
\begin{prop}\label{prop:control on limsup of mass}
For all $x_0,x\in E$ there exists $C<\infty$ such that 
\[
    \limsup_{T\ra\infty}\expE_{x}[N_T]\leq C\limsup_{T\ra\infty}\expE_{x_0}[N_T].
\]
\end{prop}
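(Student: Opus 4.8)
The plan is to recognize Proposition~\ref{prop:control on limsup of mass} as a statement about a nonnegative solution of a parabolic equation and to deduce it from the parabolic Harnack inequality. Write $w(t,x) := \expE_x[N_t] = (P_t 1)(x)$. Since the branching in this section is purely local with $r(x)(m(x)-1)\equiv c(x)$, the Feynman--Kac representation \eqref{eq:Pt_FK} gives $w(t,x) = \expE_x[\exp(\int_0^t c(X^0_s)\,ds)\,\Ind(t<\tau_E)]$, so $0\le w(t,x)\le e^{\|c\|_\infty t}<\infty$, and $w$ is the solution of the Cauchy--Dirichlet problem $\partial_t w = Lw$ on $(0,\infty)\times E$, $w(0,\cdot)\equiv 1$, $w=0$ on $(0,\infty)\times\partial E$ (cf.\ Theorem~\ref{theo:parabolic Feynman-Kac}). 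By interior parabolic regularity — this is where the local H\"older continuity of the coefficients and the uniform ellipticity of $a$ from Assumption~\ref{assum:uniformly elliptic standing assumptions} enter — $w\in C^{2,1,\alpha}_{\loc}((0,\infty)\times E)$ is a classical nonnegative solution in the interior; since we only ever evaluate $w$ at interior points, the boundary behaviour plays no role.

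Next I would fix $x_0, x\in E$ and, using that $E$ is open and connected, choose a compact connected set $K\subset E$ whose interior contains both $x$ and $x_0$. Applying the Krylov--Safonov parabolic Harnack inequality — valid for nonnegative solutions of $\partial_t w = Lw$ for \emph{nondivergence-form} uniformly elliptic operators with bounded coefficients — iterated along a finite chain of parabolic cylinders compactly contained in $(0,\infty)\times E$ and covering $K$, yields constants $\Theta>0$ and $C<\infty$ depending only on $K$, the dimension and the coefficients of $L$ (crucially \emph{not} on $t$), such that $\sup_{y\in K} w(t,y) \le C\inf_{y\in K} w(t+\Theta,y)$ for all $t\ge 1$. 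In particular $w(t,x)\le C\,w(t+\Theta,x_0)$ for every $t\ge 1$. Taking $\limsup$ as $t\to\infty$, and using that a fixed time shift does not alter the $\limsup$, gives $\limsup_{T\to\infty}\expE_x[N_T] \le C\limsup_{T\to\infty}\expE_{x_0}[N_T]$, which is the claim. Applied symmetrically, the same two-sided bound shows that $\limsup_{t\to\infty}\tfrac1t\ln\expE_x[N_t]$ does not depend on $x$, i.e.\ Theorem~\ref{theo:limsup mass doesn't depend on x}.

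The argument has no deep obstacle; the content is exactly ``$\expE_x[N_t]$ solves a parabolic equation, plus parabolic Harnack chaining''. The points requiring care are: (i) one must invoke the Krylov--Safonov Harnack inequality, since $L$ is in nondivergence form, rather than the divergence-form (Moser) version; (ii) one needs $w$ to be a genuine nonnegative solution to which Harnack applies, which is why the interior regularity step (and hence the H\"older assumption) is used and why we stay away from $\partial E$; and (iii) it is essential that the Harnack constant $C$ and the time shift $\Theta$ be independent of $t$, which is automatic once the compact set $K$ and the finite cylinder chain are fixed. The possibility of soft killing ($p_0\not\equiv 0$) only makes the zeroth-order coefficient $c$ sign-indefinite, which is harmless for the Harnack inequality (reduce to $c\le 0$ by multiplying $w$ by $e^{-\|c\|_\infty t}$ if desired).
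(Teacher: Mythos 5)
Your proof is correct and follows essentially the same route as the paper: both identify $\expE_x[N_t]$ as a nonnegative classical solution of the parabolic equation via Theorem~\ref{theo:parabolic Feynman-Kac}, neutralise the sign-indefinite zeroth-order term by an exponential factor, and apply the Krylov--Safonov Harnack inequality with a constant independent of the time parameter before absorbing the fixed time shift into the $\limsup$. The only cosmetic difference is that the paper phrases the Harnack step for the backward problem $u(x,t)=\expE_{x,t}[N_T]$ at small times uniformly in $T$ (leaving the chaining between distant points implicit), whereas you apply it directly to $w(t,x)=\expE_x[N_t]$ at large times with an explicit Harnack chain over a compact connected set containing $x$ and $x_0$.
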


We fix $2<T<\infty$ for the time being. Then 
\[
u(x,t):=\expE_{x,t}[N_T]
\]
is a classical solution of Kolmogorov's backward equation,
\[
 \partial_tu=a_{ij}(x)\partial_{ij}u+b_i(x)\partial_iu+c(x)u,
 \]
 by Theorem \ref{theo:parabolic Feynman-Kac}. We now take $\bar c:=\sup_{x}\lvert c(x)\rvert$. Therefore 
\[
v(x,t):=e^{-\bar c t}u(x,t)
\]
is a classical solution of 
\[
 \partial_tv=a_{ij}(x)\partial_{ij}v+b_i(x)\partial_iv+[c(x)-\bar c]v.
\]

We fix $1>\delta>0$ and let $U_{\delta}:=\{x\in E:d(x,\partial E)>10\delta\}$. Then by \cite[Theorem 1.1]{Krylov1981} there exists a constant $C$ which doesn't depend upon $\delta$ nor $T$ such that
\[
v(4\delta^2,x_0)\leq Cv(8\delta^2,x)
\]
for $x_0\in U_{\delta}$ and $x\in B(x_0,\delta)$. Therefore
\[
u(4\delta^2,x_0)= e^{ 4\bar c\delta^2}v(4\delta^2,x_0)\leq Ce^{ 4\bar c\delta^2}v(8\delta^2,x)=Ce^{- 4\bar c\delta^2}u(8\delta^2,x).
\]
Therefore
\[
u(4\delta^2,x_0)\leq Cu(8\delta^2,x).
\]
Since the constant $C$ doesn't depend upon $T$, it follows that
\[
\begin{split}
\limsup_{T\ra\infty}\expE_{x_0}[N_T]=\limsup_{T\ra\infty}\expE_{x_0}[N_{T-4\delta^2}]=u(4\delta^2,x_0)\\
\leq Cu(8\delta^2,x)=C\limsup_{T\ra\infty}\expE_{x}[N_{T-8\delta^2}]=C\limsup_{T\ra\infty}\expE_{x}[N_{T-8\delta^2}]
\end{split}
\]
for all $x_0\in U_{\delta}$ and $x\in B(x_0,\delta)$. We therefore conclude the proof of Proposition \ref{prop:control on limsup of mass}, and hence of \eqref{eq:limsup of mass doesn't depend on x}.\qed

\subsection{Proof of Theorem \ref{theo:PDE equality of eigenvalues different notion}}


Our goal is to show that $\lambda_1'(L)>0$ implies $\lambda_c'((P_t)_{t\geq 0})\geq 0$, and $\lambda_c'((P_t)_{t\geq 0})> 0$ implies $\lambda_1'(L)\geq 0$. Since we can always add a constant killing rate or local binary branching rate, this is sufficient.

We suppose that $\lambda'(L)>0$, so we can take $u\in W^{2,d}_{\text{loc}}(E)\cap L^{\infty}(E)$ such that $Lu\geq 0$, $u(x)>0$ for all $x\in E$, and $u(x)\ra 0$ as $x\ra \xi$ for all $\xi\in \partial E$. Then  by Ito's lemma \cite[Krylov, Controlled diffusions, p.122 and 47]{Krylov1980},
\[
\sum_{i=1}^{N_t}u(X^i_t)
\]
is a local submartingale, hence  a submartingale (since the branching rate is bounded). Therefore $\lambda'_c((P_t)_{t\geq 0})\geq 0$. 

We now assume that $\lambda'_c((P_t)_{t\geq 0})>0$. Then we can take $u\in B_b(E;\Rm_{\geq 0})$ such that $(P_tu)(x)\geq u(x)$ for all $t\geq 0$, $x\in E$. We set $v=P_1u\in C^{2,\alpha}_{\loc}(E)\cap B_b(E;\Rm_{\geq 0})\subset  W^{2,d}_{\loc}(E)\cap B_b(E;\Rm_{\geq 0})$ (by Theorem \ref{theo:parabolic Feynman-Kac}). We observe that $(P_tv)(x)\geq v(x)$ for all $x\in E$ and $t\geq 0$, hence $Lv\geq 0$. Moreover, since $v(x)\leq \expE_x[N_t]\lvert\lvert u\rvert\rvert_{\infty}$, it follows that $v(x)\ra 0$ as $x\ra \xi$ for all $\xi\in \partial E$. Therefore $\lambda'_1(L)\geq 0$.

The proof that $\lambda''_1(L)<0$ implies $\lambda_c''((P_t)_{t\geq 0})\leq 0$ is identical to the proof that $\lambda'_1(L)>0$ implies $\lambda_c'((P_t)_{t\geq 0})\geq 0$. We now assume that $\lambda_c''((P_t)_{t\geq 0})< 0$, so that $\limsup_{t\ra\infty}\frac{1}{t}\ln\expE[N_t]<0$ by Theorem~\ref{th:lambda'_lambda''}. We recall the function $v$ defined in the proof of the equality in Theorem~\ref{th:lambda'_lambda''} was defined by
\[
v(x):=\int_0^{\infty} (P_t1)(x)dt.
\]
It follows from Theorem \ref{theo:elliptic Feynman-Kac} that $v\in C^{2,\alpha}_{\loc}(E)\subset W^{2,d}_{\loc}(E)$. In the proof of the equality in Theorem \ref{th:lambda'_lambda''}, we then defined $u(x)=v(x)+\epsilon$ for some appropriately chosen $\epsilon>0$ sufficiently small. We see that $u=v+\epsilon\in W^{2,d}_{\loc}(E)$, hence $\lambda_c''(L)\geq 0$. \qed

\subsection{Proof of Theorem \ref{theo:local survival equivalence}}

The inclusion $\{\locSurv\}\subseteq \{\text{$r$-local survival}\}$ is obvious. Our goal is to show that 
\[
\{\text{$r$-local survival}\}\cap \{\text{local extinction}\}
\]
is a $\Pm_x$-null set. To do so we shall establish that there exists arbitrarily large $r_0<\infty$ such that, for some open, non-empty, pre-compact subset $U\subset \subset E$ we have
\begin{equation}\label{eq:sufficient condition for local survival theorem}
\glSurv \cap \{\liminf_{t\ra \infty}\inf_{1\leq i\leq N_t}\lvert X^i_t\rvert\leq r_0\} \cap \{\text{$U$-local extinction}\}\quad \text{is a $\Pm_x$ null set for all $x\in E$.}
\end{equation}

We fix aribtrarily large $r_2<\infty$. We take $\delta>0$ to be determined, and set
\[
r_3=r_2+\delta,\quad r_1=r_2-\delta,\quad r_0=r_1-\delta.
\]

We now define the notion of ``return time'', as follows. Given $0\leq s<t<\infty$, $1\leq j\leq N_s$ and $1\leq i\leq N_t$ we write $X^j_s\preccurlyeq X^i_t$ if $X^j_s$ is the time $s$ ancestor of $X^i_t$. Given a particle $X^i_t$ we write $j_s(i,t)$ for the (unique) index of the time $s$ ancestor of $X^i_t$, so that $X^{j(i,t)}_s\preccurlyeq X^i_t$. The last $r_3$-time of $X^i_t$, denoted by $t_-(i,t)$, is defined by
\[
t_-(i,t):=\sup\{s<t:\lvert X^{j_s(i,t)}_s\rvert \geq r_3\},
\]
the above being defined to be $0$ if the above set of times is empty.

We say that $t$ is a $r_2$-return time of $X^i$ if $\lvert X^i_t\rvert =r_2$ and $\lvert X^{j(i,t)}_t\lvert >r_2$ for all $t_-(i,t)\leq s<t$. Essentially, these represent excursions which start at $\partial B(0,r_2)$, touch $\partial B(0,r_3)$, then return to $\partial B(0,r_2)$. If $t$ is a return time of $X^i$ then we say that $(i,t)$ is a return pair. These return pairs then define a branching structure. Note that we define $(1,0)$ ($1$ being the index of the single particle at time $0$) to be a return pair, this is the unique return pair such that the corresponding particle may not be in $\partial B(0,r_2)$. 

Given that $(i_1,t_1)$ and $(i_2,t_2)$ are both return pairs and $t_1<t_2$, we say that $(i_2,t_2)$ is a next return pair of $(i_1,t_1)$ if $X^{i_1}_{t_1}\preccurlyeq X^{i_2}_{t_2}$ and there are no return pairs along the ancestral path between $i_1$ and $i_2$, i.e. $(j,s)$ is not a return pair for all $s\in (t_1,t_2)$ and $j=j_s(i_2,t_2)$. A given return pair may have many next return pairs, since it may branch before touching $r_3$ then touching $r_2$. This therefore defines a branching structure, which we label with Ulam-Harris notation
\[
\mathbb{V}:=\cup_{n\geq 0}\mathbb{N}^n.
\]

We now inductively define a family of sigma-algebras $(\calG_v:v\in \mathbb{V})$ inductively as follows. The usual filtration with respect to which the branching process is adapted is $(\mathcal{F}_t)_{t\geq 0}$. We take $\calG_{\emptyset}:=\calF_0$. We now inductively assume that we have defined $\calG_v$ for some $v\in \mathbb{V}$, that there is a corresponding $\calG_v$-measurable return time $(i_v,t_v)$, and that $X^{i_v}_{t_v}$ is $\calG_v$-measurable. We now consider the branching process $(\bar Y^v_s:s\geq t_v)$ given by following the descendents of $X^{i_v}_{t_v}$ and stopping each descendent once it undergoes a return time. We let $\tau^v_1,\tau^v_2,\ldots$ be the return times of $(\bar Y^v_s:s\geq t_v)$ in chronological order, and define $\calG_{vk}:=\sigma(\bar Y^v_s:t_v\leq s\leq \tau^v_k)$ for each $k<\infty$. There may be finitely or infinitely many subsequent return times $\tau^v_k$. If there are only $k_{\max}<\infty$ of them, we set $\calG_{vk}:=\calG_{vk_{\max}}$ for $k>k_{\max}$ and set $(i_{vk},t_{vk}):=\emptyset$. Whenever $(i_v,t_v):=\emptyset$ we define $(i_{vk},t_{vk}):=\emptyset$ and $\calG_{vk}:=\calG_v$ for all $k\in \mathbb{N}$.

We now enumerate the elements of $\mathbb{V}$ as $v_0,v_1,\ldots$ in such a manner so that $v_{0}=\emptyset$ and the parent of $v_n$ belongs to $v_0,\ldots,v_{n-1}$ for all $n\geq 1$. We write $(i_n,t_n):=(i_{tv_n},t_{v_n})$ for the return pair corresponding to $v_n$ and define the sigma-algebra $\calH_n:=\bigvee_{k\leq n}\calG_{v_k}$, for each $n$. We note that $(\calH_n)_{n\geq 0}$ is a filtration. 


We write $A_n$ and $B_n$ for the event that a child of $X^{i_n}_{t_n}$ touches $\bar B(x_0,r)\setminus U$ (respectively $\bar U$) before a next return time along the corresponding ancestral path. Our goal is to show that for any $\delta>0$ sufficiently small, $U\subset\subset B(0,r_0)$ may be chosen so that 
\[
M_0:=1,\quad M_{n}:=\exp\Big[\sum_{k\leq n-1}(\Ind(A_k)-\Ind(B_k))\Big],\quad n\geq 1,
\]
is a non-negative $(\calH_n)_{n\geq 0}$-supermartingale after time $1$, meaning that
\[
\expE[M_{n+1}\lvert \calH_n]\leq M_n
\]
for $n\geq 1$. It will be convenient to ignore the $n=0$ case because we have no guarantee that $\lvert x\rvert=r_2$, which will be important later on. The non-negativity is trivial. 

Then
\[
\expE\Big[\frac{M_{n+1}}{M_n}\Big\lvert \calH_n\Big]=\expE[\exp[\Ind(A_n)-\Ind(B_n)]\lvert \calH_n],\quad n\geq 1.
\]
It therefore suffices to prove that
\begin{equation}\label{eq:equation to prove supermartingale local survival theorem}
\expE[\exp[\Ind(A_n)-\Ind(B_n)]\lvert \calH_n]\leq 1,\quad n\geq 1.
\end{equation}
We assume henceforth that $n\geq 1$. In this case
\[
\begin{split}
\expE[\exp[\Ind(A_n)-\Ind(B_n)]\lvert \calH_n]-1= \expE[\exp[\Ind(A_n)-\Ind(B_n)]\lvert X^{i_n}_{t_n}]-1\\
=(e-1)\Pm(A_n\setminus B_n\lvert X^{i_n}_{t_n})+(e^{-1}-1)\Pm(B_n\setminus A_n\lvert X^{i_n}_{t_n}).
\end{split}
\]

Our goal now is to show that we can choose $\delta >0$ and $U\subset\subset B(x_0,r_1)$ such that
\[
    \Pm(B_n\setminus A_n\lvert X^{i_n}_{t_n})>\frac{e-1}{(1-e^{-1})}\Pm(A_n\setminus B_n\lvert X^{i_n}_{t_n}),
\]
for all $n\geq 1$. This would imply that $(M_n)_{n\geq 0}$ is a non-negative $(\calH_n)_{n\geq 0}$ supermartingale after time $1$.

Given a branching process started at $\bar X_0=x\in \partial B(0,r_2)$, we write $A$ and $B$ for the event that a descendent of $X^1_0$ touches $\bar B(x_0,r)\setminus U$ (respectively $\bar U$) before a next return time along the corresponding ancestral path. It suffices to show that
\begin{equation}\label{eq:sufficient condition 1 local survival thm}
    \Pm_x(B\setminus A)>\frac{e-1}{1-e^{-1}}\Pm_x(A\setminus B),
\end{equation}
for all $x\in \partial B(0,r_2)\cap E$.

Note that once a particle touches $\partial B(0,r_3)$, neither it nor its descendents can touch $\bar U\cup \bar B(0,r_0)$ prior to a next return time because this would entail firstly returning to $\partial B(0,r_2)$. We may therefore equivalently kill all particles upon exiting $B(0,r_3)$. We now define
\[
\tilde E:=E\cap B(0,r_3)\setminus (\bar U \cup \bar B(0,r_0)).
\]
We therefore equivalently consider particles to be instantaneously killed upon exiting $\tilde{E}$, with $A$ and $B$ the events that at least one particle exits $\tilde{E}$ through $\partial B(0,r_0)$ or $\partial U$ respectively.

In order to prove \eqref{eq:sufficient condition 1 local survival thm}, it suffices to show that
\begin{equation}\label{eq:sufficient condition 2 local survival thm}
\Pm_x(B)\geq [1+\frac{e-1}{1-e^{-1}}]\Pm_x(A)
\end{equation}
for all $x\in \partial B(0,r_2)\cap E$.

We define
\[
\begin{split}
N_A:=\#\{\text{particles that exit through $\partial B(0,r_0)$ prior to time $1$}\}\\+\#\{\text{particles at time $1$ with a descendent that exits through $\partial B(0,r_0)$ after time $1$}\}.
\end{split}
\]
Since $A$ is precisely the event that $N_A\geq 1$, Markov's inequality tells us that
\[
\Pm_x(A)\leq \expE_x[N_A].
\]

We now write $(X^0_t)_{t<\tilde \tau}$ for a copy of the right-process without branching, killed upon exiting $\tilde{ E}$. Then
\[
\Pm_x(B)\geq e^{-\lvert\lvert c\rvert\rvert_{\infty}}\left[\Pm_x(\text{$X^0_t$ exits $\tilde {E}$ through $\partial U$ prior to time $1$})+\expE[\Pm_{X^0_{1}}(B)\Ind(\tilde{\tau}>1)]\right].
\]

Then (by Theorem \ref{theo:parabolic Feynman-Kac}), $\expE_x[N_A]=u(x,1)$ where $u(x,t)$ is the unique classical solution of
\[
\begin{split}
\partial_tu=Lu,\quad (x,t)\in E\times (0,1),\quad u(x,0)=\Pm_x(A),\quad x\in E,\\ u(x,t)=\Ind(x\in \partial B(0,r_0)),\quad (x,t)\in \partial E\times (0,1).
\end{split}
\]
Similarly $v(x,1)=\Pm_x(X^0\text{ exits $\tilde {E}$ through $\partial U$})+\expE[\Pm_{X^0_{1}}(B)\Ind(\tilde{\tau}>1)]$ where $v(x,t)$ is the unique classical solution of
\[
\partial_tv=Lv,\quad (x,t)\in E\times (0,1),\quad v(x,0)=\Pm_x(B),\quad x\in E,\\ u(x,t)=\Ind(x\in \partial U),\quad (x,t)\in \partial E\times (0,1).
\]
Since both $u$ and $v$ vanish continuously along $\partial E\cap (B(0,r_3)\setminus B(0,r_1))$, the parabolic boundary Harnack inequality \cite{Torres-Latorre2024} (note that this is where we use $b\equiv 0$) implies the existence of $C_{\delta}<\infty$ dependent only upon $\delta>0$ (using that for any $\delta>0$ and any compact subset $K$ of $[E\cap (B(0,r_3)\setminus \bar B(0,r_1))]\times (0,1)$ there exists $U_{\min}\subset\subset B(0,r_1)$ such that $v$ is bounded away from $0$ on $K$ whenever $U_0\subset U\subset\subset B(0,r_1)$) such that
\[
\sup_{x\in \partial B(0,r_2)}\frac{u(x,1)}{v(x,1)}\leq C_{\delta}\sup_{(x,t)\in [B(0,r_3)\setminus B(0,r_1)]\times (0,1)}u(x,t).
\]
Now for any fixed $\delta>0$, we can ensure that the right-hand side is as small as we like by taking sufficiently large $U\subset\subset B(0,r_1)$, implying that for all $\delta>0$ there exists $U\subset \subset  B(0,r_1)$ such that
\[
\begin{split}
\frac{\Pm_x(A)}{\Pm_x(B)}\leq e^{\lvert\lvert c\rvert\rvert_{\infty}}\frac{\expE_x[N_A]}{\Pm_x(X^0\text{ exits $\tilde {E}$ through $\partial U$})+\expE[\Pm_{X^0_{1}}(B)\Ind(\tilde{\tau}>1)]}\\
\leq e^{\lvert\lvert c\rvert\rvert_{\infty}}\frac{u(x,t)}{v(x,1)}\leq \Big[1+\frac{e-1}{1-e^{-1}}\Big]^{-1}.
\end{split}
\]
for all $x\in \partial B(0,r_2)\cap E$. We therefore have \eqref{eq:sufficient condition 2 local survival thm} and hence \eqref{eq:sufficient condition 1 local survival thm}.

This implies that for all $\delta >0$ we can choose $U\subset\subset B(0,r_1)$ such that $(M_n)_{n\geq 0}$ is a non-negative $(\calH_n)_{n\geq 0}$-supermartingale after time $1$, hence $\lim_{n\ra\infty}M_n$ exists and is finite almost-surely. 

This then implies that if $A_n$ occurs infinitely often, so too must $B_n$. 

We now suppose that the event $\glSurv\cap\{\liminf_{t\ra\infty}\inf_{1\leq i\leq N_t}\lvert X^i_t\rvert< r_1\}$ occurs. Then one of two things must happen:
\begin{enumerate}
    \item the descendents of one child, considered to be erased upon exiting $\tilde E$, globally survives in $\tilde{E}$;
    \item $A_n$ occurs infinitely often.
\end{enumerate}
By taking $\delta >0$ sufficiently small and $U$ sufficiently large we can ensure that the second possibility cannot occur. Making $U$ larger if necessary to ensure that $(M_n)_{n\geq 0}$ is a non-negative $(\calH_n)_{n\geq 0}$-supermartingale after time $1$, we see that the first possibility possibility must occur on the event  $\glSurv\cap\{\liminf_{t\ra\infty}\inf_{1\leq i\leq N_t}\lvert X^i_t\rvert< r_1\}$. Since $\lim_{n\ra\infty}M_n$ exists and is finite almost-surely, this entails $B_n$ occuring infinitely often, which then implies local survival. This concludes the proof of Theorem \ref{theo:local survival equivalence}.\qed

\subsection{Proof of Theorems \ref{theo:lambda'=lambda'' condition 1} and \ref{theo:lambda'=lambda'' condition}}\label{subsec:lambda'=lambda'' condition}

We will provide a proof of Theorem \ref{theo:lambda'=lambda'' condition}. The proof of Theorem \ref{theo:lambda'=lambda'' condition 1} is identical except for minor changes which we point out along the way. 

We suppose that Assumptions \ref{assum:uniformly elliptic standing assumptions} and \ref{assum:uniformly elliptic symmetric} are satisfied. When we point out the changes to be made in order to prove Theorem \ref{theo:lambda'=lambda'' condition 1}, we will replace Assumption \ref{assum:uniformly elliptic symmetric} with the assumption that $a$ is $C^1$ and $L$ can be put into the divergence form $Lu=\frac{1}{2}\partial_i(a_{ij}(x)\partial_ju)+c(x)u$, which apart from the places we point out will make no difference. 


Our goal is to show that if $\lambda''(L)>0$, then we have local (and hence global) survival with poisitive probability.


Since $\lambda''_1>0$, Theorem \ref{theo:elliptic regularity limsup of mass} implies that
\[
\limsup\frac{1}{t}\ln\expE_{x_0}[N_t]>0.
\]

Given $R<\infty$ we define
\[
N^R_t:=\#\{i:\lvert X^i_t\rvert \leq R\},\quad D^R_t:=\#\{i:\lvert X^i_t\rvert > R\}.
\]

We prove the following lemma.
\begin{lem}\label{lem:number of particles greater than Ct}
There exists $C<\infty$ such that
\begin{equation}
    \limsup\frac{1}{t}\ln\expE_{x_0}[D^{Ct}_t]\leq 0.
\end{equation}
\end{lem}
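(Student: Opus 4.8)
The plan is to reduce the statement to a single-particle displacement estimate via the first-moment (Feynman--Kac) formula, and then exploit that a diffusion with bounded coefficients spreads at most ballistically with Gaussian-type fluctuations, so that the probability of being at distance $\ge Ct$ at time $t$ decays like $e^{-\kappa t}$ with a rate $\kappa=\kappa(C)$ that can be made to exceed the exponential growth rate of the total population.

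Since branching is purely local, the expectation semigroup admits the Feynman--Kac representation \eqref{eq:Pt_FK}, which in the present setting reads $P_t f(x) = \expE_x[f(X^0_t)\exp(\int_0^t c(X^0_s)\,ds)\Ind(t<\tau_E)]$, where $X^0$ is the underlying diffusion $dX^0_t = b(X^0_t)\,dt + \sigma(X^0_t)\,dW_t$ killed on exiting $E$ at time $\tau_E$. Applying this with $f = \Ind_{\{|\cdot|>Ct\}}$ and writing $\bar c := \sup_{x\in E}|c(x)|$ gives
\[
\expE_{x_0}[D^{Ct}_t] = P_t\Ind_{\{|\cdot|>Ct\}}(x_0) \le e^{\bar c t}\,\Pm_{x_0}\big(|X^0_t|>Ct,\ t<\tau_E\big),
\]
so it suffices to show $\Pm_{x_0}(|X^0_t|>Ct,\ t<\tau_E)$ decays faster than $e^{-\bar c t}$ once $C$ is large. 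For this I would extend the (bounded) coefficients arbitrarily to all of $\R^d$ and couple, so that the event on the right is contained in $\{|\tilde X_t|>Ct\}$ for the resulting diffusion $\tilde X$ on $\R^d$, and then write $\tilde X_t - x_0 = \int_0^t b(\tilde X_s)\,ds + M_t$ with $M_t = \int_0^t\sigma(\tilde X_s)\,dW_s$. The drift term has norm $\le \bar b\,t$ with $\bar b := \sup_{x\in E}|b(x)|$, while each coordinate $M^{(k)}$ is a continuous martingale with $\langle M^{(k)}\rangle_t = \int_0^t a_{kk}(\tilde X_s)\,ds \le \bar a\,t$ \emph{deterministically}, where $\bar a := \sup_{x}\max_k a_{kk}(x)<\infty$ by Assumption~\ref{assum:uniformly elliptic standing assumptions}. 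The exponential (Bernstein) inequality for continuous martingales — from the supermartingale property of $\exp(\theta M^{(k)}_s - \tfrac{\theta^2}{2}\langle M^{(k)}\rangle_s)$ and optimising in $\theta$ — then yields $\Pm(|M_t|\ge\rho)\le 2d\exp(-\rho^2/(2d\bar a\,t))$, and taking $\rho = (C-\bar b)t - |x_0| \ge \tfrac12(C-\bar b)t$ (valid once $t\ge 2|x_0|/(C-\bar b)$) gives
\[
\Pm_{x_0}\big(|X^0_t|>Ct,\ t<\tau_E\big) \le 2d\exp\Big(-\frac{(C-\bar b)^2}{8d\bar a}\,t\Big).
\]

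Combining the two displays, $\expE_{x_0}[D^{Ct}_t] \le 2d\exp\big((\bar c - \tfrac{(C-\bar b)^2}{8d\bar a})t\big)$ for all large $t$; choosing $C \ge \bar b + \sqrt{8d\bar a\bar c}$ makes the exponent non-positive, so $\expE_{x_0}[D^{Ct}_t]\le 2d$ eventually, whence $\limsup_{t\to\infty}\tfrac1t\ln\expE_{x_0}[D^{Ct}_t]\le 0$. The argument is essentially routine; the only substantive ingredient is the exponential martingale tail bound, and the only point requiring care is that uniform boundedness of $a$ (Assumption~\ref{assum:uniformly elliptic standing assumptions}) makes $\langle M\rangle_t$ bounded \emph{deterministically} by $\bar a\,t$ — which is what produces genuine exponential-in-$t$ decay rather than a polynomial one — together with the bookkeeping ensuring that the Gaussian rate $(C-\bar b)^2/(8d\bar a)$ beats the crude population growth rate $\bar c$.
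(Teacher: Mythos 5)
Your argument is correct. It differs from the paper's proof in its key ingredient: the paper bounds $\expE_{x_0}[D^{Ct}_t]$ by invoking the Aronson-type Gaussian upper bound $p_t(x,y)\le C_0t^{-d/2}\exp\bigl(C_0t-c_0|x-y|^2/t\bigr)$ for the kernel of the expectation semigroup (citing Daners) and integrating it over $E\cap B(x_0,Ct)^c$, choosing $C^2>C_0/c_0$; you instead stay probabilistic, writing $\expE_{x_0}[D^{Ct}_t]=P_t\Ind_{\{|\cdot|>Ct\}}(x_0)\le e^{\bar c t}\,\Pm_{x_0}(|X^0_t|>Ct,\ t<\tau_E)$ via the Feynman--Kac representation of the purely local branching semigroup, and then control the single-particle displacement by the exponential (Bernstein) inequality for the stopped martingale part, whose bracket is deterministically bounded by $\bar a t$ because $a$ is bounded. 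Conceptually both proofs are the same ``Gaussian displacement tail beats exponential mass growth'' comparison, but your route is more elementary and self-contained: it uses only boundedness of $a$, $b$, $c$ (no uniform ellipticity, no H\"older regularity, no heat-kernel theory), at the price of a cruder constant, whereas the paper's citation of the Aronson bound is shorter on the page. Two cosmetic remarks: you do not really need to extend the coefficients to $\R^d$ and couple --- working with the martingale stopped at $\tau_E$ (whose quadratic variation is still $\le\bar a t$) gives the same bound on $\Pm_{x_0}(|X^0_t|>Ct,\ t<\tau_E)$ directly; and it is worth noting explicitly that the martingale decomposition is available in the weak-solution framework (the weak solution comes equipped with a driving Brownian motion, or equivalently one argues through the martingale problem), though this is standard.
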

\begin{proof}
We have the following Aronson estimates from \cite[Theorem 6.1 and the beginning of Section 6]{Daners2000}: there exists $p_t(x,y)$ such that $P_t(x,dy)=p_t(x,y)dy$ and 
\[
p_t(x,y)\leq C_0t^{-\frac{d}{2}}\exp(C_0t-c_0\frac{\lvert x-y\rvert^2}{t}),
\]
for some $C_0<\infty$ and $c_0>0$. 

Lemma \ref{lem:number of particles greater than Ct} then follows immediately by taking $C^2>\frac{C_0}{c_0}$ and integrating over $E\cap (B(x_0,Ct)^c)$.
\end{proof}

It therefore follows from Lemma \ref{lem:number of particles greater than Ct} that
\begin{equation}
\limsup\frac{1}{t}\ln\expE_{x_0}[N^{Ct}_t]>0.
\end{equation}

We abuse notation by writing $\mu(y)$ for the density of $\mu$ with respect to Lebesgue measure. We now define
\[
q_t(x,y)=\frac{p_t(x,y)}{\mu(y)}.
\]
For the proof of Theorem \ref{theo:lambda'=lambda'' condition 1} we simply take $\mu=\Leb$ so that $q_t(x,y)=p_t(x,y)$. We see that:
\begin{enumerate}
    \item $P_t(x,dy)=q_t(x,y)\mu(dy)$ for all $x\in E$;
    \item $\int_{E}q_t(x,y)q_t(y,z)\mu(dy)=q_t(x,z)$ for all $x,z\in E$.
\end{enumerate}
Since $P_t$ is self-adjoint with respect to $L^2(\mu)$ (see for instance \cite[Section 4]{Baudoin2014}), it easily follows that
\[
q_t(x,y)=q_t(y,x)\quad\text{for all}\quad x,y\in E.
\]
For the proof of Theorem \ref{theo:lambda'=lambda'' condition 1}, the fact that $L$ can be written in the divergence form similarly implies that $p_t(x,y)=p_t(y,x)$.


We now observe the following, which represents the crucial step in the proof,
\[
\begin{split}
q_{2t}(x_0,x_0)\geq \int_{B(x_0,Ct)}q_t(x_0,y)q_{t}(y,x_0)\mu(dy)\\=\int_{B(x_0,Ct)}[q_t(x_0,y)]^2\mu(dy)=\lvert\lvert q_t(x_0,\cdot)\rvert\rvert_{L^2((B(x_0,Ct),\mu))}^2\\\geq \mu(B(x_0,Ct))^{-\frac{1}{2}}\lvert\lvert q_t(x_0,\cdot)\rvert\rvert_{L^1((B(x_0,Ct),\mu))}^2=\mu(B(x_0,Ct))^{-\frac{1}{2}}\expE_{x_0}[N^{Ct}_t]^2.
\end{split}
\]
Since $\limsup_{t\ra \infty} \frac{1}{t}\ln\mu(B(x_0,Ct))=0$, we see that
\begin{equation}\label{eq:p2t going to infinity}
\limsup_{t\ra\infty}\frac{1}{t}\ln q_{2t}(x_0,x_0)>0.
\end{equation}

\commentout{By abuse of notation, we write $\mu(x)$ for the density of $\mu$ with respect to Lebesgue measure. Then
\[
q_t(x,y)=\frac{p_t(x,y)}{\mu(y)}
\]
gives the transition density of $P_t(x,dy)$ with respect to Lebesgue measure, i.e.
\[
P_t(x,dy)=q_t(x,y)\Leb(dy).
\]
}

It follows from Harnack's inequality, \eqref{eq:p2t going to infinity} and the fact $\mu$ is uniformly positive on compacts that there exists $\delta>0$ such that $B(x_0,\delta)\subset \subset E$ and
\[
\limsup_{t\ra\infty}\inf_{x,y\in B(x_0,\delta)}q_{2t}(x,y)=\infty.
\]
Therefore for all $C<\infty$ we can choose $T_C<\infty$ such that $q_{2T_C}(x,y)\geq C$ for all $x,y\in B(x_0,\delta)$.

We write $\nu=\frac{\Leb_{\lvert_{B(x_0,\delta)}}}{\Leb(B(x_0,\delta))}$ for the uniform distribution on $B(x_0,\delta)$. It follows from Harnack's inequality that if we start with a single particle $X_0=x\in B(x_0,\delta)$, then 
\[
\Pm_{x}(\text{there is at least one particle in $dy$ at time $1$})\geq c\nu(dy)
\]
for some $c>0$ uniform over all $x\in B(x_0,\delta)$. Therefore we can lower bound the number of particles at times $k(2T_C+1)$ for $k\in \mathbb{N}$ by a Galton-Watson process with offspring distribution having expectation at least
\[
c\int_{B(x_0,\delta)\times B(x_0,\delta)}p_{2T_C}(x,y)\Leb(dy)\nu(dx)\geq c\Leb(B(x_0,\delta))C.
\]
By choosing $C$ such that the right-hand side is strictly greater than $1$, we see that the resulting Galton-Watson process is supercritical, implying that we have local survival (and hence global survival) started from $x_0$ with positive probability. This implies that $\lambda(L)\geq 0$ (by Proposition \ref{prop:nve lambda implies local survival}) and $\lambda'_1(L)\geq 0$ by Theorem \ref{theo:PDE global extinction e-value relationship}.

We have established that $\lambda''(L)>0$ implies $\lambda(L),\lambda'(L)\geq 0$. Since we can always add a constant binary branching or soft killing rate, it follows that $\lambda''(L)\leq \lambda(L),\lambda'(L)$. The converse implication $\lambda''(L)\geq \lambda(L)$ is trivial from their definitions, and the implication $\lambda''(L)\geq \lambda'(L)$ follows from Theorem \ref{theo:elliptic regularity limsup of mass}. We conclude that $\lambda(L)=\lambda'(L)=\lambda''(L)$.
\qed

\commentout{

Therefore
\[
\limsup_{t\ra\infty}\inf_{x,y\in B(x_0,\delta)}p^{Ct}_{2t+2}(x,y)\ra\infty
\]
as $t\ra\infty$. Therefore
\[
\limsup_{t\ra\infty}\inf_{x\in B(x_0,\delta)}P^{Ct}_{2t+2}(x,B(x_0,\delta))\geq \Leb(B(x_0,\delta))\limsup_{t\ra\infty}\inf_{x,y\in B(x_0,\delta)}p^{Ct}_{2t+2}(x,y)\ra \infty.
\]

Therefore we can choose $T<\infty$ and $R<\infty$ such that
\[
p_{T}^R(x,B(x_0,\delta))>10
\]
for all $x\in B(x_0,\delta)$.

This implies that 
\[
\lambda''\leq \limsup_{R\ra\infty}\lambda((P^R_t)_{t\geq 0})\leq \lambda.
\]}

\subsection{Proof of Theorem \ref{theo:PDE semilinear elliptic equation global survival}}

We firstly assume that we have a $(0,1]$-valued solution $u$ of \eqref{eq:semilinear elliptic equation}, and seek to establish global survival. Then
\[
y(x,t):=u(x),\quad x\in E,\quad t\in \Rm_{\geq 0}
\]
solves the parabolic equation
\[
\partial_ty=Ly+f(y,x)
\]
with Dirichlet boundary conditions $y(x,t)=0$ for $x\in \partial E$, $t>0$, and initial condition
\[
y(x,0)=u(x),\quad x\in E.
\]
Therefore
\[
y(x,t)=1-\expE_x\Big[\prod_{i=1}^{N_t}(1-v(X^i_t))\Big],\quad t>0.
\]
Since $y(x,t)$ is constant in $t$, this implies that
\[
\expE_x\Big[\prod_{i=1}^{N_t}(1-u(X^i_t))\Big]=1-u(x)
\]
for any $t>0$. Using the branching and Markov properties, this implies that
\[
\prod_{i=1}^{N_t}(1-u(X^i_t))
\]
is a martingale. Since it is also bounded, Doob's $L^1$-martingale convergence theorem implies that
\[
\lim_{t\ra\infty}\prod_{i=1}^{N_t}(1-u(X^i_t))
\]
exists almost surely, and is equal in expectation to $1-u(x)\in [0,1)$. On the event of global extinction this almost sure limit is equal to $1$, hence we cannot have $\Pm_x(\text{global extinction})=1$. Therefore we have $\Pm_x(\glSurv)>0$. 

We now prove the converse direction, assuming that we have $\Pm_x(\glSurv)>0$ for some (hence all) $x\in E$. We consider the solution of the parabolic equation
\[
\partial_ty=Ly+f(y,x)
\]
with Dirichlet boundary conditions $y(x,t)=0$ for $x\in \partial E$, $t>0$, and initial condition
\[
v(x,0)=\Pm_x(\glSurv),\quad x\in E.
\]
Then
\[
v(x,t)=1-\expE_x\Big[\prod_{i=1}^{N_t}(1-\Pm_{X^i_t}(\glSurv))\Big]=1-\expE_x\Big[\prod_{i=1}^{N_t}\Pm_{X^i_t}(\glSurv^c)\Big].
\]
Since global extinction started from $x$ is equivalent to global extinction for each of the children at time $t$,
\[
v(x,t)=1-\Pm_x(\text{global extinction})=\Pm_x(\glSurv).
\]
Therefore $v(x,t)=v(x,0)=\Pm_x(\glSurv)$ for all $t>0$. This implies that
\begin{equation}\label{eq:u(x)=P global survival semilinear equation}
u(x):=\Pm_x(\glSurv)
\end{equation}
satisfies \eqref{eq:semilinear elliptic equation}.

We have now established the first assertion in Theorem \ref{theo:PDE semilinear elliptic equation global survival}. 

We now prove that if there exists a $(0,1]$-valued solution of \eqref{eq:semilinear elliptic equation} (equivalently if we have global survival), then \eqref{eq:u(x)=P global survival semilinear equation} gives the maximal such solution of \eqref{eq:semilinear elliptic equation} (we already know it gives a solution). We suppose that $\tilde{u}$ is another $(0,1]$-valued solution of \eqref{eq:semilinear elliptic equation}. 

Then from the above proof we know that
\[
\tilde{M}_t:=\prod_{i=1}^{N_t}(1-\tilde{u}(X^i_t))
\]
is a martingale, which converges almost surely and in $L^1$ as $t\ra\infty$ to a limit $\tilde{M}_{\infty}$. Then we know that (1) $\tilde{M}_{\infty}\in [0,1]$ almost surely since $\tilde{u}\in (0,1]$ everywhere (and hence $\tilde{M}_t\in [0,1]$ almost surely), and (2) $\tilde{M}_{\infty}=1$ on the event of global extinction. Therefore we have that
\[
\tilde{M}_{\infty}=\Ind(\text{global extinction})+\tilde{M}_{\infty}\Ind(\glSurv)\geq \Ind(\text{global extinction}).
\]
Therefore
\[
1-\tilde{u}(x)=\expE_x[\tilde{M}_{\infty}]\geq \Pm_x(\text{global extinction})=1-u(x).
\]
Thus $\tilde{u}(x)\leq u(x)$.

\subsection{Proof of Theorem \ref{theo:local survival minimal solution}}

The proof that $w(x):=\Pm_x(\locSurv)$ furnishes a $(0,1]$-valued solution of \eqref{eq:semilinear elliptic equation} is identical to the proof that $v(x):=\Pm_x(\glSurv)$ furnishes a $(0,1]$-valued solution of \eqref{eq:semilinear elliptic equation}, in the proof of Theorem \ref{theo:PDE semilinear elliptic equation global survival}. We now prove the minimality of $w$. 

We suppose that $\tilde{u}(x)$ is another solution of \eqref{eq:semilinear elliptic equation}. Then
\[
\tilde{M}_t:=\prod_{i=1}^{N_t}(1-\tilde{u}(X^i_t))
\]
is a $[0,1]$-valued $\Pm_x$-martingale. This was proven in the proof of Theorem \ref{theo:PDE semilinear elliptic equation global survival}.

We take a ball $B(x_0,\delta)\subset\subset E$, and define $\underline{u}:=\inf_{x\in B(x,\delta)}\tilde{u}(x)>0$. We now take arbitrary $\epsilon>0$, and thereby fix $n=n(\epsilon)\in \mathbb{N}$ such that $(1-\underline{u})^n<\epsilon$. We define the stopping time 
\begin{equation}
    \tau_n:=\inf\{t>0:\lvert \{i:X^i_t\in B(x_0,\delta)\}\rvert\geq n\}
\end{equation}
(which we note may be $+\infty$ with positive probability), and thereby define the $[0,1]$-valued stopped martingale
\[
\tilde{M}^n_t:=\tilde{M}_{t\wedge \tau_n}.
\]
By Doob's $L^1$ martingale convergence theorem this has a limit almost surely and in $L^1$ as $t\ra\infty$, which we denote by $\tilde{M}^n_{\infty}$.

\begin{lem}\label{lem:M infty <= epsilon on local survival}
$\tilde{M}^n_{\infty}\leq \epsilon$ on the event of local survival. 
\end{lem}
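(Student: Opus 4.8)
The plan is to split the event $\locSurv$ according to whether the stopping time $\tau_n$ is finite. On $\{\tau_n<\infty\}$ the bound is immediate from the way $\tilde M^n$ is constructed: since the martingale is frozen at $\tau_n$, we have $\tilde M^n_\infty=\tilde M_{\tau_n}=\prod_{i=1}^{N_{\tau_n}}(1-\tilde u(X^i_{\tau_n}))$, and by the definition of $\tau_n$ together with the right-continuity of the process at least $n$ of the particles $X^i_{\tau_n}$ lie in $\bar B(x_0,\delta)$. Every factor $1-\tilde u(X^i_{\tau_n})$ lies in $[0,1]$, so discarding the particles outside the ball gives $\tilde M^n_\infty\le\prod_{i\,:\,X^i_{\tau_n}\in\bar B(x_0,\delta)}(1-\tilde u(X^i_{\tau_n}))\le(1-\underline u)^n$, where now $\underline u:=\inf_{\bar B(x_0,\delta)}\tilde u>0$ (strictly positive because $\tilde u$ is a continuous, everywhere positive solution of \eqref{eq:semilinear elliptic equation} and $\bar B(x_0,\delta)$ is compact); since $n=n(\epsilon)$ was chosen with $(1-\underline u)^n<\epsilon$, this yields $\tilde M^n_\infty<\epsilon$ on $\{\tau_n<\infty\}$.

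It therefore remains to show that $\locSurv\cap\{\tau_n=\infty\}$ is $\Pm_x$-null, equivalently that $\tau_n<\infty$ holds $\Pm_x$-a.s.\ on $\locSurv$. On $\locSurv$ the branching process visits $B(x_0,\delta/4)$ at arbitrarily large times, since $\locSurv_U=\locSurv$ up to $\Pm_x$-null sets for every non-empty open pre-compact $U$. I would introduce the successive visit times $R_1<R_2<\cdots$ of $B(x_0,\delta/4)$ spaced by at least one unit of time, and for each $k$ let $A_k$ be the event that the sub-tree rooted at a chosen particle lying in $B(x_0,\delta/4)$ at time $R_k$ has at least $n$ particles in $B(x_0,\delta)$ at some time before $R_k+1$; note $A_k\subseteq\{\tau_n<\infty\}$. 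By the branching and strong Markov properties, $\Pm_x(A_k\mid\calF_{R_k})\ge p_0$ on $\{R_k<\infty\}$, where $p_0:=\inf_{y\in\bar B(x_0,\delta/4)}\Pm_y(\text{at least }n\text{ particles in }B(x_0,\delta)\text{ at some time}\le1)$. Iterated conditioning on the $\calF_{R_k}$ then gives $\Pm_x\big(\bigcap_{k\le K}(A_k^c\cap\{R_k<\infty\})\big)\le(1-p_0)^{K-1}\to0$, so that on $\{R_k<\infty\ \text{for all }k\}\supseteq\locSurv$ some $A_k$ occurs $\Pm_x$-a.s.; hence $\tau_n<\infty$ a.s.\ on $\locSurv$, which completes the proof of the lemma. (The lemma is then fed into the identity $\expE_x[\tilde M^n_\infty]=1-\tilde u(x)$ to obtain $w(x)\le\tilde u(x)+\epsilon w(x)$ and, letting $\epsilon\downarrow0$, the minimality of $w$.)

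The step I expect to be the real obstacle is the strict positivity $p_0>0$: this requires that genuine branching be available near the ball, so I would not take $B(x_0,\delta)$ arbitrary but choose $x_0$ with $c(x_0)>0$ and $\delta$ small enough that $c$ is bounded below by a positive constant on $\bar B(x_0,\delta)$. Such a choice is legitimate --- $\underline u$ and the rest are unaffected since the conclusion $w\le\tilde u$ does not mention $x_0$ --- and it is possible because the hypotheses force $c>0$ on a non-empty open set: $\Pm_x(\locSurv)>0$ gives $\lambda(L)\ge0$ by Proposition~\ref{prop:nve lambda implies local survival}, while the constant function $1$ is a positive supersolution of $L-\sup_E c$, so $\lambda(L)\le\sup_E c$; together with the continuity of $c$ and the assumption that $(n\ge2)$-branching genuinely occurs (ruling out the effectively critical/subcritical regime, in which the process would die out, contradicting local survival) this produces an open set on which $c>0$. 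On such a ball $r$ and the probability of at least two offspring are bounded below, so from any $y\in\bar B(x_0,\delta/4)$ there is positive probability that the driving diffusion remains in $\bar B(x_0,\delta/2)\subset B(x_0,\delta)$ for one unit of time while undergoing at least $\lceil\log_2 n\rceil$ branching events each with at least two offspring, producing at least $n$ particles in $B(x_0,\delta)$; the interior parabolic Harnack inequality (or the Stroock--Varadhan support theorem together with the strong Feller property) then makes this probability uniform over the compact set $\bar B(x_0,\delta/4)$, giving $p_0>0$.
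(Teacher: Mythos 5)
Your overall architecture is the same as the paper's: first, on $\{\tau_n<\infty\}$ the frozen martingale satisfies $\tilde M^n_\infty=\tilde M_{\tau_n}\le (1-\underline u)^n<\epsilon$ (your use of the closed ball and right-continuity is the cleaner way to state this); second, you reduce the lemma to showing $\tau_n<\infty$ almost surely on $\locSurv$, and you prove this by a sequence of attempts at return times spaced by at least one unit, each succeeding with probability bounded below uniformly, followed by an iterated-conditioning/Borel--Cantelli bound. This is exactly the paper's argument with its stopping times $\hat\tau_k$ and events $A_k$, where the paper asserts a uniform constant $q>0$ such that a particle in $\mathrm{cl}(B(x_0,\delta))$ produces at least $n$ descendants in $B(x_0,\delta)$ within the unit window.

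The place where you diverge is in trying to justify that uniform constant ($p_0>0$ in your notation), and there your argument has a genuine gap. You insist on choosing the ball so that $c>0$ in a neighbourhood, and you argue such a ball exists because $\Pm_x(\locSurv)>0$ forces $\lambda(L)\ge 0$ while $\lambda(L)\le\sup_E c$. That chain only yields $\sup_E c\ge 0$, not $c>0$ anywhere; the remaining step, ``the effectively critical/subcritical regime would die out, contradicting local survival,'' is precisely a nontrivial assertion (of the same order of difficulty as what is being proved, and delicate for space-dependent, catalytic-type branching), so as written the existence of your special ball is not established. Moreover, the restriction is unnecessary: $\tau_n$ only asks that $n$ particles be \emph{located} in $B(x_0,\delta)$ at some time, not that any branching occur there. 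From an arbitrary starting point in $\mathrm{cl}(B(x_0,\delta))$, the particle can, with probability bounded below uniformly over the compact ball (heat-kernel lower bounds or the support theorem, uniform ellipticity), travel to a region where $r(x)p_m(x)>0$ for some $m\ge 2$, undergo enough branching events there, and have all offspring return to $B(x_0,\delta)$ within the unit time window. This is what makes the paper's $q>0$ valid for an arbitrary ball --- granted that the set where branching with at least two offspring is available is effective (e.g.\ of positive Lebesgue measure), which is the reading one must give to the hypothesis ``$r(x)p_n(x)>0$ for some $x$, $n\ge2$'' in any case, since a single point carries no branching for the diffusion. Replacing your special-ball construction by this routing argument closes the gap and brings your proof in line with the paper's.
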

\begin{proof}[Proof of Lemma \ref{lem:M infty <= epsilon on local survival}]
Note that $\tilde{M}^n_{\infty}=\tilde{M}_{\tau_n}\leq (1-\underline{u})^n\leq \epsilon$ on the event that $\tau_n<\infty$. It therefore suffices to show that $\tau_n<\infty$ almost surely on the event of local survival, i.e. that $\Pm_x(\text{local survival and $\tau_n=\infty$})=0$. 

We define the sequence of stopping times $\hat{\tau}_0:=0$, 
\[
\hat{\tau}_{k+1}:=\inf\{t>\hat{\tau}_k+1:X^i_t\in B(x_0,\delta)\text{ for some }1\leq i\leq N_t\}. 
\]
We then define the sequence of events 
\[
A_k:=\{\hat{\tau}_k<\infty,\tau_{n}>\hat{\tau}_k+1\}. 
\]
Given a single particle in $\text{cl}(B(x_0,\delta))$, there is a probability $q>0$ such that this particle will have at least $n$ children in $B(x_0,\delta)$ after time $\frac{1}{2}$. From this we see that $\Pm_x(A_k\lvert A_{k-1})\leq (1-q)$, and hence $\Pm_x(A_k)\leq (1-q)^k$. It then follows from the Borel-Cantelli lemma that $\Pm_x(\text{$A_k$ eventually})=0$. Therefore $\Pm_x(\cup_kA_k^c)=1$.

Therefore $\Pm_x(\text{$\hat{\tau}_k=\infty$ or $\tau_n\leq \hat{\tau}_k+1<\infty$ for some $k$})=1$. However local survival is equivalent to $\hat{\tau}_k<\infty$ for all $k<\infty$, hence up to $\Pm_x$-null sets
\[
\{\tau_n\leq \hat{\tau}_k+1<\infty\text{ for some $k<\infty$}\}\supseteq \{\hat{\tau}_k<\infty\}=\locSurv.
\]
Therefore $\Pm_x(\{\tau_n=\infty\}\cap \locSurv)=0$. This implies that $\tilde{M}^n_{\infty}\leq \epsilon$ on the event of local survival.
\end{proof}

We have therefore established that 
\[
\tilde{M}^n_{\infty}\leq \epsilon\Ind(\locSurv)+\Ind(\text{local extinction}). 
\]
Since $\expE_x[\tilde{M}^n_{\infty}]=\expE_x[\tilde{M}^n_0]=1-u(x)$, we see that
\[
1-\tilde{u}(x)\leq \epsilon\Pm_x(\locSurv)+\Pm_x(\text{local extinction}).
\]
Since $\epsilon>0$ was arbitrary, it follows that
\[
1-\tilde{u}(x)\leq \Pm_x(\text{local extinction}),
\]
i.e. $\tilde{u}(x)\geq \Pm_x(\locSurv)$. \qed

\subsection{Proof of Theorem \ref{theo:local survival gives liminf of number of particles}}

We recall that we assume Assumption \ref{assum:uniformly elliptic standing assumptions}, that $\lambda_1'(L)>0$, and $\Pm_x(\locSurv)>0$ for some given $x\in E$. Our goal is to prove that
\[
    \Pm_x(\liminf_{t\ra\infty}\frac{1}{t}\ln N_t\geq \lambda_1'(L) \mid \locSurv)=1.
\]

We fix $0<\lambda<\lambda_1'(L)$ for the time being.

Given $1\leq i\leq N_t$ and $t_0>0$, we let $N^{t_0,i}_t$ for $t\geq t_0$ be the number of children of $X^i_{t_0}$ at time $t$. We observe that for any $t_0>0$, 
\[
\liminf_{t\ra\infty}\frac{1}{t}\ln N_t\geq \lambda\text{ if and only if }\liminf_{t\ra\infty}\frac{1}{t-t_0}\ln N^{t_0,i}_t\geq \lambda_1'(L). 
\]
Therefore by precisely the same argument as in the proof of Theorem \ref{theo:PDE semilinear elliptic equation global survival},
\[
\tilde{u}(x):=\Pm_x(\liminf_{t\ra\infty}\frac{1}{t}\ln N_t\geq \lambda )
\]
solves the stationary FKPP equation \eqref{eq:semilinear elliptic equation}. Moreover $\tilde{u}(x)>0$ everywhere by Theorem \ref{theo:PDE global extinction e-value relationship}, so it is a non-trivial $(0,1]$-valued solution.

As in the proof of Theorem \ref{theo:local survival minimal solution} we define the martingale
\[
\tilde{M}_t:=\prod_{i=1}^{N_t}(1-\tilde{u}(X^i_t)).
\]
As before, we take a ball $B(x_0,\delta)\subset\subset E$, and define $\underline{u}:=\inf_{x\in B(x,\delta)}\tilde{u}(x)>0$. We now take arbitrary $\epsilon>0$, and thereby fix $n=n(\epsilon)\in \mathbb{N}$ such that $(1-\underline{u})^n<\epsilon$. We define the stopping time 
\[
    \tau_n:=\inf\{t>0:\lvert \{i:X^i_t\in B(x_0,\delta)\}\rvert\geq n\}
\]
exactly as before. We again thereby define the $[0,1]$-valued stopped martingale
\[
\tilde{M}^n_t:=\tilde{M}_{t\wedge \tau_n},
\]
which has a limit almost surely and in $L^1$ as $t\ra\infty$, which we denote by $\tilde{M}^n_{\infty}$.

We observe that
\[
\begin{split}
\Pm(\liminf_{t\ra\infty}\frac{1}{t}\ln N_t\geq \lambda\lvert \mathcal{F}_{\tau_n})\Ind(\tau_n<\infty)=\Ind(\tau_n<\infty)\Big[1-\prod_{i=1}^{N_{\tau_n}}(1-\tilde{u}(X^i_{\tau_n})\Big]\\
=[1-\tilde{M}_{\tau_n}]\Ind(\tau_n<\infty)=[1-\tilde{M}^n_{\infty}]\Ind(\tau_n<\infty).
\end{split}
\]
We have that $\tau_n<\infty$ and $\tilde{M}^n_{\infty}\leq \epsilon$ almost surely on the event of local survival, exactly as in the proof of Theorem \ref{theo:local survival minimal solution}. Therefore
\[
\Pm(\liminf_{t\ra\infty}\frac{1}{t}\ln N_t\geq \lambda\lvert \mathcal{F}_{\tau_n})\Ind(\locSurv)\geq [1-\epsilon]\Ind(\locSurv).
\]
Therefore
\[
\Pm_x(\liminf_{t\ra\infty}\frac{1}{t}\ln N_t\geq \lambda\lvert \locSurv)\geq 1-\epsilon.
\]
Since $\epsilon>0$ was arbitrary, we have that
\[
\Pm_x(\liminf_{t\ra\infty}\frac{1}{t}\ln N_t\geq \lambda\lvert \locSurv)=1.
\]
Since $\lambda<\lambda_1'(L)$ was arbitrary we are done. \qed

\commentout{
\subsection{Proof of Theorem \ref{theo:divergence form lamnda'=lambda''}}

Let us begin by recalling some definitions, following \cite{Daners2000}. We write $W^1_2(E)$ for the Sobolev space of all weakly differentiable functions $E\ra\Rm$ which are square-integrable with square-integrable weak derivates. We write $\dot{W}^1_2(E)$ for the closure of $C_c^{\infty}(E)$ in $W^1_2(E)$. A function $u$ is a weak solution of the Dirichlet problem
\begin{equation}
    \partial_tu=Lu
\end{equation}
on $E\times [0,\infty)$ if $u\in L^2((0,T);\dot{W}^1_2(E)$ for all $T<\infty$ and
\[
-\int_0^{\infty}\langle u(t),v\rangle \dot{\varphi}(t)dt+\int_0^{\infty}\int_{E}a_{ij} (x, t)\partial_j u\partial_iv + c(x, t)uv dx\varphi(t)dt-\langle u_0,v\rangle \varphi(0)=0
\]
for all $\varphi\in C_c^{\infty}([0,\infty))$ and $v\in \dot{W}^1_2(E)$.
We say that 

We suppose that $\lambda''(L)>0$. We let $u(x,t)$ be a weak solution of the PDE
\begin{equation}
\partial_tu=Lu,\quad t>0,\\ 
u(x,t)=0,\quad x\in \partial E,\quad u(x,0)\equiv 1,\quad x\in E.
\end{equation}
Our goal is to show that
\begin{equation}
    \limsup_{t\ra\infty}\frac{1}{t}\ln\int_{E}u(x,t)dx>0
\end{equation}
for all $x\in E$.

We suppose for contradiction that
Our first goal is to show that $ $
}


\section{Examples and counter-examples}\label{sec:examples}
Here we collect various examples and counter-examples. 

\subsection{Discrete time and space example, $\rho_{c,x}' < \liminf_{n\to\infty} (\expE_{x}[N_n])^{1/n} < \rho_{c,x}''$}
    \label{sec:example_mutations}
    
    \begin{figure}[ht]
        \centering
        \includegraphics[width=\textwidth]{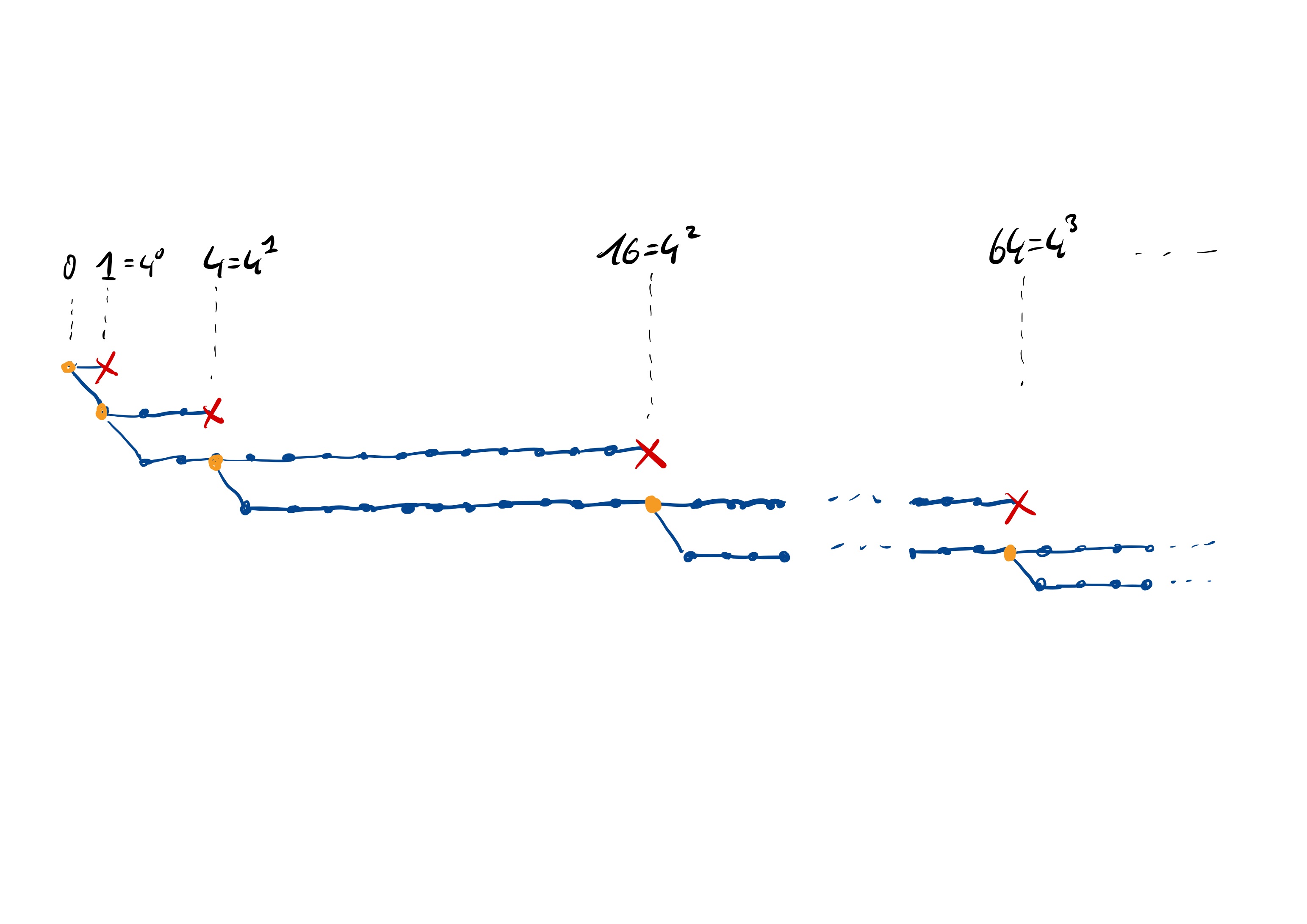}
        \caption{Schematic picture of the underlying graph of the branching Markov chain described in Section~\ref{sec:example_mutations}.}
        \label{fig:example_mutations}
    \end{figure}
    
    Consider the following branching Markov chain. Particles move on the graph sketched in Figure~\ref{fig:example_mutations}.
    At each step, a particle branches into 4 particles, all of which move one step from left to right along a blue edge. Particles arriving at a red cross are killed. At times $n$ of the form $n = 4^k$, $k\in\N_0$, particles arrive at crossroads (yellow dots). In the following step, every particle, when it moves, takes the downwards path with probability $8^{-n}$ and the other path with probability $1-8^{-n}$. 
    
    We claim that at every time $n$ of the form $n=4^k$, $n\in\N_0$, the expected number of particles is at least $2^n$. Indeed, it is true for $k=0$. Assume it is true at $n = 4^k$, for some $k$. Then, the expected number of particles at time $4^{k+1} = 4n$ is at least
    \[
    \frac{2^n}{8^n} 4^{3n}= 4^{2n} = 2^{4n}.
    \]
    The claim then follows by induction.
    
    It follows from the claim, and the fact that the number of particles grows by a factor of 4 at every time other than the times of the form $n=4^k$, that
    \[
   \liminf_{n\to\infty} (\expE[N_n])^{1/n} \ge 2 > 1.
   \]
   
   On the other hand, we trivially have at most $4^n$ particles at every time $n$. It follows that at every time $n$ of the form $n=4^k$, the probability that there exists a particle taking the downwards path is bounded by $4^n/8^n = 2^{-n}$. Hence, there exists almost surely a time of the form $n=4^k$ where no particle takes the downwards path. It follows that the process dies out almost surely. Hence, $\rho_c' \le 1$.

   We can also show that we have in fact $\liminf_{n\to\infty} (\expE[N_n])^{1/n} = 2$, and
   \[
     \rho''_{c,x} = \limsup_{n\to\infty} (\expE[N_n])^{1/n} = \lim_{n\to\infty} \left(2^n 4^{3n}\right)^{1/4n} = 2^{7/4} > 2.
   \]

   To sum up, we have in this example,
   \[
     \rho'_{c,x} \le 1 < 2 = \liminf_{n\to\infty} (\expE[N_n])^{1/n} < 2^{7/4} = \rho''_{c,x}.
   \]
   This shows that both inequalities in Theorem~\ref{th:rho'_rho''} may be strict.
   
   %
   %
   
   \subsection{Continuous time and space counterexample to $\lambda_1'=\lambda_1''$}\label{section:lambda' lambda'' can be different}
We recall that H. Berestycki and Rossi \cite{BerestyckiRossi} defined the following notion of generalised principal eigenvalue for a uniformly elliptic second-order differential operator $L$ on a smooth and \textit{unbounded} domain $E\subseteq \Rm^d$,
\begin{equation}\label{eq:Berestycki-Rossi eigenvalues example section}
\begin{split}
\lambda_1'=\lambda_1'(L):=\sup\{\lambda\in\Rm:\exists u\in W^{2,N}_{\text{loc}}(E)\cap L^{\infty}(E)\text{ such that $(L-\lambda)u\geq 0,$}\\\text{$u(x)>0$ for all $x\in E$, $u(x)\ra 0$ as $x\ra \xi$ for all $\xi\in \partial E$}\},\\
\lambda_1''=\lambda_1''(L):=\inf\{\lambda\in\Rm:\exists u\in W^{2,N}_{\text{loc}}(E)\text{ such that $(L-\lambda)u\leq 0$, $\inf_{x\in E}u(x)>0$}\}.
\end{split}
\end{equation}
Note that the above definition differs from the definition given in \cite{BerestyckiRossi} by a factor of two. They established general conditions under which $\lambda_1'\leq \lambda_1''$, and conjectured that they are equal. We will provide here a counterexample demonstrating that in fact this may not be.

We inductively define
\[
\begin{split}
S_0:=0,\quad a_1:=1,\quad S_1:=2a_1,\\ a_{n+1}:=4S_n,\quad S_{n+1}:=S_n+2a_{n+1},\quad n\geq 1.
\end{split}
\]
This gives
\[
S_n=2*9^{n-1},\quad S_n+a_{n+1}=10*9^{n-1},\quad n\geq 1.
\]
This then defines the sequence of intervals $([S_n,S_n+a_{n+1}])_{n=0}^{\infty}$ and $([S_n+a_{n+1},S_{n+1}])_{n=0}^{\infty}$. We then define
\[
A:=\cup_{n\geq 0}[S_n,S_n+a_{n+1}),\quad B:=\cup_{n\geq 0}[S_n+a_{n+1},S_{n+1}).
\]
For $\sigma>0$ we define
\[
L_0^{\sigma}:=\begin{cases}
\frac{\sigma}{2}\Delta+\partial_x+1,\quad x\in A,\\
\frac{\sigma}{2}\Delta+\partial_x-1,\quad x\in B,\\
\frac{\sigma}{2}\Delta+\partial_x,\quad x<1.
\end{cases}
\]
Note that for any $\sigma>0$ this is a uniformly elliptic second-order differential operator with bounded coefficients and smooth (in fact constant) second-order coefficients, exactly the setting considered by Berestycki-Rossi.

We prove the following.
\begin{prop}\label{prop:lambda' neq lambda''}
We have the inequality
\begin{equation}\label{eq:upper bound lambda'}
    \limsup_{\sigma\ra 0}\lambda'(L_0^{\sigma})\leq 0
\end{equation}
Conversely
\begin{equation}\label{eq:lower bound lambda''}
    \lambda''(L_0^\sigma)\geq \frac{4}{5}
\end{equation}
for all $\sigma>0$. Therefore $\lambda'(L_0^{\sigma})< \lambda''(L_0^\sigma)$ for $\sigma>0$ sufficiently small.
\end{prop}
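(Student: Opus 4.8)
The plan is to prove the two displayed bounds separately; the final assertion $\lambda'(L_0^\sigma)<\lambda''(L_0^\sigma)$ for small $\sigma$ is then immediate. Throughout I pass freely between $L_0^\sigma$ and its associated branching diffusion (after a harmless regularisation of the piecewise-constant zeroth-order coefficient $c$, which equals $+1$ on $A$, $-1$ on $B$ and $0$ on $(-\infty,0)$, and which changes none of the macroscopic quantities below), using the identifications $\lambda_1'(L_0^\sigma)=\lambda_c'((P_t)_{t\ge0})$, $\lambda_1''(L_0^\sigma)=\lambda_c''((P_t)_{t\ge0})$ from Theorem~\ref{theo:PDE equality of eigenvalues different notion} and the Feynman--Kac formula $\expE_x[N_t]=\expE_x\!\left[\exp\!\left(\int_0^t c(X^0_s)\,ds\right)\right]$, where $X^0$ is the drift diffusion with generator $\tfrac\sigma2\Delta+\partial_x$, so that $X^0_s=x+s+\sqrt\sigma\,W_s$ for a standard Brownian motion $W$. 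The arithmetic input is the elementary consequence of $S_n=2\cdot9^{n-1}$ and $S_n+a_{n+1}=10\cdot9^{n-1}$ that $A$ and $B$ have equal Lebesgue measure in $[0,S_N]$, whereas in $[0,t_N]$ with $t_N:=S_N+a_{N+1}=10\cdot9^{N-1}$ the set $A$ exceeds $B$ by exactly $a_{N+1}=8\cdot9^{N-1}=\tfrac45 t_N$; hence $\int_0^{S_N}c(y)\,dy=0$ and $\int_0^{t_N}c(y)\,dy=\tfrac45 t_N$, up to an $O(|x|)$ correction if the lower limit is replaced by $x$.

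\emph{Lower bound, valid for every $\sigma>0$.} By Theorem~\ref{theo:elliptic regularity limsup of mass}, $\lambda_1''(L_0^\sigma)=\limsup_{t\to\infty}\tfrac1t\ln\expE_x[N_t]$, so it suffices to bound $\expE_x[N_{t_N}]$ from below along $t=t_N$. On a regular event $H_N$ of probability at least $\tfrac12$, on which $\sup_{v\le u}|W_v|\le C(1+u^{3/4})$ for all $u\le t_N$, the set of times $s\le t_N$ with $c(X^0_s)\ne c(x+s)$ lies in a $\sqrt\sigma\,C(1+u^{3/4})$-neighbourhood of the $O(N)$ sign changes of $c$; since these sit at scales $9^{n-1}$, this neighbourhood has total length $O(\sqrt\sigma\,t_N^{3/4})=o(t_N)$, so $\int_0^{t_N}c(X^0_s)\,ds\ge\tfrac45 t_N-o(t_N)$ on $H_N$. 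Therefore $\expE_x[N_{t_N}]\ge\tfrac12 e^{(\frac45-\varepsilon)t_N}$ for every $\varepsilon>0$ and all large $N$, whence $\lambda_1''(L_0^\sigma)\ge\tfrac45$.

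\emph{Upper bound.} If $\lambda_1'(L_0^\sigma)>\lambda>0$ there is $u\in pB(E)$ with $0<u\le1$ and $P_tu\ge e^{\lambda t}u$ for all $t$, so $\expE_x[N_{S_N}]\ge(P_{S_N}u)(x)\ge e^{\lambda S_N}u(x)$; it therefore suffices to exhibit an absolute $\sigma_0>0$ such that, for all $\sigma\le\sigma_0$ and all $x$,
\[
\limsup_{N\to\infty}\frac1{S_N}\ln\expE_x\!\left[\exp\!\left(\int_0^{S_N}c(X^0_s)\,ds\right)\right]\le0 .
\]
Write $\int_0^{S_N}c(X^0_s)\,ds=\int_0^{S_N}c(x+s)\,ds+D_N$, where $\int_0^{S_N}c(x+s)\,ds=O(|x|)$ and $|D_N|\le2|\mathcal B_N|$, with $\mathcal B_N$ the set of $s\le S_N$ at which the jump set of $c$ separates $x+s$ from $x+s+\sqrt\sigma W_s$. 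The key estimate is $|\mathcal B_N|\le C\sqrt\sigma\sum_{p}R_p$, the sum over sign changes $p$ of $c$ in range of $R_p:=\sup\{|W_u|:|u-(p-x)|\le\sqrt\sigma\sup_{[0,S_N]}|W|\}$: because the $p$'s are geometrically spaced, on a regular event (and more generally whenever $\sup_{[0,S_N]}|W|$ is of polynomial size in $S_N$) $\sum_p R_p=o(S_N)$, so $\exp(2|\mathcal B_N|)\le e^{\delta S_N}$ there for any fixed $\delta$; on the complementary rare events one uses the trivial bound $\int_0^{S_N}c(X^0_s)\,ds\le S_N$ together with the Gaussian tail $\Pm(\sup_{[0,S_N]}|W|\ge R)\le 4e^{-R^2/(2S_N)}$, and the geometric growth $a_n\sim9^n$ forces that creating a macroscopic value of $|\mathcal B_N|$ requires a Brownian excursion whose height, relative to the time scale on which it must persist, is so large that for $\sigma$ below an absolute constant its probability beats $e^{S_N}$. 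Summing over a dyadic decomposition of the excursion height (and of the time at which it occurs) then gives the displayed estimate, and hence $\lambda_1'(L_0^\sigma)\le0$ for all small $\sigma$.

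This last, rare-event step is the real work, and the one I expect to be the main obstacle: one must organise the multi-scale decomposition of $\sup_{[0,S_N]}|W|$ — keeping track of \emph{where} in $[0,S_N]$ a large excursion occurs, since an early excursion is cheaper and more damaging — and verify that the net contribution of every scale is $o(e^{\delta S_N})$. Heuristically this is where ``drift beats diffusion'': holding a unit-drift, variance-$\sigma$ motion in a bounded region costs $\sim\tfrac1{2\sigma}$ per unit time, which for $\sigma<\tfrac12$ exceeds the gain $\sim1$ per unit time from the potential $+1$ on $A$, so the process cannot sustain growth by lingering in, or backing up into, an interval of $A$; conversely for $\sigma>\tfrac12$ a particle can ``surf'' an arbitrarily long interval of $A$ (whose Dirichlet principal growth rate tends to $1-\tfrac1{2\sigma}>0$), making $\lambda_1'(L_0^\sigma)>0$, which is exactly why the hypothesis $\sigma\to0$ cannot be removed. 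Granting the estimate, combining $\lambda_1'(L_0^\sigma)\le0$ for small $\sigma$ with $\lambda_1''(L_0^\sigma)\ge\tfrac45$ completes the proof.
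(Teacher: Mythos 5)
Your lower bound \eqref{eq:lower bound lambda''} is essentially fine and parallel to the paper's (the paper works at the same times $S_N+a_{N+1}$, but feeds the supersolution supermartingale into Jensen's inequality and uses convergence in probability of the rescaled occupation functional to $4/5$, rather than bounding $\expE_x[N_t]$ below on a regular Brownian event). One caveat: your route goes through $\lambda_1''=\lambda_c''=\limsup_t\frac1t\ln\expE_x[N_t]$ and the branching/Feynman--Kac machinery, which as stated requires the H\"older continuity of $c$ in Assumption \ref{assum:uniformly elliptic standing assumptions}; the ``harmless regularisation'' you invoke would have to be carried out (e.g.\ sandwiching the discontinuous $c$ between H\"older functions agreeing with it off small neighbourhoods of the jump points and using monotonicity of the eigenvalues in $c$), whereas the paper sidesteps this entirely by arguing directly from the sub/supersolution martingale inequalities for the drift diffusion $X_t=\sigma B_t+t$.

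The genuine gap is in the upper bound \eqref{eq:upper bound lambda'}. Your reduction to showing $\limsup_N\frac1{S_N}\ln\expE_x\big[\exp\big(\int_0^{S_N}c(X^0_s)\,ds\big)\big]\le0$ is fine, but that exponential-moment estimate \emph{is} the content of the bound, and you do not prove it: the control of the rare events on which Brownian fluctuations distort the occupation balance is only sketched via a multi-scale decomposition over excursion heights and locations, and you yourself flag it as ``the real work'' and ``the main obstacle''. As written this is a statement of intent, not a proof. Moreover you are aiming at a strictly stronger claim than the proposition requires, namely $\lambda_1'(L_0^\sigma)\le0$ for all $\sigma$ below an absolute threshold, which would indeed call for a large-deviation/variational analysis of the lingering and displacement strategies you describe. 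The paper proves only the stated $\limsup_{\sigma\to0}$ bound, and for that a single global event suffices: writing $Y^n_s=X_{S_ns}/S_n$, on $\{\sup_{s\le1}|s-Y^n_s|\le\epsilon\}$ the time fraction spent in $A$ up to $S_n$ is at most $\Leb(\calA_{c,\epsilon})\le\Leb(\calA_c)+C_c\epsilon$ (with $\calA=\bigcup_{n\ge1}[9^{-n},5\cdot 9^{-n})$, $\Leb(\calA)=\tfrac12$), so the exponent is at most $\kappa+2\Leb(\calA_c)+2C_c\epsilon-1$, while the complementary event has probability at most $4\Phi(\sqrt{S_n}\epsilon/\sigma)$, of order $e^{-S_n\epsilon^2/2\sigma^2}$, which beats the crude worst-case factor $e^{S_n}$ as soon as $\sigma<\epsilon/2$; letting $\sigma\to0$ first and then $\epsilon,c\to0$ yields $\limsup_{\sigma\to0}\lambda_1'(L_0^\sigma)\le2\Leb(\calA)-1=0$. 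So no bookkeeping over where the excursion occurs is needed for the stated result; if you insist on your stronger fixed-$\sigma$ claim, the missing rare-event analysis must actually be supplied, and it does not follow from anything in your sketch.
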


\subsubsection*{Proof of Proposition \ref{prop:lambda' neq lambda''}}
We will firstly provide a Feynman-Kac formula, which provides the key tool for estimating the generalised eigenvalues. 

\subsubsection*{Feynman-Kac formula}

For arbitrary $\kappa\in \Rm$ we define $L^{\sigma,\kappa}:=L^{\sigma}_0+\kappa$. Given some choice of $\sigma>0$ and $\kappa\in \Rm$, we write as a shorthand
\[
\begin{split}
    \calU_{\sub}^{\sigma,\kappa}:=\{u\in W^{2,N}_{\text{loc}}(E)\cap L^{\infty}(E)\text{ such that $L^{\sigma,\kappa} u\geq 0,\;u(x)>0$ for all $x\in E$,}\},\\
\calU_{\super}^{\sigma,\kappa}:=\{ u\in W^{2,N}_{\text{loc}}(E)\text{ such that $L^{\sigma,\kappa}u\leq 0$, $\inf_{x\in E}u(x)>0$}\}.
\end{split}
\]

We define $X_t$ to be the diffusion
\[
X_t=\sigma B_t+t,
\]
where $B_t$ is standard Brownian motion started from $0$. 

Given a non-negative Borel-measurable function $u>0$ we define
\[
M_t[u]:=e^{\kappa t+\int_0^t\Ind(X_s\in A)-\Ind(X_s\in B)ds}u(X_t).
\]
We see that:
\begin{enumerate}
    \item[(i)]if $u\in \calU^{\sigma,\kappa}_\sub$ then $M_t[u]$ is a submartingale;
    \item[(ii)]if $u\in \calU^{\sigma,\kappa}_\super$ then $M_t[u]$ is a supermartingale.
\end{enumerate}

From this we see that:
\begin{enumerate}
    \item[(i)]if $u\in \calU^{\sigma,\kappa}_\sub$ and $t\in \Rm_{>0}$ then
    \begin{equation}\label{eq:counterexample upper bound on u when subharmonic 1}
    u(0)=M_0[u]\leq \expE_0[e^{\kappa t+\int_0^\Ind(X_s\in A)-\Ind(X_s\in B)ds}u(X_t)]\leq \expE_0[e^{\kappa t+\int_0^t\Ind(X_s\in A)-\Ind(X_s\in B)ds}]\lvert\lvert u\rvert\rvert_{\infty}.
    \end{equation}
    \item[(ii)]if $u\in \calU^{\sigma,\kappa}_\super$ and $t\in \Rm_{>0}$ then 
    \begin{equation}\label{eq:counterexample lower bound on u when superharmonic 1}
    u(0)=M_0[u]\geq \expE_0[e^{\kappa t+\int_0^t\Ind(X_s\in A)-\Ind(X_s\in B)ds}u(X_t)]\geq \expE_0[e^{\kappa t+\int_0^t\Ind(X_s\in A)-\Ind(X_s\in B)ds}]\inf_{x\in \Rm}u(x).
    \end{equation}
\end{enumerate}

\commentout{
\subsubsection{The $\sigma=0$ case}

\begin{prop}\label{prop:counterexample sigma=0}
We suppose that $\sigma=0$. Then we have the inequalities
\begin{align}
\lambda'(L_0)\leq -D,\\
\lambda''(L_0)\geq \frac{4}{5}-\frac{D}{5}.
\end{align}
In particular we obtain
\begin{equation}
    \lambda''(L_0)>\lambda'(L_0).
\end{equation}
\end{prop}

\begin{proof}[Proof of Proposition \ref{prop:counterexample sigma=0}]
Since $\sigma=0$, $X_t$ is now deterministic, with $X_t=t$. We then have for any $t>0$ that:
\begin{enumerate}
    \item[(i)]If $Lu\geq 0$, $u>0$ and $u$ is bounded then
    \[
    u(1)\leq e^{\kappa t+\int_0^t\Ind(s\in A)-\Ind(s\in B)ds}\lvert\lvert u\rvert\rvert_{\infty}.
    \]
    \item[(ii)]If $Lu\leq 0$ and $u>0$ then 
    \[
    u(1)\geq e^{\kappa t+\int_0^t\Ind(s\in A)-\Ind(s\in B)ds}\inf_{x\in \Rm}u(x).
    \]
\end{enumerate}

We observe that
\[
\begin{split}
\int_0^{S_n}\Ind(s\in A)-\Ind(s\in B)=-\sum_{1\leq k\leq n}\delta_k,\\
\int_0^{S_n+a_{n+1}}\Ind(s\in A)-\Ind(s\in B)=-\sum_{1\leq k<n}\delta_k+a_{n+1}.
\end{split}
\]

We now suppose that $Lu\leq 0$, $u>0$ and $u$ is bounded. Then by choosing $t=S_n$ we see that
\[
u(1)\leq \exp[\kappa S_n-\sum_{k\leq n}\delta_k]\leq \exp[S_n(\kappa -\frac{\sum_{k\leq n}\delta_k}{S_n})].
\]
Taking the limit supremum as $n\ra\infty$, we see that $u(1)=0$ if 
\[
\kappa- D<0,
\]
which is a contradiction.

We conclude that if
\[
\kappa-D<0
\]
then there does not exist $u>0$ such that $Lu\leq 0$ and $u$ is bounded, implying that $\lambda'(L)\leq 0$. 

We now suppose that $Lu\geq 0$ and $\inf_{x\in \Rm}u(x)>0$. Then by choosing $t=S_n+a_{n+1}$ we see that
\[
\begin{split}
u(1)\geq \exp[\kappa (S_n+a_{n+1})+a_{n+1}-\sum_{k\leq n}\delta_k]\geq \exp[(S_n+a_{n+1})(\kappa +\frac{a_{n+1}}{S_n+a_{n+1}}-\frac{\sum_{k\leq n}\delta_k}{S_n}\frac{S_n}{S_n+a_{n+1}})]\\
=\exp[(S_n+a_{n+1})(\kappa +\frac{4}{5}-\frac{D}{5})].
\end{split}
\]
Taking the limit infimum as $n\ra\infty$ we see that
\[
u(1)=\infty
\]
if $\kappa+\frac{4}{5}-\frac{D}{5}>0$. This is a contradiction, hence if
\[
\kappa+\frac{4}{5}-\frac{D}{5}>0
\]
then there does not exist $u$ such that $Lu\geq 0$ and $\inf_{x\in \Rm}u(x)>0$, implying that $\lambda''(L)\geq 0$. 

We now note that
\[
\lambda'(L)=\lambda'(L_0)+\kappa,\quad \lambda''(L)=\lambda''(L_0)+\kappa.
\]

By allowing $\kappa$ to vary, we conclude that
\begin{enumerate}
    \item$\lambda'(L_0)+\kappa\leq 0$ if $\kappa-D<0$, implying that
    \[
    \lambda'(L_0)\leq -D.
    \]
    \item$\lambda''(L_0)+\kappa\geq 0$ if $\kappa+\frac{4}{5}-\frac{D}{5}>0$, implying that
    \[
    \lambda''(L_0)\geq \frac{4}{5}-\frac{D}{5}.
    \]
\end{enumerate}
We note that
\[
-D<\frac{4}{5}-\frac{D}{5}
\]
for any $D\geq 0$ (note that $D\geq 0$ necessarily).

Therefore
\[
\lambda''(L_0)>\lambda'(L_0).
\]
\end{proof}

}
\commentout{
We recall that in the deterministic case, it turned out that we need to take $(\delta_n)_{n=1}^{\infty}$ such that
\[
\lim_{n\ra\infty}\frac{\sum_{k\leq n}\delta_k}{S_n}
\]
exists. We therefore now fix $(\delta_n)_{n=1}^{\infty}$ so that
\[
\frac{\sum_{k\leq n}\delta_k}{S_n}=\frac{1}{2}
\]
for all $n$. By algebraic manipulation, we see this is given by taking
\[
\delta_1=2,\quad \delta_{n+1}=8S_n,\quad n\geq 1.
\]
We will now fix this $(\delta_n)_{n=1}^{\infty}$. 

We want to consider $\sigma$ to vary. We denote this by writing $L^\sigma$ and $L_0^\sigma$ respectively.

\begin{prop}
For any $\sigma>0$ we have
\begin{equation}
    \limsup_{\sigma\ra 0}\lambda_1'(L_0^\sigma)\leq -\frac{1}{2}.
\end{equation}
Moreover we have
\begin{equation}
\lambda_1''(L_0^\sigma)\geq \frac{7}{10}.   
\end{equation}
Therefore $\lambda_1'(L^\sigma_0)<\lambda_1''(L^\sigma_0)$ for all $\sigma>0$ sufficiently small.
\end{prop}
Note that the above limits agree with the values of the eigenvalues obtained in the $\sigma=0$ case with $D=\frac{1}{2}$.}

\subsubsection*{Proof of \eqref{eq:lower bound lambda''}}

If $u\in \calU^{\sigma,\kappa}_{\super}$ then it follows from \eqref{eq:counterexample upper bound on u when subharmonic 1} and Jensen's inequality that
    \begin{equation}
    u(0)\geq \exp[\expE_0[\kappa t+\int_0^t\Ind(X_s\in A)-\Ind(X_s\in B)ds]]\inf_{x\in \Rm}u(x).
    \end{equation}
This then implies that
\begin{equation}\label{eq:upper bound 2 u(0) subharmonic}
u(0)\geq \limsup_{n\ra\infty}\exp\Big[(S_n+a_{n+1})\Big(\kappa +\expE_0\Big[\frac{1}{S_n+a_{n+1}}\int_0^{{S_n+a_{n+1}}}\Ind(X_s\in A)-\Ind(X_s\in B)ds\Big]\Big)\Big]\inf_{x\in \Rm}u(x).
\end{equation}
Our goal is to show that the limit supremum on the right-hand side is infinite for $\kappa>-\frac{4}{5}$, from which we conclude a contradiction, implying that $\calU^{\sigma,\kappa}_{\super}$ is empty for $\kappa>-\frac{4}{5}$.
    
When we rescale time and space by $\frac{1}{S_n+a_{n+1}}$, we see that the Brownian motion term vanishes as $n\ra\infty$, from which we see that
\[
\begin{split}
\frac{1}{S_n+a_{n+1}}\int_0^{S_n+a_{n+1}}\Ind(X_s\in A)-\Ind(X_s\in B)ds-\frac{1}{S_n+a_{n+1}}\int_0^{S_n+a_{n+1}}\Ind(s\in A)-\Ind(X_s\in B)ds\ra 0
\end{split}
\]
in probability as $n\ra\infty$. Now we have
\[
\int_0^{S_n}\Ind(s\in A)-\Ind(X_s\in B)ds=0
\]
for all $n\geq 1$. Therefore
\[
\frac{1}{S_n+a_{n+1}}\int_0^{S_n+a_{n+1}}\Ind(s\in A)-\Ind(X_s\in B)ds=\frac{a_{n+1}}{S_n+a_{n+1}}=\frac{4}{5}.
\]

It follows that
\[
\begin{split}
\frac{1}{S_n+a_{n+1}}\int_0^{S_n+a_{n+1}}\Ind(X_s\in A)-\Ind(X_s\in B)ds\ra \frac{4}{5}
\end{split}
\]
in probability as $n\ra\infty$. Therefore if $\kappa>-\frac{4}{5}$ then
\[
\limsup_{n\ra\infty}\Big(\kappa +\expE_0\Big[\frac{1}{S_n+a_{n+1}}\int_0^{{S_n+a_{n+1}}}\Ind(X_s\in A)-\Ind(X_s\in B)ds\Big]\Big)>0.
\]
This implies that the limit-supremum on the right-hand side of \eqref{eq:upper bound 2 u(0) subharmonic} is infinite for $\kappa>-\frac{4}{5}$. We therefore conclude that $\calU^{\sigma,\kappa}_{{\super}}$ is empty for $\kappa>-\frac{4}{5}$. Therefore
\[
\lambda''(L^\sigma_0)+\kappa=\lambda''(L^\sigma_0+\kappa)\geq 0
\]
for $\kappa>-\frac{4}{5}$. Therefore
\[
\lambda''(L^\sigma_0)\geq \frac{4}{5},
\]
whence we have established \eqref{eq:lower bound lambda''}.

\subsubsection*{Proof of \eqref{eq:upper bound lambda'}}

We now turn to the upper bound on $\lambda_1'(L^{\sigma}_0)$. We rescale $Y_t$ to obtain
\[
Y^n_t:=\frac{X_{S_nt}}{S_n}=t+\frac{\sigma}{S_n}\tilde{B}_t.
\]
where $\tilde{B}_t$ is the Brownian motion
\[
\tilde{B}_t:=\frac{1}{\sqrt{S_n}}B_{S_n t}.
\]
Note that whilst $\tilde{B}_t$ does depend on $n$, it doesn't distributionally, so we will not denote the dependence of $\tilde{B}_t$ upon $n$ as we will only ever need that it's distributed as a standard Brownian motion.

We define the set
\[
\calA:=\cup_{n\geq 1}[\frac{1}{9^n},\frac{5}{9^n}). 
\]
We further define for $c,\epsilon>0$ the sets
\[
\calA_c:=[0,c)\cup \calA,\quad  \calA_{c,\epsilon}:=[0,c)\cup \{x\in \Rm:d(x,\calA)\leq \epsilon\}.
\]
We observe that for all $c>0$ there exists $n_c$ dependent only upon $c$ such that for all $\epsilon>0$ and all $n\geq n_c$, if 
\[
\{\frac{t}{S_n}:t\leq S_n,t\in A\}+B(0,\epsilon)\subseteq \calA_{c,\epsilon}. 
\]
Therefore on the event $\{\sup_{s\leq 1}\lvert 
s-Y^n_s\rvert\leq \epsilon\}$, it follows that
\[
\frac{1}{S_n}\Leb(\{t\leq S_n:X_t\in A\})\leq \Leb(\calA_{c,\epsilon}).
\]
We also observe that for all $c>0$ there exists $C_c<\infty$ dependent only upon $c$ such that
\[
\Leb(\calA_{c,\epsilon})\leq \Leb(\calA_c)+C_c\epsilon.
\]

We now suppose that we have $u$ such that$Lu\geq 0$, $u>0$ and $u$ is bounded. We recall that by \eqref{eq:counterexample upper bound on u when subharmonic 1} we have
    \begin{equation}\label{eq:counterexample upper bound on u when subharmonic 2}
    u(1)\leq \expE_0\left[\exp\left(S_n\left(\kappa +\frac{1}{S_n}\int_0^t\Ind(X_s\in A)-\Ind(X_s\in B)ds\right)\right)\right]\lvert\lvert u\rvert\rvert_{\infty}.
    \end{equation}
We would therefore like to upper bound this quantity. From the above discussion we have that
\[
\left(\kappa +\frac{1}{S_n}\int_0^t\Ind(X_s\in A)-\Ind(X_s\in B)ds\right)\leq \kappa+\Ind(\sup_{s\leq 1}\lvert 
s-Y^n_s\rvert> \epsilon)+2\Leb(\calA_{c,\epsilon})-1
\]
for all $n\geq n_c$. It follows that
\[
\begin{split}
\expE_0\left[\exp\left(S_n\left(\kappa +\frac{1}{S_n}\int_0^t\Ind(X_s\in A)-\Ind(X_s\in B)ds\right)\right)\right]\\\leq
e^{S_n(\kappa+2\Leb(\calA_c)+C_c\epsilon-1)}[1+e^{S_n}\Pm(\sup_{s\leq 1}\lvert s-Y^n_s\rvert\geq \epsilon)].
\end{split}
\]
Since $Y^n_s-s=\frac{\sigma}{\sqrt{S_n}}\tilde{B}_s$, we have that
\[
\Pm(\sup_{s\leq 1}\lvert s-Y^n_s\rvert>\epsilon)=\Pm(\sup_{s\leq 1}\lvert\tilde{B}_s\rvert>\frac{\sqrt{S_n}\epsilon}{\sigma})\leq 2\Pm(\sup_{s\leq 1}\tilde{B}_s>\frac{\sqrt{S_n}\epsilon}{\sigma}).
\]
The running maximum of Brownian motion is very well-known. In particular, writing $\Phi(x):=\Pm(N(0,1)\geq x)$ for the complementary CDF of a normal distribution, we have
\[
\Pm(\sup_{s\leq 1}\tilde{B}_s>\frac{\sqrt{S_n}\epsilon}{\sigma})=2\Phi(\frac{\sqrt{S_n}\epsilon}{\sigma}).
\]
Therefore
\[
\Pm(\sup_{s\leq 1}\lvert s-Y^n_s\rvert>\epsilon)\leq 4\Phi(\frac{\sqrt{S_n}\epsilon}{\sigma}).
\]
Putting this all together we obtain
\[
\begin{split}
\expE_0\left[\exp\left(S_n\left(\kappa +\frac{1}{S_n}\int_0^t\Ind(X_s\in A)-\Ind(X_s\in B)ds\right)\right)\right]\\\leq e^{S_n(\kappa+2\Leb(\calA_c)+2C_c\epsilon-1)}[1+4e^{S_n}\Phi(\frac{\sqrt{S_n}\epsilon}{\sigma})].
\end{split}
\]

We now use the upper bound
\[
\Phi(t)\leq \frac{1}{\sqrt{2\pi}t}e^{-\frac{t^2}{2}},\quad t>0,
\]
to obtain
\[
\begin{split}
\expE_0\left[\exp\left(S_n\left(\kappa +\frac{1}{S_n}\int_0^t\Ind(X_s\in A)-\Ind(X_s\in B)ds\right)\right)\right]\\
\leq e^{S_n(\kappa+2\Leb(\calA_c)+2C_c\epsilon-1)}\left[1+\frac{4\sqrt{\sigma}}{\sqrt{2\pi S_n}\epsilon}e^{S_n(1-\frac{\epsilon^2}{2\sigma^2})}\right].
\end{split}
\]

Therefore 
\[
\limsup_{n\ra\infty}\expE_0\left[\exp\left(S_n\left(\kappa +\frac{1}{S_n}\int_0^t\Ind(X_s\in A)-\Ind(X_s\in B)ds\right)\right)\right]=0.
\]
whenever $\sigma<\frac{\epsilon}{2}$ and $\kappa+2\Leb(\calA_c)+2C_c\epsilon-1<0$. By the same argument as in the proof of \eqref{eq:lower bound lambda''} (using \eqref{eq:counterexample upper bound on u when subharmonic 1} in place of \eqref{eq:counterexample lower bound on u when superharmonic 1}),
\[
\lambda_1'(L^{\sigma})=\kappa+\lambda_1'(L^{\sigma}_0)\leq 0
\]
whenever $\sigma<\frac{\epsilon}{2}$ and $\kappa+2\Leb(\calA_c)+2C_c\epsilon-1<0$.

By allowing $\kappa$ to vary, we conclude that
\[
\lambda_1'(L^{\sigma}_0)\leq 2\Leb(\calA_c)+2C_c\epsilon-1
\]
whenever $\sigma<\frac{\epsilon}{2}$. 

We finally observe that
\[
\begin{split}
\limsup_{\sigma\ra 0}\lambda_1'(L^{\sigma}_0)=\limsup_{c\ra 0}\limsup_{\epsilon\ra 0}\limsup_{\sigma\ra 0}\lambda_1'(L^{\sigma}_0)\leq \limsup_{c\ra 0}\limsup_{\epsilon\ra 0}\limsup_{\sigma\ra 0}[\Leb(\calA_c)+C_c\epsilon]\\
=\limsup_{c\ra 0}\limsup_{\epsilon\ra 0}[2\Leb(\calA_c)+2C_c\epsilon-1]=\limsup_{c\ra 0}2\Leb(\calA_c)-1=2\Leb(\calA)-1.
\end{split}
\]

We finally compute that $\Leb(\calA)=4\sum_{n\geq 1}9^{-n}=-\frac{1}{2}$, implying that
\[
\limsup_{\sigma\ra 0}\lambda_1'(L^{\sigma}_0)\leq 0.
\]
\qed

\subsection{Counterexample: if $\lambda_1' =0$ then both global extinction and global survival are possible}\label{section: example both global extinction and survival are possible at criticality}

\begin{prop}\label{prop:both possible at lambda' crit}
If $\lambda'(L)=0$ then both global extinction and global survival are possible.
\end{prop}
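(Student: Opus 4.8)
The plan is to prove Proposition~\ref{prop:both possible at lambda' crit} by exhibiting two branching diffusions, both with $\lambda_1'(L)=0$, one undergoing global extinction almost surely and one surviving globally with positive probability.

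For the extinction example I would take $E=B(0,R_c)\subset\Rm^d$ with $L=\frac{1}{2}\Delta+1$, where $R_c:=\sqrt{\mu_d/2}$ and $\mu_d$ is the principal Dirichlet eigenvalue of $-\Delta$ on the unit ball, chosen so that $\lambda_1(L)=1-\frac{1}{2}\mu_dR_c^{-2}=0$; the associated branching process is binary branching Brownian motion with branching rate $1$ and instantaneous killing on $\partial E$. Since $E$ is bounded with smooth boundary I would deduce $\lambda_1'(L)=0$ from the inequalities $\lambda_1(L)\le\lambda_1'(L)\le\lambda_1''(L)$ (the first from \cite{BerestyckiRossi}, the second from Theorem~\ref{theo:elliptic regularity limsup of mass}) together with the fact that $\expE_x[N_t]=P_t1(x)$ remains bounded when $\lambda_1(L)=0$, which forces $\lambda_1''(L)=0$. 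To obtain global extinction I would invoke Theorem~\ref{theo:PDE semilinear elliptic equation global survival}: it suffices to show that the stationary FKPP equation $\frac{1}{2}\Delta u+u(1-u)=0$ on $E$ with Dirichlet boundary conditions admits no $(0,1]$-valued solution. If $u$ were such a solution, multiplying by the Dirichlet principal eigenfunction $\phi_1>0$ of $\frac{1}{2}\Delta$ on $E$ (which at the critical radius satisfies $\frac{1}{2}\Delta\phi_1=-\phi_1$), integrating over $E$ and using Green's identity (both $u$ and $\phi_1$ vanish on $\partial E$) gives $\int_E\phi_1u^2=0$, hence $u\equiv0$, a contradiction. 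Thus $\Pm_x(\glSurv)=0$ for all $x\in E$.

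For the survival example I would take $E=\Rm^d$ and $L=\frac{1}{2}\Delta$, with no branching and no killing; the process is then a single Brownian particle, so $N_t\equiv1$ and $\Pm_x(\glSurv)=1$. The boundary condition in the definition of $\lambda_1'$ is vacuous here, and by the identification $\lambda_1'(L)=\lambda_c'((P_t)_{t\ge0})$ of Theorem~\ref{theo:PDE equality of eigenvalues different notion} one has $\lambda_1'(L)=0$: the constant $u\equiv1$ shows $\lambda_c'\ge0$, while for $\lambda>0$ any $u$ with $0<u\le1$ and $P_tu\ge e^{\lambda t}u$ would satisfy $e^{\lambda t}u\le P_tu\le P_t1=1$ pointwise, forcing $u\equiv0$. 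If one instead wants genuine branching, one can take $d\ge3$ with binary branching at a rate $c(x)\ge0$ that is compactly supported and small enough that Khasminskii's condition $\sup_x\int_{\Rm^d}G(x,y)c(y)\,dy<1$ holds, $G$ the Brownian Green's function; there is still no killing, so $\Pm_x(\glSurv)=1$, while boundedness of the gauge $\expE_x[\exp(\int_0^tc(B_s)\,ds)]=\expE_x[N_t]$ gives $\lambda_1''(L)=0$, and $\lambda_1'(L)\ge\lambda_1(L)\ge\lambda_1(\frac{1}{2}\Delta,\Rm^d)=0$.

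The delicate point is the extinction example: a direct probabilistic proof of almost sure extinction at criticality would require an involved analysis, since this is the borderline regime in which the first-moment semigroup neither grows nor decays. The argument stays short by routing through the branching-process/FKPP correspondence of Theorem~\ref{theo:PDE semilinear elliptic equation global survival}, after which non-existence of a positive steady state reduces to the short eigenfunction computation above; one must also be careful to establish $\lambda_1'(L)=0$ rather than merely $\lambda_1(L)=0$ on the bounded domain, which is where Theorem~\ref{theo:elliptic regularity limsup of mass} and the boundedness of $\expE_x[N_t]$ enter.
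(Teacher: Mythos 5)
Your proposal is correct in substance, but it proves the proposition by a genuinely different route than the paper. The paper works on $\Rm$ with drift $2$ and a rate $g(x)=x^{-1/2}\Ind_{x\ge1}$, comparing the two operators $L^{\pm}=\frac12\Delta+2\partial_x\pm g$: extinction for the pure-killing process is a direct computation ($\int_0^t g(X_s)\,ds\to\infty$ a.s.), survival for the pure-branching process is trivial, and the real work is pinning $\lambda_1'(L^+)=\lambda_1'(L^-)=0$ by perturbation arguments (adding $\varepsilon$ branching to the killed process, and comparing the $\varepsilon$-killed branching process with BBM of spreading speed $\sqrt2<2$). You invert the difficulty: your two $\lambda_1'$ computations are immediate (critical ball: $0=\lambda_1\le\lambda_1'\le\lambda_1''=\limsup_t\frac1t\log\expE_x[N_t]=0$; free Brownian particle: constants are test functions), and the nontrivial step is extinction of critical BBM on the ball of critical radius, which you obtain by routing through Theorem~\ref{theo:PDE semilinear elliptic equation global survival} and ruling out $(0,1]$-valued steady states by the classical $\phi_1$-integration argument. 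This buys a cleaner and arguably more instructive extinction example (a genuinely branching critical process, the very example the paper itself alludes to after Theorem~\ref{theo:PDE semilinear elliptic equation global survival}), at the cost of leaning on the FKPP correspondence, whereas the paper's example is self-contained but its eigenvalue computations are more delicate.

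One caveat you should address explicitly: equation~\eqref{eq:semilinear elliptic equation} as stated in the paper carries no boundary condition, and without one your non-existence argument fails ($u\equiv1$ always solves $\frac12\Delta u+u(1-u)=0$, and keeping the boundary term in Green's identity only yields $\int_E u^2\phi_1=-\frac12\int_{\partial E}u\,\partial_n\phi_1\ge0$, no contradiction). So for the contrapositive you must know that the solution produced by Theorem~\ref{theo:PDE semilinear elliptic equation global survival}, namely $v(x)=\Pm_x(\glSurv)$, vanishes continuously on $\partial E$. This is a one-line estimate — survival requires a branching event before the first exit, so $\Pm_x(\glSurv)\le\expE_x[1-e^{-\tau_E}]\le\expE_x[\tau_E]\to0$ as $x\to\partial E$ on a smooth bounded domain — and it is also the reading implicit in the paper's proof (which uses the Dirichlet parabolic problem), but as written your Green's identity step assumes a boundary condition that the cited theorem does not literally supply. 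With that observation added (and the routine remark that elliptic regularity makes $u$ smooth up to the boundary so the integration by parts is legitimate), your argument is complete.
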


\begin{proof}[Proof of Proposition \ref{prop:both possible at lambda' crit}]
We define the function
\[
g(x):=\begin{cases}
    \frac{1}{\sqrt{x}},\quad x\geq 1,\\
    0,\quad x<1.
    \end{cases}
\]

We take spatial movement $dX_t=2dt+ dW_t$, and consider two possible killing/branching rates:
\begin{enumerate}
    \item killing rate $g$, and no branching;
    \item branching rate $g$, and no killing.
\end{enumerate}

The corresponding infinitesimal generators are given by
\[
L^{\pm}=L_0\pm g,
\]
whereby
\[
L_0=\frac{1}{2}\Delta+2\partial_x.
\]

We now prove that
\begin{equation}\label{eq:evalues all 0 counterexample both possible}
 \lambda_1'(L^+)=\lambda_1'(L^-) =0.
\end{equation}

We observe that:
\begin{enumerate}
    \item We clearly have global survival in the case with branching rate $g$ and no killing.
    \item We have global extinction with killing rate $g$ and no branching. This follows since we then have a single particle and
    \[
    \begin{split}
    \Pm_x(\glSurv)=\lim_{t\ra\infty}\Pm_x[\text{survival to time $t$}]\\
    =\expE[\Pm(\text{survival to time $t$}\lvert (W_s)_{0\leq s\leq t})]=\expE[e^{-\int_0^tg(X_s)ds}],
    \end{split}
    \]
    and we can straightforwardly see that $\int_0^tg(X_s)ds\ra \infty$ almost surely as $t\ra\infty$.
\end{enumerate}

We now note that if we add any small amount $\epsilon>0$ of branching to the second (pure killing) process, then we have global survival. Therefore $\lambda_1'(L^{-}+\epsilon)>0$ for any $\epsilon>0$ by Theorem \ref{theo:PDE global extinction e-value relationship}, hence $\lambda_1'(L^+)\geq \lambda_1'(L^-)\geq 0$. 

Conversely we suppose that we add a small amount $\epsilon>0$ of killing to the first branching process. Since $g\leq 1$ everywhere, the branching process is dominated by a branching Brownian motion (BBM) with constant drift $2$ and branch rate $1$. 

Since the asymptotic spreading speed of driftless BBM with unit branch rate is $\sqrt{2}$, BBM with drift $2$ and branch rate $1$ must undergo local extinction, i.e. 
\[
\Pm(\text{there is a particle in $[-R,R]$ at arbitrarily large times $t$})=0.
\]

Therefore our first BBM with branch rate $g(x)$ and constant killing rate $\epsilon$ must eventually only have particles within the set $\{x:g(x)<\frac{\epsilon}{2}\}$. Since a branching process with branch rate $\frac{\epsilon}{2}$ and killing rate $\epsilon$ must undergo extinction, we must have global extinction of the BBM with branch rate $g(x)$ and constant killing rate $\epsilon$. Therefore $\lambda(L^+-\epsilon)\leq 0$ for all $\epsilon>0$ by Theorem \ref{theo:PDE global extinction e-value relationship}, hence $\lambda_1'(L^-)\leq \lambda_1'(L^+)\leq 0$.

This concludes the proof of \eqref{eq:evalues all 0 counterexample both possible}, and hence of Proposition \ref{prop:both possible at lambda' crit}.
\end{proof}

\commentout{
\section{Counterexample to $\lambda_1'=\lambda_1''$}
For $(\delta_n)_{n=1}^{\infty}$ to be determined we inductively define
\[
\begin{split}
S_0:=0,\quad a_1:=1,\quad b_1:=1+\delta_1,\quad S_1:=a_1+b_1,\\ a_{n+1}:=4S_n,\quad b_{n+1}:=a_{n+1}+\delta_{n+1},\quad S_{n+1}:=S_n+a_{n+1}+b_{n+1},\quad n\geq 1.
\end{split}
\]
We suppose that $(\delta_n)_{n=1}^{\infty}$ are chosen so that the limit
\[
D:=\lim_{n\ra\infty}\frac{\sum_{k\leq n}\delta_k}{S_n}
\]
exists.

This then defines the sequence of intervals $([S_n,S_n+a_{n+1}])_{n=0}^{\infty}$ and $([S_n+a_{n+1},S_{n+1}])_{n=0}^{\infty}$. We then define
\[
A:=\cup_{n\geq 0}[S_n,S_n+a_{n+1}),\quad B:=\cup_{n\geq 0}[S_n+a_{n+1},S_{n+1}).
\]

We now fix $\epsilon\geq 0$. For arbitrary $\kappa\in \Rm$ we define $L:=L_0+\kappa$ where
\[
L_0:=\begin{cases}
\frac{\sigma}{2}\Delta+\partial_x+1,\quad x\in A,\\
\frac{\sigma}{2}\Delta+\partial_x-1,\quad x\in B,\\
\frac{\sigma}{2}\Delta+\partial_x,\quad x<1.
\end{cases}
\]
We will show that this provides a counterexample to $\lambda_1'=\lambda_1''$. 

We will establish this for $\sigma=0$, which is particularly instructive, then establish it for small $\sigma>0$. Note that the latter is a uniformly elliptic setting with bounded coefficients and smooth (in fact constant) second-order coefficients, exactly the setting considered by Berestycki-Rossi.

We will firstly provide a Feynman-Kac formula, which provides the key tool for estimating the generalised eigenvalues. 

\subsection{Feynman-Kac formula}
We define $X_t$ to be the diffusion
\[
X_t=\sigma B_t+t,
\]
where $B_t$ is standard Brownian motion started from $0$. 

Given $u>0$ we define
\[
M_t[u]:=e^{\kappa t+\int_0^t\Ind(X_s\in A)-\Ind(X_s\in B)ds}u(X_t).
\]

\begin{enumerate}
    \item[(i)]if $Lu\geq 0$, $u>0$ and $u$ is bounded then $M_t[u]$ is a submartingale;
    \item[(ii)]if $Lu\leq 0$ and $u>0$ then $M_t[u]$ is a supermartingale.
\end{enumerate}

From this we see that:
\begin{enumerate}
    \item[(i)]If $Lu\geq 0$, $u>0$ and $u$ is bounded then
    \begin{equation}\label{eq:counterexample upper bound on u when subharmonic 1}
    u(1)\leq \expE_0[e^{\kappa t+\int_0^\Ind(X_s\in A)-\Ind(X_s\in B)ds}u(X_t)]\leq \expE_0[e^{\kappa t+\int_0^t\Ind(X_s\in A)-\Ind(X_s\in B)ds}]\lvert\lvert u\rvert\rvert_{\infty}.
    \end{equation}
    \item[(ii)]If $Lu\leq 0$ and $u>0$ then 
    \begin{equation}\label{eq:counterexample lower bound on u when superharmonic 1}
    u(1)\geq \expE_0[e^{\kappa t+\int_0^t\Ind(X_s\in A)-\Ind(X_s\in B)ds}u(X_t)]\geq \expE_0[e^{\kappa t+\int_0^t\Ind(X_s\in A)-\Ind(X_s\in B)ds}]\inf_{x\in \Rm}u(x).
    \end{equation}
\end{enumerate}

We will now consider the $\sigma=0$ case, before turning to the $\sigma>0$ case.

\subsection{The $\sigma=0$ case}

\begin{prop}\label{prop:counterexample sigma=0}
We suppose that $\sigma=0$. Then we have the inequalities
\begin{align}
\lambda'(L_0)\leq -D,\\
\lambda''(L_0)\geq \frac{4}{5}-\frac{D}{5}.
\end{align}
In particular we obtain
\begin{equation}
    \lambda''(L_0)>\lambda'(L_0).
\end{equation}
\end{prop}

\begin{proof}[Proof of Proposition \ref{prop:counterexample sigma=0}]
Since $\sigma=0$, $X_t$ is now deterministic, with $X_t=t$. We then have for any $t>0$ that:
\begin{enumerate}
    \item[(i)]If $Lu\geq 0$, $u>0$ and $u$ is bounded then
    \[
    u(1)\leq e^{\kappa t+\int_0^t\Ind(s\in A)-\Ind(s\in B)ds}\lvert\lvert u\rvert\rvert_{\infty}.
    \]
    \item[(ii)]If $Lu\leq 0$ and $u>0$ then 
    \[
    u(1)\geq e^{\kappa t+\int_0^t\Ind(s\in A)-\Ind(s\in B)ds}\inf_{x\in \Rm}u(x).
    \]
\end{enumerate}

We observe that
\[
\begin{split}
\int_0^{S_n}\Ind(s\in A)-\Ind(s\in B)=-\sum_{1\leq k\leq n}\delta_k,\\
\int_0^{S_n+a_{n+1}}\Ind(s\in A)-\Ind(s\in B)=-\sum_{1\leq k<n}\delta_k+a_{n+1}.
\end{split}
\]

We now suppose that $Lu\leq 0$, $u>0$ and $u$ is bounded. Then by choosing $t=S_n$ we see that
\[
u(1)\leq \exp[\kappa S_n-\sum_{k\leq n}\delta_k]\leq \exp[S_n(\kappa -\frac{\sum_{k\leq n}\delta_k}{S_n})].
\]
Taking the limit supremum as $n\ra\infty$, we see that $u(1)=0$ if 
\[
\kappa- D<0,
\]
which is a contradiction.

We conclude that if
\[
\kappa-D<0
\]
then there does not exist $u>0$ such that $Lu\leq 0$ and $u$ is bounded, implying that $\lambda'(L)\leq 0$. 

We now suppose that $Lu\geq 0$ and $\inf_{x\in \Rm}u(x)>0$. Then by choosing $t=S_n+a_{n+1}$ we see that
\[
\begin{split}
u(1)\geq \exp[\kappa (S_n+a_{n+1})+a_{n+1}-\sum_{k\leq n}\delta_k]\geq \exp[(S_n+a_{n+1})(\kappa +\frac{a_{n+1}}{S_n+a_{n+1}}-\frac{\sum_{k\leq n}\delta_k}{S_n}\frac{S_n}{S_n+a_{n+1}})]\\
=\exp[(S_n+a_{n+1})(\kappa +\frac{4}{5}-\frac{D}{5})].
\end{split}
\]
Taking the limit infimum as $n\ra\infty$ we see that
\[
u(1)=\infty
\]
if $\kappa+\frac{4}{5}-\frac{D}{5}>0$. This is a contradiction, hence if
\[
\kappa+\frac{4}{5}-\frac{D}{5}>0
\]
then there does not exist $u$ such that $Lu\geq 0$ and $\inf_{x\in \Rm}u(x)>0$, implying that $\lambda''(L)\geq 0$. 

We now note that
\[
\lambda'(L)=\lambda'(L_0)+\kappa,\quad \lambda''(L)=\lambda''(L_0)+\kappa.
\]

By allowing $\kappa$ to vary, we conclude that
\begin{enumerate}
    \item$\lambda'(L_0)+\kappa\leq 0$ if $\kappa-D<0$, implying that
    \[
    \lambda'(L_0)\leq -D.
    \]
    \item$\lambda''(L_0)+\kappa\geq 0$ if $\kappa+\frac{4}{5}-\frac{D}{5}>0$, implying that
    \[
    \lambda''(L_0)\geq \frac{4}{5}-\frac{D}{5}.
    \]
\end{enumerate}
We note that
\[
-D<\frac{4}{5}-\frac{D}{5}
\]
for any $D\geq 0$ (note that $D\geq 0$ necessarily).

Therefore
\[
\lambda''(L_0)>\lambda'(L_0).
\]
\end{proof}
\subsection{The $\sigma>0$ case}

We recall that in the deterministic case, it turned out that we need to take $(\sigma_n)_{n=1}^{\infty}$ such that
\[
\lim_{n\ra\infty}\frac{\sum_{k\leq n}\delta_k}{S_n}
\]
exists. We therefore now fix $(\delta_n)_{n=1}^{\infty}$ so that
\[
\frac{\sum_{k\leq n}\delta_k}{S_n}=\frac{1}{2}
\]
for all $n$. By algebraic manipulation, we see this is given by taking
\[
\delta_1=2,\quad \delta_{n+1}=8S_n,\quad n\geq 1.
\]
We will now fix this $(\delta_n)_{n=1}^{\infty}$. 

We want to consider $\sigma$ to vary. We denote this by writing $L^\sigma$ and $L_0^\sigma$ respectively.

\begin{prop}
For any $\sigma>0$ we have
\begin{equation}
    \limsup_{\sigma\ra 0}\lambda_1'(L_0^\sigma)\leq -\frac{1}{2}.
\end{equation}
Moreover we have
\begin{equation}
\lambda_1''(L_0^\sigma)\geq \frac{7}{10}.   
\end{equation}
Therefore $\lambda_1'(L^\sigma_0)<\lambda_1''(L^\sigma_0)$ for all $\sigma>0$ sufficiently small.
\end{prop}
Note that the above limits agree with the values of the eigenvalues obtained in the $\sigma=0$ case with $D=\frac{1}{2}$.

If $Lu\leq 0$ and $\inf_{x}u(x)>0$ then it follows from \eqref{eq:counterexample upper bound on u when subharmonic 1} and Jensen's inequality that
    \[
    u(1)\geq \exp[\expE_1[\kappa t+\int_0^t\Ind(X_s\in A)-\Ind(X_s\in B)ds]]\inf_{x\in \Rm}u(x).
    \]
Now we have that
\[
\begin{split}
\frac{1}{S_n+a_{n+1}}\int_0^{S_n+a_{n+1}}\Ind(X_s\in A)-\Ind(X_s\in B)ds\ra \\
\lim_{n\ra\infty}\frac{1}{S_n+a_{n+1}}\int_0^{S_n+a_{n+1}}\Ind(s\in A)-\Ind(s\in B)ds=\frac{7}{10}
\end{split}
\]
in probability as $n\ra\infty$, which we can see by rescaling time and space by $\frac{1}{S_n+a_{n+1}}$, under which the Brownian motion vanishes.

By the same argument as in the $\sigma=0$ case, we see that if $\kappa+\frac{7}{10}>0$ then there doesn't exist $u$ such that $Lu\geq 0$ and $\inf_{x\in \Rm}u(x)>0$, implying that $\lambda_1''(L)\geq 0$. By varying $\kappa$, it follows as in the $\sigma=0$ case that
\[
\lambda_1''(L_0)\leq \frac{7}{10}.
\]

We now turn to the upper bound on $\lambda_1'(L^{\sigma}_0)$. We rescale $Y_t$ to obtain
\[
Y^n_t:=\frac{X_{S_nt}}{S_n}=t+\frac{\sigma}{S_n}\tilde{B}_t.
\]
where $\tilde{B}_t$ is the Brownian motion
\[
\tilde{B}_t:=\frac{1}{\sqrt{S_n}}B_{S_n t}.
\]
Note that whilst $\tilde{B}_t$ does depend on $n$, it doesn't distributionally, so we will not denote the dependence of $\tilde{B}_t$ upon $n$ as we will only ever need that it's distributed as a standard Brownian motion.

We define the set
\[
\calA:=\cup_{n\geq 1}[\frac{1}{17^n},\frac{5}{17^n}). 
\]
We further define for $c,\epsilon>0$ the sets
\[
\calA_c:=[0,c)\cup \calA,\quad  \calA_{c,\epsilon}:=[0,c)\cup \{x\in \Rm:d(x,\calA)\leq \epsilon\}.
\]
We observe that for all $c>0$ there exists $n_c$ dependent only upon $c$ such that for all $\epsilon>0$ and all $n\geq n_c$, if 
\[
\{\frac{t}{S_n}:t\leq S_n,t\in A\}+B(0,\epsilon)\subseteq \calA_{c,\epsilon}. 
\]
Therefore on the event $\{\sup_{s\leq 1}\lvert 
s-Y^n_s\rvert\leq \epsilon\}$, it follows that
\[
\frac{1}{S_n}\Leb(\{t\leq S_n:X_t\in A\})\leq \Leb(\calA_{c,\epsilon}).
\]
We also observe that for all $c>0$ there exists $C_c<\infty$ dependent only upon $c$ such that
\[
\Leb(\calA_{c,\epsilon})\leq \Leb(\calA_c)+C_c\epsilon.
\]

We now suppose that we have $u$ such that$Lu\geq 0$, $u>0$ and $u$ is bounded. We recall that by \eqref{eq:counterexample upper bound on u when subharmonic 1} we have
    \begin{equation}\label{eq:counterexample upper bound on u when subharmonic 2}
    u(1)\leq \expE_0\left[\exp\left(S_n\left(\kappa +\frac{1}{S_n}\int_0^t\Ind(X_s\in A)-\Ind(X_s\in B)ds\right)\right)\right]\lvert\lvert u\rvert\rvert_{\infty}.
    \end{equation}
We would therefore like to upper bound this quantity. From the above discussion we have that
\[
\left(\kappa +\frac{1}{S_n}\int_0^t\Ind(X_s\in A)-\Ind(X_s\in B)ds\right)\leq \kappa+\Ind(\sup_{s\leq 1}\lvert 
s-Y^n_s\rvert> \epsilon)+2\Leb(\calA_{c,\epsilon})-1
\]
for all $n\geq n_c$. It follows that
\[
\begin{split}
\expE_0\left[\exp\left(S_n\left(\kappa +\frac{1}{S_n}\int_0^t\Ind(X_s\in A)-\Ind(X_s\in B)ds\right)\right)\right]\\\leq
e^{S_n(\kappa+2\Leb(\calA_c)+C_c\epsilon-1)}[1+e^{S_n}\Pm(\sup_{s\leq 1}\lvert s-Y^n_s\rvert\geq \epsilon)].
\end{split}
\]
Since $Y^n_s-s=\frac{\sigma}{\sqrt{S_n}}\tilde{B}_s$, we have that
\[
\Pm(\sup_{s\leq 1}\lvert s-Y^n_s\rvert>\epsilon)=\Pm(\sup_{s\leq 1}\lvert\tilde{B}_s\rvert>\frac{\sqrt{S_n}\epsilon}{\sigma})\leq 2\Pm(\sup_{s\leq 1}\tilde{B}_s>\frac{\sqrt{S_n}\epsilon}{\sigma}).
\]
The running maximum of Brownian motion is very well-known. In particular, writing $\Phi(x):=\Pm(N(0,1)\geq x)$ for the complementary CDF of a normal distribution, we have
\[
\Pm(\sup_{s\leq 1}\tilde{B}_s>\frac{\sqrt{S_n}\epsilon}{\sigma})=2\Phi(\frac{\sqrt{S_n}\epsilon}{\sigma}).
\]
Therefore
\[
\Pm(\sup_{s\leq 1}\lvert s-Y^n_s\rvert>\epsilon)\leq 4\Phi(\frac{\sqrt{S_n}\epsilon}{\sigma}).
\]
Putting this all together we obtain
\[
\expE_0\left[\exp\left(S_n\left(\kappa +\frac{1}{S_n}\int_0^t\Ind(X_s\in A)-\Ind(X_s\in B)ds\right)\right)\right]\leq e^{S_n(\kappa+2\Leb(\calA_c)+2C_c\epsilon-1)}[1+4e^{S_n}\Phi(\frac{\sqrt{S_n}\epsilon}{\sigma})]
\]

We now use the upper bound
\[
\Phi(t)\leq \frac{1}{\sqrt{2\pi}t}e^{-\frac{t^2}{2}},\quad t>0
\]
to obtain
\[
\begin{split}
\expE_0\left[\exp\left(S_n\left(\kappa +\frac{1}{S_n}\int_0^t\Ind(X_s\in A)-\Ind(X_s\in B)ds\right)\right)\right]\\
\leq e^{S_n(\kappa+2\Leb(\calA_c)+2C_c\epsilon-1)}\left[1+\frac{4\sqrt{\sigma}}{\sqrt{2\pi S_n}\epsilon}e^{S_n(1-\frac{\epsilon^2}{2\sigma^2})}\right].
\end{split}
\]

Therefore 
\[
\limsup_{n\ra\infty}\expE_0\left[\exp\left(S_n\left(\kappa +\frac{1}{S_n}\int_0^t\Ind(X_s\in A)-\Ind(X_s\in B)ds\right)\right)\right]=0.
\]
whenever $\sigma<\frac{\epsilon}{2}$ and $\kappa+2\Leb(\calA_c)+2C_c\epsilon-1<0$. By the same argument as in the deterministic case, it follows that
\[
\lambda_1'(L^{\sigma})=\kappa+\lambda_1'(L^{\sigma}_0)\leq 0
\]
whenever $\sigma<\frac{\epsilon}{2}$ and $\kappa+2\Leb(\calA_c)+2C_c\epsilon-1<0$.

By allowing $\kappa$ to vary, we conclude that
\[
\lambda_1'(L^{\sigma}_0)\leq 2\Leb(\calA_c)+2C_c\epsilon-1
\]
whenever $\sigma<\frac{\epsilon}{2}$. 

We finally observe that
\[
\begin{split}
\limsup_{\sigma\ra 0}\lambda_1'(L^{\sigma}_0)=\limsup_{c\ra 0}\limsup_{\epsilon\ra 0}\limsup_{\sigma\ra 0}\lambda_1'(L^{\sigma}_0)\leq \limsup_{c\ra 0}\limsup_{\epsilon\ra 0}\limsup_{\sigma\ra 0}[\Leb(\calA_c)+C_c\epsilon]\\
=\limsup_{c\ra 0}\limsup_{\epsilon\ra 0}[2\Leb(\calA_c)+2C_c\epsilon-1]=\limsup_{c\ra 0}2\Leb(\calA_c)-1=2\Leb(\calA)-1.
\end{split}
\]

We finally compute that $\Leb(\calA)=4\sum_{n\geq 1}17^{-n}=\frac{1}{4}$,
implying that
\[
\limsup_{\sigma\ra 0}\lambda_1'(L^{\sigma}_0)\leq -\frac{1}{2}.
\]

}

\appendix

\section{Rigorous derivation of the Feynman-Kac formula and McKean's representation of the generalised FKPP equation}

\subsection{The Feynman-Kac formula}
Whereas there are many references for the statement that if $u$ solves Kolmogorov's backward equation in an appropriate sense, then it is given by the Feynman-Kac formula, it seems to be much harder to find references for the statement that solutions actually exist. One might try checking directly that the Feynman-Kac formula satisfies Kolmogorov's backward equation, but in order to do this we need to know it belongs to a suitable Sobolev space in order to apply Ito's lemma ($W^{2,1}_{d,loc}$ is needed to apply Ito's lemma, see \cite[Krylov, Controlled diffusions, p.122 and 47]{Krylov1980}). It is straightforward enough to check that the Feynamn-Kac formula belongs to the domain of the infinitesimal generator defined via Hille-Yosida (see \cite[Theorem 8.1.1]{Ksendal2013}), and hence the Feynman-Kac formula solves Kolmogorov's backward equation in an abstract sense. However since we want a solution in a PDE sense (with solutions belonging at least to the Sobolev space $W^{2,1}_{d,loc}$), we will need to appeal to PDE theory.

Given $u(x,t)$ defined on a subset of $\Rm^d\times \Rm$ and $\alpha\in (0,1)$, we say that
\[
u\in C^{2,\alpha}_{\loc}
\]
if $u(x,t)$ is locally twice differentiable in space, once in time, and $u$, $D_xu$, $D^2_xu$ and $D_tu$ are all locally H\"older continuous (exponent $\alpha$). 

\begin{assum}\label{assum:Feynman-Kac coefficient assumptions}
We will assume that
\[
a_{ij}(x),\quad b_i(x),\quad c(x,t),\quad \phi(x,t)
\]
are all bounded and locally H\"older continuous (with exponent $\alpha\in (0,1)$), with $a$ and $b$ not depending upon time $t$. We further assume that $a$ is symmetric and uniformly positive definite. We thereby define the differential operator
\[
L_0=a_{ij}\partial_{ij}+b_i\partial_i,\quad L=L_0+c.
\]
\end{assum}
We consider the diffusion $X_t$ associated to $L_0$, defined as follows. We take $\sigma$ such that $\sigma(x)\sigma^{\dagger}(x)=a(x)$. The diffusion $X_t$ evolves as a weak solution of the SDE
\begin{equation}\label{eq:solutions of SDE}
dX_t=b(x_t)dt+\sigma(X_t)dW_t,
\end{equation}
solutions of which are unique in law (by \cite[Chapter 7]{Stroock1997}). We write $\Pm_{x,t}$ for the probability measure under which $X_s$ is a solution of the above solution issued from time $t$ with $X_t=x$. 

We suppose that $D=B(x_{\ast},\epsilon)\times (t_0,t_1)$ for some $x_{\ast}\in \Rm^d$, $0\leq t_0<t_1<\infty$ and $\epsilon>0$. We define $\partial D_+:=\partial B(x_{\ast},\epsilon)\times (t_0,t_1)\cup B(x_{\ast},\epsilon)\times \{t_1\}$ (note that the boundary at time $t_0$ is not included. 

The key theorem we will need to prove is the following.
\begin{theo}\label{theo:parabolic Feynman-Kac}
We take an arbitrary bounded, measurable function $\psi$ on $\partial D$ and let $\tau:=\inf\{s>t:(X_s,s)\in \partial D\}$. Then
\[
u(x,t):=\expE_{x,t}\Big[\int_0^{\tau}e^{\int_0^{s}c(X_{u},u)du} \phi(X_s,s)ds+e^{\int_0^{\tau}c(X_{s},s)ds}\psi(X_{\tau},\tau)\Big]
\]
is a $C^{2,\alpha}_{\loc}$ classical solution of Kolmogorov's backward equation
\[
\partial_tu=-Lu-\phi
\]
on $D$.
\end{theo}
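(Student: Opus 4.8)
The plan is to identify $u$ with the classical solution of the initial--boundary value problem for $\partial_t u=-Lu-\phi$ on the cylinder $D$, first for continuous boundary data via an It\^o argument, and then for bounded measurable $\psi$ by approximation together with interior parabolic estimates. A preliminary observation simplifies matters: since the diffusion $X_s$ runs forward in time, a path started at $(x,t)\in D$ can leave $D$ only through the lateral boundary $\partial B(x_\ast,\epsilon)\times(t_0,t_1]$ or at the terminal time $t_1$; it never reaches the bottom face $B(x_\ast,\epsilon)\times\{t_0\}$. Hence $\tau\le t_1$, all the functionals in the statement are bounded, and only the data on $\partial D_+$ matters. Reversing time $s\mapsto t_1-s$ turns $\partial_t u=-Lu-\phi$ into a genuinely (forward) parabolic equation on a cylinder with smooth spatial cross-section; I will keep writing things in the original variables, and read the exponents $\int_0^{\cdot}c$ in the statement as $\int_t^{\cdot}c$, i.e.\ integration along the path from its starting time $t$ (the only sensible reading).

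First assume $\psi$ is continuous on $\partial D$ (by uniform approximation it suffices to treat $\psi$ equal to the restriction of a $C^{2,1,\alpha}$ function on a neighbourhood of $\bar D$). By classical parabolic theory under Assumption~\ref{assum:Feynman-Kac coefficient assumptions}, using that $a,b,c,\phi$ are locally H\"older, $a$ is uniformly positive definite, and the sphere $\partial B(x_\ast,\epsilon)$ satisfies an exterior ball condition so that every lateral boundary point is regular (cf.\ \cite[Ch.~3]{Friedman1964} together with interior Schauder estimates), there is a function $v\in C^{2,1,\alpha}_{\loc}(D)\cap C(\bar D\setminus(B(x_\ast,\epsilon)\times\{t_0\}))$ solving $\partial_t v=-Lv-\phi$ on $D$ with $v=\psi$ on $\partial D_+$. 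Since $v\in C^{2,1,\alpha}_{\loc}\subset W^{2,1}_{d+1,\loc}$, It\^o's formula in the form valid for such functions (Krylov, \cite[pp.~47, 122]{Krylov1980}) applies: writing $E_s=\exp(\int_t^s c(X_r,r)\,dr)$, and using $\partial_t v+L_0 v+cv=-\phi$ on $D$, the process $E_s\,v(X_s,s)+\int_t^s E_r\,\phi(X_r,r)\,dr$, $s\in[t,\tau)$, is a local martingale. Localising by stopping times exhausting $D$, then letting them increase to $\tau$, and using $\tau\le t_1$ together with boundedness of $v$ on $\bar D$ and of $c,\phi$, bounded convergence gives
\[
v(x,t)=\expE_{x,t}\Big[\int_t^{\tau}E_r\,\phi(X_r,r)\,dr+E_{\tau}\,v(X_{\tau},\tau)\Big]=\expE_{x,t}\Big[\int_t^{\tau}E_r\,\phi(X_r,r)\,dr+E_{\tau}\,\psi(X_{\tau},\tau)\Big]=u(x,t),
\]
since $X_{\tau}\in\partial D_+$ a.s., where $v=\psi$. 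Thus $u=v\in C^{2,1,\alpha}_{\loc}(D)$ solves the equation when $\psi$ is continuous.

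For general bounded measurable $\psi$, approximate: by Lusin's theorem and smoothness of $\partial D_+$, choose continuous $\psi_n$ on $\partial D$ with $\|\psi_n\|_\infty\le\|\psi\|_\infty$ and $\psi_n\to\psi$ surface-measure-a.e.\ on $\partial D_+$. Because $X$ is non-degenerate and the lateral boundary is smooth, the exit distribution of $(X_\tau,\tau)$ under $\expE_{x,t}$ is, for every $(x,t)\in D$, absolutely continuous with respect to surface measure on the lateral part (and with respect to Lebesgue measure on the terminal face); hence $\psi_n(X_\tau,\tau)\to\psi(X_\tau,\tau)$ $\expE_{x,t}$-a.s., and bounded convergence yields $u_n(x,t)\to u(x,t)$ pointwise on $D$, where $u_n$ is the representation with data $\psi_n$. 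By the previous paragraph each $u_n\in C^{2,1,\alpha}_{\loc}(D)$ solves the PDE, and $\|u_n\|_{L^\infty(D)}\le\|\psi\|_\infty+(t_1-t_0)\|\phi\|_\infty$ uniformly in $n$. Interior parabolic estimates — interior $W^{2,1}_p$ bounds for all $p$ (e.g.\ \cite[Theorem~1.1]{Krylov1981}), Sobolev embedding into $C^{1,\beta}_{\loc}$, then interior Schauder bootstrapping using the local H\"older continuity of $a,b,c,\phi$ — give, on every parabolic subcylinder $Q$ with $\bar Q\subset D$, a bound $\|u_n\|_{C^{2,1,\alpha}(\bar Q)}\le C_Q$ independent of $n$. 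Arzel\`a--Ascoli and a diagonal subsequence produce $u_{n_k}\to w$ in $C^{2,1}_{\loc}(D)$ with $w$ a classical solution; but $w=\lim u_{n_k}=u$ pointwise, so $u\in C^{2,1,\alpha}_{\loc}(D)$ and $\partial_t u=-Lu-\phi$ on $D$, as claimed.

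The only genuinely delicate points are in the last paragraph: (i) absolute continuity of the exit distribution with respect to surface measure on the lateral boundary, which is what lets the a.e.\ approximation of $\psi$ survive inside the representation — this rests on non-degeneracy of $a$ together with the smoothness of $\partial B(x_\ast,\epsilon)$ (existence of a bounded hitting density, e.g.\ via barriers and interior estimates); and (ii) the uniform interior regularity, which is routine given the H\"older hypotheses. The rest — the martingale computation, the boundedness bounds, and the observation $\tau\le t_1$ that disposes of the bottom face — is standard. In particular we never claim, nor need, continuity of $u$ up to $\partial D$; for merely measurable $\psi$ it typically fails, which is why the statement asserts the identity only as functions on $D$ and the PDE only in the interior.
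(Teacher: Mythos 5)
Your argument is correct in outline, but it takes a genuinely different route from the paper, and one step needs a better justification. The paper never approximates $\psi$: it first proves that $u$ is continuous in the interior of $D$ by a purely probabilistic argument (the strong Feller property of the semigroup of the diffusion killed at $\partial B(x_\ast,\epsilon)$, together with the fact that the contribution of $c$, $\phi$ and of an early lateral exit over a short time step is uniformly small), and then, on an arbitrary subcylinder $D'\subset\subset D$, solves the backward equation classically with the \emph{continuous} boundary datum $u$ restricted to $\partial D'$ via \cite[Theorem 9, p.69]{Friedman1964}, identifying the solution with $u$ by the same It\^o/strong Markov computation you use. This localisation makes the measurability of $\psi$ irrelevant, since $\psi$ only enters through $u$ itself. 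You instead solve on all of $D$ for continuous data and pass to bounded measurable $\psi$ by approximation, which forces you to know that the exit law of $(X_\tau,\tau)$ is absolutely continuous with respect to surface measure on the lateral boundary (plus Lebesgue measure on the top face). That is the point where your justification is inadequate as written: barriers and interior estimates yield regularity of boundary points and continuity of solutions, not absolute continuity of harmonic measure, and for non-divergence form operators with merely continuous coefficients the exit law can in fact be singular with respect to surface measure, so the claim cannot be had ``for free'' from non-degeneracy and smoothness of the sphere. Under the standing local H\"older assumption the claim is true, but the correct reason is the existence of the parabolic Green function and Poisson kernel for the first initial-boundary value problem on a smooth cylinder (again \cite{Friedman1964}, Chapter 3), which represents the exit distribution with a density; with that input supplied, your approximation combined with the uniform interior Schauder bounds and Arzel\`a--Ascoli is sound (your reading of $\int_0^s c$ as $\int_t^s c$ is also the intended one). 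In short, both proofs rest on classical solvability plus It\^o; the paper trades your boundary potential theory (exit densities) for the strong Feller property of the killed semigroup, which makes its argument insensitive to the fine structure of harmonic measure and is why it handles merely measurable data with no extra work, whereas your route is the more classical PDE approximation argument and leans harder on the H\"older continuity of the coefficients.
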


Classically, the Feynman-Kac formula is defined for a diffusion on the whole space, where we ``score'' the function at the terminal time. One can also stop the diffusion upon hitting a boundary, and score it as a function of the hitting location. For instance we will kill the diffusion upon contact with the boundary of $E$, in which case there is no ``score'' in the Feynman-Kac formula. The above theorem however, is local. Namely, however one defines the Feynman-Kac formula $u(x,t)$, one can place a parabolic cylinder around any point on the interior of the domain, define $\psi$ to be the value of $u$ on $\partial D$, and thereby (using also the strong Markov property) see that $u(x,t)$ is a solution of Kolmogorov's backward equation on that cylinder (and hence on the whole interior of the domain). 

We obtain the following elliptic version of Theorem \ref{theo:parabolic Feynman-Kac} as a corollary of it.
\begin{theo}\label{theo:elliptic Feynman-Kac}
We take an arbitrary bounded, measurable function $\psi$ on $\partial D$ and let $\tilde \tau:=\inf\{t>0:X_t\in \partial B(x_{\ast},\epsilon)\}$. We suppose that $c$ depends only upon $x\in D$. Then if
\[
v(x):=\expE_{x}\Big[\int_0^{\tilde\tau}e^{\int_0^{s}c(X_{u})du} \phi(X_s,s)ds+e^{\int_0^{\tilde\tau}c(X_{s})ds}\psi(X_{\tilde\tau},\tilde\tau)\Big]
\]
is bounded, it is a $C^{2,\alpha}_{\loc}$ classical solution of 
\[
Lv=-\phi
\]
on $B(x_{\ast},\epsilon)$.
\end{theo}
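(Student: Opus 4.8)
The plan is to derive Theorem~\ref{theo:elliptic Feynman-Kac} from the parabolic statement, Theorem~\ref{theo:parabolic Feynman-Kac}, by a time-homogeneity trick: realise $v$ as a parabolic Feynman-Kac functional on a space-time cylinder whose terminal (top) data is $v$ itself, observe that the resulting space-time solution does not depend on the time variable, and then read off the regularity and the equation from the parabolic result. (The stated theorem carries time-dependent $\phi,\psi$, but ``$Lv=-\phi$'' is only meaningful for $\phi=\phi(x)$, and the argument below produces a $t$-independent function only when $\psi=\psi(x)$ too; accordingly I treat $c,\phi,\psi$ as functions of $x$ alone, which is the only case we shall need --- e.g.\ $\phi\equiv 1$, $\psi\equiv 0$ in the proof of Theorem~\ref{theo:PDE equality of eigenvalues different notion}.)

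First I would fix $T>0$ and set $D_T:=B(x_\ast,\epsilon)\times(0,T)$. Define $\Psi$ on $\partial D_T$ by $\Psi:=\psi$ on the lateral boundary $\partial B(x_\ast,\epsilon)\times(0,T)$ and $\Psi(\cdot,T):=v$ on the top face $B(x_\ast,\epsilon)\times\{T\}$. By hypothesis $v$ is bounded, and being a Feynman-Kac expectation it is Borel; hence $\Psi$ is a bounded measurable function on $\partial D_T$. Applying Theorem~\ref{theo:parabolic Feynman-Kac} to the data $(D_T,\Psi,\phi)$ then produces
\[
u(x,t):=\expE_{x,t}\Big[\int_t^{\tau_T}e^{\int_t^{s}c(X_r)\,dr}\phi(X_s)\,ds+e^{\int_t^{\tau_T}c(X_r)\,dr}\Psi(X_{\tau_T},\tau_T)\Big],\qquad \tau_T:=\inf\{s>t:(X_s,s)\in\partial D_T\},
\]
which lies in $C^{2,1,\alpha}_{\loc}(D_T)$ and solves $\partial_t u=-Lu-\phi$ on $D_T$.

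The key step is the identity $u(x,t)=v(x)$ on $D_T$. Let $\sigma:=\inf\{s>t:X_s\in\partial B(x_\ast,\epsilon)\}$ denote the spatial exit time; since the time coordinate only increases and $t>0$, the space-time process never meets the bottom face, so $\tau_T=\sigma\wedge T$, with $(X_{\tau_T},\tau_T)$ on the lateral boundary when $\sigma\le T$ and on the top face when $\sigma>T$. On $\{\sigma>T\}$ the terminal term equals $e^{\int_t^{T}c(X_r)\,dr}\,v(X_T)$; by the Markov property at the deterministic time $T$ together with time-homogeneity of $X$, $v(X_T)$ unfolds into exactly the continuation of the functional from time $T$ to time $\sigma$. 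Recombining this with the contribution of $\{\sigma\le T\}$ and translating time by $-t$ gives
\[
u(x,t)=\expE_x\Big[\int_0^{\sigma}e^{\int_0^{s}c(X_r)\,dr}\phi(X_s)\,ds+e^{\int_0^{\sigma}c(X_r)\,dr}\psi(X_{\sigma})\Big]=v(x),
\]
independent of $t$. (Here one uses that $v$ is finite --- part of the hypothesis --- and that, since $a$ is uniformly elliptic with bounded coefficients, $\sigma<\infty$ $\Pm_x$-a.s., which legitimises the decomposition and the conditioning.)

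Finally, since $u(x,t)\equiv v(x)$ and $u\in C^{2,1,\alpha}_{\loc}(D_T)$, restriction to any time slice $\{t=t_0\}$ yields $v\in C^{2,\alpha}_{\loc}(B(x_\ast,\epsilon))$, and feeding $\partial_t u\equiv 0$ into $\partial_t u=-Lu-\phi$ gives $Lv=-\phi$ on $B(x_\ast,\epsilon)$, as claimed. I expect the only genuine work to be the identification $u\equiv v$ in the third step: one must handle the exit-time decomposition on $\partial D_T$ carefully (in particular, checking that the bottom face is irrelevant), verify the measurability and boundedness needed to feed $v$ in as terminal data, and apply the (strong) Markov property together with time-homogeneity cleanly. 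Everything else is a direct transcription of Theorem~\ref{theo:parabolic Feynman-Kac}.
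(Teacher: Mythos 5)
Your proposal is correct and is essentially the paper's own argument: the paper likewise realises $v$ as the parabolic Feynman--Kac solution on a finite cylinder (it takes $t_0=0$, $t_1=1$ and imposes boundary data equal to $v$), notes that the resulting $u(x,t)$ is independent of $t$, and reads off the $C^{2,\alpha}_{\loc}$ regularity and the equation $Lv=-\phi$ from Theorem~\ref{theo:parabolic Feynman-Kac}. You merely spell out the Markov-property identification $u(x,t)=v(x)$ (which the paper asserts without detail) and make explicit the time-independence of the data, consistent with how the theorem is actually used.
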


\subsubsection*{Proof of Theorem \ref{theo:parabolic Feynman-Kac}}
The first order of business is to check that $u$ is continuous on $D$. We will do this using the strong Feller property. Once the continuity of $u$ is established, we will be able to apply PDE theory on arbitrary sub-cylinders whose closure belongs to $D$. 

We let $P_t$ be the expectation semigroup for $X_t$ killed upon the boundary $\partial B(x_{\ast},\epsilon)$, i.e.
\[
P_tf(x):=\expE_{x,0}[f(X_t)\Ind(\tau_B>t)],
\]
where $\tau:=\inf\{t>0:X_s\notin B(x_{\ast},\epsilon)\}$. Since $P_t$ is a strongly continuous strong Feller semigroup, 
\begin{equation}\label{eq:continuity killed diffusion semigroup}
    D\times \Rm_{>0}\ni (x,t)\mapsto P_tf(x)
\end{equation}
is continuous for all bounded, Borel measurable $f$. 

We fix $(x_0,t_0)\in D$, and seek to show that $\limsup_{(x,t)\ra (x_0,t_0)}\lvert u(x,t)-u(x_0,t_0)\rvert=0$. We take $0<r<d(x_0,\partial B(x_{\ast},\epsilon))$ and $0<\delta<d(t_0,\{t_0,t_1\})$. We write 
\[
u_{\delta}(x):=u(x,t_0+\delta)\quad\text{and}\quad \delta t:=t_0+\delta-t
\]
for $t\in B(t_0,\delta)$. Then for $(x,t)\in B(x_0,r)\times B(t_0,\delta)$ we have by the strong Markov property
\[
\begin{split}
u(x,t)=\expE_{x,0}\Big[\int_0^{\tau\wedge \delta t}e^{\int_0^{s}c(X_{t+u},t+u)du} f(X_{t+s},t+s)ds+\Ind(\tau>\delta t)e^{\int_0^{\delta t}c(X_{s+u},s+u)ds}u_{\delta}(X_{\delta t})\\+\Ind(\tau\leq \delta t)e^{\int_0^{\tau}c(X_{s+u},s+u)ds}\psi(X_{t+\tau},t+\tau)\Big].
\end{split}
\]
On the other hand
\[
(P_{\delta t}u_{\delta})(x)=\expE_{x,0}\Big[\Ind(\tau>\delta t)u_{\delta}(X_{\delta t})\Big].
\]
Therefore
\[
\begin{split}
\lvert u(x,t)-(P_{\delta t}u_{\delta})(x)\rvert=\Big\lvert\expE_{x,0}\Big[\int_0^{\tau\wedge \delta t}e^{\int_0^{s}c(X_{t+u},t+u)du} f(X_{t+s},t+s)ds\\
+\Ind(\tau>\delta t)[e^{\int_0^{\delta t}c(X_{s+u},s+u)ds}-1]u_{\delta}(X_{\delta t})+\Ind(\tau\leq \delta t)e^{\int_0^{\tau}c(X_{s+u},s+u)ds}\psi(X_{t+\tau},t+\tau)\Big]\Big\rvert.
\end{split}
\]
Since $\sup_{x\in B(x_0,r)}\Pm_{x,0}(\tau\leq \delta)\ra 0$ as $\delta \ra 0$ and all the coefficients are bounded, we have
\[
\sup_{\substack{x\in B(x_0,r),\\ t\in B(t_0,\delta)}}\lvert u(x,t)-P_{\delta t}u_{\delta}(x)\rvert\ra 0
\]
as $\delta \ra 0$. Therefore by the triangle inequality
\[
\limsup_{r\ra 0}\limsup_{\delta\ra 0}\sup_{\substack{x\in B(x_0,r),\\ t\in B(t_0,\delta)}}\lvert u(x,t)-u(x_0,t_0)\rvert\leq \limsup_{r\ra 0}\limsup_{\delta\ra 0}\sup_{\substack{x\in B(x_0,r),\\ t\in B(t_0,\delta)}}\lvert P_{\delta t}u_{\delta}(x)-P_{\delta}u_{\delta}(x_0)\rvert.
\]
The latter is equal to $0$ by \eqref{eq:continuity killed diffusion semigroup}, hence we have established the continuity of $u$. 

We now consider an arbitrary subcylinder compactly contained in $D$, $D'=B(x_{\ast}',\epsilon')\subset\subset D$, and define $\partial D'$ similarly to before. Then by \cite[Theorem 9, p.69]{Friedman1964} there exists a $C^{2+\alpha}$ classical solution of $Lv=f$ on $D'$ with boundary conditions $v=u$ on $\partial D'$. We fix $(x_0,t_0)\in D'$, start a diffusion from $t=t_0$ and $X_{t}=x_0$, and define $\tau'$ to be the first hitting time of $\partial D'$. We observe that
\[
\int_{t_0}^{t\wedge \tau'}e^{\int_{t_0}^{t\wedge \tau'}c(X_{s},s)ds} f(X_{s\wedge \tau'},s\wedge \tau')ds+e^{\int_{t_0}^{t\wedge \tau'}c(X_{s},s)ds}v(X_{t\wedge \tau'},t\wedge \tau')
\]
is a $\Pm_{x_0,t_0}$-martingale by Ito's lemma and the fact that $v$ satisfies Kolmogorov's backward equation
\[
(\partial_t+L)v=-f.
\]
Therefore
\[
v(x_0,t_0)=\expE_{x_0,t_0}\Big[\int_{t_0}^{\tau'}e^{\int_{t_0}^{ \tau'}c(X_{s},s)ds} f(X_{s\wedge \tau'},s\wedge \tau')ds+e^{\int_{t_0}^{ \tau'}c(X_{s},s)ds}v(X_{ \tau'},  \tau')\Big]=u(x_0,t_0).
\]
We have established that $u\equiv v$ on $D'$, hence $u$ satisfies Kolmogorov's backward equation on $D'$, hence on all of $D$.
\qed

\subsubsection*{Proof of Theorem \ref{theo:elliptic Feynman-Kac}}
We take $t_0=0$ and $t_1=1$. Then $v(x)=u(x,t)$ for $t\in (0,1)$, where we impose the boundary condition $\psi(x,t)=v(x)$ on $\partial D$. It follows from Theorem \ref{theo:parabolic Feynman-Kac} that $u(x,t)$ is a $C^{2,1,\alpha}_{\loc}$ classical solution of Kolmogorov's backward equation on $D$. Since $u(x,t)$ doesn't depend upon $t$ for fixed $x\in B(x_{\ast},\epsilon)$, we obtain Theorem \ref{theo:elliptic Feynman-Kac}.
\qed

\subsection{McKean's representation of the generalised FKPP equation}

It is well-known since the work of Skorokhod \cite{Skorokhod1964} (see also the work of McKean \cite{McKean1975}) that branching Brownian motion is dual to the classical FKPP equation \cite{McKean1975}
\[
\partial_tu=\frac{1}{2}\Delta u+u(1-u).
\]
This connection is known to be valid in much greater generality (see e.g. \cite{Henry-Labordre2019}), however we couldn't find a proof in the literature under quite the assumptions on the coefficients we wish to impose here. Ikeda, Nagasawa and Watanabe established this in very great generality in \cite{Ikeda1966}, but there the FKPP equation is formulated in an abstract sense, rather than in the PDE sense we desire. We therefore provide a proof here.

We take non-negative, bounded and Borel measurable
\[
r(x),\; p_n(x), \quad n\geq 0,
\]
such that 
\[
\sum_np_n(x)\equiv 1\quad\text{and}\quad r(x)\sum_{n\geq 0}np_n(x)\quad\text{is bounded.}
\]

The branching process $\bar X_t=(X^1_t,\ldots,X^{N_t}_t)$ is defined as follows. Each particle $X^i_t$ evolves in between branching and killing events independently as a weak solution of \eqref{eq:solutions of SDE}. Each particle in our branching process branches locally (i.e. the children are born at the parents' location) at rate $r(x)$ with offspring distribution $(p_n(x))_{n=0}^{\infty}$, so that $n$ offspring produced independently with probability $p_n(x)$ and the parent particle is killed. If $n=0$ offspring are produced, this corresponds to \textit{soft killing}. The process is defined on $E$, assumed to be a non-empty, open subset $E$ of $\Rm^d$ with locally Lipschitz boundary $\partial E$. 

We will associate this branching process with $C^{2,1,\beta}_{\loc}$ (for some $\beta\in (0,1)$) classical solutions of the PDE
\begin{equation}\label{eq:FKPP equation BBM representation theorem}
\partial_t y=L_0y+f(y,x),
\end{equation}
whereby we define
\begin{equation}
f(y,x):=r(x)\sum_np_n(x)[y^n-y].
\end{equation}

The function is said to satisfy Dirichlet boundary conditions if $y(x,t)$ vanishes continuously along $\partial E\times (0,\infty)$. Given a continuous function $g(x)$, it is said to have initial condition $g(x)$ if $y(x,t)-g(x)$ vanishes continuously along $E\times \{0\}$.

\begin{theo}\label{theo:McKean rep}
\begin{enumerate}
    \item Given any $[0,1]$-valued continuous function $g$, there is a unique solution of \eqref{eq:FKPP equation BBM representation theorem} with Dirichlet boundary conditions and initial condition $g$.
    \item Given a $[0,1]$-valued Borel-measurable function $g$,
    \[
    u(x,t):=\expE\Big[\prod_{i=1}^{N_t}g(X^i_t)\Big]
    \]
    satisfies \eqref{eq:FKPP equation BBM representation theorem} with Dirichlet boundary conditions on $E$. If $g$ is continuous then this solution of \eqref{eq:FKPP equation BBM representation theorem} has initial condition $g$.
\end{enumerate}

\end{theo}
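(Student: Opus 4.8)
The plan is to prove Theorem~\ref{theo:McKean rep} by turning the branching property into a mild integral equation, bootstrapping its regularity with the linear Feynman--Kac theory of Theorem~\ref{theo:parabolic Feynman-Kac}, and obtaining uniqueness from an It\^o/martingale argument carried by the genealogy. Write $X^0$ for a single particle solving \eqref{eq:solutions of SDE} killed on exiting $E$ at time $\tau_E$, and for bounded measurable $h$ set $P^{r}_t h(x):=\expE_x\bigl[e^{-\int_0^t r(X^0_s)\,ds}h(X^0_t)\Ind(t<\tau_E)\bigr]$, the killed diffusion semigroup discounted at the branch rate; since $r$ is bounded and the killed diffusion is strongly continuous and strong Feller (as used in the proof of Theorem~\ref{theo:parabolic Feynman-Kac}), so is $P^{r}$. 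Fix a bounded $[0,1]$-valued $g$ and put $u(x,t):=\expE_x\bigl[\prod_{i=1}^{N_t}g(X^i_t)\bigr]$, with the convention that a lineage absorbed at $\partial E$ is retained and assigned the value $g\equiv0$ (this is what forces the Dirichlet condition below). Conditioning on the first branching time of the root (an exponential clock of rate $r(X^0_\cdot)$) and on $\tau_E$, and using the branching and Markov properties, yields the renewal identity
\[
u(\cdot,t)=P^{r}_t g+\int_0^t P^{r}_{t-s}\Bigl[r\sum_{n\ge0}p_n\,u(\cdot,s)^n\Bigr]\,ds,
\]
which is exactly the mild formulation of $\partial_t u=L_0u+f(u,\cdot)$ with $f(y,x)=r(x)\bigl[\sum_n p_n(x)y^n-y\bigr]$ and Dirichlet data. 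The map $y\mapsto r\sum_n p_n y^n$ is bounded by $\|r\|_\infty$ and Lipschitz in sup-norm on $[0,1]$-valued functions with constant $\le\|r\|_\infty\sup_x\sum_n np_n(x)<\infty$ (finite by uniform integrability, using $|y_1^n-y_2^n|\le n|y_1-y_2|$), so by a Picard/Gronwall argument the bounded mild solution is unique; it is the $u$ above, and is $[0,1]$-valued by construction.

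Next I would upgrade this mild solution to a classical one. From the integral identity, joint continuity of $u$ on $E\times(0,\infty)$ follows from the strong Feller property of $P^{r}$ and boundedness of the source, by the same reasoning used to prove continuity in Theorem~\ref{theo:parabolic Feynman-Kac}. Interior parabolic $L^p$ estimates applied to $\partial_t u-L_0u=f(u,\cdot)\in L^\infty_{\loc}$ then give $u\in W^{2,1}_{p,\loc}$ for every $p$, hence $u$ is locally H\"older in $(x,t)$; consequently $f(u,\cdot)=r\bigl[\sum_n p_n u^n-u\bigr]$ is locally H\"older (the series converges in local H\"older norm since $[u^n]_{C^\beta}\le n[u]_{C^\beta}$ on $[0,1]$-valued $u$ and $\sum_n np_n$ is uniformly bounded), and Theorem~\ref{theo:parabolic Feynman-Kac} --- equivalently interior Schauder --- upgrades $u$ to a $C^{2,1,\beta}_{\loc}$ classical solution of \eqref{eq:FKPP equation BBM representation theorem}. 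The Dirichlet condition is read off the mild identity: bounding the source by $\|r\|_\infty$ and using $u\le1$ gives $u(x,t)\le\Pm_x(\tau_E>t)+\|r\|_\infty\,\expE_x[\tau_E\wedge t]$, which tends to $0$ as $x\to\xi\in\partial E$ since $\expE_x[\tau_E]\to0$ by regularity of boundary points (here $C^{1,1}$, or even Lipschitz, regularity of $\partial E$ enters). For continuous $g$ the initial datum is attained continuously on compact subsets of $E$ by isolating the event ``no branching and no boundary hit before time $t$'', on which $\prod_i g(X^i_t)=g(X^0_t)\to g(x)$, its complement having probability $O(\|r\|_\infty t)+\Pm_x(\tau_E\le t)\to0$. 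This proves part~2 in full, and the existence half of part~1.

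For uniqueness in part~1, let $y$ be any bounded $[0,1]$-valued $C^{2,1,\beta}_{\loc}$ solution with Dirichlet boundary datum and initial datum $g$. Apply It\^o's formula to $s\mapsto Z_s:=\prod_{i=1}^{N_s}y(X^i_s,t-s)$ on $[0,t]$: each living particle contributes a diffusive martingale term and the It\^o drift $(L_0-\partial_t)y=-r\bigl[\sum_n p_n y^n-y\bigr]$ (by the PDE), while a branching event of a particle at $z$ multiplies $Z_s$ by $y(z,\cdot)^{n-1}$ with probability $p_n(z)$, contributing a compensator that cancels the diffusive drift exactly; hence $Z_s$ is a bounded local, and therefore true, martingale. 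Equating its expectation at $s=0$ and $s=t$ gives $y(x,t)=\expE_x\bigl[\prod_{i=1}^{N_t}y(X^i_t,0)\bigr]=\expE_x\bigl[\prod_{i=1}^{N_t}g(X^i_t)\bigr]=u(x,t)$, so the classical solution with data $g$ is unique and coincides with the probabilistic $u$.

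The main obstacle will be the interplay between the regularity step and the It\^o computation. In the martingale argument one must justify differentiating the product of the (infinitely supported) family of factors $y(X^i_\cdot,\cdot)$: this needs $y\in W^{2,1}_{d,\loc}$ so It\^o applies factorwise, and uniform control of the jump terms $\sum_n p_n y^{n-1}$, which is precisely what the uniform integrability of the offspring law supplies; one must also handle the factors near $\partial E$, where they vanish. In the regularity step the upgrade is delicate because the linear Feynman--Kac/Schauder theory requires H\"older coefficients, so one must ensure that $r$, the $p_n$, and hence the nonlinearity $f(u,\cdot)$ carry enough regularity --- uniformly in the order of the nonlinearity --- for the classical-solution conclusion to be available; this is where the paper's standing H\"older assumptions on the data enter.
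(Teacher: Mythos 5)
Your second half coincides with the paper's: the uniqueness/representation step via the product martingale $Z_s=\prod_i y(X^i_s,t-s)$, with the branching compensator cancelling the It\^o drift and boundedness upgrading the local martingale to a true one, is exactly the paper's argument, and your bookkeeping of the jump terms and of the factors near $\partial E$ is if anything more explicit. Two choices are genuinely different. First, the boundary convention: you declare that a lineage absorbed on $\partial E$ retains the factor $0$. This is in fact the reading under which the stated Dirichlet condition holds (with the ``killed particles simply removed'' convention the representation tends to $1$ at $\partial E$), and it is also what keeps $M_s$ a martingale at boundary-killing times, where otherwise it would jump upward from $0$; the paper leaves this implicit, so making it explicit, together with your bound $u\le\Pm_x(\tau_E>t)+\|r\|_\infty\expE_x[\tau_E\wedge t]$ for the vanishing at the boundary, is a genuine clarification. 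Second, for existence you bypass Proposition \ref{prop:classical solution semilinear PDE} entirely: instead of the monotone iteration $z^{(n)}$, comparison with a linear problem, Krylov--Safonov estimates and identification through Theorem \ref{theo:parabolic Feynman-Kac}, you derive a first-branching (Duhamel/mild) identity for the probabilistic $u$, get uniqueness of $[0,1]$-valued mild solutions by Gronwall, and bootstrap the regularity of $u$ itself; this also yields Part 2 for Borel $g$ directly, whereas the paper reduces the Borel case to the continuous one via the branching property and the strong Feller property.

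The soft spot is the bootstrap. From the mild identity you ``apply interior parabolic $L^p$ estimates to $\partial_t u-L_0u=f(u,\cdot)$'', but at that stage $u$ is only a bounded continuous function satisfying a Duhamel formula; interior $W^{2,1}_p$ bounds are a priori estimates for functions already known to solve the equation in a strong (or distributional) sense, so you must first identify $u$, on small parabolic cylinders, with the $W^{2,1}_p$ solution of the linear problem with bounded source $f(u,\cdot)$ and lateral/terminal data $u$ (solvability from Krylov/Ladyzhenskaya--Solonnikov--Uraltseva theory, identification via the localized probabilistic representation). This is standard but it is precisely the step the paper's toolbox does not provide: Theorem \ref{theo:parabolic Feynman-Kac} only covers H\"older sources, which is why the paper instead runs a monotone scheme in which every iterate has a H\"older right-hand side and the uniform H\"older bound comes from Krylov--Safonov. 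Relatedly, your Schauder endgame needs $f(u,\cdot)$ locally H\"older in $x$, hence H\"older $r$ and $p_n$ (uniformly in $n$), while the appendix only assumes these bounded and Borel measurable; the paper's own Proposition \ref{prop:classical solution semilinear PDE} quietly requires the same, so this is a hypothesis that must be added (or the conclusion weakened to $W^{2,1}_{p,\loc}$ solutions) in either treatment. With those two points repaired your argument goes through, and in the uniqueness half it is identical to the paper's.
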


\subsubsection*{Proof of Theorem \ref{theo:McKean rep}}
As with the proof of Theorem \ref{theo:parabolic Feynman-Kac}, the main difficulty is to do with the existence of classical solutions of the PDE. To that end we will prove the following proposition.
\begin{prop}\label{prop:classical solution semilinear PDE}
Suppose that $g:E\ra \Rm$ is continuous and $\tilde f:\Rm\times E\ra \Rm_{\geq 0}$ is bounded, non-negative, locally H\"older continuous and Lipschitz in the first variable uniformly over the spatial variable, i.e. 
\[
\sup_{x\in E}\sup_{u_1\neq u_2}\frac{\lvert f(u_2,x)-f(u_1,x)\rvert}{\lvert u_2-u_1\rvert}<\infty.
\]
We further assume that $\tilde{f}(x,0)\equiv 0$. Then for some $\beta\in (0,1)$ there exists a $C^{2,\beta}_{\loc}$ classical solution of
\begin{equation}\label{eq:general FKPP PDE}
\begin{split}
\partial_ty=L_0y+\tilde f(y,x),\quad x\in E,\; t>0,\\
y(x,0)=g(x),\quad x\in E,\quad y(x,t)=0,\quad t>0.
\end{split}
\end{equation}
\end{prop}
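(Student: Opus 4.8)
The plan is to build the solution by exhausting $E$ by bounded smooth subdomains, solving the problem classically on each, and then passing to the limit with the help of interior parabolic estimates; the final point, attaining the Dirichlet data on the merely Lipschitz boundary, will be handled by comparison with a linear supersolution that vanishes at $\partial E$. Let $E_1\subset\subset E_2\subset\subset\cdots$ be an increasing sequence of bounded smooth subdomains with $\bigcup_j E_j=E$. On each $E_j$ the initial--boundary value problem
\[
\partial_t y_j=L_0y_j+\tilde f(y_j,x)\ \text{on}\ E_j\times(0,\infty),\qquad y_j=0\ \text{on}\ \partial E_j\times(0,\infty),\qquad y_j(\cdot,0)=g\ \text{on}\ E_j
\]
has a classical solution (continuous up to the parabolic boundary away from the corner $\partial E_j\times\{0\}$): since $\tilde f$ is bounded and Lipschitz in its first argument uniformly in $x$, this follows from the contraction mapping principle applied to the Duhamel formula for the Dirichlet heat semigroup of $L_0$ on $E_j$, combined with interior Schauder theory, or alternatively from the method of sub- and supersolutions. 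Because $\tilde f(0,\cdot)\equiv0$, the constant $0$ is a subsolution and $\bar y(x,t):=\sup_E|g|+\|\tilde f\|_\infty t$ is a supersolution, so the parabolic maximum principle gives $0\le y_j(x,t)\le \sup_E|g|+\|\tilde f\|_\infty t$ on $E_j\times[0,\infty)$, uniformly in $j$ (for $g\ge0$, which covers the application in Theorem~\ref{theo:McKean rep}; in general one bounds $|y_j|$ by comparison with the associated linear equation). Comparing $y_{j+1}$ with $y_j$ on $E_j\times(0,\infty)$ — same initial data, and $y_{j+1}\ge0=y_j$ on $\partial E_j$ — gives $y_j\le y_{j+1}$, so the sequence is monotone.

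Next I would run the standard bootstrap on a fixed interior parabolic cylinder $Q'\subset\subset Q''\subset\subset E\times(0,\infty)$, noting $Q''\subset E_j\times(0,\infty)$ for all large $j$. Viewing $y_j$ as a solution of the \emph{linear} equation $\partial_ty_j-L_0y_j=h_j$ with $h_j:=\tilde f(y_j(\cdot,\cdot),\cdot)$ and $\|h_j\|_{L^\infty(Q'')}\le\|\tilde f\|_\infty$, the interior $W^{2,1}_p$ estimates for non-divergence parabolic operators with continuous leading coefficients (valid for every $p<\infty$, since $a$ is locally Hölder), followed by parabolic Sobolev embedding, bound $\|y_j\|_{C^{\theta}(\overline{Q'})}$ in the parabolic Hölder norm uniformly in $j$. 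Consequently $h_j=\tilde f(y_j,\cdot)$ is bounded in $C^{\theta'}(\overline{Q'})$ for some $\theta'\in(0,\alpha]$ (composition of the jointly locally Hölder $\tilde f$ with the parabolically Hölder $y_j$), and the interior parabolic Schauder estimates (again using that $a,b$ are locally Hölder) then bound $\|y_j\|_{C^{2,1,\beta}(\overline{Q'})}$ uniformly in $j$ for some $\beta\in(0,1)$. Exhausting $E\times(0,\infty)$ by such cylinders and using the monotonicity of $(y_j)$ (or Arzelà--Ascoli with a diagonal argument), we obtain $y:=\lim_j y_j\in C^{2,1,\beta}_{\loc}(E\times(0,\infty))$ which solves $\partial_t y=L_0y+\tilde f(y,x)$ in the interior and satisfies $0\le y(x,t)\le\sup_E|g|+\|\tilde f\|_\infty t$.

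It remains to check that $y$ attains the prescribed parabolic boundary data. For the initial data, fix $x_0\in E$: since $g$ is continuous at $x_0$, $y_j(\cdot,0)=g$, and $\|h_j\|_\infty\le\|\tilde f\|_\infty$, a local barrier for $L_0$ near $(x_0,0)$ shows $y_j(x,t)\to g(x_0)$ as $(x,t)\to(x_0,0)$ uniformly in $j$, and passing to the limit gives $y(x,t)\to g(x_0)$. For the Dirichlet condition, let $w$ solve the \emph{linear} problem $\partial_t w=L_0w+\|\tilde f\|_\infty$ on $E$ with $w=0$ on $\partial E\times(0,\infty)$ and $w(\cdot,0)=\sup_E|g|$, constructed by the same exhaustion as $w=\sup_j w_j$; because $\partial E$ is locally Lipschitz it satisfies an exterior cone condition at every point, so $L_0$ admits a local boundary barrier there, the estimates near $\partial E$ are uniform in $j$, and $w$ is continuous up to $\partial E\times(0,\infty)$ with boundary value $0$. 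Since $\|\tilde f\|_\infty\ge\tilde f(w,x)$, the function $w$ is a supersolution of the semilinear equation, so comparison on each $E_j$ gives $y_j\le w$, hence $0\le y\le w$, and therefore $y(x,t)\to0$ as $x\to\xi\in\partial E$ for every $t>0$. This yields the desired $C^{2,1,\beta}_{\loc}$ classical solution of \eqref{eq:general FKPP PDE}. (Uniqueness, which is what Theorem~\ref{theo:McKean rep}(1) additionally asserts, follows separately from the comparison principle for the semilinear equation together with the uniform Lipschitz bound on $\tilde f$ and Gronwall's inequality.)

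I expect the last paragraph to be the main obstacle: producing the linear comparison function $w$ and, more precisely, the boundary barriers for the non-divergence operator $L_0$ at cone points, with enough uniformity in the exhaustion index $j$ to conclude continuity of the limit up to the merely Lipschitz boundary. Everything else — existence on smooth bounded domains, the a priori bounds, the $W^{2,1}_p$-to-Schauder bootstrap, and the compactness/monotone passage to the limit — is a routine assembly of classical linear parabolic theory.
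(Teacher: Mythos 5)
Your argument is correct in outline, but it takes a genuinely different route from the paper. The paper does not exhaust $E$ by smooth subdomains at all: it substitutes $z=e^{Ct}y$ with $C$ exceeding the Lipschitz constant of $\tilde f$, so that the frozen nonlinearity $z\mapsto e^{Ct}\tilde f(e^{-Ct}z,x)+Cz$ is nonnegative and monotone, then runs a monotone Picard-type iteration in the \emph{nonlinearity} (not in the domain), solving each linear step on all of $E$ via the probabilistic Feynman--Kac formula of Theorem~\ref{theo:parabolic Feynman-Kac}; uniform bounds come from comparison with one linear problem, compactness from the Krylov--Safonov estimate, and the limit is identified through the probabilistic Duhamel equation by monotone convergence, after which Theorem~\ref{theo:parabolic Feynman-Kac} is invoked once more to upgrade it to a $C^{2,\beta}_{\loc}$ classical solution. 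That route stays inside the probabilistic machinery already built and avoids both existence theory on approximating domains and boundary barriers (the lateral Dirichlet data being encoded in the killed process). Your route is self-contained classical parabolic theory ($W^{2,1}_p$ plus Schauder interior bootstrap, comparison, cone barriers at the locally Lipschitz boundary assumed in this appendix), and it makes the attainment of the boundary data explicit, which the paper leaves largely implicit; the cone-barrier step you flag is indeed the only nonroutine point, but it is classical for uniformly elliptic non-divergence operators and carries over to the parabolic setting with the usual two-parameter barrier, uniformly in the exhaustion index. Two bookkeeping caveats: your monotonicity in $j$ and the sandwich $0\le y\le w$ use $g\ge 0$, and your uniform bounds use boundedness of $g$, so for general continuous $g$ you should state the two-sided comparison with linear sub/supersolutions explicitly (harmless in the application to Theorem~\ref{theo:McKean rep}, where $g$ is $[0,1]$-valued, and the paper's own proof of \eqref{eq:general FKPP PDE} implicitly assumes bounded data as well).
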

\begin{proof}[Proof of Proposition \ref{prop:classical solution semilinear PDE}]
We observe that solutions of \eqref{eq:general FKPP PDE} are equivalent to those of
\begin{equation}\label{eq:reformulation of general FKPP PDE}
    \begin{split}
\partial_tz=L_0z+e^{Ct}\tilde f(e^{-Ct}z,x)+Cz,\quad x\in E,\; t>0,\\
z(x,0)=g(x),\quad x\in E,\quad z(x,t)=0,\quad t>0,
\end{split}
\end{equation}
via the transform $z=e^{Ct}y$, for any $C\in \Rm_{\geq 0}$. We choose $0<C<\infty$ to be larger than the uniform Lipschitz constant of $\tilde{f}$ in its first variable, so that
\[
z\mapsto e^{Ct}f(e^{-Ct}z,x)+Cz
\]
is positive and non-decreasing in $z$ for all $t\geq 0$ and $x\in E$. 

Our goal now is to establish the existence of classical solutions of \eqref{eq:reformulation of general FKPP PDE}.

We inductively define $z^{(n)}$ as follows. We take $z^{(0)}\equiv 0$. We inductively claim we have $z^{(n)}$ for $n\geq 1$ satisfying the following:
\begin{enumerate}
    \item $z^{(n)}\geq z^{(n-1)}$ everywhere if $n\geq 1$;
    \item for $n\geq 1$, $z^{(n)}$ is a $C^{2,\gamma}_{\loc}$ (for some $\gamma\in (0,1)$) classical solution of
    \begin{equation}\label{eq:inductive PDE solution in n} 
  \begin{split}
\partial_tz^{(n)}=L_0z^{(n)}+e^{Ct}\tilde f(e^{-Ct}z^{(n-1)},x)+Cz^{(n-1)},\quad x\in E,\; t>0,\\
z^{(n)}(x,0)=g(x),\quad x\in E,\quad z(x,t)=0,\quad t>0,
\end{split}
    \end{equation}  
    for all $n\geq 1$.
\end{enumerate}
The second condition is clearly satisfied for $n=0$. We now suppose we have established the induction hypothesis for some $n\geq 0$, and seek to verify it for $n+1$.

Since $z^{(n)}$ is $C^{2}_{\loc}$, $e^{Ct}\tilde f(e^{-Ct}z^{(n)},x)+Cz$ is locally $\gamma$-Holder continuous for some $\gamma$. Therefore Theorem \ref{theo:parabolic Feynman-Kac} provides for the existence of a $C^{2,\gamma}_{\loc}$ classical solution $z^{(n+1)}$ of \eqref{eq:inductive PDE solution in n}, which is uniformly bounded over bounded time intervals (but at this point the bound may depend upon $n<\infty$). Since $z^{(n)}\geq z^{(n-1)}$, 
\[
e^Ct\tilde{f}(e^{-Ct}z^{n}(x,t),x)+Cz^{(n)}\geq e^Ct\tilde{f}(e^{-Ct}z^{n-1}(x,t),x)+Cz^{(n-1)},
\]
so the parabolic maximum principle implies that $z^{(n+1)}\geq z^{(n)}$. 

We now show that $z^{(n)}$ is bounded uniformly over $n<\infty$ over any given time horizon. For any $n$ we can solve the linear equation
\begin{equation}\label{eq:linearised equation fixed n}
\begin{split}
\partial_tz=L_0z+e^{Ct}\tilde f(e^{-Ct}z^{(n-1)},x)+Cz,\quad x\in E,\; t>0,\\
z(x,0)=g(x),\quad x\in E,\quad z(x,t)=0,\quad t>0,
\end{split}
\end{equation}
This is bounded uniformly in $n$ over any time horizon (say by $C_T<\infty$) by the boundedness of $\tilde f$ and Theorem \ref{theo:parabolic Feynman-Kac}. On the other hand we see that
\[
\begin{split}
\partial_t(z-z^{(n)})=L_0(z-z^{(n)})+C(z-z^{(n-1)}),\quad x\in E,\; t>0,\\
(z-z^{(n)})(x,0)=0,\quad x\in E,\quad (z-z^{(n)})(x,t)=0,\quad t>0,
\end{split}
\]
Since $z^{(n)}$ is non-decreasing in $n$, 
\[
\partial_t(z-z^{(n)})=L_0(z-z^{(n)})+C(z-z^{(n-1)})\geq L_0(z-z^{(n)})+C(z-z^{(n)}).
\]
Therefore the parabolic maximum principle implies that $z\geq z^{(n)}$ everywhere. This implies that $z^{(n)}(x,t)\leq C_T$ for all $n<\infty$, $x\in E$ and $t\leq T$. Therefore $z^{(n)}$ converges pointwise to some limit $z^{(\infty)}$, which is bounded by $C_T$ over any time horizon $T<\infty$. 

We observe that $z^{(n)}$ satisfies
\[
\partial_tz^{(n)}=L_0z^{(n)}+\frac{e^{Ct}\tilde f(e^{-Ct}z^{(n-1)},x)+Cz^{(n-1)}}{z^{(n)}}z^{(n)}.
\]
On the other hand
\[
0\leq \frac{e^{Ct}\tilde f(e^{-Ct}z^{(n-1)},x)+Cz^{(n-1)}}{z^{(n)}}\leq \frac{e^{Ct}\tilde f(e^{-Ct}z^{(n)},x)+Cz^{(n)}}{z^{(n)}},
\]
which is bounded uniformly in $n$ since $\tilde{f}(u,x)\equiv 0$ and $\tilde{f}$ is uniformly Lipschitz in the second variable. We can therefore apply the Krylov-Safonov theorem \cite{Krylov1981} to see that $z^{(n)}$ is $\beta$-Holder continuous for some $\beta\in (0,1)$, uniformly in $n$ (over any fixed compact subset of $E\times \Rm_{>0}$). Therefore the limit $z^{(\infty)}$ is locally $\beta$-Holder continuous on $E\times \Rm_{>0}$.

Theorem \ref{theo:parabolic Feynman-Kac} tells us that
\[
z^{(n)}(x,t)=\expE_x\Big[\int_0^{t\wedge \tau}e^{Ct}\tilde{f}(e^{-Ct}z^{(n-1)}(X_s,t-s),X_s)+Cz^{(n-1)}(X_s,t-s)ds+g(X_t)\Ind(\tau>t)\Big],
\]
where $\tau_E$ is the exit time of the diffusion $X_t$ from $E$. Then by taking the $n\ra\infty$ limit and applying the monotone convergence theorem, we see that
\[
z^{(\infty)}(x,t)=\expE_x\Big[\int_0^{t\wedge \tau}e^{Ct}\tilde{f}(e^{-Ct}z^{(\infty)}(X_s,t-s),X_s)+C\tilde{z}^{(\infty)}(X_s,t-s)ds+g(X_t)\Ind(\tau>t)\Big].
\]
It follows from Theorem \ref{theo:parabolic Feynman-Kac} that $z^{(\infty)}$ is a $C^{2,\beta}_{\loc}$ classical solution of \eqref{eq:reformulation of general FKPP PDE}, and therefore there exists a $C^{2,\beta}_{\loc}$ classical solution of \eqref{eq:general FKPP PDE}.\end{proof}

Having established Proposition \ref{prop:classical solution semilinear PDE}, we now use it to establish Theorem \ref{theo:McKean rep}. We suppose that $u$ is a classical solution of \eqref{eq:FKPP equation BBM representation theorem} and $\bar X_t$ is the branching process corresponding to $f$. Then we claim that for any $T<\infty$,
\[
M_s:=\prod_{i=1}^{N_s}u(X_s,T-s)
\]
is a martingale. By Ito's lemma (which we can apply by the smoothness of $u$),
\[
\begin{split}
dM_s=\sum_{i=1}^{N_s}\prod_{j\neq i}u(X^j_s,T-s)\Big[Lu(X_s,T-s)-\partial_tu(X_s,T-s)\\+r(X_s)\sum_np_n(X_s)[u^n(X_s,T-s)-u(X_s,T-s)]\Big]=0.
\end{split}
\]
Here $\partial_tu$ denotes the partial derivative of $u$ in its second (time) variable.

Since $M_t$ is bounded, it is a martingale. Therefore
\[
u(x,t)=\expE_x\Big[\prod_{i=1}^{N_t}u(X^i_t,0)\Big],
\]
so long as $u$ is continuous at the initial condition. This provides for Part 1 of Theorem \ref{theo:parabolic Feynman-Kac}, and Part 2 for continuous $g$. To extend Part 2 to discontinuous $g$, we take $0<s<t$, and use the branching property to see that
\[
u(x,t):=\expE_x\Big[\prod_{i=1}^{N_t}g(X^i_t)\Big]=\expE_x\Big[\expE\Big[\prod_{i=1}^{N_t}g(X^i_t)\Big\lvert \mathcal{F}_s\Big]\Big]=\expE_x\Big[\prod_{i=1}^{N_s}u(X^i_s,t-s)\Big].
\]
We then apply the strong Feller property to see that $u$ is necessarily continuous at positive times, and apply Part 2 for continuous initial conditions.

\qed

\section*{Acknowledgements}

We are grateful to Henri Berestycki for interesting discussions at the CIRM conference `Non-local branching processes' in September 2024.

Pascal Maillard acknowledges partial support from Institut Universitaire de France, the MITI interdisciplinary program 80PRIME GEx-MBB and the ANR MBAP-P (ANR-24-CE40-1833) project. The work of Oliver Tough was partially supported by the EPSRC MathRad programme grant EP/W026899/.

Part of this work of conducted while one of the authors (PM) was in
residence at the Simons Laufer Mathematical Sciences Institute in
Berkeley, California, during the Spring 2025 semester, partially funded by the National Science
Foundation under Grant No. DMS-1928930.

\bibliographystyle{alpha}
\bibliography{global_survival}

\end{document}